\documentclass{amsart}
\usepackage{fullpage}
\usepackage[utf8]{inputenc}
\usepackage[english]{babel}
\usepackage{amsmath, amsthm, amsfonts, amssymb}
\usepackage{pifont}
\usepackage{enumitem}
\usepackage{tikz,tikz-cd}
\usetikzlibrary{arrows}
\usepackage{longtable}
\usepackage{pdflscape}
\usepackage{hyperref}
\usepackage{appendix}
\usepackage{leftindex}
\usepackage{cleveref}
\usepackage{listofitems}
\usepackage{csquotes}
\usepackage[backend=biber,style=alphabetic,sorting=nyt,doi=false,url=false,isbn=false]{biblatex}
\crefname{appsec}{appendix}{appendices}
\Crefname{appsec}{Appendix}{Appendices}
\crefname{subsection}{algorithm}{algorithms}
\Crefname{subsection}{Algorithm}{Algorithms}
\crefname{remark}{remark}{remarks}
\Crefname{remark}{Remark}{Remarks}

\usetikzlibrary{babel,decorations.pathreplacing}

\newcommand{\ds}{\displaystyle}

\newcommand{\ov}{\overline}
\newcommand{\ti}{\widetilde}

\newcommand{\M}{\mathcal{M}}
\renewcommand{\O}{\mathcal{O}}
\renewcommand{\P}{\mathbb{P}}
\newcommand{\A}{\mathbb{A}}
\newcommand{\Z}{\mathbb{Z}}
\newcommand{\Q}{\mathbb{Q}}
\newcommand{\C}{\mathbb{C}}

\newcommand{\G}{\mathbb{G}}
\newcommand{\F}{\mathbb{F}}

\newcommand{\Aut}{\operatorname{Aut}}
\newcommand{\Spec}{\operatorname{Spec}}

\newcommand{\p}[1]{{\left( #1 \right)}}
\newcommand{\floor}[1]{\left\lfloor #1 \right\rfloor}
\newcommand{\ceil}[1]{\left\lceil #1 \right\rceil}
\newcommand{\chr}{\operatorname{char}}
\newcommand{\e}{\varepsilon}
\newcommand{\GL}{\operatorname{GL}}

\newcommand{\SL}{\operatorname{SL}}
\newcommand{\smallpmatrix}[1]{\p{\begin{smallmatrix}#1\end{smallmatrix}}}

\newcommand{\ord}{\operatorname{ord}}
\newcommand{\Ext}{\operatorname{Ext}}
\newcommand{\sExt}{\operatorname{\mathcal{E}xt}}

\newcommand{\id}{\operatorname{id}}

\newcommand{\Def}{\operatorname{Def}}

\newcommand{\li}{\leftindex}
\newcommand{\rank}{\operatorname{rank}}

\theoremstyle{plain}
\newtheorem{theorem}{Theorem}[section]
\theoremstyle{definition}
\newtheorem{definition}[theorem]{Definition}
\AddToHook{env/definition/begin}{\crefalias{theorem}{definition}}
\newtheorem{example}[theorem]{Example}
\AddToHook{env/example/begin}{\crefalias{theorem}{example}}
\AtBeginEnvironment{example}{\pushQED{\qed}}
\AtEndEnvironment{example}{\popQED\endexample}
\newtheorem{proposition}[theorem]{Proposition}
\AddToHook{env/proposition/begin}{\crefalias{theorem}{proposition}}
\newtheorem{lemma}[theorem]{Lemma}
\AddToHook{env/lemma/begin}{\crefalias{theorem}{lemma}}

\AddToHook{env/algorithm/begin}{\crefalias{theorem}{algorithm}}
\newtheorem{corollary}[theorem]{Corollary}
\AddToHook{env/corollary/begin}{\crefalias{theorem}{corollary}}
\newtheorem{remark}[theorem]{Remark}
\AddToHook{env/remark/begin}{\crefalias{theorem}{remark}}

\numberwithin{equation}{theorem}

\newcommand{\singularfiber}[8][]{
    \begin{center}\begin{tikzpicture}
        \readlist\dataoverzero{#3}
        \readlist\dataoverinfty{#7}
        \draw (0,3) -- (0,0);
        \node[left] at (-0.2,2.1) {#2};
        
        \ifnum\dataoverzero[1]>0
        \draw (-0.2,2.6) -- (0.8,2.8);
        \ifnum\dataoverzero[2]>1
        \draw (0.4,2.8) -- (1.4,2.6);
        \ifnum\dataoverzero[2]>2
        \draw (1,2.6) -- (2,2.8);
        \fi
        \fi
        \fi
        \ifnum\dataoverzero[1]>1
        \draw (-0.2,2.4) -- (0.8,2.6);
        \ifnum\dataoverzero[2]>1
        \draw (0.4,2.6) -- (1.4,2.4);
        \ifnum\dataoverzero[2]>2
        \draw (1,2.4) -- (2,2.6);
        \fi
        \fi
        \fi
        \ifnum\dataoverzero[2]<2
        \ifnum\dataoverzero[1]>1
        \draw[decorate,decoration = brace] (0.9,2.9) -- (0.9,2.3);
        \node[right] at (1,2.6) {#4};
        \fi\ifnum\dataoverzero[1]=1
        \node[right] at (1,2.7) {#4};
        \fi
        \fi
        \ifnum\dataoverzero[2]=2
        \ifnum\dataoverzero[1]>1
        \draw[decorate,decoration = brace] (1.5,2.9) -- (1.5,2.3);
        \node[right] at (1.6,2.6) {#4};
        \fi\ifnum\dataoverzero[1]=1
        \node[right] at (1.6,2.7) {#4};
        \fi
        \fi
        \ifnum\dataoverzero[2]>2
        \ifnum\dataoverzero[1]>1
        \draw[decorate,decoration = brace] (2.1,2.9) -- (2.1,2.3);
        \node[right] at (2.2,2.6) {#4};
        \fi\ifnum\dataoverzero[1]=1
        \node[right] at (2.2,2.7) {#4};
        \fi
        \fi

        \ifnum#5=1
        \draw (-0.2,1.5) -- (0.8,1.5);
        \fi\ifnum#5=2
        \draw (-0.2,1.6) -- (0.8,1.6);
        \draw (-0.2,1.4) -- (0.8,1.4);
        \draw[decorate, decoration = brace] (0.9,1.7) -- (0.9,1.3);
        \fi\ifnum#5=3
        \draw (-0.2,1.7) -- (0.8,1.7);
        \draw (-0.2,1.5) -- (0.8,1.5);
        \draw (-0.2,1.3) -- (0.8,1.3);
        \draw[decorate, decoration = brace] (0.9,1.8) -- (0.9,1.2);
        \fi\ifnum#5=4
        \draw (-0.2,1.8) -- (0.8,1.8);
        \draw (-0.2,1.6) -- (0.8,1.6);
        \draw (-0.2,1.4) -- (0.8,1.4);
        \draw (-0.2,1.2) -- (0.8,1.2);
        \draw[decorate, decoration = brace] (0.9,1.9) -- (0.9,1.1);
        \fi\ifnum#5>4
        \draw (-0.2,1.8) -- (0.8,1.8);
        \node[] at (0.4,1.6) {$\vdots$};
        \draw (-0.2,1.2) -- (0.8,1.2);
        \draw[decorate, decoration = brace] (0.9,1.9) -- (0.9,1.1);
        \fi
        \ifnum#5>0
        \node[right] at (1,1.5) {#6};
        \fi

        \ifnum\dataoverinfty[1]>1
        \draw (-0.2,0.4) -- (0.8,0.6);
        \ifnum\dataoverinfty[2]>1
        \draw (0.4,0.6) -- (1.4,0.4);
        \ifnum\dataoverinfty[2]>1
        \draw (1,0.4) -- (2,0.6);
        \fi
        \fi
        \fi
        \ifnum\dataoverinfty[1]>0
        \draw (-0.2,0.2) -- (0.8,0.4);
        \ifnum\dataoverinfty[2]>1
        \draw (0.4,0.4) -- (1.4,0.2);
        \ifnum\dataoverinfty[2]>2
        \draw (1,0.2) -- (2,0.4);
        \fi
        \fi
        \fi
        \ifnum\dataoverinfty[2]<2
        \ifnum\dataoverinfty[1]>1
        \draw[decorate,decoration = brace] (0.9,0.7) -- (0.9,0.1);
        \node[right] at (1,0.4) {#8};
        \fi\ifnum\dataoverinfty[1]=1
        \node[right] at (1,0.3) {#8};
        \fi
        \fi
        \ifnum\dataoverinfty[2]=2
        \ifnum\dataoverinfty[1]>1
        \draw[decorate,decoration = brace] (1.5,0.7) -- (1.5,0.1);
        \node[right] at (1.6,0.4) {#8};
        \fi\ifnum\dataoverinfty[1]=1
        \node[right] at (1.6,0.3) {#8};
        \fi
        \fi
        \ifnum\dataoverinfty[2]>2
        \ifnum\dataoverinfty[1]>1
        \draw[decorate,decoration = brace] (2.1,0.7) -- (2.1,0.1);
        \node[right] at (2.2,0.4) {#8};
        \fi\ifnum\dataoverinfty[1]=1
        \node[right] at (2.2,0.3) {#8};
        \fi
        \fi

        \if\relax\detokenize{#1}\relax\else
        \node[right] at (4.2,1.5) {#1};
        \fi
    \end{tikzpicture}\end{center}
}

\title{Stacky Framework for the Classification of Singular Fibers in Fibered Surfaces}
\date{\today}

\author{Javier Reyes Bastidas}
\thanks{}
\address[JR]{Department of Mathematics, University of Maryland, College Park MD 20742, USA}
\email{jereyes@umd.edu}

\begin{document}

    \begin{abstract}
        We develop a strategy for the classification of singular fibers in smooth fibered surfaces by exploiting the fact that, in the tame case, a fibration with smooth general fiber of genus $g > 1$ corresponds to a representable morphism from a stacky base to $\ov\M_g$ thanks to the root stack valuative criterion for properness of tame stacks. This framework serves as another justification of Dokchitser's classification of reduction types independently of Winters' abstract result.
        
    \end{abstract}
    
    \maketitle
    
    \tableofcontents

    \section{Introduction}

    The problem of the classification of singular fibers in fibrations has been extensively studied and has had good answers for over half a century. In \cite{Kod} and \cite{Neron1964-su}, Kodaira and Néron famously classified the singular fibers in genus 1 fibrations, showing us that, other than smooth fibers, there exist two infinite families of singular fibers together with seven exceptional ones. Some years later, \cite{Ogg} and \cite{Namikawa1973} tackled the problem in genus two. Ogg found 44\footnote{49 counting variations, some of which were missed.} combinatorial families, 27 of which are infinite; and Namikawa-Ueno refined the classification to 120\footnote{The discrepancies are explained by differences in what each author considers as a family.} families, 90 of which infinite. In \cite{ARTIN1971} it was shown that for any given genus, there can only be a finite number of families. Later, Winters shows in \cite{Winters1974} that over $\C$ any ``combinatorially admissible'' singular fiber indeed exists as a fiber in a fibration with possibly non connected fibers. His proof is not effective, as one cannot devise an algorithm for listing what these fibers are in a given genus just with this result alone. In a much more recent paper, Dokchitser shows in \cite{Dok25} an effective classification of singular fibers in any genus, by realizing that they can be subdivided into principal components connected by inner and outer chains with heavy restrictions.

    In this paper we give a different approach for an effective classification of the possible singular fibers in fibered surfaces $X \to B$, which relies on the root stack valuative criterion for tame stacks \cite[Theorem 3.1]{Bresciani2023}. We essentially show that under a tameness assumption, such a fibration is birational to a fibration on stable curves $\mathcal{X} \to \mathcal{B}$ between tame Deligne-Mumford stacks corresponding to a map $\mathcal{B} \to \ov\M_g$. We also show how to recover the original fibration $X \to B$ from this map.

    The approach of studying maps from a stacky curves $\mathcal{B}$ to $\ov\M_g$ to understand fibered surfaces traces back to \cite{Abramovich2000} where they study the problem of compactifying the space of such surfaces. Not much later in \cite{Abramovich2001}, the same authors develop the theory for compactifying the space of maps from stacky curves to arbitrary tame Deligne-Mumford stacks. The application of this approach to the classification of singular fibers was inspired by \cite{Ascher2018,bejleri2024}, where they are able to recover the Kodaira classification in genus 1 from a representable map $\mathcal{B} \to \M_{1,1}$.

    In an upcoming paper \cite{Re26-2} we will specialize this classification when the fibration $X \to B$ has a smooth hyperelliptic curve as a geometric generic fiber. We believe this approach should also help in the analogous classification when the generic fiber is superelliptic. Moreover, following \cite{Ascher2018}, we also expect there to be explicit descriptions of the compactified moduli spaces of these different kinds of fibrations.

    \ 

    \ 

    The set up is as follows. We fix an algebraically closed field $K$. Let $X \to B$ be a proper morphism between a smooth surface $X$ and a smooth curve $B$, everything over $K$. Assume that the geometric generic fiber is a smooth curve of genus $g \geq 2$, and that $X \to B$ is minimal, in the sense that no fibers contain $(-1)$-curves. Then this fibration induces a rational map $B \dashrightarrow \ov\M_g$ defined at the very least on the locus $B^{sm}$ of $B$ where the geometric fibers are smooth. Since $B$ is smooth, we have the diagram
    \[\begin{tikzcd}
        & \ov\M_g \ar[d] \\
        B \ar[ur,dashed] \ar[r] & \ov{M}_g,
    \end{tikzcd}\]
    and assuming that $\ov\M_g \to \ov M_g$ is tame along the image of the indeterminacy of $B$ (a condition that we later partially relax), \cite[Theorem 3.1]{Bresciani2023} tells us there is a root stack $\mathcal{B} \to B$ and a representable lift $\mathcal{B} \to \ov \M_g$ such that the diagram
    \[\begin{tikzcd}
        \mathcal{B} \ar[d] \ar[r] & \ov\M_g \ar[d] \\
        B \ar[ur,dashed] \ar[r] & \ov{M}_g,
    \end{tikzcd}\]
    is 2-commutative. Pulling back the universal family $\mathcal{U} \to \ov\M_g$ we obtain a fibration on stable curves $\mathcal{X} \to \mathcal{B}$. Let $\mathcal{X} \to X^{tw}$ be its coarse moduli space, which must induce another fibration $X^{tw} \to B$ that is flat, for instance, by miracle flatness or by \cite[Lemma 3.2]{Abramovich2000}. As we will see later, the surface $X^{tw}$ has at worst cyclic quotient singularities, but must be isomorphic to $X$ above $B^{sm}$. We call $X^{tw}$ the twisted model of $X$.

    Since $X$ is minimal smooth, we must have a commutative diagram
    \[\begin{tikzcd}
        & \ti X \ar[dr] \ar[dl]\\
        X^{tw} \ar[dr] & & X \ar[dl]\\
        & B
    \end{tikzcd}\]
    where $\ti X \to X^{tw}$ is a resolution of singularities and $\ti X \to X$ is a contraction of $(-1)$-curves. As a consequence of \Cref{equivalence-of-principal-under-stability}, it turns out that $\ti X$ is the minimal normal crossings model of $X$, that is, for any $b \in B$, $X_{b,red}$ is normal crossings, and any $(-1)$-curve in $X_b$ intersects other components of $X_b$ in at least three points.

    With this framework, we are able to translate the classification of ``tame'' minimal or minimal normal crossing fibrations $X \to B$ to the classification of representable maps from tame smooth stacky curves $\mathcal{B}$ to $\ov \M_g$. The translation from the minimal and minimal normal crossings models is straightforward, but the latter is much simpler to deal with in practice. This approach is, in principle, the same moduli-oriented one that \cite{Namikawa1973} used to justify the classification of genus 2 singular fibers. Here we are able to translate it to the language of stacks, and show that it works in every genus.

    As for the local picture, let us explain the theoretical background from which an algorithm can be constructed. We still assume we are in the tame case as before, but now set $B$ to be the disc $\Spec K[[t]]$. This way, $\mathcal{B} = [\Spec K[[t]]/\mu_n]$ for some $n$ coprime to $\chr K$, and where the action $\mu_n \curvearrowright \Spec K[[t]]$ is given by $\zeta_n \colon t \mapsto \zeta_n t$. Consider the diagram
    \[\begin{tikzcd}
        X^{st} \ar[d] \ar[r] & \mathcal{X} \ar[d] \ar[r] & \mathcal{U} \ar[d]\\
        B \ar[r,"{\pi'}"] \ar[dr,"\pi",swap] & {[B/\mu_n]} \ar[d] \ar[r] & \ov\M_g \ar[d] \\
        & B \ar[r] & \ov{M}_g
    \end{tikzcd}\]
    where $\pi$ and $\pi'$ are the categorical and stacky quotients of $B$ respectively. Here $X^{st} \to B$ is a fibration on stable curves which carries an equivariant $\mu_n$-action, whose quotient must be $X^{tw} \to B$. Let $C \subseteq X^{st}$ be the stable fiber over $0$, and $f \in \Aut(C)$ be the induced automorphism from the generator $\zeta_n$ of the $\mu_n$-action. If $p \in C$ is a smooth point which is fixed by a power $f^d$ of $f$, then $f^d$ acts on $T_pC$ as multiplication by some root of unity $\zeta_{n/d}^a$. Since we have an equivariant decomposition of the tangent space at $p$,
    \[T_pX^{st} \cong T_0B \oplus T_pC,\]
    we deduce that the corresponding point in the quotient $X^{tw}$ must be a cyclic quotient singularity $\frac{1}{n/d}(1,a)$. Instead, if $p \in C$ is a node, then $p \in X^{st}$ is not necessarily a smooth point, as it can be a singularity of type $A_{k-1}$ for some $k \geq 1$. Still, $f^d$ will act on the two dimensional tangent space $T_pC$ as some matrix
    \[\begin{pmatrix}
        \zeta_{n/d}^a & 0\\ 0 & \zeta_{n/d}^b
    \end{pmatrix}
    \qquad \text{ or } \qquad
    \begin{pmatrix}
        0 & \zeta_{n/d}^a \\ \zeta_{n/d}^b & 0
    \end{pmatrix}
    \]
    Similarly to when $p \in C$ is smooth, we see in \Cref{section:cyclic-quotients-of-Ak} that this matrix together with the data of the singularity type $A_{k-1}$ is enough to describe the image of $p$ in $X^{tw}$.

    This argument shows that the classification of (tame) singular fibers in surfaces fibered on curves of genus $g \geq 2$, reduces to the classification of cyclic actions $\mu_n \curvearrowright C$ for all stable curves $C$ of genus $g$.

    In \Cref{section:glueing-building-blocks} we explain how, at least combinatorially, arbitrary singular fibers can be obtained by glueing ``building blocks'', similarly to how stable curves are obtained by glueing smooth curves.

    In \Cref{section:existence-of-fibration-with-action} we prove a fundamental result to complete the classification. Namely, that given any stable curve $C$, any automorphism $f \in \Aut(C)$ of order coprime to $\chr K$, and any suitable choice of data $A_{k-1}$ at the nodes of $C$, there is a fibration on stable curves $X^{st} \to B$ with central fiber $C$, having the given singularity types at the nodes, and which admits an equivariant $\mu_n$-action whose restriction to the central fiber $C$ is $f$.

    In \Cref{section:classification-of-all-fibers} we draw the parallel with earlier work in \cite{Dok25} and using their combinatorial analysis together with our approach, we provide in \Cref{existence-of-geometrically-connected-fiber} another proof of \cite[Corollary 4.3]{Winters1974}. We also show in \Cref{semi-tame-valuative-criterion} a slight strengthening of \cite[Theorem 3.1]{Bresciani2023} showing that the valuative criterion of properness can be witnessed by a root stack even if the target stack is not tame, as long as the source admits a tamely ramified cover with a lift.

    Let us list two main theorems in this paper. We are purposefully vague as to not introduce all the necessary notation here, but to keep in mind, ``building blocks'' are the simplest kind of singular fibers, obtained as equivariant quotients of $X^{st} \to B$ where the central fibers happen to be smooth.

    \begin{theorem}[See \Cref{global-glueing-algorithm}]
        Let $\ti X_0 \subseteq \ti X$ be the singular fiber over $0$ of a generically smooth fibration of curves of genus $g$. Then, combinatorially, $\ti X_0$ is obtained from its building blocks (\Cref{big-definitions}) by glueing them together along marked chains (\Cref{non-swap-local-glueing-algorithm,swap-local-glueing-algorithm}).
    \end{theorem}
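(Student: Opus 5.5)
We work in the tame setting of the introduction and localize at $0$, so $B=\Spec K[[t]]$, $\mathcal{B}=[B/\mu_n]$. We then have the stable model $X^{st}\to B$ with its equivariant $\mu_n$-action, central fiber $C$ and automorphism $f\in\Aut(C)$. The twisted model is $X^{tw}=X^{st}/\mu_n$, and $\ti X\to X^{tw}$ is the minimal resolution. By \Cref{equivalence-of-principal-under-stability}, this $\ti X$ is the minimal normal crossings model. Since the statement is combinatorial, we need to describe the dual graph of $\ti X_0$, with its multiplicities and self-intersections, in terms of pieces attached to the components of $C$ and to its nodes.

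\textbf{Step 1: decompose $C$.} Let $C=\bigcup C_i$ be the decomposition into irreducible components, and let $\nu\colon \ti C=\bigsqcup \ti C_i\to C$ be the normalization. The automorphism $f$ permutes the components. Choose representatives of the $\langle f\rangle$-orbits of components, and let $d_i$ be the length of the orbit of $C_i$. Then $f^{d_i}$ preserves $C_i$ and lifts to an automorphism of $\ti C_i$. The preimages of the nodes on $\ti C_i$ become extra marked points; they are permuted by $f^{d_i}$.

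\textbf{Step 2: identify the building blocks.} For each orbit representative $C_i$, build a local model, using \Cref{section:existence-of-fibration-with-action}: a smooth family over a disc with central fiber $\ti C_i$ and a $\mu_{n/d_i}$-action restricting to $f^{d_i}$. By \Cref{big-definitions}, its equivariant quotient, resolved, is the building block $\mathcal{B}_i$. Its principal component is $\ti C_i/\langle f^{d_i}\rangle$. Its chains are the resolutions of the cyclic quotient singularities $\frac{1}{m}(1,a)$ computed from the tangent action at the fixed points. The point to check is that, away from the nodes of $C$, the map $X^{st}\to X^{tw}$ is étale-locally isomorphic to this model. This holds because near a smooth point $p$ the equivariant splitting $T_pX^{st}\cong T_0B\oplus T_pC$ depends only on the stabilizer $f^{d}$ and its eigenvalue $\zeta^a$. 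The orbit of $C_i$ under $\mu_n$ collapses to a single component of $X^{tw}_0$, so the induced-representation argument lets us replace $\mu_n$ by the stabilizer $\mu_{n/d_i}$. The marked points of $\mathcal{B}_i$ are the images of the node preimages, each with its chain, and these are the marked chains.

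\textbf{Step 3: glue along the nodes.} Let $p$ be a node of $C$ with stabilizer $f^d$. By \Cref{section:cyclic-quotients-of-Ak}, the image of $p$ in $X^{tw}$ is determined by three data: the type $A_{k-1}$ of $X^{st}$ at $p$, the diagonal or antidiagonal matrix of $f^d$, and the branches through $p$. Its resolution is a Hirzebruch--Jung chain $\Gamma_p$. Splitting into the non-swap and swap cases, we compare $\Gamma_p$ with the two marked chains of the blocks on either side:
\begin{itemize}
\item \emph{Non-swap case.} The resolution of the quotient of $xy=t^k$ should coincide with the chain obtained by connecting the two marked chains in the way prescribed in \Cref{non-swap-local-glueing-algorithm}.
\item \emph{Swap case.} Both branches lie in a single orbit, so only one marked chain appears. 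The resolution should be the one prescribed in \Cref{swap-local-glueing-algorithm}, with the extra fork or termination that comes from the involution.
\end{itemize}
Since resolution is local, the resolution of $X^{tw}$ is obtained by performing these local identifications at every node orbit. This yields $\ti X_0$ combinatorially, as in \Cref{global-glueing-algorithm}.

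\textbf{Main obstacle.} The main obstacle is Step 3: checking that the continued-fraction data of the resolution of $A_{k-1}/\mu$ agrees exactly with the gluing of the two block chains. For this we would compute explicitly. On $xy=t^k$ with $\zeta\colon(x,y,t)\mapsto(\zeta^a x,\zeta^b y,\zeta t)$, we write $A_{k-1}$ itself as the quotient $\A^2/\mu_k$. Then $A_{k-1}/\mu$ is a cyclic quotient $\A^2/G$ with $G$ an extension of $\mu_m$ by $\mu_k$. This $G$ is abelian and, in the tame case, cyclic after suitable identifications, so the quotient is $\frac1N(1,q)$. We would then compare $N/q$ with the combined continued fraction of the two marked chains joined through $k$ steps. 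The swap case should reduce to the non-swap case by first passing to the index-two subgroup, then taking the quotient by the residual involution.
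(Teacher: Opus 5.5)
Your local plan at the nodes follows the paper's route: write $A_{k-1}=\A^2/\mu_k$, lift the action to a cyclic group, and treat the swap case separately. The gap is in the global step. You argue ``since resolution is local, \ldots\ this yields $\ti X_0$ combinatorially,'' but the combinatorial data include the self-intersections of the principal components, and these are not inherited from the blocks.

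\emph{Why the local comparison does not suffice.} In a block, $\ov C_i^\nu$ is a whole connected component of the fibre of $X^{\nu,tw}$, so $(\ov C_i^\nu)^2=0$. In $X^{tw}$, by contrast, you need the global relation $C_i\cdot C=0$ together with the projection formula for $\tau$. This gives $\ov C_i^2=-\sum_p d_i/(n_{i,p}k_p)$, summed over type 2 points, with the paper's $d_i=n/m_i$. Only after adding the local contributions at each node does this global term cancel. What remains is
$\ti C_i^2=(\ti C_i^\nu)^2-\sum(\delta_{i,p}+\e_{i,p})-\sum 2\e_{i,p}-\sum\e_{i,p}$, as in \Cref{global-glueing-algorithm}.

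The correction terms are not visible from a comparison of continued fractions:
\begin{itemize}
\item The $\delta_{i,p}$ appear because for $j=-1$ the quotient $\frac{1}{nk/(d_1d_2)}\bigl(1,\frac{q_1k-d_1}{d_2}\bigr)$ need not be in standard form (\Cref{deficit-proposition}).
\item The $\e_{i,p}$ come from node preimages that are free orbits. There the block has no chain at all, so your phrase ``each with its chain'' silently fails. The paper handles this with the blow-up convention $\frac{1}{1}=[1]$.
\end{itemize}
Your proposal never computes any of this.

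\emph{Multiplicities.} Step 3 compares only continued fractions, but the multiplicities of the glued chains also have to be derived. The paper takes the first one from \Cref{chain-multiplicities}, which is valid only when the singularity is in standard form, i.e.\ $\delta=0$. It then propagates by the recurrence and rescales by $h_p$.

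Your Step 2 replacement of $\mu_n$ by the stabilizer of $C_i$ also needs care. It is an isomorphism of surfaces but not over $D/\mu_n$: the map to the base picks up a totally ramified factor of degree equal to the orbit length (your $d_i$, the paper's $\gamma_i$). This multiplies all fibre multiplicities by $\gamma_i$, which is the $[c]\ti X_0$ construction in \Cref{big-definitions}, and you do not track it.

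A cheaper fix for the self-intersections is to use $\ti X_0\cdot\ti C_i=0$ once every multiplicity is known. That still requires the multiplicity computation you skipped.

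Finally, the junction of the two chains is governed by $j$, where $k=a+b+jn$, not by ``$k$ steps.'' For $j=-1$ the two marked chains partially cancel through the $[1,1]$ contraction rather than concatenating. Your ``combined continued fraction'' comparison therefore needs that case split.
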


    \begin{theorem}[See \Cref{arbitrary-glueing-exists}]
        Let $\ti Y_0 = \bigsqcup \ti Y_{0,i}$ be a collection of building blocks, and let $\{(p_\alpha,p'_\alpha)\}_{\alpha \in \mathcal{J}}$ be a set of (disjoint) pairs of points of $\ti Y_0$. Then, assuming that the pairs $(p_\alpha,p'_\alpha)$ satisfy certain compatibility conditions, there exists a fibered surface $\ti X \to B$ such that its central fiber $\ti X_0$ is obtained, combinatorially, by glueing $\ti Y_0$ along the chains corresponding to $p_\alpha,p'_\alpha$ as in the previous theorem.
    \end{theorem}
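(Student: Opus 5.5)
The plan is to reduce the existence question to an equivariant smoothing problem for a nodal curve with a cyclic action, and then to invoke the existence result of \Cref{section:existence-of-fibration-with-action}.

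\emph{Step 1: lift the building blocks.} By definition each building block $\ti Y_{0,i}$ is the resolved quotient of a family $Y^{st}_i \to B$ whose central fiber $C_i$ is a smooth curve with a $\mu_{n_i}$-action $f_i$. The marked points $p_\alpha$ are images of orbits of $f_i$ on $C_i$, so each determines a stabilizer order and a rotation character $\zeta^{a}$ on the tangent line. I would first pass to a common $n=\operatorname{lcm}(n_i)$, interpreting the requirement that $\ti Y_0$ be cut out by $\mu_n$ on a disjoint union as an induced action. Concretely, each component becomes the $\mu_n$-orbit $\mu_n \times_{\mu_{n_i}} C_i$ of curves, and $f$ permutes the copies cyclically, composed with $f_i$.

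\emph{Step 2: glue the orbits into a stable curve.} For each pair $(p_\alpha,p'_\alpha)$ I glue the corresponding $f$-orbits pointwise and equivariantly to get a nodal curve $C$ with an automorphism $f$ of order $n$. This is where the compatibility conditions enter. The orbits must have the same length. At a non-swapped node the two tangent characters must combine, together with a chosen integer $k$, into an action on the local model $xy=t^k$ that is compatible with $t\mapsto \zeta_n t$; this forces $\zeta^{a}\zeta^{b}=\zeta_n^{k}$ along the stabilizer. For swapped nodes the analogous condition involves $f^{2d}$. I will take these conditions to be exactly the hypotheses of the theorem, so this step amounts to checking that the local data defines a legitimate choice of $A_{k-1}$ data at each node. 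One must also check that $C$ is stable of genus $g$, or else stabilize it. Components of genus zero with fewer than three special points should not occur, since building blocks are quotients of stable families.

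\emph{Step 3: apply the existence theorem.} I apply the result of \Cref{section:existence-of-fibration-with-action} to $(C,f,\{A_{k-1}\})$. This produces $X^{st}\to B$ with central fiber $C$, the prescribed $A_{k-1}$ singularities at the nodes, and a $\mu_n$-action restricting to $f$. I then take $X^{tw}=X^{st}/\mu_n$ and its minimal resolution $\ti X$. Away from the nodes, the local analysis given by the decomposition $T_pX^{st}\cong T_0B\oplus T_pC$ shows that the quotient agrees with the building blocks. At the nodes, \Cref{section:cyclic-quotients-of-Ak} shows that the resolution of the quotient of $A_{k-1}$ is precisely the chain attached by \Cref{non-swap-local-glueing-algorithm,swap-local-glueing-algorithm}. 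Hence $\ti X_0$ is the prescribed glueing.

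The main obstacle is Step 2. The difficulty is matching the abstract compatibility conditions with an honest equivariant local model at each node, especially in the swap case, where $f$ exchanges the branches and the action on $xy=t^k$ has the antidiagonal form. My first attempt would be to write the action explicitly as $x\mapsto \zeta^{a}y$, $y\mapsto \zeta^{b}x$. I would then verify that the admissible pairs $(a,b,k)$ are exactly those produced by the local glueing algorithms, by comparing with the Hirzebruch--Jung continued fractions computed in \Cref{section:cyclic-quotients-of-Ak}.
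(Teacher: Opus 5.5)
Your architecture is the paper's. You glue the orbits over $p_\alpha,p'_\alpha$ equivariantly into $(C,f)$, attach an integer $k_\alpha$ to each orbit of nodes, and apply \Cref{weak-realization-theorem}. You then quotient, resolve, and read off the fiber with \Cref{non-swap-local-glueing-algorithm,swap-local-glueing-algorithm,global-glueing-algorithm}.

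Step~1 as written breaks this, in two ways.
\begin{itemize}
\item Replacing $C_i$ by the induced curve $\mu_n\times_{\mu_{n_i}}C_i$ does not reproduce $\ti Y_{0,i}$. This curve is $n/n_i$ copies of $C_i$ permuted cyclically. Over $D/\mu_n$ its quotient is $[n/n_i]\ti Y_{0,i}$ in the notation of \Cref{big-definitions}, so every multiplicity is multiplied by $n/n_i$.
\item It also destroys the matching of orbits. The orbit over $p_\alpha$ now has $h_{p_\alpha}\,n/n_{i_\alpha}$ points and the one over $p'_\alpha$ has $h_{p'_\alpha}\,n/n_{i'_\alpha}$ points. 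So as soon as the two blocks have different multiplicities, the hypothesis $h_{p_\alpha}=h_{p'_\alpha}$ no longer lets you glue pointwise and equivariantly. For example, blocks of multiplicity $2$ and $3$ glued at points with $h=1$ would, after inducing to $n=6$, require gluing an orbit of $3$ points to one of $2$.
\end{itemize}
The fix, which is what the paper does, is not to induce at all. Put $f|_{C_i^\nu}=f_i$ on $C^\nu=\bigsqcup C_i^\nu$, and let $\zeta_n$ act on $C_i^\nu$ through the quotient $\mu_n\to\mu_{n_i}$. A non-faithful action on the fiber is allowed. The kernel $\mu_{n/n_i}$ acts only on the base and records the ramification index $d_i=n/n_i$ of \Cref{numbers}. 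With this choice, $(C_i^\nu\times D)/\mu_n=(C_i^\nu\times D/\mu_{n/n_i})/\mu_{n_i}$ is exactly $\ti Y_{0,i}$.

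The stability check should be dropped, not enforced. Building blocks are quotients of $C\times D$ for an arbitrary smooth $C$, including $\P^1$ with a cyclic automorphism, not of stable families. So the glued $C$ can have rational components with only one or two nodes. Stabilizing would contract exactly the components that carry the prescribed blocks. Nothing requires stability: \Cref{weak-realization-theorem} is stated for arbitrary, possibly disconnected, nodal curves, and its proof rigidifies with an $f$-invariant divisor away from the nodes.

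With these corrections, Step~2 is not the obstacle you anticipate. Set $k_\alpha=d_{i_\alpha}a_{p_\alpha}+d_{i'_\alpha}a_{p'_\alpha}+j_\alpha n_{p_\alpha}$, and analogously in the swap case. Then condition (3) of \Cref{weak-realization-theorem} holds automatically by \Cref{nodal-ext}, and the lower bounds on $j_\alpha$ are precisely what guarantee $k_\alpha\geq 1$. The continued fractions enter only afterwards, when you identify $\ti X_0$.
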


    In other words, this framework gives another justification and point of view of the algorithm for computing singular fibers \cite[Algorithm 10.10]{Dok25}. Broadly speaking, we classify all ``building blocks of genus $\leq g$'' [Steps 1 and 2 in \cite{Dok25}]. Then we use \Cref{global-glueing-algorithm} to connect, or ``glue'' these building blocks in every possible way [Steps 3 to 8 in \cite{Dok25}]. Each of these combinatorial glueings is realized by a singular fiber due to \Cref{arbitrary-glueing-exists}.

    This approach has other benefits. For instance, the finiteness of the families of possible singular fibers in a given genus follows directly from the fact that the inertia stack $\mathcal{I}_{\ov \M_g}$ is of finite type. Another benefit is that we can show geometrically when a combinatorial singular fiber exists as a fiber in a fibration with connected fibers, which we understand as a strengthening of \cite[Corollary 4.3]{Winters1974}.

    \begin{theorem}[= \Cref{existence-of-geometrically-connected-fiber}]
        Let $d \geq 1$ be an integer and let $(\Gamma,\{m_i\})$ be a combinatorial singular fiber of Euler characteristic $2d(1-g)$ for $g \geq 2$ (here, $\Gamma = \bigcup \Gamma_i$ is a reduced nodal curve and $m_i$ plays the role of the multiplicity of $\Gamma_i$). Assume that any curve $\Gamma_i$ with $m_i$ divisible by $\chr K$ is isomorphic to $\P^1$ and intersects the rest of $\Gamma$ in at most two points. Then the following are equivalent.
        \begin{enumerate}
            \item There exists a proper morphism $\ti \pi \colon \ti X \to B$ with smooth geometric generic fibers, where $\ti X$ is a smooth surface, $0 \in B$ is a smooth point in a curve, and $\ti X_{0,red} \cong \Gamma$ where the multiplicity of $\Gamma_i$ in $\ti X_{0}$ is $m_i$, and $\dim H^0(\ti X_0,\O_{\ti X_0}) = d$.
            \item At least one of the following hold
            \begin{itemize}
                \item $g(\Gamma) \geq 1$ and $d$ divides $\gcd(\{m_i\})$.
                \item $\gcd(\{m_i\}) = d$.
            \end{itemize}
        \end{enumerate}
    \end{theorem}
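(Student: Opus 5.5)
The plan is to reduce everything to the connected-fiber case via the Stein factorization, and then use the glueing results (\Cref{global-glueing-algorithm}, \Cref{arbitrary-glueing-exists}) for existence. Throughout, I take ``combinatorial singular fiber'' in the sense of \Cref{section:classification-of-all-fibers}: the dual graph is connected, $\Gamma_i\cdot\sum_j m_j\Gamma_j=0$, and adjunction holds with total Euler characteristic $2d(1-g)$.

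\textbf{(1)$\Rightarrow$(2).} Replace $B$ by $\Spec \O_{B,0}^{\wedge}=\Spec K[[t]]$ and take the Stein factorization $\ti X\to B'\to B$. Since $\ti X_0$ is connected, $B'$ is local, so $B'=\Spec K[[s]]$. Cohomology and base change gives
\[H^0(\ti X_0,\O_{\ti X_0})=\O_{B'}/t\O_{B'},\]
so $t=us^d$ with $u$ a unit. Hence $\ti X_0=d\,F$, where $F$ is the fiber of $\ti X\to B'$ over $s=0$. This already gives $d\mid\gcd(\{m_i\})$, and $F$ is a fiber of a fibration with connected fibers of genus $g$ (the Euler characteristic count is consistent).

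It remains to show that if $g(\Gamma)=0$, then $e:=\gcd(\{m_i\})/d=1$. Suppose $e>1$, so $F=eF'$ is a multiple fiber. The standard argument (Raynaud) shows that $\O_{F'}(F')$ is a torsion element of $\operatorname{Pic}(F')$ of exact order $e$. When $\Gamma$ is a tree of $\P^1$'s, $\operatorname{Pic}^0(F')$ is unipotent, an iterated extension of $\G_a$'s. So a nontrivial torsion element can only have $p$-power order, which forces $\chr K\mid e$. But then the components of multiplicity divisible by $\chr K$ are, by hypothesis, $\P^1$'s meeting the rest of $\Gamma$ in at most two points. I expect this to contradict the tameness of the multiple fiber. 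Concretely, I would argue that $e$ is the order of the stacky point of $\mathcal{B}$ produced by \Cref{semi-tame-valuative-criterion}, which is coprime to $\chr K$. This is the delicate point of this direction.

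\textbf{(2)$\Rightarrow$(1).} Set $m_i'=m_i/d$. The key reduction is base change along the totally ramified cover $B'=\Spec K[[s]]\to B$ given by $t=s^d$, viewed as composing $\ti X\to B'$ with $B'\to B$. This leaves $\ti X$ smooth, multiplies all multiplicities by $d$, and makes $H^0(\ti X_0,\O)=K[s]/s^d$ of dimension $d$. So it suffices to build $\ti X\to B'$ with connected fibers and central fiber $(\Gamma,\{m'_i\})$, a combinatorial fiber of Euler characteristic $2(1-g)$.

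\emph{Case $\gcd(m'_i)=1$.} Here I run \Cref{global-glueing-algorithm} backwards. Decompose $\Gamma$ into building blocks and marked chains, following Dokchitser's combinatorics, whose validity for this combinatorial fiber is assumed. Then \Cref{arbitrary-glueing-exists}, together with the existence result of \Cref{section:existence-of-fibration-with-action}, produces the fibration. The hypothesis on components of multiplicity divisible by $\chr K$ is what guarantees that these components come only from the resolution chains of the $A_{k-1}$ quotients, and not from building blocks. So all stabilizers involved are tame.

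\emph{Case $e=\gcd(m'_i)>1$ with $g(\Gamma)\geq 1$.} This is the main obstacle. I would first realize the reduced-multiplicity fiber $F'=(\Gamma,\{m'_i/e\})$ over a disc as above, obtaining $Y\to D$. Then I would take a cyclic étale $\mu_e$-cover of a neighborhood of $F'$, which exists because $g(\Gamma)\geq1$ gives $H^1(\Gamma,\mu_e)\neq0$ (choosing $e$-torsion in $\operatorname{Pic}^0$ of an elliptic or higher genus component, or a loop of the dual graph). Finally I would form the quotient of the pulled-back family over $s\mapsto s^e$ by the diagonal free $\mu_e$-action. The resulting central fiber is $eF'$, the total space stays smooth since the action is free, and the genus stays $g$ after fixing the Euler characteristic bookkeeping.

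The hard parts are, first, checking that the cover can be chosen so that the diagonal action is free and fiberwise compatible. Second, in characteristic $p$ with $p\mid e$, one may need to replace the $\mu_e$-cover by a $\Z/p$-torsor (using the hypothesis on such components) or argue via the stacky base. I would try the $\mu_e$-construction first and treat the wild part separately only if needed.
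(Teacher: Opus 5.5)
Your Stein-factorization reduction to $d=1$ and your $\gcd(m_i')=1$ case follow the paper. The case $e=\gcd(m_i')>1$, $g(\Gamma)\geq 1$ of (2)$\Rightarrow$(1), however, has a genuine gap: the surface you produce is not smooth. Pulling back (an étale cover of) $Y$ along $s\mapsto s^e$ is a base change of a family whose central fiber $F'$ is non-reduced or nodal, so the pulled-back family is singular along that fiber. Near a general point of a component of multiplicity $n>1$ it is locally $s^e=ux^n$. Near a point where components of multiplicities $a,b$ meet it is $s^e=ux^ay^b$, which is already an $A_{e-1}$ singularity when $a=b=1$. A free $\mu_e$-action carries these singularities down to the quotient rather than removing them. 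Resolving them inserts new curves into the central fiber, so you lose control of $(\Gamma,\{m_i\})$. This logarithmic-transform construction only works when $F'$ is a smooth reduced curve, as for ${}_eI_0$.

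The missing idea is to do the twisting on the equivariant stable model, where \Cref{weak-realization-theorem} and the glueing algorithms control everything. The paper first contracts $(-1)$-curves meeting fewer than three other components. It then builds $(C,f,\{j_p\})$ from Dokchitser's decomposition, applying \Cref{existence-cyclic-galois-cover} on each principal component and glueing compatibly. When $\gcd(m_i)>1$ this $C$ may be disconnected. The hypothesis $g(\Gamma)\geq 1$ is exactly what lets you replace it by a connected curve with the same local data, in one of two ways. Either take a connected cyclic cover over a principal component of positive genus (last clause of \Cref{existence-cyclic-galois-cover}), or, if all components are rational, reglue the $h_p$ nodes over one point of a loop of the dual graph with a cyclic shift. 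The $h_p$ and the tangent actions are unchanged, so \Cref{weak-realization-theorem} followed by quotient and minimal resolution realizes $(\Gamma,\{m_i\})$ with connected general fiber. The multiplicity $e$ appears automatically, because every stabilizer $\langle f^{h_p}\rangle$ lies in the index-$e$ subgroup $\langle f^{e}\rangle$. In your $\gcd=1$ case you should likewise say why $C$ is connected: its number of components divides every $h_p$, hence divides $\gcd(h_p)=\gcd(m_i)=1$ by \Cref{gcd-equals-hp}.

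Your positive-characteristic worries in both directions vanish after one observation. Suppose $p=\chr K$ divided $e$. Then $p\mid m_i$ for all $i$, so by hypothesis every $\Gamma_i$ is a $\P^1$ meeting the rest of $\Gamma$ in $r_i\leq 2$ points. Hence $\chi=\sum_i m_i(2-r_i)\geq 0$, contradicting $\chi=2d(1-g)<0$. Thus $\gcd(m_i)$ is prime to $p$, and several things follow. No $\Z/p$-torsors are needed. There are no wild fibers, which your cohomology-and-base-change identification of $H^0(\ti X_0,\O_{\ti X_0})$ tacitly requires. The order of $\O_{F'}(F')$ is exactly $e$; in general one only knows that this order divides $e$ with $p$-power quotient, so your claim fails for wild fibers. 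With this observation in place, your (1)$\Rightarrow$(2) argument works: the kernel of $\operatorname{Pic}(F')\to\operatorname{Pic}(\Gamma)$ is unipotent when $\Gamma$ is a tree of $\P^1$'s, so it has no nontrivial torsion of order prime to $p$. This is a correct and more classical alternative to the paper's route, which goes through tame stable reduction, \Cref{semi-tame-valuative-criterion}, \Cref{gcd-equals-hp} and \Cref{components-equals-gcd}.
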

    
    Throughout this paper we fix an algebraically closed field $K$ and a system of compatible roots of unity. By the latter we mean a sequence $\{\zeta_n\}$ defined for all $n$ coprime to $\chr K$, where $\zeta_n$ is a primitive $n$-th root of unity satisfying $\zeta_{nk}^k = \zeta_n$.

    \subsection*{Acknowledgments} I would like to thank Dori Bejleri for his advice throughout this project. I would also like to thank Baidehi Chattopadhyay, Myeong-Jae Jeon, Giancarlo Urzúa and Nicolás Vilches for helpful discussions. The research was partially supported by NSF grant DMS-2401483 and the Simons Dissertation Fellowship from the Simons Foundation.

    \subsection*{Tool and Computational Resource Disclosure} No artificial intelligence or computational resources were used in the writing of this paper. 

    \section{Continued Fractions and Cyclic Quotient Singularities}

    For $0 \leq q \leq m$, which we do not assume coprime, we define the Hirzebruch-Jung continued fraction expansion of $\frac{m}{q}$ as
    \[\frac{m}{q} = e_1 - \dfrac{1}{e_2 - \dfrac{1}{\ddots - \dfrac{1}{e_\ell}}} = [e_1,\ldots,e_\ell].\]
    Here, if $q = 0$, we use the convention $\frac{m}{q} = \infty$, and its continued fraction expansion $[ ~ ]$ is the empty one. If $q = m$, we let $\frac{m}{q} = [1]$. Otherwise, we impose that $e_i \geq 2$ for all $i$. In this way, the expansion is unique, and to each expansion there corresponds a unique fraction.

    For a fraction $\frac{m}{q}$, we say its that its dual fraction is $\frac{m}{m-q}$. Similarly, if $[e_1,\ldots,e_\ell]$ is the continued fraction expansion of $\frac{m}{q}$, we say its dual is the continued fraction expansion of $\frac{m}{m-q}$. Note that $[ ~ ]$ and $[1]$ are dual to each other.

    When $0 < q < m$, or when $m = 1$ and $q = 0$ we call this fraction and its expansion \textit{standard}.

    \begin{definition}
        We say that the continued fraction expansion $[e_1,\ldots,e_\ell]$ is a prefix of $[f_1,\ldots,f_{\ell'}]$ if $\ell' \geq \ell$ and $e_i = f_i$ for all $i=1,\ldots,\ell$. It is a strict prefix if furthermore $\ell' > \ell$.

        We say that $[e_1,\ldots,e_\ell]$ is lexicographically smaller than $[f_1,\ldots,f_{\ell'}]$ if it is either a strict prefix, or if when $i \leq \min(\ell,\ell')$ is the first index for which $e_i$ and $f_i$ disagree, then $e_i < f_i$.
    \end{definition}

    \begin{remark} \label{prefix-property}
        The fraction $\frac{m}{q}$ is smaller than $\frac{m'}{q'}$ if and only if either the sequence associated to $\frac{m'}{q'}$ is a strict prefix of the sequence for $\frac{m}{q}$, or if the sequence for $\frac{m'}{q'}$ is lexicographically greater than that of $\frac{m}{q}$. For example
        \begin{align*}
            \frac{3}{2} = [2,2] &< [2] = \frac{2}{1}; & \frac{4}{3} = [2,2,2] &< [2,3,2] = \frac{8}{5}; & \frac{m}{q} = [e_1,\ldots,e_\ell] &< [~] = \infty.
        \end{align*}
    \end{remark}

    Fix $n,a,b \in \Z$, and let $\mu_n := \Spec K[z]/(z^n-1)$ be the group scheme of $n$-th roots of unity, which we do not assume reduced. We let $\mu_n$ act on $\A^2 = \Spec K[x,y]$ via the co-action
    \begin{align*}
        K[x,y] &\to K[z]/(z^n - 1) \otimes K[x,y]\\
        x &\mapsto z^a \otimes x\\
        y &\mapsto z^b \otimes y.
    \end{align*}
    The quotient $\A^2/\mu_n$ will in general be singular, and any surface singularity analytically isomorphic to the germ $(0 \in \A^2/\mu_n)$ will be denoted by $\frac{1}{n}(a,b)$. Note that
    \[\A^2/\mu_n = \Spec K[x^uy^v \mid au + bv \equiv 0 \mod n],\]
    and that if $n$ is coprime to the characteristic of $K = \ov K$, then this action is the usual one generated by
    \[\zeta_n \colon (x,y) \mapsto (\zeta_n^a x, \zeta_n^b y).\]

    When talking about a cyclic quotient singularity, we will almost always keep track of two curves through it: the ones corresponding under the analytic isomorphism to the image of the coordinate axes in $\A^2/\mu_n$. This way, if we write $\frac{1}{n}(a,b)$, then $a$ (resp. $b$) will tell us how $\mu_n$ acts along the $x$ axis (resp. $y$ axis).

    Here are some basic properties we will be using throughout this paper.
    \begin{lemma}\label{cyclic-quotients-basic-properties}\
        \begin{enumerate}
            \item $\frac{1}{n}(a,b)$ and $\frac{1}{n}(b,a)$ are equal as singularities, but the order in which we keep track the two special curves through this point is swapped.
            \item For every $d > 1$ the singularities $\frac{1}{dn}(da,db)$ and $\frac{1}{n}(a,b)$ are equal.
            \item If $\gcd(d,b) = 1$, then $\frac{1}{dn}(da,b)$ and $\frac{1}{n}(a,b)$ are equal. Similarly on the right.
            \item If $r$ is coprime to $n$, then $\frac{1}{n}(a,b) = \frac{1}{n}(ra,rb)$.
            \item Using these facts, we can assume, if needed, that a non-trivial cyclic quotient singularity is in the standard form $\frac{1}{m}(1,q)$ where $0 \leq q < m$ are coprime.
        \end{enumerate}
    \end{lemma}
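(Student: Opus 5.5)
The plan is to compute everything from the explicit description of the invariant ring
\[\A^2/\mu_n = \Spec K[x^uy^v \mid au + bv \equiv 0 \mod n],\]
which holds for any $\mu_n$, reduced or not. Each item will be checked by exhibiting an isomorphism of invariant rings that sends the image of the $x$-axis to the image of the $x$-axis (or swaps the axes, in item (1)). For (1), the automorphism $x \leftrightarrow y$ of $K[x,y]$ interchanges the co-actions of weights $(a,b)$ and $(b,a)$. It therefore identifies the two invariant subrings while interchanging the two distinguished curves, which is exactly the claim. For (4), the condition $au+bv \equiv 0 \bmod n$ is equivalent to $rau + rbv \equiv 0 \bmod n$ when $r$ is a unit mod $n$. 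So the two invariant rings are literally the same subring of $K[x,y]$, with the same axes.

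For (2), the condition $dau + dbv \equiv 0 \bmod dn$ is equivalent to $au+bv \equiv 0 \bmod n$. Again the invariant subrings coincide.

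Item (3) is the only one where the ring actually changes. I would set $x' = x^d$. The claim is that every invariant monomial $x^uy^v$ for $\frac{1}{dn}(da,b)$ has $d \mid u$ and that $(u/d, v)$ is invariant for $\frac{1}{n}(a,b)$. Indeed, $dau + bv \equiv 0 \bmod dn$ forces $bv \equiv 0 \bmod d$, hence $d \mid v$ since $\gcd(d,b)=1$. Writing $v = dv'$ gives $au + bv' \equiv 0 \bmod n$.

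Here I notice the substitution should be on $y$, not $x$: the weight $b$ sits on $y$, so I take $y' = y^d$. The invariant ring is then $K[x^u y'^{v'} \mid au+bv' \equiv 0 \bmod n]$, which is the invariant ring of $\frac{1}{n}(a,b)$ in the variables $x, y'$. The images of the axes $\{y=0\}$ and $\{x=0\}$ correspond to $\{y'=0\}$ and $\{x=0\}$, so the distinguished curves are preserved. The "similarly on the right" version follows from this one combined with (1).

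For (5), I first reduce $a,b$ mod $n$. Then (2) removes $\gcd(a,b,n)$, and (3), applied on each side, removes common factors of $n$ with $a$ and with $b$ separately. This reaches a form $\frac{1}{m}(a',b')$ with $a'$ and $b'$ each coprime to $m$. If $m=1$ the singularity is trivial. Otherwise, multiplying by $r = a'^{-1} \bmod m$ via (4) gives $\frac{1}{m}(1,q)$ with $q \equiv b'a'^{-1}$ coprime to $m$ and $0<q<m$. Using (1) if one prefers, the roles of the axes can be tracked.

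The main obstacle is the bookkeeping in (3) and (5). One must make sure each reduction step strictly decreases $n$ or terminates in the coprime form, for instance by inducting on $n$. One must also verify that analytic isomorphism of germs is implied by the equality of the graded invariant rings, which holds since all isomorphisms constructed are global and fix the origin.
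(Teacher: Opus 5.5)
Your proposal is correct. The paper states this lemma without proof, right after recording $\A^2/\mu_n = \Spec K[x^uy^v \mid au+bv \equiv 0 \bmod n]$. Your argument is exactly the direct verification that this monomial description calls for. It also works for non-reduced $\mu_n$, because the invariants are the degree-zero part of the $\Z/n$-grading.

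Two points of polish.

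In (3), drop the false start with $x' = x^d$ and ``$d \mid u$''. State from the outset that invariance forces $d \mid v$, substitute $y' = y^d$, and note the trivial reverse inclusion: every monomial $x^u y^{dv'}$ with $au+bv' \equiv 0 \bmod n$ is invariant.

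In (5), make the hypotheses of (3) explicit at each step.
\begin{itemize}
\item After (2) one has $\gcd(a,b,n)=1$, so $d_1=\gcd(a,n)$ is coprime to $b$.
\item After the left reduction, $\gcd(a/d_1,n/d_1)=1$, so $d_2=\gcd(b,n/d_1)$ is coprime to $a/d_1$.
\end{itemize}
The reduction therefore finishes after these explicit steps, and no induction on $n$ is needed.
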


    When $0 < q < m$, or when $m = 1$ and $q = 0$, we say $\frac{1}{m}(1,q)$ is written in standard form.

    \begin{theorem}[{\cite[Theorem 7.4.16]{Ishii2018}}]
        Let $p \in U$ be a cyclic quotient singularity in a surface $X$ of type $\frac{1}{m}(1,q)$ in standard form. Let $\pi \colon V \to U$ be the minimal resolution. Then the exceptional divisor $\pi^{-1}(x)$ consists of $\ell$ smooth rational curves $E_1,\ldots,E_\ell$ such that
        \begin{itemize}
            \item $E_i \cdot E_{i+1} = 1$ for $i = 1,\ldots,\ell - 1$,
            \item $E_i \cdot E_j = 0$ if $|i-j| \geq 2$,
            \item $E_i^2 = -e_i$, where $\frac{m}{q} = [e_1,\ldots,e_\ell]$ is the Hirzebruch-Jung continued fraction associated to this singularity
        \end{itemize}
    \end{theorem}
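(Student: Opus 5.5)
The plan is to argue torically, which also handles the case where $\mu_m$ is non-reduced, since the quotient is described purely by the invariant monomials. By the description
\[\A^2/\mu_m = \Spec K[x^uy^v \mid u + qv \equiv 0 \mod m],\]
the germ is the affine toric variety $U_\sigma$ of a cone, with the following data.
\begin{itemize}
    \item The character lattice is $M = \{(u,v) \in \Z^2 \mid u+qv \equiv 0 \mod m\}$.
    \item The dual lattice is $N = \Z^2 + \Z\cdot\frac{1}{m}(1,q)$.
    \item The cone is $\sigma = \mathbb{R}_{\geq 0}^2$.
\end{itemize}
This presentation is characteristic free, and $U_\sigma$ is normal since its semigroup is saturated. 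Since $\gcd(q,m)=1$, we may change the basis of $N$ to $(1/m)(1,q)$ and $(0,1)$. In this basis, $\sigma$ becomes the cone spanned by $v_0 = (0,1)$ and $v_{\ell+1} = (m,-q)$. This is the standard normal form of a two-dimensional cone.

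Next, I would take $\Theta$ to be the convex hull of $(\sigma \cap N)\setminus\{0\}$. Let $v_0, v_1, \ldots, v_\ell, v_{\ell+1}$ be the lattice points on the compact part of its boundary, in order. The key combinatorial claim is that consecutive vectors $v_{i}, v_{i+1}$ form a $\Z$-basis of $N$. The second claim is that there are integers $e_i \geq 2$ with
\[v_{i-1} + v_{i+1} = e_i v_i \quad (1 \le i \le \ell),\]
and that $[e_1,\ldots,e_\ell]$ is exactly the Hirzebruch--Jung expansion of $\frac{m}{q}$. I would prove this by induction on $m$, using the Euclidean-type step $\frac{m}{q} = e_1 - \frac{1}{m'/q'}$, where $m' = q$ and $q' = e_1q - m$. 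Write $v_1 = (1,0)$. Then the relation at $v_1$ gives $e_1 = \lceil m/q \rceil$. The remaining cone, spanned by $v_1$ and $v_{\ell+1}$, becomes the normal form for $\frac{1}{q}(1,q')$ after a unimodular change of coordinates, and the induction applies to it. The inequality $e_i \geq 2$ is equivalent to convexity of $\Theta$. This identification of the boundary of $\Theta$ with the continued fraction is the step I expect to require the most care. In particular, I must check that no lattice point of $\sigma$ lies strictly below the polygonal line $v_0v_1\cdots v_{\ell+1}$, which follows from the basis property of consecutive pairs.

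The fan $\Sigma$ is obtained by subdividing $\sigma$ along the rays $v_1,\ldots,v_\ell$. This gives a proper birational toric morphism $\pi\colon X_\Sigma \to U_\sigma$. Each two-dimensional cone of $\Sigma$ is spanned by a lattice basis, so $X_\Sigma$ is smooth. The exceptional locus is the union of the torus-invariant curves $E_i = V(v_i)$ for $1 \le i \le \ell$, each isomorphic to $\P^1$ since it is the orbit closure of an interior ray. From the standard toric intersection theory on a smooth surface, $E_i\cdot E_j = 1$ if the rays are adjacent and $E_i \cdot E_j = 0$ otherwise. Moreover, $E_i^2 = -e_i$ precisely because of the relation $v_{i-1}+v_{i+1} = e_i v_i$. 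To see this, take the divisor of the character $\chi^u$, where $u \in M$ satisfies $\langle u, v_i\rangle = 1$ and $\langle u, v_{i-1}\rangle = 0$, and intersect it with $E_i$.

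Finally, I would address minimality. Because every $e_i \geq 2$, the exceptional curves contain no $(-1)$-curves. Any resolution of a normal surface singularity without exceptional $(-1)$-curves is the minimal one: it is dominated by every other resolution, by the factorization of birational morphisms of smooth surfaces into blow-ups and Castelnuovo's contraction criterion. Therefore $X_\Sigma \to U_\sigma$ is the minimal resolution. Since the statement concerns a germ analytically isomorphic to $(0 \in \A^2/\mu_m)$, and minimal resolutions are compatible with completion, the result transfers to $p \in U$.
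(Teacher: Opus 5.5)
The paper gives no proof of its own here and simply cites \cite[Theorem 7.4.16]{Ishii2018}. Your toric argument is correct and is the standard proof of that cited result: the normal form of the cone spanned by $(0,1)$ and $(m,-q)$, the unimodular subdivision with $v_{i-1}+v_{i+1}=e_iv_i$, $E_i^2=-e_i$ from intersecting $\operatorname{div}(\chi^u)$ with $E_i$, and minimality from the absence of $(-1)$-curves. One small remark: identifying the $v_i$ with the boundary of $\Theta$ is superfluous, since the recursive definition via the Euclidean step already gives the unimodular cones and the relations you need.
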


    In other words, the exceptional divisor in the minimal resolution is a linear chain of $\P^1$, with self intersections given by the coefficients in the continued fraction.

    In the situation of the theorem, let $L_0$ and $L_{\ell + 1}$ be the germs of curves through $p \in U$ corresponding to the $x$ and $y$ axis in $\A^2/\mu_n$. Denote their strict transforms in $V$ as $E_0$ and $E_{\ell+1}$. Then $E_0$ intersects only $E_1$ transversally at one point, and $E_{\ell+1}$ intersects only $E_\ell$ at one point.

    \begin{remark}
        When $\frac{m}{q} = \infty$, the quotient is smooth and the minimal resolution is an isomorphism. This is consistent with the fact that the continued fraction expansion of $\infty$ is empty. When $\frac{m}{q} = 1$, again the quotient is smooth, but we associate to this fraction the (non-minimal) blowup, whose exceptional curve has self intersection $-1$.
    \end{remark}

    The following definition and propositions are standard, so we provide no proofs.

    \begin{definition} \label{a_i}
        Given a standard fraction $\frac{m}{q} = [e_1,\ldots,e_\ell]$, define the sequences $a_i$ and $b_i$, for $i=0,\ldots,\ell + 1$ by
        \begin{align*}
            a_0 &= m, & a_1 &= q & a_{i+1} &= a_i e_i - a_{i-1},\\
            b_0 &= 0, & b_1 &= 1 & b_{i+1} &= b_i e_i - b_{i-1}
        \end{align*}
    \end{definition}

    \begin{remark}\label{remark:last-multiplicity-is-one}
        By construction, for $i = 1,\ldots,\ell$,
        \begin{align*}
            \frac{a_{i-1}}{a_i} = [e_i,\ldots,e_\ell] && \frac{b_{i+1}}{b_i} = [e_i,\ldots,e_1]
        \end{align*}
        and since $\frac{a_{\ell-1}}{a_\ell} = e_\ell \in \Z$, then $a_\ell = 1$ and $a_{\ell + 1} = 0$.
    \end{remark}

    \begin{proposition}
        Given a standard fraction $\frac{m}{q} = [e_1,\ldots,e_\ell]$, if $\frac{m'}{q'} = [e_\ell,\ldots,e_1]$, then $m = m'$ and $q'$ is the inverse of $q$ modulo $m$.
    \end{proposition}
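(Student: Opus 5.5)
The plan is to encode the continued fraction in a product of $2\times 2$ integer matrices and read the reversed expansion off the transpose. For an integer $e$, set
\[M(e) = \begin{pmatrix} e & -1 \\ 1 & 0 \end{pmatrix},\]
which has determinant $1$. Since $\frac{m}{q} = e_1 - \frac{1}{[e_2,\ldots,e_\ell]}$, the recursion of \Cref{a_i} (run on numerators and denominators) gives
\[M(e_1)M(e_2)\cdots M(e_\ell) = \begin{pmatrix} m & -r \\ q & -s \end{pmatrix}.\]
Here the first column is exactly $(m,q)$. This follows by induction on $\ell$: multiplying $M(e_1)$ by the first column $(m_2,q_2)$ of the tail product gives $(e_1m_2 - q_2,\, m_2)$, which is the numerator and denominator of $e_1 - q_2/m_2$. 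The step to check is that no cancellation occurs, i.e. that $\gcd(e_1m_2-q_2, m_2) = \gcd(m_2,q_2) = 1$, which holds inductively since standard fractions are in lowest terms.

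Equivalently, \Cref{remark:last-multiplicity-is-one} identifies the $a_i$ with the successive numerators: $a_{i-1}/a_i = [e_i,\ldots,e_\ell]$ with $\gcd(a_{i-1},a_i)=1$, since consecutive terms are coprime by the three-term recursion and $a_\ell = 1$.

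Next take transposes. Since $M(e)^T = \begin{pmatrix} e & 1 \\ -1 & 0\end{pmatrix} = D\,M(e)\,D$ with $D = \operatorname{diag}(1,-1)$ and $D^2 = 1$, we get
\[M(e_\ell)\cdots M(e_1) = \bigl(M(e_1)\cdots M(e_\ell)\bigr)^T \text{ conjugated by } D = \begin{pmatrix} m & r \\ -q & -s\end{pmatrix}^{T} \text{ adjusted by signs} = \begin{pmatrix} m & -q \\ r & -s \end{pmatrix}.\]
The first column of the reversed product is therefore $(m, r)$. Applying the first step to $[e_\ell,\ldots,e_1]$, whose entries are all $\geq 2$ so that it is a standard fraction, gives $m' = m$ and $q' = r$.

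Finally, taking determinants of the original product gives $-ms + qr = 1$, so $qr \equiv 1 \pmod m$ and $q' = r$ is the inverse of $q$ modulo $m$. To pin it down as the residue in $(0,m)$, we need $0 < r < m$. This follows from the standard fact that a continued fraction with all $e_i \geq 2$ has value $>1$, hence $m' > q' > 0$. The degenerate case $\ell = 1$, with $m = e_1$ and $q = 1$, is consistent since $q' = 1$.

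The main obstacle is only bookkeeping: getting the sign conventions in the transpose identity right, and confirming that the matrix entries are exactly the reduced numerators and denominators. Everything else is formal.
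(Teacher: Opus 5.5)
Your proof is correct. The paper gives no proof of its own here: it lists this proposition among facts that are ``standard, so we provide no proofs.'' A later remark does invoke exactly your viewpoint of Hirzebruch--Jung expansions as products in $\SL(2,\Z)$. Your argument is the standard complete proof, in three steps: the first column of $A = M(e_1)\cdots M(e_\ell)$ is the reduced pair $(m,q)$; the identity $M(e)^T = DM(e)D$ turns transposition into reversal, so the reversed product $DA^TD$ has first column $(m',q') = (m,r)$; and $\det A = qr - ms = 1$ gives $qr \equiv 1 \pmod m$.

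For comparison, the same conclusion can be reached inside the paper's notation using the sequences of \Cref{a_i}. The sequences $a_i$ and $qb_i$ satisfy the same recursion and agree modulo $m$ at $i=0,1$, so $qb_\ell \equiv a_\ell = 1 \pmod m$. The quantity $a_{i-1}b_i - a_ib_{i-1}$ is constant, equal to $m$, and at $i=\ell+1$ it gives $b_{\ell+1} = m$; together with $b_{\ell+1}/b_\ell = [e_\ell,\ldots,e_1]$ this yields the proposition. Your matrix identities package both facts at once.

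A few points to tighten.
\begin{itemize}
\item When you identify the first column with $(m,q)$, you need positivity as well as coprimality, since you later read $(m',q')$ off the reversed product exactly. Positivity follows by the same induction, since $e_1m_2-q_2 \geq 2m_2 - q_2 > m_2 > 0$. Coprimality is in fact automatic from $\det A = 1$.
\item The paper's definition of a standard fraction explicitly does not require $\gcd(m,q)=1$. So ``standard fractions are in lowest terms'' is really the implicit hypothesis of the proposition, not a consequence of the definition; without it the statement fails, e.g.\ for $\frac{4}{2} = [2]$.
\item The empty expansion $\frac{1}{0} = [~]$ is standard in the paper's sense. There $A$ is the identity and $r=0$, so your claim $0<r<m$ fails, although the statement itself holds trivially modulo $1$.
\item No ``adjustment by signs'' is needed in the transpose step: $\begin{pmatrix} m & r \\ -q & -s \end{pmatrix} = DAD$, and $(DAD)^T = DA^TD$ is already the reversed product.
\end{itemize}
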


    \begin{remark}
        This proposition also implies that $b_{\ell} = q'$ and $b_{\ell + 1} = m$.
    \end{remark}

    \begin{proposition} \label{chain-multiplicities}
        Using the same notation as above, we have the following identities as $\Q$-divisors.
        \begin{align*}
            \pi^* L_0 = E_0 + \sum_{i=1}^\ell \frac{a_i}{m} E_i && \pi^* L_{\ell + 1} = E_{\ell + 1} + \sum_{i=1}^\ell \frac{b_i}{m} E_i
        \end{align*}
    \end{proposition}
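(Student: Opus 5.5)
We have to prove Proposition \ref{chain-multiplicities}: $\pi^*L_0 = E_0 + \sum_{i=1}^\ell \frac{a_i}{m}E_i$, and the analogous formula for $L_{\ell+1}$ with the $b_i$.

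\textbf{Reduction to a linear system.} The plan is to use Mumford's definition of the pullback of a $\Q$-Cartier Weil divisor on a normal surface. On a cyclic quotient singularity, $mL_0$ is Cartier: it is the image of the axis $\{x=0\}$, and $x^m$ is an invariant function. So we may write
\[\pi^*L_0 = E_0 + \sum_{i=1}^\ell c_i E_i,\]
where the $c_i\in\Q$ are uniquely determined by the conditions $\pi^*L_0\cdot E_j = 0$ for $j=1,\dots,\ell$. Indeed, $m\pi^*L_0$ is the divisor of the pullback of a function, so it is numerically trivial on exceptional curves. Uniqueness holds because the intersection matrix $(E_i\cdot E_j)$ is negative definite.

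\textbf{Writing out the equations.} By the cited theorem, and the remark that $E_0$ meets only $E_1$ transversally once, the equations are
\[
\begin{aligned}
j=1:&\quad 1 - e_1 c_1 + c_2 = 0,\\
1<j<\ell:&\quad c_{j-1} - e_j c_j + c_{j+1} = 0,\\
j=\ell:&\quad c_{\ell-1} - e_\ell c_\ell = 0.
\end{aligned}
\]
Setting $c_0 = 1$ and $c_{\ell+1} = 0$, these become the single recursion
\[
c_{j+1} = e_j c_j - c_{j-1}, \qquad j=1,\dots,\ell.
\]

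\textbf{Checking the candidate solution.} By Definition \ref{a_i}, the sequence $a_i/m$ satisfies exactly this recursion, with $a_0/m = 1$. By Remark \ref{remark:last-multiplicity-is-one}, $a_{\ell+1} = 0$. So $c_i = a_i/m$ solves the system, and by uniqueness it is the solution.

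\textbf{The second formula.} For $L_{\ell+1}$ we use the symmetric system. Now the boundary values are $c_0 = 0$ (since $E_0$ plays no role) and $c_{\ell+1} = 1$ (from $E_{\ell+1}$ meeting $E_\ell$). The sequence $b_i/m$ satisfies the same three-term recursion with $b_0 = 0$. By the remark after the preceding proposition, $b_{\ell+1} = m$, so $b_{\ell+1}/m = 1$. Hence $b_i/m$ solves the system, and uniqueness again gives the formula.

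\textbf{Main obstacle.} The only substantive input is justifying that the boundary values close up: $a_{\ell+1} = 0$ and $b_{\ell+1} = m$. Both are already recorded earlier in the paper. The negative definiteness needed for uniqueness is standard (Mumford/Grauert).
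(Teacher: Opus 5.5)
Your argument is correct and is the standard one. The paper omits this proof as standard, but uses the same reasoning later (orthogonality to the exceptional curves plus the three-term recursion with fixed boundary values) in the proofs of the non-swapping and swapping local glueing algorithms. One inconsequential slip: $L_0$ is the image of the $x$-axis $\{y=0\}$, so $mL_0$ is cut out by the invariant function $y^m$ rather than $x^m$, and nothing else in your argument changes.
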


    \begin{corollary} \label{self-int-change}
        Let $C \subseteq X$ be a complete curve in a surface $X$, which is $\Q$-Cartier. Suppose $p \in C$ is a smooth point in $C$, but in $X$ it is a cyclic quotient singularity of type $\frac{1}{m}(1,q)$ in standard form, such that $C$ corresponds to the $x$ axis under the analytic isomorphism to $\A^2/\mu_m$. Let $\pi \colon Y \to X$ be the minimal resolution of $p$, and let $\ti C$ be the strict transform of $C$ in $Y$. Then
        \[\ti C^2 = C^2 - \frac{q}{m}\]
    \end{corollary}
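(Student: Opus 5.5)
The plan is to compute $\ti C^2$ by intersecting the pullback $\pi^*C$ with $\ti C$, using \Cref{chain-multiplicities} together with the projection formula for $\Q$-Cartier divisors under the minimal resolution $\pi$.

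First, near $p$ the curve $C$ is the germ $L_0$, the image of the $x$-axis. Away from $p$ the map $\pi$ is an isomorphism, so \Cref{chain-multiplicities} gives the identity of $\Q$-divisors
\[\pi^*C = \ti C + \sum_{i=1}^\ell \frac{a_i}{m} E_i,\]
where $E_1,\ldots,E_\ell$ is the exceptional chain and $\ti C = E_0$ meets only $E_1$, transversally in one point. If other singular points of $X$ lie on $C$, we resolve only $p$, as in the statement, so no further exceptional curves enter.

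Second, we apply the projection formula. Since each $E_i$ is $\pi$-exceptional, $\pi^*C\cdot E_i = 0$ for all $i$, and $\pi^*C\cdot\pi^*C = C^2$. Hence
\[C^2 = \pi^*C\cdot\pi^*C = \pi^*C\cdot\Bigl(\ti C + \sum_i \frac{a_i}{m}E_i\Bigr) = \pi^*C\cdot \ti C.\]
Third, we expand $\pi^*C\cdot\ti C$ in terms of the decomposition above:
\[\pi^*C\cdot\ti C = \ti C^2 + \sum_i \frac{a_i}{m}\,E_i\cdot\ti C = \ti C^2 + \frac{a_1}{m}.\]
Here we use that $\ti C\cdot E_1 = 1$ and $\ti C\cdot E_i = 0$ for $i \geq 2$. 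Since $a_1 = q$ by \Cref{a_i}, this yields $\ti C^2 = C^2 - q/m$.

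The only delicate point is justifying the pullback and projection formula for $\Q$-Cartier divisors on a surface with quotient singularities, and confirming that the multiplicities in \Cref{chain-multiplicities} are the Mumford pullback coefficients. I would check this directly by verifying $\pi^*C\cdot E_j = 0$ for each $j$. For $j=1$ this reads $1 - e_1\frac{a_1}{m} + \frac{a_2}{m} = 0$, i.e. $a_2 = a_1e_1 - a_0$ with $a_0 = m$. For $2 \le j \le \ell$ it is the recursion $a_{j+1} = a_je_j - a_{j-1}$, using $a_{\ell+1} = 0$ from \Cref{remark:last-multiplicity-is-one}. These are exactly the defining relations of the $a_i$. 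The degenerate case $\ell = 0$ (smooth point, $q=0$, $m=1$) gives $\ti C^2 = C^2$, consistent with the formula.
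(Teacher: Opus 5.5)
Your proposal is correct and follows the paper's proof essentially verbatim. You pull back $C$ via \Cref{chain-multiplicities}, use that $\pi^*C$ is orthogonal to the exceptional curves so that $C^2=\pi^*C\cdot\ti C$, and expand to get $\ti C^2+a_1/m$ with $a_1=q$. Your additional check that the coefficients $a_i/m$ satisfy $\pi^*C\cdot E_j=0$ via the defining recursion is a welcome justification of a step the paper simply takes for granted.
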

    \begin{proof}
        Under the hypothesis, the germ at $p$ of $C$ is just $L_0$, and so
        \[\pi^*C = \ti C + \sum_{i=1}^\ell \frac{a_i}{m}E_i\]
        Then, as the $E_i$ are exceptional, and intersections are well behaved for $\Q$-Cartier divisors under birational maps,
        \[C^2 = (\pi^*C)^2 = \ti C \cdot \pi^*C = \ti C^2 + \sum_{i=1}^\ell \frac{a_i}{m} \ti C \cdot E_i = \ti C^2 + \frac{a_1}{m} = \ti C^2 + \frac{q}{m}.\]
    \end{proof}

    \begin{corollary} \label{self-int-change-nodal}
        Let $C \subseteq X$ be a complete curve in a surface $X$ which is $\Q$-Cartier. Suppose $p \in C$ is a node of $C$, which is a cyclic quotient singularity of type $\frac{1}{m}(1,q)$ in $X$ written in standard form, so that both branches of $C$ correspond to the $x$ and $y$ axes under the analytic isomorphism to $\A^2/\mu_n$. Assume that $m > 1$ so that $p$ is not smooth, and let $\pi \colon Y \to X$ be the minimal resolution of $p$, and let $\ti C$ be the strict transform of $C$ in $Y$. Then
        \[\ti C^2 = C^2 - \frac{q + q' + 2}{m},\]
        where $q'$ is the inverse of $q$ modulo $m$.
    \end{corollary}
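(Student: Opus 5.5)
The plan is to follow the proof of \Cref{self-int-change}, now using both pullback formulas of \Cref{chain-multiplicities}, one for each branch of $C$ through $p$. Near $p$ the germ of $C$ is $L_0 \cup L_{\ell+1}$, where the branch $L_0$ is the image of the $x$ axis and $L_{\ell+1}$ is the image of the $y$ axis. Since $\pi$ is an isomorphism away from $p$, pulling back each branch gives
\[\pi^*C = \ti C + \sum_{i=1}^\ell \frac{a_i + b_i}{m} E_i .\]
Here $\ti C$ is the strict transform of $C$. It meets the exceptional chain exactly twice: once on $E_1$ through the strict transform $E_0$ of $L_0$, and once on $E_\ell$ through the strict transform $E_{\ell+1}$ of $L_{\ell+1}$.

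Next compute $C^2 = (\pi^*C)^2 = \ti C \cdot \pi^*C$. This uses the projection formula, valid for $\Q$-Cartier divisors, together with $\pi^*C \cdot E_i = 0$ for every exceptional $E_i$. Expanding gives
\[C^2 = \ti C^2 + \frac{a_1 + b_1}{m}\,(\ti C\cdot E_1) + \frac{a_\ell + b_\ell}{m}\,(\ti C \cdot E_\ell).\]

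It remains to evaluate the coefficients, which we read off from \Cref{a_i}, \Cref{remark:last-multiplicity-is-one} and the remark after the reversal proposition: $a_1 = q$, $b_1 = 1$, $a_\ell = 1$ and $b_\ell = q'$.

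The one delicate point is the case $\ell = 1$, where both branches meet the same curve $E_1$. There $m/q = [e_1]$, so $q = 1$, $q' = 1$ and $e_1 = m$. We have $\ti C \cdot E_1 = 2$ and coefficient $(a_1+b_1)/m = 2/m$, giving a correction of $4/m = (q+q'+2)/m$, as required.

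When $\ell \geq 2$ the two intersections are distinct and each is transverse. The total correction is
\[\frac{q+1}{m} + \frac{1+q'}{m} = \frac{q+q'+2}{m},\]
which is the stated formula. The hypothesis $m>1$ guarantees $\ell \geq 1$, so the chain is nonempty and the formulas of \Cref{chain-multiplicities} apply.
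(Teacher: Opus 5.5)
Your proposal is correct and follows the paper's own proof: write $\pi^*C = \ti C + \sum_{i=1}^{\ell} \frac{a_i+b_i}{m}E_i$, intersect with $\ti C$, and substitute $a_1 = q$, $b_1 = 1$, $a_\ell = 1$, $b_\ell = q'$. Your explicit treatment of the case $\ell = 1$, where both branches meet $E_1$ and $\ti C \cdot E_1 = 2$, is a useful detail that the paper's one-line computation leaves implicit.
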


    \begin{proof}
        We have 
        \[\pi^*C = \ti C + \sum_{i=1}^\ell \frac{a_i + b_i}{m}E_i\]
        Intersecting with $\ti C$,
        \[C^2 = (\pi^*C)^2 = \ti C \cdot \pi^* C = \ti C^2 + \frac{a_1 + b_1}{m} + \frac{a_\ell + b_\ell}{m}.\]
        We obtain the result after recalling that $a_1 = q$, $a_\ell = 1$, $b_1 = 1$ and $b_\ell = q'$.
    \end{proof}

    \begin{remark}
        This result is not true when $p$ is not smooth, as the minimal resolution at this point is an isomorphism, so we would have to require that $q + q' = -2$, which does not make sense in our setup.
    \end{remark}

    \begin{definition}
        For a standard cyclic quotient singularity of type $\frac{1}{m}(1,q)$, its dual singularity is $\frac{1}{m}(1,m-q)$.
    \end{definition}

    \begin{proposition}\label{dual-property}
        Suppose $q \neq m-1$. Write the continued fraction of $\frac{m}{q}$ as
        \[\frac{m}{q} = [\underbrace{2,\ldots,2}_{s_1},r_1,\underbrace{2,\ldots,2}_{s_2},r_2,\ldots,r_{k-1},\underbrace{2,\ldots,2}_{s_k}].\]
        where $r_i \geq 3$ and $s_i \geq 0$. Then its dual fraction is
        \[\frac{m}{m-q} = [s_1+2,\underbrace{2,\ldots,2}_{r_1-3},s_2+3,\ldots,s_{k-1}+3,\underbrace{2,\ldots,2}_{r_{k-1}-3},s_k+2].\]
        When $q = m-1$, then
        \[\frac{m}{m-1} = [\underbrace{2,\ldots,2}_{m-1}], \qquad \frac{m}{1} = [m].\]
    \end{proposition}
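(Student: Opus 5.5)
We prove the duality formula of \Cref{dual-property} through a combinatorial model of Hirzebruch--Jung continued fractions: the Riemenschneider point diagram. The first step is to establish the model. For a standard fraction $\frac{m}{q} = [e_1,\ldots,e_\ell]$, put $e_1 - 1$ dots in the first row. For each $i \geq 2$, put $e_i - 1$ dots in row $i$, and start row $i$ directly below the last dot of row $i-1$. Reading the diagram by columns gives column lengths $f_1 - 1, \ldots, f_{\ell'} - 1$. We claim that $[f_1,\ldots,f_{\ell'}]$ is the expansion of $\frac{m}{m-q}$.

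We prove the claim by induction on $\ell$, using the recursion $\frac{m}{q} = e_1 - \frac{1}{m_2/q_2}$, where $\frac{m_2}{q_2} = [e_2,\ldots,e_\ell]$. Set $\frac{m}{m-q} = [f_1,\ldots]$. Two elementary identities do the work.
\begin{itemize}
    \item If $e_1 = 2$, then $\frac{m}{m-q} = 1 + \frac{1}{\frac{m}{q}-1}$. This lengthens the first column by one, which is the column effect of prepending a row containing one dot.
    \item If $e_1 \geq 3$, the dual starts with a $2$ precisely $e_1 - 2$ times before continuing with the dual tail.
\end{itemize}
Alternatively, one can check both rules directly from the identity $\frac{m}{q} \mapsto \frac{m}{m-q}$ via $\frac{m}{m-q} = \bigl(1 - \frac{q}{m}\bigr)^{-1}$, together with the recursion for the $a_i$ in \Cref{a_i}.

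Once the point diagram duality holds, the proposition is bookkeeping. Consider the expansion
\[\frac{m}{q} = [\underbrace{2,\ldots,2}_{s_1},r_1,\underbrace{2,\ldots,2}_{s_2},r_2,\ldots,r_{k-1},\underbrace{2,\ldots,2}_{s_k}].\]
Each $2$ contributes a row with a single dot, and each $r_j$ contributes a row with $r_j - 1$ dots.
\begin{itemize}
    \item The first block of $s_1$ twos stacks $s_1$ dots vertically in the first column. The first dot of the $r_1$ row continues that column, so the first column has $s_1 + 1$ dots. Hence $f_1 = s_1 + 2$.
    \item The $r_1$ row then runs horizontally. Its middle dots form $r_1 - 3$ columns of length one, which contribute $r_1 - 3$ entries equal to $2$.
    \item The last dot of the $r_1$ row begins a column. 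That column continues through the $s_2$ single-dot rows and ends at the first dot of the $r_2$ row, so it has $s_2 + 2$ dots. Hence the corresponding entry is $s_2 + 3$.
    \item The pattern repeats for each intermediate block, and the final column has $s_k + 1$ dots, giving $f_{\ell'} = s_k + 2$.
\end{itemize}
This reproduces the displayed formula. The hypothesis $q \neq m-1$ guarantees $k \geq 2$, meaning at least one $r_j$ appears, which is exactly the condition for the first and last columns to be distinct.

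The case $q = m-1$ is a separate direct computation. Starting from $\frac{m}{m-1} = 2 - \frac{1}{\frac{m-1}{m-2}}$, induction on $m$ gives $m-1$ twos. Its dual is $\frac{m}{1} = [m]$, which is consistent with a single column of $m-1$ dots.

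The main obstacle is proving the Riemenschneider duality itself. The cleanest route is probably induction on $m$: show that $\frac{m}{q} \mapsto [e_1,\ldots,e_\ell]$ and the column-reading operation both satisfy the same recursion under removing the first entry, adjusting via $e_1 \mapsto e_1 - 1$ or deleting a leading $2$. Here the boundary conventions for $[\,]$ and $[1]$ must be handled carefully.
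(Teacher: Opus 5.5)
Your proposal is correct, but it is organized differently from the paper's proof. The paper inducts directly on the displayed formula using two fraction identities. Prepending a $2$ to $\frac{m}{q}$ gives $\frac{2m-q}{m}$, whose dual $\frac{2m-q}{m-q}$ is the old dual with its first entry raised by one. Raising the first entry of $\frac{m}{q}$ gives $\frac{m+q}{q}$, whose dual $\frac{m+q}{m} = 2-\frac{m-q}{m}$ is the old dual with a $2$ prepended. The paper then leaves implicit the check that the block pattern transforms accordingly; for instance, raising a leading $2$ to $3$ creates a new block with $r_1 = 3$ and shifts the $s_i$.

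You instead prove the general Riemenschneider point-diagram duality, whose inductive step rests on exactly these two identities. You then read the formula off by counting dots in columns, so no re-indexing of blocks is needed. Your route costs an extra lemma, but it yields a pattern-free statement. From it, this proposition, the count of leading $2$'s in the following remark, and \Cref{dual-removal-property} (deleting the first column of the diagram) are all immediate. The paper's route is shorter but leaves the block bookkeeping to the reader.

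One point to fix. Your second rule, ``if $e_1 \geq 3$ the dual starts with $e_1-2$ twos before continuing with the dual tail'', is off by one if ``dual tail'' means the dual of $[e_2,\ldots,e_\ell]$. For example, $[3,2] = \frac{5}{2}$ has dual $\frac{5}{3} = [2,3]$, not $[2,2]$. What your diagram actually shows is different, because the last dot of the first row sits on top of the tail's first column. The dual is $e_1-2$ twos followed by the dual of $[e_2,\ldots,e_\ell]$ with its first entry increased by one. This follows by applying the identity for $\frac{m+q}{q}$ above $e_1-2$ times and then your first rule. You also need the base case $\ell = 1$: there $[e_1] = \frac{e_1}{1}$ is a single row of $e_1-1$ dots, and its dual $\frac{e_1}{e_1-1}$ consists of $e_1-1$ twos. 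With that correction and that base case, the induction and your column count go through as written.
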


    \begin{proof}
        By induction on the length and leftmost entry of the expansion of $\frac{m}{q}$, suppose that this holds for $\frac{m}{q}$ and $\frac{m}{m-q}$. For the ``induction on the length'' step, we need to show that inserting a 2 at tle left of the expansion of $\frac{m}{q}$ and adding 1 to the leftmost entry of $\frac{m}{m-q}$, the resulting fractions are still dual. The result of inserting a 2 to $\frac{m}{q}$ is
        \[\frac{m'}{q'} = 2 - \frac{q}{m} = \frac{2m-q}{m}.\]
        Adding 1 to the leftmost entry of the expansion of $\frac{m}{m-q}$, one obtains
        \[\frac{m''}{q''} = \frac{m}{m-q} + 1 = \frac{2m-q}{m-q}.\]
        which remain dual. For the ``induction on the leftmost entry'' of $\frac{m}{q}$, we'd have to add 1 to the leftmost entry of $\frac{m}{q}$, insert a 2 at the left of $\frac{m}{m-q}$ and verify the results are still dual. The same computation above shows this holds.
    \end{proof}

    \begin{remark}
        This shows that when $q \neq m-1$, the number of 2's trailing at the left of the continued expansion of $\frac{m}{q}$ is $\ceil{\frac{m}{m-q}-2}$, and when $q = m-1$, it is $\ceil{\frac{m}{m-q}-1}$. Since the first fraction cannot be an integer but the second one is, these two can be expressed as $\floor{\frac{m}{m-q}-1}$ independently of whether of $q$ is $m-1$ or not. 
    \end{remark}

    \begin{proposition} \label{-2-contraction}
        Suppose that
        \[\frac{m}{q} = [\underbrace{2,\ldots,2}_{s},e_1,\ldots,e_\ell]\]
        is a Hirzebruch-Jung continued fraction where $s \geq 0$, $\ell \geq 1$, and $e_1 \geq 3$. Let $m = (m-q)d + r$, so $0 < r < m-q$ is the remainder of dividing $m$ by $m-q$ (it cannot be zero since that would imply the continued fraction consists only of 2's). Then $s = d-1$ and
        \[\frac{m-q}{r} = [e_1-1,e_2,\ldots,e_\ell].\]
    \end{proposition}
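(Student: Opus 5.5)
The plan is to use \Cref{dual-property} to read off the number of leading $2$'s, and then handle the tail by peeling off one leading $2$ at a time.

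\textbf{Step 1: the value of $s$.} The remark after \Cref{dual-property} says that the number of $2$'s at the left of the expansion of $\frac{m}{q}$ is $\floor{\frac{m}{m-q}-1}$. Writing $m=(m-q)d+r$ with $0<r<m-q$, we get $\frac{m}{m-q}=d+\frac{r}{m-q}$, which is not an integer. Hence $\floor{\frac{m}{m-q}-1}=d-1$, so $s=d-1$.

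\textbf{Step 2: stripping leading $2$'s.} Define $x_0=\frac{m}{q}$ and $x_{j+1}=\frac{1}{2-x_j}$, so that $x_j=[\underbrace{2,\ldots,2}_{s-j},e_1,\ldots,e_\ell]$. I claim by induction on $j$ that
\[x_j=\frac{m-j(m-q)}{m-(j+1)(m-q)}.\]
The case $j=0$ is immediate. For the inductive step, set $N_j=m-j(m-q)$. Then $2-x_j=\frac{2N_{j+1}-N_j}{N_{j+1}}$, and $2N_{j+1}-N_j=N_{j+2}$ because the $N_j$ form an arithmetic progression with difference $-(m-q)$. Thus $x_{j+1}=\frac{N_{j+1}}{N_{j+2}}$, which is the claim for $j+1$. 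This is only needed while the denominators stay positive, namely for $j\le s=d-1$, and this holds since $N_{d}=(m-q)+r>0$ and $N_{d+1}=r>0$.

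\textbf{Step 3: the tail.} Taking $j=s=d-1$ gives
\[[e_1,\ldots,e_\ell]=x_{d-1}=\frac{m-(d-1)(m-q)}{m-d(m-q)}=\frac{(m-q)+r}{r}=1+\frac{m-q}{r}.\]
The expansion $[e_1,\ldots,e_\ell]$ is its leading entry $e_1$ minus a tail fraction. Subtracting $1$ from both sides therefore gives $[e_1-1,e_2,\ldots,e_\ell]=\frac{m-q}{r}$.

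\textbf{Main obstacle: uniqueness conventions.} The only delicate point is that $[e_1-1,e_2,\ldots,e_\ell]$ must be a legitimate expansion under the conventions of the paper. Since $e_1\ge 3$, every entry is at least $2$. Moreover $0<r<m-q$, so $\frac{m-q}{r}$ is a standard fraction, and the uniqueness of its expansion applies. The hypothesis $\ell\ge1$ ensures the tail is nonempty, so no degenerate cases with $[~]$ or $[1]$ occur.
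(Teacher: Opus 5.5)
Your proof is correct and is essentially the paper's argument: the count of leading $2$'s via $\lfloor \frac{m}{m-q}-1\rfloor = d-1$, the induction showing that stripping $j$ twos leaves $\frac{m-j(m-q)}{m-(j+1)(m-q)} = \frac{m-j(m-q)}{q-j(m-q)}$, and the final subtraction of $1$ at $j=d-1$. One small indexing slip in Step 2: you should have $N_{d-1}=(m-q)+r$ and $N_d=r$, while $N_{d+1}=r-(m-q)<0$; these are the values Step 3 actually uses, so the denominators $N_1,\ldots,N_d$ are still positive and the argument is unaffected.
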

    
    \begin{remark}
        Geometrically, this corresponds to the following situation: Suppose that there is a chain of $\P^1$ in a smooth surface with self intersections given by $\frac{m}{q}$, and there is another $(-1)$-curve intersecting only the first curve in the chain. Then one may contract this $(-1)$-curve together with the (possibly empty) maximal chain of $(-2)$-curves connected to it, to obtain a new chain, still in a smooth surface. The self intersections of this new chain will be given by $\frac{m-q}{r}$ as above.
    \end{remark}

    \begin{proof}[Proof of \Cref{-2-contraction}]
        By writing $m = (m-q)d + r$, we know that the number of trailing 2's is $\floor{\frac{m}{m-q} - 1} = d-1$. One can easily show by induction on $k$ that removing $k$ 2's from the left of the chain corresponds to the fraction
        \[\frac{m - k(m-q)}{q - k(m-q)}.\]
        When $k = d-1$, we obtain
        \[\frac{r + (m-q)}{r} = [e_1,\ldots,e_\ell].\]
        Finally, subtracting one from the first entry is the same as subtracting one to the fraction, so we obtain
        \[\frac{m-q}{r} = [e_1-1,\ldots,e_\ell]\]
    \end{proof}

    \begin{remark} \label{dual-removal-property}
        Note that in this case, if the dual of $\frac{m}{q} = [2,\ldots,2,e_1,\ldots,e_\ell]$ is $\frac{m}{m-q} = [\ti e_1,\ldots,\ti e_{\ti \ell}]$, then the dual of $[e_1-1,e_2,\ldots,e_\ell]$ is $[\ti e_2,\ldots,\ti e_{\ti \ell}]$. This also holds true when $\ell = 1$ and $e_1 = 2$, as the dual of $[1]$ is $[~]$.
    \end{remark}

    For the next result, we abuse slightly the notation to allow sequences $[e_1,\ldots,e_\ell]$ of arbitrary integers which do not satisfy our assumptions from the start. Even though one could still attempt to make sense of the associated fraction, we will not do so in these cases, as to avoid problems such as division by zero.
    \begin{corollary} \label{contraction-of-duals}
        Consider the sequence
        \[[e_\ell,\ldots,e_1,1,f_1,\ldots,f_{\ell'}],\]
        where $\ell,\ell' \geq 0$ and $e_i,f_i \geq 2$. Then, as long as there is exactly one $1$ which is not at either end of the sequence, if we consecutively ``blow-down'' it down, that is, remove the $1$ and decrease by $1$ the adjacent entries, we eventually arrive to the configuration $[1,1]$ if and only if the sequences
        \[[e_1,\ldots,e_\ell] \qquad \text{ and } \qquad [f_1,\ldots,f_{\ell'}]\]
        are dual to each other.
    \end{corollary}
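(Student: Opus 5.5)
We argue by induction on $\ell + \ell'$, using \Cref{-2-contraction} together with \Cref{dual-removal-property} as the inductive engine.

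\emph{Base case.} If $\ell = \ell' = 0$, the sequence is $[1]$. The $1$ then sits at an end, so the configuration $[1,1]$ can never be reached. On the other side, $[~]$ and $[~]$ are not dual, since the dual of $[~]$ is $[1]$. Both sides of the equivalence are therefore false, and the statement holds.

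\emph{One side empty.} Suppose exactly one of $\ell,\ell'$ is zero, say $\ell' = 0$ with $\ell \geq 1$. The sequence is $[e_\ell,\ldots,e_1,1]$ and the $1$ is at an end, so again no blow-down is possible. The sequences are not dual either: the dual of $[~]$ is $[1]$, and $[e_1,\ldots,e_\ell] \neq [1]$ because all $e_i \geq 2$. The same argument applies to $\ell = 0$, $\ell' \geq 1$. So we may assume $\ell,\ell' \geq 1$.

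\emph{Inductive step.} With $\ell,\ell' \geq 1$, blowing down the interior $1$ gives
\[[e_\ell,\ldots,e_2,e_1-1,f_1-1,f_2,\ldots,f_{\ell'}].\]
Since $e_i,f_i\ge 2$, at least one of $e_1-1,f_1-1$ is $\geq 2$ unless both equal $1$. If both equal $1$, the new sequence contains two $1$'s; it equals $[1,1]$ exactly when $\ell=\ell'=1$ and $e_1=f_1=2$, which matches the fact that $[2]$ is self-dual. If both equal $1$ but $\ell+\ell'>2$, then $[1,1]$ is never reached: once two $1$'s are present, blowing down is not defined with the hypothesis of exactly one interior $1$. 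In this case we must check that $[2,e_2,\ldots]$ and $[2,f_2,\ldots]$ are never dual unless both are $[2]$. This follows from \Cref{dual-property}, since the dual of a sequence beginning with $2$ begins with an entry $\geq 3$ unless the sequence is $[2,\ldots,2]$, whose dual is $[m]$ with $m\ge 3$ unless $m=2$.

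Otherwise, say $e_1 = 2$ and $f_1 \geq 3$ (the other case is symmetric, and the case where both exceed $2$ yields no new $1$, handled below). The new sequence is
\[[e_\ell,\ldots,e_2,1,f_1-1,f_2,\ldots,f_{\ell'}],\]
again with a single interior $1$ or an end $1$, and of smaller total length. By induction, $[1,1]$ is reached if and only if $[e_2,\ldots,e_\ell]$ and $[f_1-1,f_2,\ldots,f_{\ell'}]$ are dual. By \Cref{-2-contraction} and \Cref{dual-removal-property}, this holds if and only if $[e_1,\ldots,e_\ell]=[2,e_2,\ldots]$ and $[f_1,\ldots,f_{\ell'}]$ are dual. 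Concretely, applying \Cref{dual-removal-property} to $[2,\ldots]$ with its first entry removed, one checks that its dual becomes $[f_1-1,\ldots]$ exactly when the original dual was $[f_1,\ldots]$; this is the content of the induction computation in the proof of \Cref{dual-property}.

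If both $e_1,f_1\geq 3$, the blow-down produces no $1$, so the process stops without reaching $[1,1]$. By \Cref{dual-property}, two sequences both starting with entries $\ge3$ cannot be dual, since the dual of a sequence starting with $r_1 \geq 3$ (so $s_1=0$) starts with $s_1+2=2$.

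The main obstacle is the bookkeeping in this last equivalence: checking that ``strip a leading $2$ from one sequence and decrement the leading entry of the other'' preserves duality in both directions, including the degenerate endpoints $[1]$ and $[~]$. I would handle it using the explicit formula of \Cref{dual-property} rather than fractions, to avoid division by zero.
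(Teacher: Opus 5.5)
Your proof is correct and rests on the same key fact as the paper's sketch. That fact is that removing a leading $2$ from one expansion while lowering the leading entry of the other by one preserves duality in both directions, which is the computation in the proof of \Cref{dual-property}. The paper runs this backwards, generating all dual pairs from $[1,1]$ by reverse blow-ups; you run it forwards by induction on $\ell+\ell'$ and treat the terminal configurations explicitly (a $1$ at an end, two adjacent $1$'s, or no $1$ at all), which makes the ``only if'' direction rigorous where the paper only asserts ``no more and no less''.
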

    \begin{proof}[Sketch of proof]
        The ``reverse'' of the blow-down corresponds precisely to adding 1 to the leftmost entry of one expansion and appending a $2$ to the left of the other one. Thus, starting from $[1,1]$, which we interpret as the concatenation of the dual expansions of $\frac{1}{1} = [1]$ and $\frac{1}{0} = [~]$, by the same argument as in the proof of \Cref{dual-property}, applying reverse blow-ups we get no more and no less than all pairs of dual fraction expansions.
    \end{proof}

    \section{Cyclic Quotients of \texorpdfstring{$A_k$}{Ak} Singularities}\label{section:cyclic-quotients-of-Ak}

    Let us assume that $\chr K \nmid n$. Let $X = \Spec K[x,y,t]/(xy-t^k)$ for $k \geq 1$, and suppose there is a $\mu_n$-action on $X$ given either by
    \begin{align*}
        \begin{cases}
            x & \mapsto \zeta_n^a x\\
            y & \mapsto \zeta_n^b y\\
            t & \mapsto \zeta_n t
        \end{cases}
        && \text{ or } &&
        \begin{cases}
            x & \mapsto \zeta_n^a y\\
            y & \mapsto \zeta_n^b x\\
            t & \mapsto \zeta_n t.
        \end{cases}
    \end{align*}

    We assume that $a$ and $b$ are not necessarily coprime, but for this action to be well defined, it must be that $k = a + b + jn$ for some integer $j$. In the first case, we assume that $0 < a,b \leq n$, so that $j \geq -1$ (if $a + b < n$, then $j \geq 0$). In the second case, we choose $a$ and $b$ so that $0 < a+b \leq n$, and note that both $n$ and $a+b$ need to be even. We also write $k = a+b+jn$, for some $j \geq 0$. We will denote the germ of a singularity analytically isomorphic to $X/\mu_n$ by $\frac{1}{n}(a,b,1;j)$ in the first case, and by $\frac{1}{n}(a+b,1;j)_D$ in the second. We will later see that the second singularity depends only on $a+b$, $n$ and $j$.

    Although we will not use it, the first action can be extended naturally to when the characteristic of $K$ does divide $n$, and the following theorem still holds in that case.

    \begin{theorem}\label{non-swap-quotient-theorem}
        Let $0 < a,b \leq n$, and $k = a + b + jn$ be such that $k \geq 1$. The singularity $\frac{1}{n}(a,b,1;j)$ equals the cyclic quotient singularity
        \[Y = \frac{1}{\frac{nk}{d_1d_2}}\p{1,\frac{qk-d_1}{d_2}},\]
        where $d_1 = \gcd(a,n)$, $d_2 = \gcd(b,n)$ and $0 < q \leq \frac{n}{d_1}$ satisfies $q \equiv \p{\frac{a}{d_1}}^{-1} \mod \frac{n}{d_1}$. Moreover, if
        \begin{align*}
            \frac{n}{n-a} = [\ti f_1,\ldots,\ti f_{\alpha'}] && \text{ and } && \frac{n}{n-b} = [\ti g_1,\ldots, \ti g_{\beta'}]
        \end{align*}
        then a (possibly non-minimal) resolution of the dual singularity of $Y$ has as exceptional divisor a chain with self intersections given by
        \[[\ti f_{\alpha'},\ldots,\ti f_1, 2+j,\ti g_1,\ldots,\ti g_{\beta'}].\]
    \end{theorem}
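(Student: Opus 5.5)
The plan is to realize $\frac{1}{n}(a,b,1;j)$ directly as a quotient of $\A^2$ by a finite abelian diagonal group, and then read off both the cyclic quotient type and the chain from that description.

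\textbf{Step 1: a diagonal group.} Write $X=\Spec K[x,y,t]/(xy-t^k)$ as $\A^2/\mu_k$, using $x=u^k$, $y=v^k$, $t=uv$, where $\mu_k$ acts by $(u,v)\mapsto(\zeta_k u,\zeta_k^{-1}v)$. Next lift the generator of the $\mu_n$-action to $\A^2$. I want a diagonal map $(u,v)\mapsto(\alpha u,\beta v)$ with
\[\alpha^k=\zeta_n^a,\qquad \beta^k=\zeta_n^b,\qquad \alpha\beta=\zeta_n.\]
Using $k=a+b+jn$ and the compatible roots of unity, one can take $\alpha=\zeta_{nk}^{a}\,\zeta_{nk}^{\,jn\cdot s}$ and $\beta=\zeta_n\alpha^{-1}$ for a suitable $s$. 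Concretely, I will choose $\alpha=\zeta_{nk}^{a+jn}$ and $\beta=\zeta_{nk}^{b}$ and check the three relations. Let $G\subset\G_m^2$ be the group generated by $\mu_k$ and this lift. It is an extension of $\mu_n$ by $\mu_k$, so $|G|=nk$, and $X/\mu_n=\A^2/G$.

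\textbf{Step 2: remove pseudoreflections.} The subgroup of $G$ fixing the $v$-axis pointwise consists of the elements whose $u$-weight is trivial. An element $\alpha^i\zeta_k^l$ has $u$-weight trivial only if its image $\zeta_n^{ai}$ acts trivially on $x$, so this subgroup should have order $d_1=\gcd(a,n)$. Symmetrically, the subgroup fixing the $u$-axis pointwise should have order $d_2=\gcd(b,n)$; I will verify both orders explicitly. Quotienting $\A^2$ first by the group generated by these two reflection subgroups gives $\A^2$ again, with coordinates $u^{d_1},v^{d_2}$ after re-indexing. The residual group $G'$ then acts on $\A^2$ with no pseudoreflections and has order $nk/(d_1d_2)$.

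\textbf{Step 3: cyclicity and the weight.} Any abelian small subgroup of $\G_m^2$ is cyclic, since a point off the axes has trivial stabilizer and $G'$ embeds in $\G_m$ via one coordinate. So $Y$ is cyclic of order $nk/(d_1d_2)$. To find the weight, normalize a generator to act with weight $1$ on the first new coordinate and compute its weight on the second. This is where $q\equiv(a/d_1)^{-1}$ appears, and I expect the weight to come out as $(qk-d_1)/d_2$ modulo $nk/(d_1d_2)$. The main obstacle of the proof is this gcd bookkeeping: checking that $(qk-d_1)/d_2$ is an integer, and that the result is independent of the chosen lift and of the choice of $q$. As a check, the case $d_1=d_2=1$, $j=0$ should reduce to a direct computation.

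\textbf{Step 4: the chain.} By Proposition~\ref{dual-property}, the dual of $Y$ has type $\frac{1}{N}(1,N-Q)$ with $N=nk/(d_1d_2)$ and $Q$ the weight found in Step 3. To identify the chain I will compute the Hirzebruch--Jung fraction of
\[[\ti f_{\alpha'},\ldots,\ti f_1,2+j,\ti g_1,\ldots,\ti g_{\beta'}]\]
via the product of matrices $\smallpmatrix{c&-1\\1&0}$. The partial products over $[\ti f_{\alpha'},\ldots,\ti f_1]$ and over $[\ti g_1,\ldots,\ti g_{\beta'}]$ are encoded by the sequences $a_i,b_i$ of Definition~\ref{a_i} for $\frac{n}{n-a}$ and $\frac{n}{n-b}$. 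The reversal proposition then converts the reversed part into the inverse residue. The outcome should be numerator $nk/(d_1d_2)$, up to a common factor $d_1d_2$ that accounts for non-reduced fractions, and second entry $N-Q$.

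The phrase ``possibly non-minimal'' covers the degenerate cases. These are $a=n$ or $b=n$, where the fraction is $\frac{n}{0}=[~]$, and $j=-1$, where the middle entry is $1$. In these cases I will invoke Corollary~\ref{contraction-of-duals} and Proposition~\ref{-2-contraction}: blowing down the $1$'s does not change the singularity. The minimal resolution is recovered this way.

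As a cross-check, the same chain should come from a toric picture. Let $\sigma$ be the cone of $\A^2/G$ in the lattice $\Z^2+\Z\cdot\frac{1}{nk}(a',b')$, where $a',b'$ are the exponents of $\alpha,\beta$ as powers of $\zeta_{nk}$. The dual singularity corresponds to the dual cone, and its boundary polygon passes through the lattice points that give the two halves of the chain and the middle entry $2+j$.
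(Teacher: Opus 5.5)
There is a genuine gap in the case $j=-1$. The statement explicitly covers it; that is what ``possibly non-minimal'' is for. Your matrix-product computation only shows that the string $[\ti f_{\alpha'},\ldots,\ti f_1,1,\ti g_1,\ldots,\ti g_{\beta'}]$ has the right formal value. By itself this does not make it the exceptional chain of any resolution. Two things are missing:
\begin{itemize}
\item You must show that successive blow-downs never create a $0$ or a $[1,1]$, and that they terminate at an admissible string.
\item You must track the second entry. For $j=-1$ the number $Q=(qk-d_1)/d_2$ can be negative: the paper's example $\frac14(1,-7)$ gives first column $\binom{4}{11}$. Blowing down a $(-1)$-curve at the left end changes that entry by $-N$.
\end{itemize}
\Cref{contraction-of-duals} cannot supply this, because reaching $[1,1]$ happens exactly when the two sides are dual, i.e.\ $k=0$. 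Your sketch never uses the hypothesis $k\geq 1$, and that hypothesis is exactly what is needed.

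One way to close the gap is through negative definiteness. The leading principal minors of the negated intersection matrix are the numerators of the prefixes of the string:
\begin{itemize}
\item along the $\ti f$'s they are numerators of admissible strings, hence positive;
\item after the $1$ the minor is $a/d_1$;
\item after $\ti g_1,\ldots,\ti g_i$ it is $(aN_i-nC_i)/d_1$, where $\frac{N_i}{C_i}=[\ti g_1,\ldots,\ti g_i]\geq\frac{n}{n-b}$ by \Cref{prefix-property}. This is positive since $a+b>n$.
\end{itemize}
So the chain is negative definite, which forces the blow-downs to end at an admissible (possibly empty) string. Interior and right-end blow-downs preserve the first column of the product, while left-end ones lower the second entry by $N$. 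You therefore land on the expansion of $\frac{N}{(N-Q)\bmod N}$. Reversing the blow-downs realizes the original string as a non-minimal resolution of the dual of $Y$.

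The paper sidesteps all of this by constructing an actual resolution. Since $\check R^n$ is scalar of order $k$, it blows up the vertex of $\frac1k(1,1)$ and finds $\frac1n(1,n-a)$ and $\frac1n(1,n-b)$ on the image of the $(-k)$-curve. It then gets $-2-j$ from $\ov E^2=-k/n$ by pullback, uniformly in $j$.

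Outside this case your route works, and it is genuinely different from the paper's. Stripping the two reflection subgroups and embedding the small residual group in $\G_m$ avoids the paper's prime-by-prime choices of $s,r,r'$ and its reduction to $\gcd(a,b,n)=1$. The computation you defer is short. Take $g=\mathrm{diag}(\zeta_{nk}^{a+jn},\zeta_{nk}^{b})$ and $h=\mathrm{diag}(\zeta_{nk}^{n},\zeta_{nk}^{-n})$. The element $g^qh^l$ with $l=(d_1-qa)/n-qj$ equals $\mathrm{diag}(\zeta_{nk}^{d_1},\zeta_{nk}^{qk-d_1})$, and $d_2\mid qk-d_1$ because $n\mid qa-d_1$.

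Be careful with Step 2: the reflection subgroup of order $d_1$ (trivial $u$-weight) acts on $v$. So the intermediate coordinates are $u^{d_2},v^{d_1}$, not $u^{d_1},v^{d_2}$. On these, $g^qh^l$ acts as $(\zeta_N,\zeta_N^{(qk-d_1)/d_2})$ with $N=nk/(d_1d_2)$.

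For the chain, use the reduced fractions together with the fact that the penultimate numerator of $[\ti f_1,\ldots,\ti f_{\alpha'}]$ is $n/d_1-q$. Then the product has first column exactly $\binom{N}{N-Q}$, with no extra factor $d_1d_2$. For $j\geq0$ every entry is at least $2$, so uniqueness of Hirzebruch--Jung expansions finishes the argument. This includes $a=n$ or $b=n$, which are not degenerate there.
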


    \begin{remark}
        This statement still makes sense when $a = n$ or $b = n$. In these cases, $\frac{n}{n-a}$ and/or $\frac{n}{n-b}$ will have empty continued fractions. The resolution of the dual singularity of $Y$ will have no $\ti f_i$ and/or $\ti g_i$, but will still contain a curve with self intersection $-2-j$.
    \end{remark}

    \begin{proof}[Proof of \Cref{non-swap-quotient-theorem}]
        We prove this when $\chr K \nmid nk$ since we will be using quotients by $nk$-th roots of unity which may not exist otherwise. The result is still true by interpreting the relevant actions as being done by the non-reduced group $\mu_{nk}$.
        
        Fix $\eta = \zeta_{nk} \in \mu_{nk}$ to be the chosen primitive $nk$-th root of unity, so that $\eta^k = \zeta_n$.

        First, we may assume that $d = \gcd(a,b,n) = 1$. This is because under the action $(x,y,t) \mapsto (\zeta_n^a x, \zeta_n^b y, \zeta_n t)$, any invariant monomial in $K[x,y,t]$ must have every instance of $t$ appearing to some power which is multiple of $d$, so they form a subring of $K[x,y,t^d]$. Furthermore, the subring
        \[K[x,y,t^d]/(xy-t^k) \subseteq K[x,y,t]/(xy-t^k)\]
        is isomorphic to $K[x,y,t]/(xy-t^{k/d})$, and the $\mu_n$-action descends as a $\mu_{n/d}$-action. This gives an isomorphism
        \[\frac{1}{n}(a,b,1;j) \cong \frac{1}{n/d}\p{\frac{a}{d},\frac{b}{d},1;j}.\]

        We can identify
        \[K[x,y,t]/(xy-t^k) \cong K[U^k,V^k,UV] \subseteq K[U,V],\]
        where we see that $K[U^k,V^k,UV]$ is the subring of $K[U,V]$ fixed by the $\mu_k$-action $(U,V) \mapsto (\eta^n U, \eta^{-n}V)$. Now, for every $s \in \Z/k\Z$, the original $\mu_n$-action on $K[U^k,V^k,UV]$ lifts to a $\mu_{nk}$-action on $K[U,V]$ given by
        \[(U,V) \mapsto (\eta^{a + ns}U,\eta^{b + n(j-1-s)}V).\]
        This means that $Y \cong \Spec K[U,V]^G$, where $G$ is the subgroup of $\GL_2(K)$ generated by
        \begin{align*}
            R = \begin{pmatrix}
                \eta^{a + ns} & 0\\
                0 & \eta^{b + n(j-s)}
            \end{pmatrix}
            &&
            S = \begin{pmatrix}
                \eta^n & 0\\
                0 & \eta^{-n}
            \end{pmatrix}.
        \end{align*}
        We observe that $R^n = S^{a + ns}$, and that we can always choose $s$ so that $\gcd(a + ns,k) = 1$. To see this pick any $s$ satisfying the two conditions
        \begin{itemize}
            \item For every prime $p \mid k$, if $p \mid a$, then $s \not\equiv 0 \mod p$.
            \item For every prime $p \mid k$, if $p \nmid a$ and $p \nmid n$, then $s \not\equiv -an^{-1} \mod p$.
        \end{itemize}
        In the first case, $p$ cannot divide $n$ as it would also divide $b$ as well, so $p$ cannot divide $a + ns$ either. In the second case, $p$ will never divide $a + ns$.

        For this choice of $s$, this shows that $R$ generates $G$. Moreover, since $\det R = \eta^k = \zeta_n$, we see that $n \mid \ord(R)$. Since we also have $\ord(R^n) = \ord(S^{a + ns}) = k$, we obtain $\ord(R) = nk$.

        Now, let $0 < q \leq \frac{n}{d_1}$ be the inverse of $\frac{a}{d_1}$ modulo $\frac{n}{d_1}$. Since $\gcd(a + ns,k) = 1$, we can choose $r$ such that $(q + nr)(a + ns) \equiv d_1 \mod nk$. For every prime $p \mid d_1$, if $p$ divides $q$, it cannot also divide $\frac{n}{d_1}k$. So similarly as how we chose $s$, we can pick $r'$ so that
        \[\gcd\p{q + nr + \frac{n}{d_1}kr',nk} = 1.\]
        This way, the following element is also a generator of $G$
        \[R^{q + nr + \frac{n}{d_1}kr'} = \begin{pmatrix}
            \eta^{d_1} & 0\\
            0 & \eta^{qk - d_1 + \frac{n}{d_1}k^2r'}
        \end{pmatrix},\]
        which in turn tells us that
        \[Y = \frac{1}{nk}\p{d_1,qk-d_1 + \frac{n}{d_1}k^2r'}.\]
        By the choice of $r$ and $r'$ which made $q + nr + \frac{n}{d_1}kr'$ coprime to $nk$, we obtain
        \[\gcd\p{qk - d_1 + \frac{n}{d_1}k^2r',nk} = \gcd(b + n(j-s),nk),\]
        but since
        \begin{align*}
            \gcd(b+n(j-s),n) &= \gcd(b,n) = d_2, && \text{and}\\
            \gcd(b + n(j-s),k) &= \gcd(k-a-ns,k) = 1,
        \end{align*}
        we deduce
        \[\gcd(qk-d_1+\frac{n}{d_1}k^2r',nk) = d_2,\]
        and so, since $d_1$ and $d_2$ are coprime, we can use (2) and (3) from \Cref{cyclic-quotients-basic-properties} to see
        \[Y = \frac{1}{nk}\p{d_1,qk-d_1 + \frac{n}{d_1}k^2r'} = \frac{1}{\frac{nk}{d_1d_2}}\p{1,\frac{qk-d_1}{d_2}}.\]

        Now, to find an explicit resolution of the dual singularity $\check Y$, consider the generator of its action
        \[\check R = \begin{pmatrix}
            \eta^{a + ns} & 0\\
            0 & \eta^{-b-n(j-s)}
        \end{pmatrix}.\]
        Note that $\check R^n = \eta^{n(a + ns)}I$, and the subring of $K[U,V]$ fixed by this power is
        \[K[U,V]^{\check R} = K[U^k,U^{k-1}V,\ldots,UV^{k-1},V^k].\]
        In this subring, $\check R$ descends as a $\mu_n$-action given by
        \[U^iV^{k-i} \mapsto \eta^{k(i-b)}U^iV^{k-i} = \zeta_n^{i-b}U^iV^{k-i}.\]
        Let $X = \Spec K[U,V]^{\check R^n}$, which sits inside $\A^{k + 1} = \Spec K[x_0,\ldots,x_k]$ via
        \begin{align*}
            K[x_0,\ldots,x_k] &\hookrightarrow K[U,V]^{\check R^n}\\
            x_i \mapsto U^{k-i}V^i,
        \end{align*}
        whose defining ideal is generated by elements of the form $x_\alpha x_\beta - x_{\alpha + 1}x_{\beta-1}$. Let $\ti X \subseteq \operatorname{BL}_0\A^{k+1}$ be the strict transform of $X$ in the blowup at $0 \in \A^{k+1}$. The exceptional set of $\ti X \to X$ is a $\P^1$ with self intersection $-k$. Considering
        \[\operatorname{Bl}_0 \A^{k+1} = V(x_\alpha y_\beta - x_\beta y_\alpha) \subseteq \A^{k+1}_{x_i} \times \P^k_{y_i},\]
        the $\check R$-action on $X$ extends to $\A^{k+1} \times \P^k$ via
        \[((x_i)_{0 \leq i \leq k, [y_i]_{0 \leq i \leq k}}) \mapsto ((\zeta_n^{k-i-b} x_i), [\zeta_n^{k-i-b}y_i]).\]

        $\ti X$ is covered by the two affine open sets $\{y_0 \neq 0\}$ and $\{y_k \neq 0\}$. In the first open $\ti X_0$ defined by setting $y_0 = 1$, we have $x_i = y_ix_0$ and $y_i = y_1^i$, so $\ti X_0 \cong \A^2_{x_0,y_1}$. Here, the $\check R$-action is given by
        \[(x_0,y_1) \mapsto \p{\zeta_n^{k-b}x_0, \frac{\zeta_n^{k-1-b}}{\zeta_n^{k-b}}y_i} = (\zeta_n^a x_0, \zeta_n^{-1}y_1),\]
        so the quotient has a single singularity of type $\frac{1}{n}(1,n-a)$. On the other hand, in the open $\ti X_k$ defined by setting $y_k = 1$, we have $x_i = y_ix_k$ and $y_i = y_{k-1}^{k-i}$, so $\ti X_k \cong \A^2_{x_k,y_{k-1}}$. The $\check R$-action is given by
        \[(x_k,y_{k-1}) \mapsto (\zeta_n^{-b}x_k,\zeta_n y_{k-1}),\]
        so there is a single singularity of type $\frac{1}{n}(1,n-b)$. Note that these two points are smooth if $a = n$ or $b = n$.

        With this, if $\pi \colon \ti X \to Z$ is the quotient by $\mu_n$, and $\kappa \colon \ti Z \to Z$ is the minimal resolution of these two singularities, then $\ti Z \to \check Y$ will be a (possibly non-minimal) resolution of $Y$. We already know the self intersections of all the curves in this resolution except for the middle curve, connecting the resolutions of $\frac{1}{n}(1,n-a)$ and $\frac{1}{n}(1,n-b)$. This curve is the strict transform of the image of exceptional of the blowup $\ti X \to X$.

        Let $E \subseteq \ti X$ be the exceptional curve of $\ti X \to X$, so $E \cong \P^1$ and $E^2 = -k$, and let $\ov E \subseteq Z$ be its image. As these surfaces are $\Q$-factorial, self intersections of these curves are well defined in the rational Chow rings. Moreover, looking at local charts, we see that $E \to \ov E$ is a degree $n$ map. Since $\pi$ is also a degree $n$ map, we must have $\pi^*\ov E = E$. Thus,
        \[\ov E^2 = \frac{1}{\deg \pi} \pi^*\ov E \cdot \pi^*\ov E = \frac{1}{n}E^2 = -\frac{k}{n} = -j - \frac{a+b}{n}.\]
        Let $\ti E$ be the strict transform of $\ov E$ in the resolution $\ti Z \to Z$. If $F_1$ and $G_1$ are the components of the resolution of $\frac{1}{n}(1,n-a)$ and $\frac{1}{n}(1,n-b)$ intersecting $\ti E$ (which we consider formally as zero if the respective point is smooth), then
        \[\kappa^*\ov E = \ti E + \frac{n-a}{n}F_1 + \frac{n-b}{n}G_2 + \sum_{\alpha > 1}\lambda_\alpha F_\alpha + \sum_{\beta > 1}\lambda_\beta' G_\beta\]
        where the $F_\alpha$ and $G_\beta$, $\alpha,\beta > 1$ are the components that do not intersect $\ti E$ and $\lambda_\alpha, \lambda'_\beta$ are rational numbers. Then we obtain
        \[-j - \frac{a+b}{n} = \ov E^2 = (\kappa^* \ov E)^2 = \kappa^*\ov E \cdot \ti E = \ti E^2 + \frac{n-a}{n} + \frac{n-b}{n}.\]
        This implies that $\ti E^2 = -2-j$ which finishes the proof.
    \end{proof}

    \begin{example}
        We will now show how the resolution of the singularity of $\frac{1}{n}(a,b,1;j)$ relates to the singularities $\frac{1}{n}(1,a)$ and $\frac{1}{n}(1,b)$. Consider the case when $n = 2200$, $a = 900$ and $b = 1540$, and note that
        \begin{align*}
            \frac{n}{a} = \frac{22}{9} = [3,2,5] && \frac{n}{b} = \frac{10}{7} = [2,2,4].
        \end{align*}
        We compute that $d_1 = 100$, $d_2 = 220$ and $k = 2440 + 2200j$. Note that $a + b > n$, or in other words, $\frac{a}{n} + \frac{b}{n} > 1$, so we can consider $j$ starting with $-1$.

        The inverse of $a/d_1$ modulo $n/d_1$ is the inverse of $9$ modulo $22$ which is $5$. We have
        \begin{align*}
            Y = \frac{1}{\frac{nk}{d_1d_2}}\p{1,\frac{qk-d_1}{d_2}} &= \frac{1}{\frac{2200(2440 + 2200j)}{22000}}\p{1,\frac{5(2440 + 2200j)-100}{220}}\\
            &= \frac{1}{244 + 220j}(1,55 + 50j)
        \end{align*}
        We compute a few continued fraction expansions with small $j$
        \begin{align*}
            j &= -1 & \frac{1}{24}&(1,5) & \frac{24}{5} &= [5,5],\\
            j &= 0 & \frac{1}{244}&(1,55) & \frac{244}{55} &= [5,2,5,2,4],\\
            j &= 1 & \frac{1}{464}&(1,105) & \frac{464}{105} &= [5,2,4,3,2,4],\\
            j &= 2 & \frac{1}{684}&(1,155) & \frac{684}{155} &= [5,2,4,2,3,2,4],\\
            j &= 3 & \frac{1}{904}&(1,205) & \frac{904}{205} &= [5,2,4,2,2,3,2,4],\\
            j &= 4 & \frac{1}{1124}&(1,255) & \frac{1124}{255} &= [5,2,4,2,2,2,3,2,4].
        \end{align*}
        The pattern is clear starting from $j=1$. The result is just the concatenation of the expansions for $\frac{n}{a}$ and $\frac{n}{b}$ (after adding $1$ to the left-most entry of both expansions and later flipping the first one), together with $(j-1)$ $2$'s in the middle. For $j=0$ it is also clear. We concatenate both expansions, adding the initial terms together.

        Both of these conclusions are easy consequences of the expression we have for the dual of $Y$ using \Cref{dual-property}. When $j=-1$, the fact that a $-1$-curve appears in the resolution of the dual singularity suggests that there is some sort of ``contraction'' or ``cancellation'' of the two chains. The procedure in this case is discussed in \Cref{non-swap-local-glueing-algorithm}.
    \end{example}

    \begin{example}
        As another example, consider $n = 215$, $a = 129$, $b = 90$. Then $d_1 = 43$, $d_2 = 5$, $q = 2$ and $k = 219 + 215j$. We have
        \[Y = \frac{1}{\frac{215(219 + 215j)}{215}}\p{1,\frac{2(219+215j) - 43}{5}} = \frac{1}{219 + 215j}(1,79 + 86j).\]
        When $j = -1$, we obtain $\frac{1}{4}(1,-7) = \frac{1}{4}(1,1)$, so the resolution of the singularity consists of a single curve with self intersection $-4$. For small $j$ we have
        \begin{align*}
            j &= -1 & \frac{1}{4}&(1,-7) & \frac{4}{1} &= [4],\\
            j &= 0 & \frac{1}{219}&(1,79) & \frac{219}{79} &= [3,5,2,3,4],\\
            j &= 1 & \frac{1}{434}&(1,165) & \frac{434}{165} &= [3,3,4,2,3,4],\\
            j &= 2 & \frac{1}{649}&(1,251) & \frac{649}{251} &= [3,3,2,4,2,3,4],\\
            j &= 3 & \frac{1}{864}&(1,337) & \frac{864}{337} &= [3,3,2,2,4,2,3,4],\\
            j &= 4 & \frac{1}{1079}&(1,423) & \frac{1079}{423} &= [3,3,2,2,2,4,2,3,4].\\
        \end{align*}
        
        The fact that this singularity is not in standard form for $j = -1$ becomes important in algorithm \Cref{global-glueing-algorithm}.
    \end{example}

    \begin{example}
        We now deal with the degenerate case $a = b = n$, corresponding to when $\frac{n}{a} = \frac{n}{b} = [1]$. Here $k = n(j+2)$ and $q = 1$, so
        \[Y = \frac{1}{\frac{n^2(j+2)}{n^2}}\p{1,\frac{qn(j+2)-n}{n}} = \frac{1}{j+2}(1,j+1).\]
        The associated continued fraction expansion is
        \[\frac{j+2}{j+1} = [\underbrace{2,\ldots,2}_{j+1}],\]
        which corresponds to the resolution of an $A_{j+1}$ singularity. In particular, when $j = -1$, the quotient is already smooth.
    \end{example}

    \begin{example}
        There are other ways in which the quotient can be smooth, which happen when $\frac{nk}{d_1d_2} = 1$. Take for example $n = 1150$, $a = 300$ and $b = 851$. Here $d_1 = 50$, $d_2 = 23$, $q = 4$ and $k = 1151 + 1150j$. We have
        \[Y = \frac{1}{\frac{1150(1151+1150j)}{1150}}\p{1,\frac{4(1151+1150j)-50}{23}} = \frac{1}{1151 + 1150j}(1,198 + 200j)\]

        When $j = -1$, we obtain $\frac{1}{1}(1,-2)$. As before, even though the point is smooth, the fact that the coefficient obtained is negative is still important in algorithm \Cref{global-glueing-algorithm}.
    \end{example}

    \begin{theorem} \label{swap-quotient-theorem}
        Let $n$ be a positive even integer and let $a,b$ be such that $0 < a + b \leq n$, and that $a+b$ is also even. Let $j \geq 0$ and define $k = a + b + jn$, and assume $\chr K \nmid n$. Then $\frac{1}{n}(a + b,1;j)_D$ obtained via quotient of $\Spec K[x,y,t]/(xy-t^k)$ under the action $(x,y,t) \mapsto (\zeta_n^ay, \zeta_n^bx,\zeta_nt)$ is a dihedral quotient singularity: its minimal resolution has dual graph
        \[\begin{tikzpicture}[
            dot/.style = {circle, fill, minimum size=6pt, inner sep=0pt, outer sep=0pt}]
            \node[dot] at (1,0) {};
            \node[label=above:{$E_\ell$}] at (1,0) {};
            \node[label=below:{$(-e_\ell)$}] at (1,0) {};
            \draw (1,0) -- (1.5,0);
            \node[] at (2,0) {$\ldots$};
            \draw (2.5,0) -- (3,0);
            \node[dot] at (3,0) {};
            \node[label=above:{$E_1$}] at (3,0) {};
            \node[label=below:{$(-e_1)$}] at (3,0) {};
            \draw (3,0) -- (4,0);
            \node[dot] at (4,0) {};
            \node[label=below:{$(-2)$}] at (4,0) {};
            \draw (4,0) -- (4.5,0);
            \node[] at (5,0) {$\ldots$};
            \draw (5.5,0) -- (6,0);
            \node[dot] at (6,0) {};
            \node[label=below:{$(-2)$}] at (6,0) {};
            \draw[thick, decoration={brace},decorate] (4,0.5) -- (6,0.5) node[above,pos=0.5] {$j$};
            \draw (6,0) -- (6.866,0.5);
            \node[dot] at (6.866,0.5) {};
            \node[label=above:{$(-2)$}] at (6.866,0.5) {};
            \draw (6,0) -- (6.866,-0.5);
            \node[dot] at (6.866,-0.5) {};
            \node[label=below:{$(-2)$}] at (6.866,-0.5) {};
        \end{tikzpicture}
        \]
        where
        \[\frac{a + b + n}{a+b} = [e_1,\ldots,e_\ell].\]
        In particular, the singularity depends only on $a+b$ instead of on both $a$ and $b$. Hence the notation.
    \end{theorem}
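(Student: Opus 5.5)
Let $\sigma$ denote the generator of the $\mu_n$-action, $\sigma\colon(x,y,t)\mapsto(\zeta_n^a y,\zeta_n^b x,\zeta_n t)$. We have $\sigma^2\colon (x,y,t)\mapsto(\zeta_n^{a+b}x,\zeta_n^{a+b}y,\zeta_n^2 t)$, which is a non-swap action of the index-two subgroup $\mu_{n/2}\subseteq\mu_n$. The plan is to quotient in two steps: first by $\mu_{n/2}=\langle\sigma^2\rangle$, then by the residual involution.

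\textbf{Step 1: the cyclic quotient.} Writing $n'=n/2$ and $c=(a+b)/2$, the subgroup acts by $(x,y,t)\mapsto(\zeta_{n'}^{c}x,\zeta_{n'}^{c}y,\zeta_{n'}t)$ with $k=a+b+jn=2c+2jn'$. I would not try to match this literally to the normalization $\frac{1}{n'}(c,c,1;j')$ of \Cref{non-swap-quotient-theorem}, since $k=2c+2jn'$ rather than $2c+j'n'$. Instead I would redo the lift from that proof: identify $K[x,y,t]/(xy-t^k)=K[U,V]^{\mu_k}$ and lift the whole group generated by $\sigma$ to a subgroup $G\subseteq\GL_2(K)$. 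The lift of $\sigma$ should be of the form
\[
\begin{pmatrix}0&\alpha\\ \beta&0\end{pmatrix},
\]
so that $G$ is a metacyclic/dihedral-type group whose diagonal part $H$ is cyclic. Then $\Spec K[U,V]^H$ is a cyclic quotient singularity $\frac{1}{N}(1,Q)$, and its exponents can be computed exactly as in the proof of \Cref{non-swap-quotient-theorem}. Since the two diagonal exponents agree, I expect $Q\equiv Q^{-1}$ or $Q\equiv -Q^{-1}$ modulo $N$ appropriately, so that the swap preserves $H$ and acts on it by inversion/conjugation.

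\textbf{Step 2: resolving and passing to the involution.} Rather than computing $N$ and $Q$ explicitly, it should be easier to follow the blowup approach used for the dual singularity in \Cref{non-swap-quotient-theorem}. Blow up and take the quotient by $\mu_{n'}$ to obtain a chain with a central curve, arranged symmetrically. The involution induced by $\sigma$ swaps the two coordinate axes, hence reverses the chain. The resolution of the cyclic quotient is a palindromic chain, and the involution flips it, fixing the middle: either a middle curve, on which it has two fixed points, or a middle node. The quotient by the involution then acquires $A_1$ points at the fixed points. Resolving these gives the two $(-2)$-curves at the fork. Half of the chain remains as the arm, and the self-intersection of the image of the middle curve is computed from $\pi^*$, exactly as in the $\ov E^2$ computation of the earlier proof.

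\textbf{Step 3: identifying the data.} We must show that the remaining arm is $[e_1,\ldots,e_\ell]$ with $\frac{a+b+n}{a+b}=[e_1,\ldots,e_\ell]$, followed by $j$ curves of self-intersection $-2$. The natural approach is to compute the chain for $j=0$ and then observe that increasing $j$ by one inserts two $(-2)$-curves into the palindromic chain, one on each side, as in the earlier examples. This gives $j$ such curves on the arm. The fraction $\frac{a+b+n}{a+b}=1+\frac{n}{a+b}$ should arise from the half-chain: its dual fraction is $\frac{n+a+b}{n}$, i.e. $\frac{n}{a+b}$ with $1$ added, matching the ``add 1 to the leftmost entry'' pattern from \Cref{dual-property}.

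\textbf{Main obstacle.} I expect the main difficulty to be the bookkeeping at the center: determining whether the involution fixes a curve or a point, and handling the $j=0$ case and the degenerate case $a+b=n$ (where $\frac{2n}{n}=[2]$). Minimality also needs checking: no $(-1)$-curves should appear, since all the $e_i\geq2$ and the new curves have self-intersection $-2$. Finally, the independence of $a$ and $b$ individually should follow once we note that conjugating by $\operatorname{diag}(\lambda,1)$ rescales the swap so that only $\zeta_n^{a+b}$ matters.
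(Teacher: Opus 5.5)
There is a genuine gap: the proposal never determines the exceptional chain, and that chain is the whole content of the theorem. The architecture is sound: quotient by the index-two subgroup $H\subseteq G$ lying over $\langle\sigma^2\rangle$, then by the residual involution, with the fork coming from two $A_1$ points on a central curve and self-intersections computed by pullback. The problems are in the execution.
In Step~2 the blowup is unspecified, and the cyclic quotient points it creates are never computed.
Step~3 replaces the computation with three things that do not establish it:
\begin{itemize}
\item ``compute the chain for $j=0$'', with no method given;
\item an extrapolation of the $j$-dependence from the earlier examples;
\item a false identity: the dual of $\frac{a+b+n}{a+b}$ is $\frac{a+b+n}{n}=1+\frac{a+b}{n}$, not $\frac{n}{a+b}+1$.
\end{itemize}
The fork depends on two further claims you leave open: that the involution fixes a middle curve rather than a middle node, and that its fixed points are $A_1$.
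Your reason in Step~1 for not using \Cref{non-swap-quotient-theorem} is also mistaken. Write $X=\Spec K[x,y,t]/(xy-t^k)$. Since $k=2c+2jn'=c+c+(2j)n'$ with $0<c\le n'$, the quotient $X/\langle\sigma^2\rangle$ is exactly $\frac{1}{n'}(c,c,1;2j)$.

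That observation closes the argument. Put $d=\gcd(c,n')$, $q\equiv (c/d)^{-1}\bmod n'/d$, and $\frac{n'/d}{q}=[e'_1,\ldots,e'_r]$. By \Cref{non-swap-local-glueing-algorithm}, the minimal resolution of $X/\langle\sigma^2\rangle$ is $[e'_1,\ldots,e'_{r-1},e'_r+1,\underbrace{2,\ldots,2}_{2j-1},e'_r+1,e'_{r-1},\ldots,e'_1]$ if $j\ge1$, and $[e'_1,\ldots,e'_{r-1},2e'_r,e'_{r-1},\ldots,e'_1]$ if $j=0$.
Both chains have odd length. The involution swaps the images of the two axes, so it reverses the chain; it therefore preserves a middle curve $M$ and acts on it with two fixed points.
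Every element of $G\setminus H$ is antidiagonal, with nonzero entries multiplying to $\zeta_n^m$ for some odd $m$, because it sends $t=UV$ to $\zeta_n^m t$. Since $n$ is even, $\zeta_n^m\neq 1$, so no such element has eigenvalue $1$. Hence the involution fixes no curve, and its two fixed points on $M$ are of type $\frac12(1,1)$.
The image of $M$ has self-intersection $\frac12 M^2$. Resolving the two $A_1$ points lowers this by $1$, so the fork vertex has self-intersection $-2$ when $j\ge1$ and $-e'_r-1$ when $j=0$. The arm, counted from the fork vertex, is $[\underbrace{2,\ldots,2}_{j},e'_r+1,e'_{r-1},\ldots,e'_1]$, and all entries are $\ge2$, so the resolution is minimal.
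Finally, reversing the chain gives $[e'_r,\ldots,e'_1]=\frac{n'/d}{c/d}=\frac{n}{a+b}$. Hence $[e'_r+1,e'_{r-1},\ldots,e'_1]=\frac{n}{a+b}+1=\frac{a+b+n}{a+b}$, which is what your ``add $1$'' remark was reaching for.

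Completed this way, your argument is a genuinely different proof from the paper's. The paper blows up the origin of $\A^2_{U,V}$ once and reads off the three special orbits of the dihedral action of $G$ on the exceptional curve: one $\frac{1}{k/2}(1,-\frac n2)$ point and two $\frac12(1,1)$ points. It then finds $\ti E^2=-1-\delta$ with $\delta=\lceil n/k\rceil$, and converts $\frac{k+n}{k}$ into $[2,\ldots,2,e_1,\ldots,e_\ell]$ by stripping $2$'s. Your route reuses the cyclic theorem and makes the $j$-dependence transparent; the paper's route is self-contained.
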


    \begin{proof}
        We again prove this when $\chr K \nmid nk$. The result still holds true when $\chr K > 2$, but the analysis is a bit more delicate.
        
        Let $Y = \Spec K[x,y,t]/(xy-t^k)^{\mu_n}$ be the quotient under the $\mu_n$-action. Fix $\eta$ to be the chosen primitive $nk$-th root of unity, so that $\eta^k = \zeta_n$. Write
        \[K[x,y,t]/(xy-t^k) \cong K[U^k,V^k,UV] = K[U,V]^{\mu_k} \subseteq K[U,V],\]
        where $\mu_k$ acts by $(U,V) \mapsto (\eta^nU, \eta^{-n}V)$. The original $\mu_n$-action has a lift to $K[U,V]$ as the $\mu_{nk}$-action
        \[(U,V) \mapsto (\eta^a V, \eta^{b + n(j-1)}U)\]
        If we set $X = \Spec K[U,V]$, then $Y = X/G$, where $G$ is the subgroup of $\operatorname{GL}_2(K)$ generated by the two actions
        \begin{align*}
            R = \begin{pmatrix}
                \eta^n & 0\\
                0 & \eta^{-n}
            \end{pmatrix}, && S = \begin{pmatrix}
                0 & \eta^a\\
                \eta^{b + n(j-1)} & 0
            \end{pmatrix}.
        \end{align*}
        Let $\ti X = \operatorname{Bl}_0 X \subseteq X \times \P^1$ be the blowup at the origin, and let $u,v$ be the projective coordinates on $\P^1$ so that $\ti X$ is defined by $uV - vU = 0$. Both $R$ and $S$ lift to $\ti X$ as
        \begin{align*}
            R \colon \ti X &\to \ti X\\
            ((U,V),[u,v]) &\mapsto ((\eta^n U, \eta^{-n}V),[\eta^n u, \eta^{-n} v]),\\
            S \colon \ti X &\to \ti X\\
            ((U,V),[u,v]) &\mapsto ((\eta^a V, \eta^{b + n(j-1)} U), [\eta^a v, \eta^{b + n(j-1)} u]).
        \end{align*}
        Let $Z = \ti X/G$ and $\ti Z \to Z$ be its minimal resolution. In this way, $Z \to Y$ will be an (a priori non-minimal) resolution of $Y$.

        We verify
        \begin{align*}
            \ord(R) = k && \ord(S) = 2n && R^{k/2} = -I && S^2 = \zeta_nI,
        \end{align*}
        so as an abstract group, we have
        \[G = \langle R, S \mid S^{2n} = 1, \quad R^{k/2} = S^n, \quad SR = R^{-1}S \rangle.\]
        Let $E \subseteq \ti X$ be the exceptional divisor defined by $U = V = 0$, and $\ov E \subseteq Z$ be its image. We see that $E$ is $G$-invariant and the kernel of this action is $\langle S^2 \rangle$, so we have an exact sequence
        \[1 \to \langle S^2 \rangle \to G \to D_{k/2} \to 1\]
        where the dihedral group $D_{k/2}$ acts faithfully on $E$.

        Generically in $E$, the stabilizer is $\langle S^2 \rangle$ and the image in $Z$ will be smooth. There are three orbits with higher stabilizer.

        One of these orbits contains the point $((0,0),[0,1])$, where the stabilizer is $\langle R,S^2 \rangle \leqslant G$. Taking the local chart $v = 1$, where $U = uV$, this action looks like
        \begin{align*}
            R(u,V) = (\eta^{2n}u,\eta^{-n}V), && S^2(u,V) = (u,\eta^{k}V).
        \end{align*}
        Since $S^2$ fixes $u$ and acts on $V$ as multiplication by $\zeta_n$, we have
        \[K[u,V]^{\langle R,S^2 \rangle} = K[u,V^n]^{R}.\]
        $R$ acts on this subring as
        \[R(u,V^n) = (\eta^{2n} u, \eta^{-n^2}V^2) = \p{\zeta_{k/2}u, \zeta_{k/2}^{-n/2}V^n},\]
        which implies that the corresponding point in the quotient $Z$ is a cyclic quotient singularity of type
        \[\frac{1}{k/2}\p{1,-\frac{n}{2}}.\]
        Note that this point is smooth if and only if $k$ divides $n$.

        The other two orbits correspond to the two points $((0,0),[\pm \eta^{a-k/2},1])$, where the stabilizer is $\langle \sigma \rangle \leqslant G$. In the local chart given by $v = 1$ where $U = uV$, we have
        \[S(u,V) = \p{\frac{\eta^{2a-k}}{u},\eta^{k-a}uV}.\]
        At the point $(u,V) = (\pm \eta^{a-k/2},0)$, the differential of this map is
        \[dS = -du \pm \eta^{k/2}dV = \zeta_{2n}^n du + \zeta_{2n}^{1 + \frac{n}{2} \pm \frac{n}{2}}dV,\]
        which implies that the respective points in the quotient $Z$ are cyclic quotient singularities of type
        \[\frac{1}{2n}\p{n,1 + \frac{n}{2}(1 \pm 1)} = \frac{1}{2}(1,1).\]
        Let $\kappa \colon \ti Z \to Z$ be the minimal resolution of these three points, and let $\ti E \subseteq \ti Z$ be the strict transform of $\ov E$. As $\pi \colon \ti X \to Z$ has degree $nk$ but $\pi|_E \colon E \to \ov E$ has degree $k$, we have $\pi^*\ov E = nE$, and thus
        \[\ov E^2 = \frac{1}{\deg \pi} \pi^*\ov E \cdot \pi^*\ov E = \frac{n}{k}E^2 = -\frac{n}{k}.\]
        Let $\delta = \ceil{\frac{n}{k}}$, so that
        \begin{align*}
            \frac{1}{k/2}\p{1,-\frac{n}{2}} = \frac{1}{k/2}\p{1,\delta\frac{k}{2} - \frac{n}{2}}, && 0 \leq \delta\frac{k}{2}-\frac{n}{2} < \frac{k}{2},
        \end{align*}
        so that we write the singularity in standard form. We have
        \[\kappa^*\ov E = \ti E + \p{\frac{\delta \frac{k}{2} - \frac{n}{2}}{k/2}}F_1 + \frac{1}{2}G_1 + \frac{1}{2} G_2 + \sum_{\alpha > 1} \lambda_\alpha F_\alpha,\]
        where $F_1$ is the component over $\frac{1}{k/2}(1,-\frac{n}{2})$ intersecting $\ti E$ (which we consider formally as zero if the point is smooth, i.e. when $\delta = \frac{n}{k}$), and $G_1,G_2$ are the exceptional curves over the two points $\frac{1}{2}(1,1)$. This implies that
        \[-\frac{n}{k} = \ov E^2 = (\kappa^*\ov E)^2 = \kappa^*\ov E \cdot \ti E = \ti E^2 + \delta - \frac{n}{k} + 1,\]
        or in other words,
        \[\ti E^2 = -1-\delta.\]
        This already means that $\ti Z \to Y$ is the minimal resolution, as $\delta$ has to be positive and all other exceptional curves other than $\ti E$ also have self intersection less than $-1$.

        To verify the rest of the claim, recall that the self intersections of the chain over $\frac{1}{k/2}(1,-\frac{n}{2})$ are given by the continued fraction expansion
        \[\frac{k/2}{\delta\frac{k}{2} - \frac{n}{2}} = \frac{k}{\delta k - n} = [f_1,\ldots,f_{\ell'}].\]
        The fraction for this chain together with $\ti E$ must be
        \[[1 + \delta, f_1,\ldots,f_{\ell'}] = 1 + \delta - \frac{\delta k - n}{k} = \frac{k+n}{k}.\]
        If $j = 0$, then $k = a + b$, so we immediately obtain that
        \[[1 + \delta,f_1,\ldots,f_{\ell'}] = \frac{a+b + n}{a + b} = [e_1,\ldots,e_\ell].\]
        When $j \geq 1$, notice that
        \[\frac{k + n}{k} = \frac{a + b + (j+1)n}{a+b+jn} = 2 - \frac{1}{\frac{a+b+jn}{a+b+(j-1)n}}.\]
        We can inductively remove $2$'s from the left of the expansion of $\frac{k+n}{n}$ up to the point we arrive to $\frac{a+b + n}{a+b}$, and so obtain
        \[[1 + \delta,f_1,\ldots,f_{\ell'}] = [\underbrace{2,\ldots,2}_{j},e_1,\ldots,e_{\ell}]\]
    \end{proof}

    \begin{remark}
        The theorem still holds true when $p = \chr K > 2$, although we do need to assume that $p \nmid n$ to even make sense of the action.

        Let us go back to characteristic zero for a while, as to keep using the notation from before, but we fix a prime $p$ which we assume coprime to $n$. We defined the lift $S$ acting on $X = \Spec K[U,V]$ as
        \[S = \begin{pmatrix}
            0 & \eta^a\\
            \eta^{k - a} &  0
        \end{pmatrix},\]
        but we could have lifted $S$ as 
        \[S = \begin{pmatrix}
            0 & \eta^{a + ns}\\
            \eta^{k - a - ns} & 0
        \end{pmatrix}\]
        for any $s \in \Z$. Since $n$ is coprime to $p$, we can choose $s$ so that $a + ns$ is as divisible by $p$ as we want. In particular, if $k = p^\ell k'$, we can consider
        \[S = \begin{pmatrix}
            0 & \zeta_{nk'}^{(a + ns)/p^\ell}\\
            \zeta_{nk'}^{(k - a - ns)/p^\ell} & 0
        \end{pmatrix}\]
        which does not involve any $p$-th roots of unity. We write $a' = \frac{a + ns}{p^\ell}$.

        Now we need an analogue of $G$ in characteristic $p$ that acts on monomials in the same way as $G$ does in characteristic zero. Recall that we defined the group
        \[G = \langle R,S \mid S^{2n} = 1, \quad R^{k/2} = S^n, \quad SR = R^{-1}S \rangle.\]
        Using as guides the functions $\rho,\sigma,\omega \colon G \to \C$ given by
        \begin{align*}
            \rho(R^cS^d) = \zeta_{k/2}^c, && \sigma(R^cS^d) = (-1)^d, && \omega(R^cS^d) = \zeta_k^c\zeta_{2n}^d,
        \end{align*}
        we define the group scheme
        \[G_p = \Spec \F_p\left[\rho,\sigma,\omega\right]/(\rho^{k/2}-1, \sigma^2-1,\omega^n-\rho^{n/2}\sigma),\]
        with multiplication and inverse given by
        \begin{align*}
            m^*\rho &= \frac{1 + \sigma}{2}\rho \otimes \rho + \frac{1-\sigma}{2}\rho \otimes \rho^{-1}, & i^*\rho &= \frac{1 + \sigma}{2}\rho^{-1} + \frac{1-\sigma}{2}\rho,\\
            m^*\sigma &= \sigma \otimes \sigma, & i^*\sigma &= \sigma,\\
            m^*\omega &= \frac{1 + \sigma}{2}\omega \otimes \omega + \frac{1 - \sigma}{2}\omega \otimes \rho^{-1}\omega, & i^*\omega &= \frac{1+\sigma}{2}\rho\omega^{-1} + \frac{1-\sigma}{2}\omega^{-1}.
        \end{align*}
        Here $\sigma = (\omega^2\rho^{-1})^{n/2}$ is redundant, but for simplicity we include anyways.

        We define the action
        \[\phi \colon G_p \times \Spec K[U,V] \to \Spec K[U,V]\]
        via
        \begin{align*}
            \phi^*U &= \frac{1+\sigma}{2}\omega\otimes U + \frac{1-\sigma}{2}\omega\otimes \zeta_{nk'}^{-\frac{k'}{2} + a'} V,\\
            \phi^*V &= \frac{1 + \sigma}{2}\omega\rho^{-1} \otimes V + \frac{1 - \sigma}{2}\omega\rho^{-1} \otimes \zeta_{nk'}^{\frac{k'}{2} - a'}U.
        \end{align*}

        As in characteristic zero, this group scheme lies in two exact sequences. The first one is
        \[1 \to \mu_k \to G \to \mu_n \to 1\]
        where $\mu_k$ is the subgroup determined by setting $\rho = \omega^2$ and $\sigma = 1$, and $\mu_n$ is the spectrum of the subring generated by $\omega^2\rho^{-1}$. This sequence relates the action of $G$ to the action of $\mu_n$ on $\Spec K[x,y,t]/(xy-t^k)$. The second exact sequence is
        \[1 \to \mu_n \to G \to D_{k/2} \to 1\]
        where $\mu_n$ is the subgroup determined by setting $\rho = \sigma = 1$, and $D_{k/2} \cong \mu_{k/2}\rtimes \Z/2\Z$ is the spectrum of the subring generated by $\rho$ and $\sigma$. This sequence corresponds to the action of $G$ on the exceptional divisor of the blowup of $\Spec K[U,V]$.

        Using this group action, the rest of the proof follows analogously.
    \end{remark}

    \section{Decomposition of a Singular Fiber into Building Blocks} \label{section:glueing-building-blocks}

    In what follows, we assume that $n$ is coprime to the characteristic of the base field $K$, and we let $0 = \Spec K \hookrightarrow D = \Spec K[[t]]$ be the disk with the $\mu_n$ action given by $\zeta_n \cdot t = \zeta_nt$. 
    
    In this section, we discuss the results seen in \Cref{section:cyclic-quotients-of-Ak} applied to our case. We suppose we are given a generically smooth fibration $X \to D$, where $\mu_n = \langle \zeta_n \rangle$ acts equivariantly on $X \to D$, and where $\zeta_n$ acts as an automorphism $f \colon C \to C$ on the central fiber, which we assume nodal. Let $\ti X \to X/\mu_n$ be the minimal resolution. Then if $p \in C_1 \cap C_2 \subseteq C$ is a node, we wish to understand the chain of exceptional curves in $\ti X$ over the image of $p$, and also the self intersection of the images of $C_1$ and $C_2$, only in terms of how $f$ acts on $C_1$ and $C_2$ and the extra piece of data $j$ as in \Cref{section:cyclic-quotients-of-Ak}.

    More generally, given $C \subseteq X \to D$ as above with a $\mu_n$-action restricting to $f \in \Aut(C)$, let $C^\nu$ be the normalization of $C$. Then the automorphism $f$ lifts to $C^{\nu}$, a lift we still call $f$, and we can define a new smooth fibration (but with possibly disconnected fibers) $X^{\nu} = C^{\nu} \times D \to D$ with the induced action $\zeta_n \cdot (x,t) = (f(x),\zeta_n t)$. We will see how to relate the central fiber of the minimal resolution of the quotient $\ti X_0 \subseteq \ti X \to X/\mu_n$ in terms of the central fiber of the minimal resolution $\ti X_0^\nu \subseteq X^\nu \to X^\nu/\mu_n$, which we interpret as the ``building blocks'' from which $\ti X_0$ is constructed.

    \begin{remark} \label{the-promise}
        The process of ``glueing'' building blocks is purely combinatorial. Our claim is that we can recover the dual graph, self intersections and multiplicities of $\ti X_0$ from $\ti X_0^\nu$. The fact we define $\ti X_0^\nu$ as a quotient of a trivial family is already a non-canonical choice. Moreover, we do not yet claim that an arbitrary glueing of building blocks $\ti Y_0^\nu$ is realizable by a singular fiber in a surface $\ti Y_0$ or that there is a unique such $\ti Y_0$. Indeed, uniqueness fails as we will explain later in \Cref{glueing-is-not-canonical-scheme-structure}. The question of existence is the essential content of \Cref{section:classification-of-all-fibers}.
    \end{remark}

    In our approach, We also only require $C$ to be nodal, and not necessarily stable, as long, of course, that the automorphism $f \colon C \to C$ has finite order $n$ coprime to $\chr K$.
    
    We start with an analysis on the local picture at nodes of $C$. If $p \in C$ is a node with étale local branches $L_1,L_2$, then the minimal power of $f$ fixing $p$ must act on $T_pL_i$ in a way associated to the fraction $\frac{n_i}{a_i}$ for some coprime integers $n_i$ and $a_i$. With this picture in mind, we will define some local invariants that will be useful later.

    \begin{definition}
        Let $\infty > \frac{n_1}{a_1}, \frac{n_2}{a_2} \geq 1$ be fractions such that $\frac{a_1}{n_1} + \frac{a_2}{n_2} > 1$, or in other words, that $\frac{n_1}{n_1-a_1} > \frac{n_2}{a_2}$ (c.f. \Cref{prefix-property}). Write
        \[\frac{n_1}{n_1 - a_1} = [\tilde e_1,\ldots,\tilde e_{\tilde \ell}] \qquad \frac{n_2}{a_2} = [f_1,\ldots,f_{\ell'}]\]
        Define $\delta(\frac{n_1}{a_1},\frac{n_2}{a_2})$ by
        \[\delta\p{\frac{n_1}{a_1},\frac{n_2}{a_2}} = \begin{cases}
            f_{\tilde \ell + 1} - 1 & \text{ if } [\tilde e_1,\ldots,\tilde e_{\tilde \ell}] \text{ is a prefix of } [f_1,\ldots,f_{\ell'}]\\
            0 & \text{ otherwise.}
        \end{cases}\]
        In particular, when $n_1 = a_1 = 1$, as $[~]$ is a prefix of every expansion, we always have $\delta(\frac{1}{1},\frac{n_2}{a_2}) = f_1-1$.
    \end{definition}
    
    \begin{proposition} \label{deficit-proposition}
        Under the same hypothesis as above, and assuming that $\gcd(n_1,a_1) = \gcd(n_2,a_2) = 1$, if $q_1$ is the inverse of $a_1$ modulo $n_1$, such that $0 < q_1 \leq n_1$, then
        \[\delta\p{\frac{n_1}{a_1},\frac{n_2}{a_2}} = \ceil{\frac{1}{n_1^2\p{\frac{a_1}{n_1} + \frac{a_2}{n_2} - 1}} - \frac{q_1}{n_1}}\]
    \end{proposition}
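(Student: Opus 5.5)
The plan is to treat the prefix condition as a statement about a Möbius transformation and read off the next partial quotient as a ceiling. Write $x = \frac{n_1}{n_1-a_1} = [\tilde e_1,\ldots,\tilde e_{\tilde\ell}]$ and $y = \frac{n_2}{a_2} = [f_1,\ldots,f_{\ell'}]$. The hypothesis $\frac{a_1}{n_1}+\frac{a_2}{n_2}>1$ is exactly $x>y$.

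First I set up the convergent matrix. Let
\[M=\prod_{i=1}^{\tilde\ell}\begin{pmatrix}\tilde e_i & -1\\ 1 & 0\end{pmatrix}=\begin{pmatrix}P & -P'\\ Q & -Q'\end{pmatrix}.\]
Then $P=n_1$, $Q=n_1-a_1$, $\frac{P'}{Q'}=[\tilde e_1,\ldots,\tilde e_{\tilde\ell-1}]$ and $P'Q-PQ'=1$. For a real tail $z$, the value of $[\tilde e_1,\ldots,\tilde e_{\tilde\ell},z]$ is $\frac{Pz-P'}{Qz-Q'}$, and
\[x-[\tilde e_1,\ldots,\tilde e_{\tilde\ell},z]=\frac{1}{Q(Qz-Q')}.\]
So there is a unique real $z$ with $y=[\tilde e_1,\ldots,\tilde e_{\tilde\ell},z]$, namely
\[z=\frac{Q'}{Q}+\frac{1}{Q^2(x-y)}.\]

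Next I show $\delta=\max(0,\lceil z\rceil-1)$. By uniqueness of Hirzebruch–Jung expansions and the monotonicity in \Cref{prefix-property}, $[\tilde e_1,\ldots,\tilde e_{\tilde\ell}]$ is a strict prefix of the expansion of $y$ exactly when $z>1$. In that case $z=[f_{\tilde\ell+1},\ldots,f_{\ell'}]$. Its leading entry is $\lceil z\rceil$ when $z\notin\Z$ and is $z$ itself when $z\in\Z$. Since every $f_i\ge 2$, in both cases $f_{\tilde\ell+1}-1=\lceil z\rceil-1$. When $z\le 1$ the prefix property fails, or holds only in the degenerate way $y=[\ldots,\tilde e_{\tilde\ell}-1]$, and $\delta=0=\max(0,\lceil z\rceil -1)$. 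I would also check the edge case $n_1=a_1=1$: there $M=I$, $Q=0$, and the formula is read with $z=y$.

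Then I rewrite $z$ in terms of the data of the statement. Set $N=n_1n_2\bigl(\frac{a_1}{n_1}+\frac{a_2}{n_2}-1\bigr)=a_1n_2+a_2n_1-n_1n_2$, which is a positive integer. A direct computation gives $Q^2(x-y)=\frac{QN}{a_2}$, so
\[z=\frac{Q'}{Q}+\frac{a_2}{QN}.\]
From $P'Q-PQ'=1$ with $P=n_1$ and $Q\equiv -a_1 \pmod{n_1}$, we get $Q'\equiv P'Q\equiv a_1^{-1}\cdot(\text{unit})$. The first task is to identify $Q'$ modulo $n_1$ in terms of $q_1$, pinning down the exact residue and the sign.

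The main obstacle is the final step: showing
\[\Bigl\lceil \tfrac{Q'}{Q}+\tfrac{a_2}{QN}\Bigr\rceil-1=\Bigl\lceil \tfrac{n_2}{n_1N}-\tfrac{q_1}{n_1}\Bigr\rceil,\]
noting that $\frac{1}{n_1^2(\ldots)}=\frac{n_2}{n_1 N}$. These two expressions are not literally equal, so the argument must use integrality. I would first clear denominators. I would write $a_2n_1=N+n_2(n_1-a_1)=N+n_2Q$ to eliminate $a_2$, which gives
\[z-1=\frac{Q'-Q}{Q}+\frac{N+n_2Q}{n_1QN}=\frac{n_1(Q'-Q)N+N+n_2Q}{n_1QN}.\]
I would then try to show that this quantity differs from $\frac{n_2-q_1N}{n_1N}$ by an amount of the form $\frac{c}{n_1QN}$ with $c$ an integer congruence-controlled multiple. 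The target is that no integer lies strictly between the two values and that they cannot straddle an integer. The key inputs would be $0\le Q'<Q$, the relation $n_1Q'\equiv -1\pmod Q$, and $q_1a_1\equiv 1\pmod{n_1}$. If this direct comparison becomes messy, the fallback is induction on $\tilde\ell$ via \Cref{-2-contraction} and \Cref{dual-removal-property}. Removing the first entry of $[\tilde e_1,\ldots]$ and the matching first entry of $[f_1,\ldots]$ should transform both sides of the claimed formula compatibly. The base case $\tilde\ell=0$, i.e. $n_1=a_1=1$, reduces to $f_1-1=\lceil y-1\rceil$, which I would verify directly.
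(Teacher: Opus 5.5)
Your $\SL(2,\Z)$ set-up is correct, and it is a genuinely different route from the paper's. The paper inducts on two moves, ``prepend a $2$ to both expansions'' and ``add $1$ to both leading entries'', and checks by direct algebra that the quantity inside the ceiling is invariant. It then treats the base cases $\tilde\ell=0$ and $\tilde e_1>f_1$ separately; the latter needs a fairly delicate inequality showing the right-hand side lies in $(-1,0]$.

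Your argument checks out up to $\delta=\lceil z\rceil-1$ and $z=\frac{Q'}{Q}+\frac{a_2}{QN}$. The $\max$ is even unnecessary, since $z>0$.

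The gap is the last step, which carries all the content. You leave it open and misdiagnose it:
\begin{itemize}
\item You expect $z-1$ and $w:=\frac{n_2}{n_1N}-\frac{q_1}{n_1}$ to differ, and you plan a no-straddling argument driven by a congruence for $Q'$ modulo $n_1$. But $P'Q-n_1Q'=1$ says nothing about $Q'$ modulo $n_1$.
\item What it determines is $P'$. From $P'Q\equiv 1$ and $Q\equiv -a_1 \pmod{n_1}$ you get $P'\equiv -q_1\pmod{n_1}$. Since $0<P'<n_1$ and $0<q_1<n_1$ when $\tilde\ell\ge 1$, in fact $P'=n_1-q_1$.
\item Your fallback induction does not close the gap. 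Stripping equal leading entries stops either at $\tilde\ell=0$ or at $\tilde e_1\neq f_1$. You give no argument for the second base case, and it is the hardest part of the paper's proof.
\end{itemize}

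The missing computation is short, and it gives an exact identity rather than an estimate. Since $a_2n_1-n_2Q=N$, one has $\frac{a_2}{QN}-\frac{n_2}{n_1N}=\frac{1}{n_1Q}$. Substituting $n_1Q'=P'Q-1$ then gives
\[
z-1-w=\frac{n_1Q'-n_1Q+q_1Q+1}{n_1Q}=\frac{P'+q_1-n_1}{n_1}=0 .
\]
Your listed inputs also suffice. The congruences $n_1Q'\equiv-1\pmod Q$ and $q_1Q\equiv -q_1a_1\equiv -1\pmod{n_1}$ make the numerator divisible by $n_1Q$. The bounds $0\le Q'<Q$ and $1\le q_1\le n_1-1$ place it strictly between $-n_1Q$ and $n_1Q$, so it vanishes.

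In the trivial case $n_1=a_1=1$ one has $z=y$ and $w=y-1$. So $z-1=w$ in all cases, prefix or not, and $\delta=\lceil z\rceil-1=\lceil w\rceil$ with no integrality argument needed.

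With this added, your proof is shorter and more conceptual than the paper's. It replaces the two invariance computations and the special inequality for $\tilde e_1>f_1$ by a single matrix identity. It is exactly the $\SL(2,\Z)$ interpretation the paper alludes to only in the remark following its proof.
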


    \begin{proof}
        First, let us write
        \[q_1a_1 = 1 + n_1\e,\]
        and note that since $q_1 \leq n_1$ and $a_1 \leq n_1$, then we have $q_1 > \e$, $a_1 > \e$ and $\e \geq 0$.
        
        For this proof, we first note that $\delta$ remains invariant under the operation of appending the same integer to the left of $[\tilde e_1,\ldots,\tilde e_{\tilde \ell}]$ and $[f_1,\ldots,f_{\ell'}]$. Therefore it is enough to show that the left hand side of the equation is also invariant under the same operation, and that the the equality holds when either $e_1 > f_1$ or when $\frac{n_1}{n_1-a_1} = [~]$ ($e_1 < f_1$ cannot happen by \Cref{prefix-property}). In the latter case, we must have $n_1 = a_1 = 1$, and the expression evaluates to
        \[\ceil{\frac{n_2}{a_2} - 1}.\]
        This number is by definition $f_1 - 1$, which is what we needed to show.

        Now let us assume that $n_1 > q_1$ so the first expansion is non-empty, and $e_1 > f_1 \geq 1$. We need to show that the right hand side is zero. First note that since $\frac{q_1}{n_1} \geq 1$, then it is always true that
        \[\frac{1}{n_1^2\p{\frac{a_1}{n_1} + \frac{a_2}{n_2} - 1}} - \frac{q_1}{n_1} > -1,\]
        so we only need to show that
        \[\frac{1}{n_1^2\p{\frac{a_1}{n_1} + \frac{a_2}{n_2} - 1}} - \frac{q_1}{n_1} \leq 0,\]
        or in other words, that
        \[n_2 \leq q_1(a_1n_2 + a_2n_1 - n_1n_2).\]
        Using that $q_1a_1 = 1 + n_1\e$, this is equivalent to showing that
        \[\e n_2 - q_1(n_2-a_2) \geq 0.\]
        When $f_1 = 1$, that is, when $n_2 = a_2 = 1$, this inequality is trivially satisfied, so we may assume that $f_2 \geq 2$, in which case $\tilde e_1 \geq 3$. Also observe that this implies that $n_1 > 1$ and $a_1 > 1$. By construction of the continued fraction expansion, we must have $\frac{n_1}{n_1-a_1} > \tilde e_1 - 1$, and so
        \[\frac{n_1}{a_1} < 1 + \frac{1}{\tilde e_1 - 2}.\]
        As these are fractions of integers, this strict inequality implies that
        \[\frac{n_1}{a_1} \leq 1 + \frac{1}{\tilde e_1 - 2} - \frac{1}{a_1(\tilde e_1 - 2)}.\]
        Multiplying by $\e$, we obtain
        \begin{equation}\label{horrible-thing}
            q_1 - \frac{1}{a_1} = \frac{n_1\e}{a_1} \leq \e + \frac{\e}{\tilde e_1 - 2} - \frac{\e}{a_1(\tilde e_1-2)}.
        \end{equation}
        With this we obtain
        \begin{align*}
            \e n_2 - q_1(n_2-a_2) &\geq \e n_2 - \e(n_2-a_2) - \frac{\e(n_2-a_2)}{\tilde e_1-2} + \frac{\e(n_2-a_2)}{a_1(\tilde e_1 - 2)} - \frac{n_2-a_2}{a_1}\\
            &= \e\p{a_2 - \frac{n_2-a_2}{\tilde e_1-2}} + \frac{n_2-a_2}{a_1}\p{\frac{\e}{\tilde e_1-2} -1}\\
            &= \e a_2\p{\frac{\tilde e_1 - \frac{n_2}{a_2}-1}{\tilde e_1-2}} + \frac{n_2-a_2}{a_1}\p{\frac{\e}{\tilde e_1-2} -1}
        \end{align*}
        Now, since $\frac{n_2}{a_2} \leq f_1$, we further have
        \begin{align*}
            \e n_2 - q_1(n_2-a_2) &\geq \e a_2\p{\frac{\tilde e_1 - f_1 -1}{\tilde e_1-2}} + \frac{n_2-a_2}{a_1}\p{\frac{\e}{\tilde e_1-2} -1}
        \end{align*}
        As we are assuming that $\tilde e_1 > f_1$, this reduces our problem to showing that $\frac{\e}{\tilde e_1-2} \geq 1$. We can rearrange inequality \ref{horrible-thing} into
        \[\frac{\e}{\tilde e_1-2}\p{1 - \frac{1}{a_1}} \geq q_1 - \e - \frac{1}{a_1}.\]
        Recalling that $q_1 > \e$, we finally obtain
        \[\frac{\e}{\tilde e_1-2}\p{1 - \frac{1}{a_1}} \geq 1 - \frac{1}{a_1},\]
        and as $a_1 > 1$, this proves that $\frac{\e}{\tilde e_1 - 2} \geq 1$.

        Now let us show that the expression is invariant under appending a 2 to the left of both chains. This corresponds to replacing $\frac{n_1}{a_1}$ and $\frac{n_2}{a_2}$ by $\frac{n_1 + a_1}{a_1}$ and $\frac{2n_2 - a_2}{n_2}$ respectively. Using that $a_1q_1 = 1 + n_1\e$, we can write
        \[(q_1 + \e)a_1 = 1 + (n_1 + a_1)\e,\]
        and so an inverse of $a_1$ modulo $(n_1 + a_1)$ is $(q_1 + \e)$, and our inequalities for $\e$ imply that $0 < q_1 + \e < n_1 + a_1$, so this candidate of inverse is in fact in the form that this hypothesis requires. Under these changes, the expression inside the bracket in the right hand side evaluates to
        \[\frac{1}{(n_1 + a_1)^2\p{\frac{a_1}{n_1 + a_1} + \frac{n_2}{2n_2-a_2} - 1}} - \frac{q_1 + \e}{n_1 + a_1}\]
        A tedious and not very enlightening algebraic manipulation shows that this equals
        \[\frac{1}{n_1^2\p{\frac{a_1}{n_1} + \frac{a_2}{n_2} - 1}} - \frac{q_1}{n_1},\]
        so the expression is indeed invariant.

        Finally, let us show that the right hand side is also invariant under adding 1 to the left most entry of both expansions, where we need to assume that $\frac{n_1}{a_1} \neq 1$, or in other words, that $n_1 > a_1$, so that $\frac{n_1}{n_1 - a_1} \neq [~]$. This operation replaces $\frac{n_1}{a_1}$ and $\frac{n_2}{a_2}$ by $\frac{2n_1-a_1}{n_1}$ and $\frac{n_2 + a_2}{a_2}$ respectively. From $a_1q_1 = 1 + n_1\e$, we obtain
        \[(2q_1 - \e)n_1 = 1 + (2n_1 - a_1)q_1,\]
        so an inverse of $n_1$ modulo $(2n_1-a_1)$ is $(2q_1-\e)$. We claim that $0 < 2q_1-\e < 2n_1-a_1$. Indeed, since $q_1 > \e$ the first inequality is clear, and as for the second, we have
        \begin{align*}
            2q_1 - \e < 2n_1- a_1 &\iff 2q_1a_1 - \e a_1 < (2n_1-a_1)a_1\\
            &\iff 2 + \e(2n_1-a_1) < (2n_1-a_1)a_1\\
            &\iff 2 < (2n_1-a_1)(a_1-\e)
        \end{align*}
        This inequality is certainly true, as $a_1 > \e$ and since $2n_1-a_1 \geq 2(a_1 + 1) - a_1 = a_1 + 2 \geq 3$. This verification shows that $(2q_1-\e)$ is the inverse of $n_1$ that is in the form required in the hypothesis. The expression on the right for these new fractions then becomes
        \[\frac{1}{(2n_1-a_1)^2\p{\frac{n_1}{2n_1-a_1} + \frac{a_2}{n_2 + a_2} - 1}} - \frac{2q_1-\e}{2n_1-a_1}.\]
        Another computation shows that this equals
        \[\frac{1}{n_1^2\p{\frac{a_1}{n_1} + \frac{a_2}{n_2} - 1}} - \frac{q_1}{n_1},\]
        which finishes the proof.
    \end{proof}

    \begin{remark}
        In the proof we actually showed that the fraction in the expression itself is invariant under the operations. Considering the case when the first chain is indeed a prefix of the second one, we can deduce an arguably stronger result. It can be stated as follows.

        If $\frac{n}{a} = [e_1,\ldots,e_\ell]$ and $\frac{n'}{a'} = [e_1,\ldots,e_s]$ with $s < \ell$, then
        \[\frac{1}{(n')^2\p{\frac{a}{n}-\frac{a'}{n'}}} + \frac{q}{n'} = \frac{n+q(n'a-na')}{n'(n'a-na')}= [e_{s+1},\ldots,e_\ell],\]
        where $q$ is the inverse of $a'$ modulo $n'$. This result is a direct consequence of interpreting Hirzebruch-Jung continued fractions as products of matrices in $\SL(2,\Z)$.
    \end{remark}

    \begin{remark} \label{no-delta}
        Note that, assuming the fractions are reduced, if $n_1 \geq n_2$, then $\delta\p{\frac{n_1}{a_1},\frac{n_2}{a_2}} = 0$. In particular it is always zero when $\frac{n_1}{a_1} = \frac{n_2}{a_2}$, and at least one of $\delta\p{\frac{n_1}{a_1},\frac{n_2}{a_2}}$ and $\delta\p{\frac{n_2}{a_2},\frac{n_1}{a_1}}$ must be zero.
    \end{remark}

    \subsection{Non-swapping Local Glueing Algorithm} \label{non-swap-local-glueing-algorithm}
    We now describe an algorithm that determines the exceptional chains in $\ti X_0$ in terms of the exceptional chains over the corresponding points in $\ti X_0^{\nu}$. We interpret this as a ``glueing'' of exceptional chains, although this glueing is purely combinatorial, as we cannot make sense of the scheme structure of $\ti X_0$ just from looking at $\ti X_0^{\nu}$. An analog of this algorithm is \cite[Algorithm 5.16]{Dok25}, but the main tool for this construction is \Cref{non-swap-quotient-theorem}.

    For $i = 1,2$, let $n_i,a_i$ such that $0 < a_i \leq n_i$ are coprime, and let $q_i$ be the inverse of $a_i$ modulo $n_i$. Let $d_i$ be two integers such that $n_1d_1 = n_2d_2$, and let $n$ be that number. Write the continued fraction expansions of $\frac{n_1}{q_1}$ and $\frac{n_2}{q_2}$ as
    \[\frac{n_1}{q_1} = [e_1,\ldots,e_\ell], \qquad \frac{n_2}{q_2} = [f_1,\ldots,f_{\ell'}]\]
    Let $p_1 \in U_1$, $p_2 \in U_2$ and $p \in U$ be point in three (étale neighborhoods in) surfaces, which are singular points of types
    \[\frac{1}{n_1}(a_1,1), \qquad \frac{1}{n_2}(1,a_2), \qquad \frac{1}{n}(a_1d_1,a_2d_2,1;j)\]
    respectively. Let $L_1' \subseteq U_1$ be a curve corresponding to the $x$ axis through $p_1$, $L_2' \subseteq U_2$ be a curve corresponding to the $y$ axis through $p_2$, and $L_1,L_2 \subseteq U$ be curves through $p$ corresponding to the $x$ and $y$ axis respectively. Let $\rho \colon Z \to U$ be the minimal resolution at $p$, and let $\rho_i \colon Z_i \to U_i$ be the resolution at $p_i$ corresponding to the fraction $\frac{n_i}{a_i}$, that is, $\rho_i$ is a blow-up if $p_i$ is smooth, and the minimal resolution if it is not. Let $\alpha_0,\ldots,\alpha_\ell$ and $\beta_0,\ldots,\beta_{\ell'}$ be defined by
    \begin{align*}
        \alpha_0 &= n_1, & \alpha_1 &= q_1,  & \alpha_{i+1} &= e_i\alpha_i - \alpha_{i-1}\\
        \beta_0 &= n_2, & \beta_1 &= q_2,  & \beta_{i+1} &= f_i\beta_i - \beta_{i-1},
    \end{align*}
    so that the multiplicities and self intersections of the exceptional chain of $\rho_1^*(n_1L_1')$ and $\rho_2^*(n_2L_2')$ are 
    \begin{align*}
        \begin{tikzpicture}[
            dot/.style = {circle, fill, minimum size=6pt, inner sep=0pt, outer sep=0pt},
            square/.style = {rectangle, fill, minimum size=6pt, inner sep=0pt, outer sep=0pt}]
            \node[square] at (-1,0) {};
            \node[label=below:{$L_1'$}] at (-1,0) {};
            \node[label=above:{$n_1$}] at (-1,0) {};
            \draw (-1,0) -- (0,0);
            \node[dot] at (0,0) {};
            \node[label=above:{$\alpha_1$}] at (0,0) {};
            \node[label=below:{$(-e_1)$}] at (0,0) {};
            \draw (0,0) -- (0.5,0);
            \node[] at (1,0) {$\ldots$};
            \draw (1.5,0) -- (2,0);
            \node[dot] at (2,0) {};
            \node[label=above:{$\alpha_{\ell-1}$}] at (2,0) {};
            \node[label=below:{$(-e_{\ell-1})$}] at (2,0) {};
            \draw (2,0) -- (3.5,0);
            \node[dot] at (3.5,0) {};
            \node[label=above:{$1$}] at (3.5,0) {};
            \node[label=below:{$(-e_\ell)$}] at (3.5,0) {};
        \end{tikzpicture} &&
        \begin{tikzpicture}[
            dot/.style = {circle, fill, minimum size=6pt, inner sep=0pt, outer sep=0pt},
            square/.style = {rectangle, fill, minimum size=6pt, inner sep=0pt, outer sep=0pt}]
            \node[dot] at (-0.5,0) {};
            \node[label=above:{$1$}] at (-0.5,0) {};
            \node[label=below:{$(-f_{\ell'})$}] at (-0.5,0) {};
            \draw (-0.5,0) -- (1,0);
            \node[dot] at (1,0) {};
            \node[label=above:{$\beta_{\ell'-1}$}] at (1,0) {};
            \node[label=below:{$(-f_{\ell'-1})$}] at (1,0) {};
            \draw (1,0) -- (1.5,0);
            \node[] at (2,0) {$\ldots$};
            \draw (2.5,0) -- (3,0);
            \node[dot] at (3,0) {};
            \node[label=above:{$\beta_1$}] at (3,0) {};
            \node[label=below:{$(-f_1)$}] at (3,0) {};
            \draw (3,0) -- (4,0);
            \node[square] at (4,0) {};
            \node[label=below:{$L_2'$}] at (4,0) {};
            \node[label=above:{$n_2$}] at (4,0) {};
        \end{tikzpicture},
    \end{align*}
    where we recall that $\alpha_\ell = \beta_{\ell'} = 1$ (\Cref{remark:last-multiplicity-is-one}).

    \begin{theorem} The minimal resolution of $\frac{1}{n}(a_1d_1,a_2d_2,1;j)$ and the multiplicities in $\rho^*(n_1L_1 + n_2L_2)$ are the following depending on $j$.
        \begin{enumerate}
            \item If $j > 0$, they are given by
            \begin{align*}
                \begin{tikzpicture}[
                    dot/.style = {circle, fill, minimum size=6pt, inner sep=0pt, outer sep=0pt},
                    square/.style = {rectangle, fill, minimum size=6pt, inner sep=0pt, outer sep=0pt}]
                    \node[square] at (-2,0) {};
                    \node[label=below:{$L_1$}] at (-2,0) {};
                    \node[label=above:{$n_1$}] at (-2,0) {};
                    \draw (-2,0) -- (-1,0);
                    \node[dot] at (-1,0) {};
                    \node[label=above:{$\alpha_1$}] at (-1,0) {};
                    \node[label=below:{$(-e_1)$}] at (-1,0) {};
                    \draw (-1,0) -- (-0.5,0);
                    \node[] at (0,0) {$\ldots$};
                    \draw (0.5,0) -- (1,0);
                    \node[dot] at (1,0) {};
                    \node[label=above:{$\alpha_{\ell-1}$}] at (1,0) {};
                    \node[label=below:{$(-e_{\ell-1})$}] at (1,0) {};
                    \draw (1,0) -- (2.5,0);
                    \node[dot] at (2.5,0) {};
                    \node[label=above:{$1$}] at (2.5,0) {};
                    \node[label=below:{$(-e_\ell-1)$}] at (2.5,0) {};
                    \draw (2.5,0) -- (4,0);
                    \node[dot] at (4,0) {};
                    \node[label=above:{$1$}] at (4,0) {};
                    \node[label=below:{$(-2)$}] at (4,0) {};
                    \draw (4,0) -- (4.5,0);
                    \node[] at (5,0) {$\ldots$};
                    \draw (5.5,0) -- (6,0);
                    \node[dot] at (6,0) {};
                    \node[label=above:{$1$}] at (6,0) {};
                    \node[label=below:{$(-2)$}] at (6,0) {};
                    \draw[thick, decoration={brace},decorate] (4,0.75) -- (6,0.75) node[above,pos=0.5] {$j-1$};
                    \draw (6,0) -- (7.5,0);
                    \node[dot] at (7.5,0) {};
                    \node[label=above:{$1$}] at (7.5,0) {};
                    \node[label=below:{$(-f_{\ell'}-1)$}] at (7.5,0) {};
                    \draw (7.5,0) -- (9,0);
                    \node[dot] at (9,0) {};
                    \node[label=above:{$\beta_{\ell'-1}$}] at (9,0) {};
                    \node[label=below:{$(-f_{\ell'-1})$}] at (9,0) {};
                    \draw (9,0) -- (9.5,0);
                    \node[] at (10,0) {$\ldots$};
                    \draw (10.5,0) -- (11,0);
                    \node[dot] at (11,0) {};
                    \node[label=above:{$\beta_1$}] at (11,0) {};
                    \node[label=below:{$(-f_1)$}] at (11,0) {};
                    \draw (11,0) -- (12,0);
                    \node[square] at (12,0) {};
                    \node[label=below:{$L_2$}] at (12,0) {};
                    \node[label=above:{$n_2$}] at (12,0) {};
                \end{tikzpicture}
            \end{align*}
            \item If $j = 0$, they are given by
            \begin{align*}
                \begin{tikzpicture}[
                    dot/.style = {circle, fill, minimum size=6pt, inner sep=0pt, outer sep=0pt},
                    square/.style = {rectangle, fill, minimum size=6pt, inner sep=0pt, outer sep=0pt}]
                    \node[square] at (-1,0) {};
                    \node[label=below:{$L_1$}] at (-1,0) {};
                    \node[label=above:{$n_1$}] at (-1,0) {};
                    \draw (-1,0) -- (0,0);
                    \node[dot] at (0,0) {};
                    \node[label=above:{$\alpha_1$}] at (0,0) {};
                    \node[label=below:{$(-e_1)$}] at (0,0) {};
                    \draw (0,0) -- (0.5,0);
                    \node[] at (1,0) {$\ldots$};
                    \draw (1.5,0) -- (2,0);
                    \node[dot] at (2,0) {};
                    \node[label=above:{$\alpha_{\ell-1}$}] at (2,0) {};
                    \node[label=below:{$(-e_{\ell-1})$}] at (2,0) {};
                    \draw (2,0) -- (4,0);
                    \node[dot] at (4,0) {};
                    \node[label=above:{$1$}] at (4,0) {};
                    \node[label=below:{$(-e_\ell-f_{\ell'})$}] at (4,0) {};
                    \draw (4,0) -- (6,0);
                    \node[dot] at (6,0) {};
                    \node[label=above:{$\beta_{\ell'-1}$}] at (6,0) {};
                    \node[label=below:{$(-f_{\ell'-1})$}] at (6,0) {};
                    \draw (6,0) -- (6.5,0);
                    \node[] at (7,0) {$\ldots$};
                    \draw (7.5,0) -- (8,0);
                    \node[dot] at (8,0) {};
                    \node[label=above:{$\beta_1$}] at (8,0) {};
                    \node[label=below:{$(-f_1)$}] at (8,0) {};
                    \draw (8,0) -- (9,0);
                    \node[square] at (9,0) {};
                    \node[label=below:{$L_2$}] at (9,0) {};
                    \node[label=above:{$n_2$}] at (9,0) {};
                \end{tikzpicture}
            \end{align*}
            \item If $j = -1$ and $\frac{n_1}{a_1} = \frac{n_2}{a_2} = 1$, that is, $n_1=a_1=n_2=a_2=1$, then $U$ is already smooth, so the resolution is an isomorphism and there are no multiplicities to compute.
            \item If $j = -1$ but at least one of $\frac{n_1}{a_2}$ or $\frac{n_2}{a_2}$ is not 1,  consider the sequence
            \[[e_1,\ldots,e_{\ell}, 1, f_{\ell'},\ldots,f_1],\]
            and ``blow down'' successively whenever there is a single 1 in this sequence. The condition $\frac{a_1}{n_1} + \frac{a_2}{n_2} > 1$ is equivalent to this process eventually reaching the configuration
            \[[e_1,\ldots,e_{r},1,1,f_{s},\ldots,f_1]\]
            for some $0 \leq r \leq \ell$ and $0 \leq s \leq \ell'$ which cannot be both zero at the same time, where if $r = 0$ or $s = 0$, then the sequence either starts or ends with $[1,1]$ respectively. Then the self intersections and multiplicities of $\rho^*(n_1L_1 + n_2L_2)$ are as follows:
            \begin{itemize}
                \item If $r > 0$ and $s > 0$, then $\alpha_r = \beta_s$ and
                \begin{align*}
                    \begin{tikzpicture}[
                        dot/.style = {circle, fill, minimum size=6pt, inner sep=0pt, outer sep=0pt},
                        square/.style = {rectangle, fill, minimum size=6pt, inner sep=0pt, outer sep=0pt}]
                        \node[square] at (-1,0) {};
                        \node[label=below:{$L_1$}] at (-1,0) {};
                        \node[label=above:{$n_1$}] at (-1,0) {};
                        \draw (-1,0) -- (0,0);
                        \node[dot] at (0,0) {};
                        \node[label=above:{$\alpha_1$}] at (0,0) {};
                        \node[label=below:{$(-e_1)$}] at (0,0) {};
                        \draw (0,0) -- (0.5,0);
                        \node[] at (1,0) {$\ldots$};
                        \draw (1.5,0) -- (2,0);
                        \node[dot] at (2,0) {};
                        \node[label=above:{$\alpha_{r-1}$}] at (2,0) {};
                        \node[label=below:{$(-e_{r-1})$}] at (2,0) {};
                        \draw (2,0) -- (4,0);
                        \node[dot] at (4,0) {};
                        \node[label=above:{$\alpha_r = \beta_s$}] at (4,0) {};
                        \node[label=below:{$(-e_r-f_{s} + 1)$}] at (4,0) {};
                        \draw (4,0) -- (6,0);
                        \node[dot] at (6,0) {};
                        \node[label=above:{$\beta_{s-1}$}] at (6,0) {};
                        \node[label=below:{$(-f_{s-1})$}] at (6,0) {};
                        \draw (6,0) -- (6.5,0);
                        \node[] at (7,0) {$\ldots$};
                        \draw (7.5,0) -- (8,0);
                        \node[dot] at (8,0) {};
                        \node[label=above:{$\beta_1$}] at (8,0) {};
                        \node[label=below:{$(-f_1)$}] at (8,0) {};
                        \draw (8,0) -- (9,0);
                        \node[square] at (9,0) {};
                        \node[label=below:{$L_2$}] at (9,0) {};
                        \node[label=above:{$n_2$}] at (9,0) {};
                    \end{tikzpicture}
                \end{align*}
                Also, $\delta\p{\frac{n_1}{a_1},\frac{n_2}{a_2}} = \delta\p{\frac{n_2}{a_2}, \frac{n_1}{a_1}} = 0$.
                \item If $r = 0$ and $s > 0$, then $\beta_s = n_1$ and
                
                \begin{align*}
                    \begin{tikzpicture}[
                        dot/.style = {circle, fill, minimum size=6pt, inner sep=0pt, outer sep=0pt},
                        square/.style = {rectangle, fill, minimum size=6pt, inner sep=0pt, outer sep=0pt}]
                        \node[square] at (5,0) {};
                        \node[label=above:{$n_1$}] at (5,0) {};
                        \node[label=below:{$L_1$}] at (5,0) {};
                        \draw (5,0) -- (6,0);
                        \node[dot] at (6,0) {};
                        \node[label=above:{$\beta_{s-1}$}] at (6,0) {};
                        \node[label=below:{$(-f_{s-1})$}] at (6,0) {};
                        \draw (6,0) -- (6.5,0);
                        \node[] at (7,0) {$\ldots$};
                        \draw (7.5,0) -- (8,0);
                        \node[dot] at (8,0) {};
                        \node[label=above:{$\beta_1$}] at (8,0) {};
                        \node[label=below:{$(-f_1)$}] at (8,0) {};
                        \draw (8,0) -- (9,0);
                        \node[square] at (9,0) {};
                        \node[label=below:{$L_2$}] at (9,0) {};
                        \node[label=above:{$n_2$}] at (9,0) {};
                    \end{tikzpicture}
                \end{align*}
                Here, if $s = 1$, then $p \in U$ is smooth. Either way, $\delta\p{\frac{n_1}{a_1},\frac{n_2}{a_2}} = f_s-1 > 0$ and $\delta\p{\frac{n_2}{a_2},\frac{n_1}{a_1}} = 0$.
                \item If $r > 0$ and $s = 0$, then $\alpha_r = n_2$ and
                \begin{align*}
                    \begin{tikzpicture}[
                        dot/.style = {circle, fill, minimum size=6pt, inner sep=0pt, outer sep=0pt},
                        square/.style = {rectangle, fill, minimum size=6pt, inner sep=0pt, outer sep=0pt}]
                        \node[square] at (-1,0) {};
                        \node[label=below:{$L_1$}] at (-1,0) {};
                        \node[label=above:{$n_1$}] at (-1,0) {};
                        \draw (-1,0) -- (0,0);
                        \node[dot] at (0,0) {};
                        \node[label=above:{$\alpha_1$}] at (0,0) {};
                        \node[label=below:{$(-e_1)$}] at (0,0) {};
                        \draw (0,0) -- (0.5,0);
                        \node[] at (1,0) {$\ldots$};
                        \draw (1.5,0) -- (2,0);
                        \node[dot] at (2,0) {};
                        \node[label=above:{$\alpha_{r-1}$}] at (2,0) {};
                        \node[label=below:{$(-e_{r-1})$}] at (2,0) {};
                        \draw (2,0) -- (3,0);
                        \node[square] at (3,0) {};
                        \node[label=above:{$n_2$}] at (3,0) {};
                        \node[label=below:{$L_2$}] at (3,0) {};
                    \end{tikzpicture}
                \end{align*}
                Here, if $r = 1$, then $p \in U$ is smooth. Either way, $\delta\p{\frac{n_1}{a_1},\frac{n_2}{a_2}} = 0$ and $\delta\p{\frac{n_2}{a_2},\frac{n_1}{a_1}} = e_r-1 > 0$.
            \end{itemize}
        \end{enumerate}
    \end{theorem}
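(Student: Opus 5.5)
The plan is to deduce everything from \Cref{non-swap-quotient-theorem}, applied with $a = a_1d_1$, $b = a_2d_2$ and $n = n_1d_1 = n_2d_2$, and then read off multiplicities from a linear-algebra argument.

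\textbf{Identifying the data.} Since $\gcd(a_i,n_i)=1$, we get $\gcd(a,n) = d_1$ and $\gcd(b,n) = d_2$. Moreover
\[\frac{n}{n-a} = \frac{n_1}{n_1-a_1}, \qquad \frac{n}{n-b} = \frac{n_2}{n_2-a_2}.\]
So the theorem says that a (possibly non-minimal) resolution of the dual $\check Y$ of $Y = \frac{1}{n}(a_1d_1,a_2d_2,1;j)$ is the chain
\[[\ti f_{\alpha'},\ldots,\ti f_1,\,2+j,\,\ti g_1,\ldots,\ti g_{\beta'}],\]
built from the expansions of $\frac{n_1}{n_1-a_1}$ and $\frac{n_2}{n_2-a_2}$. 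Next I relate these to $[e_1,\ldots,e_\ell]$ and $[f_1,\ldots,f_{\ell'}]$. The singularity $\frac{1}{n_1}(a_1,1)$ with $L_1'$ as $x$-axis is $\frac{1}{n_1}(1,q_1)$ in standard form (multiply by $q_1$), which is why its chain is $\frac{n_1}{q_1}$. The point to check is that the reversed chain $[e_\ell,\ldots,e_1]$ corresponds to $\frac{n_1}{a_1}$ (by the proposition on reversed expansions, since $q_1$ is the inverse of $a_1$). Hence $[\ti f_1,\ldots]$ is the dual of $[e_\ell,\ldots,e_1]$, and likewise $[\ti g_1,\ldots]$ is the dual of $[f_{\ell'},\ldots,f_1]$. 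Getting these orientations right is bookkeeping, but it must be done carefully since every later step depends on it.

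\textbf{Dualizing back.} Applying \Cref{dual-property} (the Riemenschneider description: a $2$-block of length $s$ becomes an entry $s+2$ or $s+3$, and an entry $r$ becomes $r-3$ or $r-2$ twos) to the concatenation gives the three main cases.
\begin{itemize}
\item If $j \geq 1$, the middle entry $2+j$ dualizes to a block of $j-1$ twos. The adjacent ends each gain $+1$, producing $e_\ell+1$ and $f_{\ell'}+1$, and away from the junction the two sides dualize back to $[e_1,\ldots,e_\ell]$ and $[f_{\ell'},\ldots,f_1]$.
\item If $j = 0$, the middle $2$ merges the two boundary contributions into a single entry $e_\ell + f_{\ell'}$.
\end{itemize}
I would prove both by induction on length, exactly as in the proof of \Cref{dual-property}, or by checking that the claimed chain has fraction $\frac{nk/(d_1d_2)}{(qk-d_1)/d_2}$ via the $\SL(2,\Z)$ matrix-product interpretation. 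Since all entries in cases (1) and (2) are $\geq 2$, the resulting resolution is minimal.
\begin{itemize}
\item If $j=-1$, the dual chain has a $1$ in the middle, so $\check Y$'s chain is non-minimal. This is the configuration of \Cref{contraction-of-duals}.
\end{itemize}
I would instead argue directly on $Y$. Case (3) is immediate, since $Y=\frac{1}{1}(\cdot)$ is smooth. For case (4), I would show that the blow-down process applied to $[e_1,\ldots,e_\ell,1,f_{\ell'},\ldots,f_1]$ reaches $[\ldots,e_r,1,1,f_s,\ldots]$, and that contracting one of the two final $(-1)$'s yields the stated chain. Concretely, I would compare with the resolution obtained by gluing the two chains through a $(-1)$-curve, which computes a model over $Y$, and then contract $(-1)$-curves. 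The equivalence of reaching $[1,1]$ at some stage with $\frac{a_1}{n_1}+\frac{a_2}{n_2}>1$ should follow from \Cref{contraction-of-duals} together with \Cref{prefix-property}. I expect this $j=-1$ case to be the main obstacle: the three subcases $r,s>0$, $r=0$, $s=0$, and the claims about when $p$ is smooth (for example $s=1$), all have to be matched.

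\textbf{Multiplicities.} The divisor $n_1L_1+n_2L_2$ is locally the divisor of the invariant function $t^n$, hence Cartier. Therefore $\rho^*(n_1L_1+n_2L_2)\cdot E=0$ for every exceptional curve $E$, which gives the recursion $m_{i-1}+m_{i+1}=c_i m_i$ with boundary values $n_1$ and $n_2$. This recursion has a unique solution, so it suffices to verify the displayed one. Away from the junction it is the recursion defining $\alpha_i$ and $\beta_i$. At the junction I use $\alpha_\ell=\beta_{\ell'}=1$ and $\alpha_{\ell-1}=e_\ell$ (from $\alpha_{\ell+1}=0$); for instance, for $j\ge1$ we get $e_\ell+1=(e_\ell+1)\cdot 1$. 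In the $j=-1$ case, tracking the multiplicities through the blow-downs gives $\alpha_r=\beta_s$, $\beta_s=n_1$, and $\alpha_r=n_2$ respectively.

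\textbf{The $\delta$ values.} The values of $\delta$ follow from the definition by noting that the blow-down process terminates with $r=0$ exactly when the dual of $[e_\ell,\ldots,e_1]$ is a prefix, with next entry $f_s$. Alternatively, they follow from \Cref{deficit-proposition} together with \Cref{no-delta}.
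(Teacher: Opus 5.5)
Your treatment of $j\ge 0$ and of case (3) is sound. The self-intersections come from dualizing the chain of \Cref{non-swap-quotient-theorem}, exactly as in the paper. Your multiplicity argument is a valid and somewhat cleaner alternative to the paper's: $n_1L_1+n_2L_2$ is the divisor of the invariant $t^n$, so its pullback is orthogonal to every exceptional curve, and negative definiteness gives uniqueness. The paper instead reads off the first multiplicity $q_1$ from the standard form $\frac{1}{nk/(d_1d_2)}\bigl(1,\frac{q_1k-d_1}{d_2}\bigr)$ via \Cref{chain-multiplicities} and propagates it.

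The gap is in case (4). You propose to treat $[e_1,\ldots,e_\ell,1,f_{\ell'},\ldots,f_1]$ as the exceptional locus of a non-minimal resolution of $Y$ and then contract $(-1)$-curves, but no such model exists. The lattice of $[\ldots,a,1,b,\ldots]$ is isometric to that of $[\ldots,a-1,b-1,\ldots]$ plus an orthogonal $\langle -1\rangle$, so formal blow-downs do not change negative definiteness. Yet once the process reaches $[\ldots,e_r,1,1,f_s,\ldots]$, the two adjacent $(-1)$-curves $E,E'$ satisfy $(E+E')^2=0$. So, exactly under the hypothesis $\frac{a_1}{n_1}+\frac{a_2}{n_2}>1$, the glued chain is not negative definite and cannot be the exceptional divisor of any resolution of $p\in U$. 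Likewise, the last step $[\ldots,e_r,1,1,f_s,\ldots]\rightsquigarrow[\ldots,e_r+f_s-1,\ldots]$ is not a contraction at all: it merges two curves into one. It is only a topological surgery, which the paper remarks never preserves the complex structure. So neither your derivation of the three chains nor your plan to get $\alpha_r=\beta_s$, $\beta_s=n_1$, $\alpha_r=n_2$ by tracking multiplicities through blow-downs has anything to run on.

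The missing idea is to do the genuine contractions on the dual side. For $j=-1$, \Cref{non-swap-quotient-theorem} gives an honest non-minimal resolution of $\check Y$ with middle entry $2+j=1$, joining the expansions of $\frac{n_1}{n_1-a_1}$ and $\frac{n_2}{n_2-a_2}$. Using \Cref{dual-property}, \Cref{dual-removal-property} and \Cref{contraction-of-duals}, one shows that blow-downs of this chain mirror the formal blow-downs of $[e_1,\ldots,e_\ell,1,f_{\ell'},\ldots,f_1]$, with corresponding sides staying dual. One then follows the dual side down to the minimal resolution of $\check Y$ and dualizes it, obtaining $[e_1,\ldots,e_{r-1},e_r+f_s-1,f_{s-1},\ldots,f_1]$, $[f_{s-1},\ldots,f_1]$ or $[e_1,\ldots,e_{r-1}]$. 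This is the paper's argument, and it also yields the $\delta$ values, according to which side is completely blown down.

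For the multiplicities you may keep your orthogonality-plus-uniqueness argument on the final chain. You must then verify the relation at the merged vertex, which for $r,s>0$ amounts to $\alpha_r=\beta_s$ and $\alpha_{r+1}+\beta_{s+1}=\alpha_r$, and for $r=0$ to $\beta_s=n_1$. These identities are not automatic. The paper gets them from the fact that the form above is standard exactly when $\delta\bigl(\frac{n_1}{a_1},\frac{n_2}{a_2}\bigr)=0$ (\Cref{deficit-proposition}), so \Cref{chain-multiplicities} still applies. It handles the smooth case $(r,s)=(0,1)$ separately via $n_2a_1+n_1a_2=1+n_1n_2$.
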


    \begin{proof}
        When $j \geq 0$, the statement about the self intersection of the exceptional curves in the minimal resolution follows directly from applying \Cref{dual-property} to the dual chain statement in \Cref{non-swap-quotient-theorem}. If $j > 0$, we let $\eta_i,\varphi_i$ and $\lambda_i$ be the multiplicities of the curves in $\rho^*(n_1L_1 + n_2L_2) \subseteq Z$ in the following way
        \begin{align*}
            \begin{tikzpicture}[
                dot/.style = {circle, fill, minimum size=6pt, inner sep=0pt, outer sep=0pt},
                square/.style = {rectangle, fill, minimum size=6pt, inner sep=0pt, outer sep=0pt}]
                \node[square] at (-2,0) {};
                \node[label=below:{$L_1$}] at (-2,0) {};
                \node[label=above:{$n_1$}] at (-2,0) {};
                \draw (-2,0) -- (-1,0);
                \node[dot] at (-1,0) {};
                \node[label=above:{$\eta_1$}] at (-1,0) {};
                \node[label=below:{$(-e_1)$}] at (-1,0) {};
                \draw (-1,0) -- (-0.5,0);
                \node[] at (0,0) {$\ldots$};
                \draw (0.5,0) -- (1,0);
                \node[dot] at (1,0) {};
                \node[label=above:{$\eta_{\ell-1}$}] at (1,0) {};
                \node[label=below:{$(-e_{\ell-1})$}] at (1,0) {};
                \draw (1,0) -- (2.5,0);
                \node[dot] at (2.5,0) {};
                \node[label=above:{$\eta_\ell$}] at (2.5,0) {};
                \node[label=below:{$(-e_\ell-1)$}] at (2.5,0) {};
                \draw (2.5,0) -- (4,0);
                \node[dot] at (4,0) {};
                \node[label=above:{$\lambda_1$}] at (4,0) {};
                \node[label=below:{$(-2)$}] at (4,0) {};
                \draw (4,0) -- (4.5,0);
                \node[] at (5,0) {$\ldots$};
                \draw (5.5,0) -- (6,0);
                \node[dot] at (6,0) {};
                \node[label=above:{$\lambda_{j-1}$}] at (6,0) {};
                \node[label=below:{$(-2)$}] at (6,0) {};
                \draw[thick, decoration={brace},decorate] (4,0.75) -- (6,0.75) node[above,pos=0.5] {$j-1$};
                \draw (6,0) -- (7.5,0);
                \node[dot] at (7.5,0) {};
                \node[label=above:{$\varphi_{\ell'}$}] at (7.5,0) {};
                \node[label=below:{$(-f_{\ell'}-1)$}] at (7.5,0) {};
                \draw (7.5,0) -- (9,0);
                \node[dot] at (9,0) {};
                \node[label=above:{$\varphi_{\ell'-1}$}] at (9,0) {};
                \node[label=below:{$(-f_{\ell'-1})$}] at (9,0) {};
                \draw (9,0) -- (9.5,0);
                \node[] at (10,0) {$\ldots$};
                \draw (10.5,0) -- (11,0);
                \node[dot] at (11,0) {};
                \node[label=above:{$\varphi_1$}] at (11,0) {};
                \node[label=below:{$(-f_1)$}] at (11,0) {};
                \draw (11,0) -- (12,0);
                \node[square] at (12,0) {};
                \node[label=below:{$L_2$}] at (12,0) {};
                \node[label=above:{$n_2$}] at (12,0) {};
            \end{tikzpicture}
        \end{align*}
        We also define $\lambda_0 = \eta_\ell$, $\lambda_j = \varphi_{\ell'-1}$. When $j = 0$, we do the same, so that $\eta_\ell = \varphi_{\ell'}$.
        
        By \Cref{non-swap-quotient-theorem}, the singularity at $p$ is of type
        \[\frac{1}{\frac{nk}{d_1d_2}}\p{1,\frac{q_1k - d_1}{d_2}} = \frac{1}{\frac{nk}{d_1d_2}}\p{\frac{q_2k - d_2}{d_1},1}.\]
        Since $j \geq 0$, we have that $k \geq n$ and thus, both of these representations of the singularity are in standard form. This implies, by \Cref{chain-multiplicities}, that the multiplicity of the first curve in $\rho^*(L_1)$ and $\rho^*(L_2)$ are
        \[\frac{1}{\frac{nk}{d_1d_2}}\cdot \frac{q_1k - d_1}{d_2} = \frac{q_1k - d_1}{n_1k}, \qquad \text{ and } \qquad \frac{1}{\frac{nk}{d_1d_2}} = \frac{d_1}{n_2k},\]
        respectively. Therefore, $\eta_1$, which the multiplicity of this curve in $\rho^*(n_1L_1 + n_2L_2)$, is $q_1$ which equals $\alpha_1$ by \Cref{chain-multiplicities}. Now that we know that the first multiplicity agrees with the one for $\rho_1^*(n_1L_1')$, it follows that $\eta_i = \alpha_i$ for every $i = 1,\ldots,\ell$. This is because both $\{\eta_i\}$ and $\{\alpha_i\}$ satisfy the equations
        \[\eta_{i+1} = e_i\eta_i - \eta_{i-1}, \qquad \alpha_{i+1} = e_i\alpha_i - \alpha_{i-1}\]
        and they agree at the terms with $i = 0$ and $i = 1$. A similar argument shows that $\varphi_i = f_i$ for all $i = 1,\ldots,\ell'$, and proves the statement when $0 \leq j \leq 1$. Now assume $j \geq 2$, so we need to show that $\lambda_1 = \ldots = \lambda_{j-1}$, and we already know that $\lambda_0 = \lambda_j = 1$. Since these are $(-2)$-curves, they satisfy the system of equations
        \[\lambda_{i+1} = 2\lambda_i - \lambda_{i-1},\]
        which we can rewrite as
        \[\lambda_{i+1} - \lambda_i = \lambda_i - \lambda_{i-1}.\]
        This means that the multiplicities change in a linear fashion as $i$ increases. However, since the first $\lambda_0$ and last $\lambda_j$ are equal to 1, this means all the $\lambda_i$ in the middle must be equal to 1 too.

        If $j = -1$ and $n_1=a_1=n_2=a_2 = 1$, it follows that $k = n = d_1 = d_2$, and so $\frac{nk}{d_1d_2} = 1$, so $U$ is smooth.

        Now, still for $j = -1$, we assume that not both of $\frac{n_1}{a_1}$ and $\frac{n_2}{a_2}$ are equal to $1$, and we write
        \[\frac{n_1}{n_1 - a_1} = [\ti e_{\ti \ell},\ldots,\ti e_1], \qquad \frac{n_2}{n_2-a_2} = [\ti f_{\ti \ell'},\ldots, \ti f_1]\]
        for the expansions of the dual fractions. Note that the numbering of these coefficients are opposite to the one in \Cref{non-swap-quotient-theorem}. Let $0 \leq u \leq \ti \ell$ be the largest integer such that $[\ti e_{\ti \ell},\ldots, \ti e_{\ti \ell-u+1}]$ is a prefix of $[f_{\ell'},\ldots,f_1]  = \frac{n_2}{a_2}$. By \Cref{prefix-property}, the condition $\frac{a_1}{n_1} + \frac{a_2}{n_2} > 1$ is equivalent to $u = \ti \ell$ so $\frac{n_1}{n_1-a_1}$ is itself a prefix of $\frac{n_2}{a_2}$, or $\ti e_{\ti \ell - u} > f_{\ell' - u}$.
        
        Now note that for a similar argument as we saw in \Cref{dual-property}, the blow-down operations on 
        \[[e_1,\ldots,e_\ell,1,f_{\ell'},\ldots,f_1] \qquad \text{ and } \qquad [\ti e_1,\ldots, \ti e_{\ti \ell}, 1, \ti f_{\ti \ell'},\ldots,\ti f_1]\]
        ``mirror'' each other. By this, we mean that as long as there is exactly one $1$ in the middle for both, if we blow-down both sequences, the right sides (resp. left sides) will remain dual to each other. Note that when blowing one of them down, and we arrive to a configuration containing $[1,1]$, in the other one there will not appear another $1$.

        First suppose that $u = \ti \ell$ so $\frac{n_1}{n_1-a_1}$ is a prefix of $\frac{n_2}{a_2}$. They by \Cref{contraction-of-duals}, blowing down completely the first chain, we must eventually arrive to
        \[[1,1,f_{\ell'-u},\ldots,f_1]\]
        By this mirroring property and by \Cref{dual-property}, at this step, the other sequence must look like
        \[[1,\underbrace{2,\ldots,2}_{f_{\ell'-u}-2},\ti f_{\ti \ell' - v},\ldots,\ti f_1]\]
        where if $\ti \ell' - v > 1$ then $\ti f_{\ti \ell' - v} > 2$. Contracting the initial 1 and subsequently the chain of $2$'s, we arrive to the dual chain
        \[[\ti f_{\ti \ell' - v} - 1,\ldots,\ti f_1],\]
        which by \Cref{dual-removal-property}, must correspond to
        \[[f_{\ell'-u-1},\ldots f_1].\]
        Note that if $\ti \ell' - v = 1$ and $\ti f_{\ti \ell' - v} = 2$, this chain is empty. This covers the self intersections when $r = 0$ as well as the computation of $\delta\p{\frac{n_1}{a_1},\frac{n_2}{a_2}}$.

        Now suppose that $u > \ti \ell$ and $\ti e_{\ti \ell-u} > f_{\ell'-u}$. Letting $\lambda = \ti e_{\ti \ell - u} - f_{\ell' - u}$, after blowing-down enough times the first chain, we obtain
        \[[e_1,\ldots,e_{r'},\underbrace{2,\ldots,2}_{\ti e_{\ti \ell - u}-2},1,\ti e_{\ti \ell - u}-\lambda, f_{\ell'-u-1},\ldots,f_1]\]
        where if $r' > 1$ then $e_{r'} > 2$. Blowing down $\ti e_{\ti \ell - u} - \lambda - 1$ more times, we finally obtain
        \[[e_1,\ldots,e_{r'},\underbrace{2,\ldots,2}_{\lambda-1},1,1, f_{\ell'-u-1},\ldots,f_1] = [e_1,\ldots,e_r,1,1,f_s,\ldots,f_1].\]
        If $s > 0$, this must be mirrored by the chain
        \[[\ti e_1,\ldots,\ti e_{\ti \ell - u-1}, \lambda + 1,\underbrace{2,\ldots,2}_{f_s-2},\ti f_{v},\ldots, \ti f_1],\]
        whose dual is
        \[[e_1,\ldots,e_r + f_s-1,\ldots,f_1].\]
        If $s = 0$, it is instead mirrored by
        \[[\ti e_1,\ldots,\ti e_{\ti \ell - u-1}, \lambda + 1],\]
        which using \Cref{dual-removal-property} one can see that its dual is
        \[[e_1,\ldots,e_{r-1}]\]
        independently of whether $\lambda = 1$ or not. This covers the self intersections and $\delta\p{\frac{n_1}{a_1},\frac{n_2}{a_2}}$ in the rest of the cases. For computing $\delta\p{\frac{n_2}{a_2},\frac{n_1}{a_1}}$ we do the same analysis but instead of dualizing the left chain at the start, we dualize the right one.

        Finally, to compute the multiplicities, we apply the exact same argument as in the case $j \geq 0$, but first we assume that $(r,s) \neq (0,1)$ or $(1,0)$ so the chain is non-empty. The only thing we need to be careful in this case is that we used that the singularities
        \[\frac{1}{\frac{nk}{d_1d_2}}\p{1,\frac{q_1k - d_1}{d_2}} = \frac{1}{\frac{nk}{d_1d_2}}\p{\frac{q_2k - d_2}{d_1},1}\]
        are represented in standard form, where $k = a_1d_1 + a_2d_2 - n$. However, by \Cref{deficit-proposition}, $\delta\p{\frac{n_1}{a_1},\frac{n_2}{a_2}}$ is defined in such a way that
        \[0 \leq \frac{q_1k -d_1}{d_2} + \delta\p{\frac{n_1}{a_1},\frac{n_2}{a_2}}\frac{nk}{d_1d_2} < \frac{nk}{d_1d_2},\]
        so the first singularity is written in standard if and only if $\delta\p{\frac{n_1}{a_1},\frac{n_2}{a_2}} = 0$, and a similar story happens for the second one. This is just enough to make the argument work.
        
        When $(r,s) = (0,1)$ (the case $(1,0)$ being analogous), the argument does not work as the chain is empty, but we still wish to show that $q_2 = n_1$. Since $p \in U$ is smooth, we must have
        \[n(a_1d_1 + a_2d_2 -n) = d_1d_2,\]
        which implies that
        \[n_2a_1 + n_1a_2 = 1 + n_1n_2.\]
        This shows that $n_1$ is an inverse of $a_2$ modulo $n_2$. By \Cref{no-delta}, $n_1 < n_2$, which implies that $n_1 = q_2$.
    \end{proof}

    \begin{example}
        Let us show some examples of the contraction when $j = -1$ only knowing the continued fractions of $\frac{n_i}{a_i}$, but without actually computing these numbers. Consider the sequences
        \[[3,2,3] \qquad \text{ and } \qquad [3,2,3,2,2].\]
        Now, we flip the second one and connect them with a 1. Afterwards, we contract until we run out of $1$'s, or until we see that the sequence contains $[1,1]$.
        \begin{align*}
            &\quad[3,2,3,1,2,2,3,2,3]\\
            \rightsquigarrow & \quad[3,2,2,1,2,3,2,3]\\
            \rightsquigarrow & \quad[3,2,1,1,3,2,3].
        \end{align*}
        Since we stopped this process with a $[1,1]$, we now know that these two sequences can be glued together with $j=-1$, and the resulting chain is
        \[[3,4,2,3].\]
        As we did not blow-down completely any of the chains, then $\delta\p{\frac{n_1}{a_1},\frac{n_2}{a_2}} = \delta\p{\frac{n_2}{a_2},\frac{n_1}{a_1}} = 0$.

        Now consider the sequences
        \[[2,2,3] \qquad \text{ and } \qquad [2,5,4,2].\]
        Again, we flip the second, connect them and contract.
        \begin{align*}
            &\quad[2,2,3,1,2,4,5,2]\\
            \rightsquigarrow & \quad[2,2,2,1,4,5,2]\\
            \rightsquigarrow & \quad[2,2,1,3,5,2]\\
            \rightsquigarrow & \quad[2,1,2,5,2]\\
            \rightsquigarrow & \quad[1,1,5,2].
        \end{align*}
        Since the process stops with $[1,1]$, we know we can glue them together with $j=-1$ and the resulting chain is
        \[[2].\]
        The left chain was completely blown-down in the process. Thus $\delta\p{\frac{n_1}{a_1},\frac{n_2}{a_2}} = 4$ and $\delta\p{\frac{n_2}{a_2},\frac{n_1}{a_1}} = 0$.
    \end{example}

    \begin{remark}
        In our algorithm, $j$ plays the role of the depth in \cite[Definition 4.3]{Dok25}. However, the depth and $j$ disagree in the cases when $n_1 = 1$ or $n_2 = 1$. This is because we use the convention $0 < a_i \leq n_i$, instead of the convention $0 \leq a_i < n_i$ (see \cite[Definition 5.8]{Dok25} for $i(d,m)$). According to \cite[Definition 5.1]{Dok25}, if the depth is $n$, then $n+1$ is the number of multiplicity 1 components counting the curves $L_1,L_2$ and the chain joining them. For us, $j+1$ is the number of multiplicity 1 components only on the chain joining them.
    \end{remark}

    \begin{remark}
        We can think of the combinatorial operation $[\ldots,e_r,1,1,f_s,\ldots] \rightsquigarrow [\ldots,e_r+f_s-1,\ldots]$ as a topological contraction of the corresponding sphere plumbing. As opposed to the traditional blow-down, one cannot contract the two curves to a point as the result is not a smooth manifold, however we can do a different surgery.
        
        We claim that tubular neighborhoods of the union of the two $(-1)$-spheres have boundary $S^1 \times S^2$. Indeed, one may contract, say, the first $(-1)$-sphere to obtain a plumbing corresponding to the sequence $[\ldots,e_r-1,0,f_s,\ldots]$, and a neighborhood of a $(0)$-sphere looks like $D^2 \times S^2$. Now, remove this neighborhood and attach instead $S^1 \times D^3$. Notice that in the resulting manifold, the branches that belonged to the spheres having self intersections $(-e_r+1)$ and $(-f_s)$ must be connected along $S^1 \times D^3$ and thus, form part of the same sphere. We obtain a new plumbing of spheres, where this new sphere turns out to have Euler number equal to the sum of the euler numbers of the previous spheres, which is $-(e_r + f_s - 1)$, so it corresponds to the sequence $[\ldots,e_r+f_s-1,\ldots]$.
        
        However, in general this construction never preserves the complex structure.
    \end{remark}

    \begin{proposition}
        Let $X$ be an oriented compact 4-manifold containing a sphere $\Sigma$ of self intersection $0$ which is non-zero in $H_2(X,\Q)$. Let $Y$ be obtained from $X$ by the surgery described above. Then $X$ and $Y$ cannot simultaneously admit a complex structure.
    \end{proposition}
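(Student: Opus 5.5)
The plan is to use a characteristic-number obstruction. For a compact complex surface $S$, Noether's formula $12\chi(\O_S) = c_1^2 + c_2$ holds, together with $c_2 = \chi_{top}(S)$ and $c_1^2 = 2\chi_{top}(S) + 3\sigma(S)$ (Hirzebruch signature theorem). Hence
\[\chi(\O_S) = \frac{\chi_{top}(S) + \sigma(S)}{4},\]
so $\chi_{top}(S) + \sigma(S) \equiv 0 \mod 4$ for every compact complex surface. In fact this congruence already holds for almost complex $4$-manifolds, by Wu's formula for $c_1^2$. I will show that the surgery changes $\chi_{top} + \sigma$ by exactly $2$. Then at most one of $X$ and $Y$ can satisfy the congruence, and the statement follows.

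\textbf{Step 1 (Euler characteristic).} Let $N \cong D^2 \times S^2$ be a closed tubular neighborhood of $\Sigma$; it is a product because $\Sigma^2 = 0$. Let $W = X \setminus \mathrm{int}(N)$, so that $\partial W \cong S^1 \times S^2$. Then $X = W \cup N$ and $Y = W \cup (S^1 \times D^3)$, both glued along $S^1 \times S^2$. Since $\chi(S^1 \times S^2) = 0$, inclusion–exclusion gives
\[\chi(X) = \chi(W) + 2, \qquad \chi(Y) = \chi(W) + 0,\]
so $\chi(Y) = \chi(X) - 2$.

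\textbf{Step 2 (signature).} I will apply Novikov additivity to both decompositions, which is legitimate because they are gluings along a full closed boundary component. This gives $\sigma(X) = \sigma(W) + \sigma(D^2 \times S^2)$ and $\sigma(Y) = \sigma(W) + \sigma(S^1 \times D^3)$. Both pieces have zero signature: $H_2(S^1 \times D^3) = 0$, and the intersection form of $D^2 \times S^2$ is the $1\times 1$ matrix $(\Sigma^2) = (0)$. Hence $\sigma(Y) = \sigma(X)$.

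Two remarks on Step 2. The hypothesis that $[\Sigma] \neq 0$ in $H_2(X,\Q)$ gives a consistency check: by Poincaré duality there is a class $\alpha$ with $\alpha \cdot \Sigma \neq 0$. The span of $\Sigma, \alpha$ is a hyperbolic-type plane of signature $0$, which the surgery kills; correspondingly $b_1$ is unchanged (the new core circle bounds a rational $2$-chain coming from $\alpha \cap W$) and $b_2^\pm$ each drop by one. This confirms the Euler characteristic count. I will also check that $Y$ is a compact oriented manifold, with the orientation of $S^1 \times D^3$ chosen compatibly with $\partial W$.

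\textbf{Step 3 (conclusion).} Steps 1 and 2 give $\chi(Y) + \sigma(Y) = \chi(X) + \sigma(X) - 2$. The two values differ by $2$, so they cannot both be divisible by $4$. Therefore $X$ and $Y$ cannot both admit complex (or even almost complex) structures.

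The main point needing care is Step 2, specifically justifying Novikov additivity and the vanishing of $\sigma(D^2 \times S^2)$. It is also the place where, if a direct Mayer–Vietoris computation is preferred, the hypothesis $[\Sigma] \neq 0$ will be used to track how $b_2^+$ and $b_2^-$ change.
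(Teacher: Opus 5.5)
Your proof is correct. It uses the same obstruction as the paper but tracks the surgery differently.

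\textbf{The paper's route.} It runs Mayer--Vietoris, using $[\Sigma]\neq 0$ in $H_2(X,\Q)$ to get $b_1(Y)=b_1(X)$ and an isometry $H_2(Y)\cong[\Sigma]^\perp/\Z[\Sigma]$. Hence $b_2^+(Y)=b_2^+(X)-1$. It then invokes $b_2^+\equiv 1+b_1 \pmod 2$ for compact complex surfaces, which follows from Noether's formula and the index theorem.

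\textbf{Same obstruction.} For any closed connected oriented $4$-manifold, $\chi_{top}+\sigma=2(1-b_1+b_2^+)$. So your condition $4\mid\chi_{top}+\sigma$ is exactly that parity condition.

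\textbf{What your route gains.} You compute $\chi_{top}$ and $\sigma$ by inclusion--exclusion and Novikov additivity, without needing to know how the homology redistributes. As a result, the hypothesis $[\Sigma]\neq 0$ is never used. When $[\Sigma]=0$ rationally, $b_1$ rises by one instead of $b_2^+$ dropping (e.g.\ an unknotted sphere in $S^4$ yields $S^1\times S^3$). Either way $1-b_1+b_2^+$ changes parity, and your count handles both cases at once. It also covers almost complex structures.

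\textbf{What the paper's route gains.} It gives the finer information of the intersection form of $Y$, which your consistency remark in Step 2 recovers.

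\textbf{A shared implicit convention.} Both arguments require the complex structures to induce the given orientation on $X$ and the orientation on $Y$ coming from $W$, since the congruence is for the complex orientation. This cannot be dropped. For $X=\mathbb{CP}^2\#2\overline{\mathbb{CP}^2}$ and $\Sigma$ in the class $H-E_1$, the surgery gives $Y\cong\overline{\mathbb{CP}^2}$. This manifold admits a complex structure only with the reversed orientation.
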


    \begin{proof}
        Assume they are both complex surfaces. Let $B = S^1 \times S^2$ be the boundary described above. Using that $\Sigma$ is not zero in $H_2(X,\Q)$, a Mayer-Vietoris argument gives that $H_3(Y) \cong H_3(X)$, $H_1(X - \Sigma) \cong H_1(Y)$, and the exact sequences
        \[0 \to H_2(X-\Sigma) \to H_2(X) \xrightarrow{\rho} H_1(B) \to H_1(X - \Sigma) \to H_1(X) \to 0,\]
        and
        \[0 \to H_2(B) = \Z[\Sigma] \to H_2(X - \Sigma) \to H_2(Y) \to 0.\]
        Here, $\rho$ is intersection with the boundary $B$. By Poincaré duality, it must have full rank as $b_1(Y) = b_1(X)$, so we obtain
        \[0 \to H_2(X - \Sigma) \to H_2(X) \xrightarrow{\cdot \Sigma} \Z,\]
        where we interpret the last map as intersection pairing with $\Sigma$. We conclude that, as intersection forms,
        \[H_2(Y) \cong [\Sigma]^\perp/\Z[\Sigma].\]
        Computing rank and index for the two intersection forms one obtains $r(H_2(Y)) = r(H_2(X)) - 2$ and $\tau(H_2(Y)) = \tau(H_2(X))$, and therefore $b_2^+(Y) = b_2^+(X) - 1$.

        Using the Thom-Hirzebruch Index Theorem \cite[Theorem I.3.1]{BHPV} and the Noether formula, we obtain
        \[b_2^+(X) = 2\chi(X) - 1 - b_1(X),\]
        and similarly for $Y$. In particular, $b_2^+(X) \equiv b_2^+(Y) \mod 2$. This is a contradiction.
    \end{proof}

    \subsection{Swapping Local Glueing Algorithm} \label{swap-local-glueing-algorithm}
    Here we describe how to obtain the exceptional divisor in $\ti X_0$ over a point obtained as a quotient of a node, whose branches are swapped by the action. This algorithm is essentially a rewrite of \Cref{swap-quotient-theorem}. We interpret this algorithm as a ``glueing'' of a chain in $\ti X_0^{\nu}$ to ``itself'', but it is easier to understand as attaching a ``D-tail'' \cite[Definition 10.1]{Dok25}.

    Let $0 < a \leq n$ be coprime, with $0 < q \leq n$ the inverse of $a$ modulo $n$ and let $u,v$ integers with $u + v = 2a$. Write the continued fraction expansion of $\frac{n}{q}$ as
    \[\frac{n}{q} = [e_1,\ldots,e_\ell].\]
    Let $p' \in U'$ and $p \in U$ be points of two (étale neighborhoods in) surfaces, which are singular points of type
    \[\frac{1}{n}(a,1), \qquad \frac{1}{2n}(2a,1;j)_D.\]
    Let $L' \subseteq U'$ (resp. $L \subseteq U$) be a line through $p'$ corresponding to the $x$ axis (resp. through $p$ corresponding to both axes in the quotient). Let $\rho \colon Z \to U$ be the minimal resolution and $\rho' \colon Z' \to U'$ be the resolution corresponding to $\frac{n}{q}$, that is, $\rho'$ is the blow-up if $p_i$ is smooth, and is otherwise the minimal resolution at $p$. Let $\alpha_0,\ldots,\alpha_{\ell}$ be defined by
    \[\alpha_0 = n, \qquad \alpha_1 = 1, \qquad \alpha_{i+1} = e_i\alpha_i - \alpha_{i-1},\]
    so that the multiplicities of the exceptional chain of $(\rho')^*(2nL')$ correspond to
    \begin{align*}
        \begin{tikzpicture}[
            dot/.style = {circle, fill, minimum size=6pt, inner sep=0pt, outer sep=0pt},
            square/.style = {rectangle, fill, minimum size=6pt, inner sep=0pt, outer sep=0pt}]
            \node[square] at (-1,0) {};
            \node[label=below:{$L'$}] at (-1,0) {};
            \node[label=above:{$2n$}] at (-1,0) {};
            \draw (-1,0) -- (0,0);
            \node[dot] at (0,0) {};
            \node[label=above:{$2\alpha_1$}] at (0,0) {};
            \node[label=below:{$(-e_1)$}] at (0,0) {};
            \draw (0,0) -- (0.5,0);
            \node[] at (1,0) {$\ldots$};
            \draw (1.5,0) -- (2,0);
            \node[dot] at (2,0) {};
            \node[label=above:{$2\alpha_{\ell-1}$}] at (2,0) {};
            \node[label=below:{$(-e_{\ell-1})$}] at (2,0) {};
            \draw (2,0) -- (3.5,0);
            \node[dot] at (3.5,0) {};
            \node[label=above:{$2$}] at (3.5,0) {};
            \node[label=below:{$(-e_\ell)$}] at (3.5,0) {};
        \end{tikzpicture}
    \end{align*}
    \begin{theorem}
        In the above scenario, the self intersections and multiplicities of exceptional components in $\rho^*(2nL)$ are
        \[\begin{tikzpicture}[
            dot/.style = {circle, fill, minimum size=6pt, inner sep=0pt, outer sep=0pt},
            square/.style = {rectangle, fill, minimum size=6pt, inner sep=0pt, outer sep=0pt}]
            \node[square] at (-1,0) {};
            \node[label=below:{$L$}] at (-1,0) {};
            \node[label=above:{$2n$}] at (-1,0) {};
            \draw (-1,0) -- (0,0);
            \node[dot] at (0,0) {};
            \node[label=above:{$2\alpha_1$}] at (0,0) {};
            \node[label=below:{$(-e_1)$}] at (0,0) {};
            \draw (0,0) -- (0.5,0);
            \node[] at (1,0) {$\ldots$};
            \draw (1.5,0) -- (2,0);
            \node[dot] at (2,0) {};
            \node[label=above:{$2\alpha_{\ell-1}$}] at (2,0) {};
            \node[label=below:{$(-e_{\ell-1})$}] at (2,0) {};
            \draw (2,0) -- (3.5,0);
            \node[dot] at (3.5,0) {};
            \node[label=above:{$2$}] at (3.5,0) {};
            \node[label=below:{$(-(e_\ell+1))$}] at (3.5,0) {};
            \draw (3.5,0) -- (5,0);
            \node[dot] at (5,0) {};
            \node[label=above:{$2$}] at (5,0) {};
            \node[label=below:{$(-2)$}] at (5,0) {};
            \draw (5,0) -- (5.5,0);
            \node[] at (6,0) {$\ldots$};
            \draw (6.5,0) -- (7,0);
            \node[dot] at (7,0) {};
            \node[label=above:{$2$}] at (7,0) {};
            \node[label=below:{$(-2)$}] at (7,0) {};
            \draw[thick, decoration={brace},decorate] (5,1) -- (7,1) node[above,pos=0.5] {$j$};
            \draw (7,0) -- (7.866,0.5);
            \node[dot] at (7.866,0.5) {};
            \node[label=above:{$1$}] at (7.866,0.5) {};
            \node[label=right:{$(-2)$}] at (7.866,0.5) {};
            \draw (7,0) -- (7.866,-0.5);
            \node[dot] at (7.866,-0.5) {};
            \node[label=above:{$1$}] at (7.866,-0.5) {};
            \node[label=right:{$(-2)$}] at (7.866,-0.5) {};
        \end{tikzpicture}
    \]
    \end{theorem}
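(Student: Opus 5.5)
The plan is to reduce everything to \Cref{swap-quotient-theorem}, applied with $2n$ in place of $n$ and with $u,v$ in place of $a,b$. Then we check that the chain appearing there is the one built from $\frac{n}{q}$, and finally recover the multiplicities from the intersection-theoretic linear system.

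\emph{Step 1: the self-intersections.} Since $u+v=2a$ is even, $0<2a\leq 2n$ and $2n$ is even, the hypotheses of \Cref{swap-quotient-theorem} hold for $\frac{1}{2n}(2a,1;j)_D$. It gives a dual graph consisting of a chain $E_{\ell},\ldots,E_1$ with self-intersections $-e'_i$, where
\[\frac{2a+2n}{2a}=\frac{n+a}{a}=[e'_1,\ldots,e'_{\ell''}],\]
followed by $j$ curves of self-intersection $(-2)$ and a fork of two $(-2)$-curves. Because $q$ is the inverse of $a$ modulo $n$, the reversal proposition (the one stating that $[e_\ell,\ldots,e_1]$ has numerator $n$ and the inverse residue) gives $\frac{n}{a}=[e_\ell,\ldots,e_1]$. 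Adding $1$ to the fraction adds $1$ to the leftmost entry, so
\[\frac{n+a}{a}=[e_\ell+1,e_{\ell-1},\ldots,e_1].\]
Thus $\ell''=\ell$, the curve adjacent to the $(-2)$-chain and fork has self-intersection $-(e_\ell+1)$, and the far end has $-e_1$, exactly as drawn. The degenerate case $a=n$ reads $\frac{n}{q}=[1]$ and $\frac{2n}{n}=[2]$, which is consistent. I will also note that the minimality statement of \Cref{swap-quotient-theorem} means no further contraction occurs.

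\emph{Step 2: locating $L$.} Next I check that the strict transform of $L$ meets the end curve $-e_1$ transversally once. In the proof of \Cref{swap-quotient-theorem}, the image of the axes $\{U=0\}\cup\{V=0\}$ (identified by the swap) passes through the orbit of $((0,0),[0,1])$. That orbit is the cyclic point $\frac{1}{k/2}(1,-\frac n2)$, whose resolution chain together with $\ti E$ forms the $e'$-chain, with $\ti E$ at the end adjacent to the fork. So I will track the line $u=0$ in the chart $v=1$, which is one of the coordinate axes of that cyclic quotient, and verify that it lands on the opposite end of the chain from $\ti E$. This is the step I expect to be most delicate, because one must make sure which coordinate axis of $\frac{1}{k/2}(1,-\frac n2)$ the image of $L$ corresponds to after putting it in standard form with $\delta$.

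\emph{Step 3: multiplicities.} The intersection matrix of the exceptional curves is negative definite. Hence the coefficients of $\rho^*(2nL)$ are the unique solution of $\rho^*(2nL)\cdot E=0$ for every exceptional $E$, given that $L$ has coefficient $2n$. So it suffices to exhibit a solution. Put $1$ on both fork curves and $2$ on their common neighbour; the fork equations read $-2\cdot1+2=0$. Put $2$ on all $j$ curves of the $(-2)$-chain; each equation reads $2\cdot2=2+2$ (or $1+1+2$ at the fork node). On the curve with self-intersection $-(e_\ell+1)$ put $2=2\alpha_\ell$; its equation forces the next coefficient to be $2(e_\ell+1)-2=2e_\ell=2\alpha_{\ell-1}$, using $\alpha_\ell=1$ and $\alpha_{\ell+1}=0$ from \Cref{remark:last-multiplicity-is-one}. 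From there the equation $e_i\cdot 2\alpha_i=2\alpha_{i-1}+2\alpha_{i+1}$ is exactly the defining recursion, and it propagates down to the coefficient $2\alpha_0=2n$ on $L$. This matches the given coefficient of $L$, so by uniqueness the claimed multiplicities are the correct ones.
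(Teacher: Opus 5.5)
Your proposal is correct and follows the paper's proof. The self-intersections come from \Cref{swap-quotient-theorem} applied with $2n$ and $u+v=2a$. The multiplicities come from checking $E\cdot\rho^*(2nL)=0$ for every exceptional $E$ via the recursion for the $\alpha_i$; as you implicitly do, this must be read with $\alpha_1=q$ rather than the misprinted $\alpha_1=1$.

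The details you add, which the paper leaves implicit, also check out: the reversal identity $\frac{n+a}{a}=[e_\ell+1,e_{\ell-1},\ldots,e_1]$, and the location of $\ti L$. For the latter, in the chart $v=1$ the coordinate $u$ has weight $1$. So $\ov E=\{V=0\}$ is the $x$-axis meeting $F_1$, while the image of $L$, namely $\{u=0\}$, is the $y$-axis meeting the opposite end of the chain, exactly as drawn.
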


    \begin{proof}
        The statement about the self intersections is a direct application of \Cref{swap-quotient-theorem}. For the statement about the multiplicities, all we need to show is that $E\cdot \p{2n\ti L + \sum \lambda_i E_i} = 0$ for each exceptional $E$, where $\ti L$ is the strict transform of $L$ and the $\lambda_i$ are the coefficients appearing in the statement. This is true since the $\alpha_i$ satisfy the equations $\alpha_{i-1}-e_i\alpha_i + \alpha_{i+1} = 0$.
    \end{proof}

    \begin{definition}
        We say that an exceptional divisor as the one above is a \textit{D-tail}.
    \end{definition}

    Again, \Cref{non-swap-local-glueing-algorithm,swap-local-glueing-algorithm} let us understand the exceptional chains in the resolution $\ti X_0 \subseteq \ti X \to X/\mu_n$ in terms of the exceptional chains in the resolution $\ti X_0^\nu \subseteq \ti X^\nu \to X^\nu/\mu_n$. The next step is to analyze the global picture, most importantly, how the self intersection of the non-exceptional components of $\ti X_0$ compare to the ones in $\ti X_0^\nu$. Let us standardize our notation from above and introduce other that will be needed.

    \begin{definition} \label{big-definitions}
        Let $\mu_n \curvearrowright C \subseteq X \xrightarrow{\pi} D$ be an equivariant action where $n$ is coprime to $\chr K$. We let $C_i \subseteq C$ the components of $C$. We denote by $\ov C$ the quotient $C/\mu_n$ and by $\ov C_i$ the image of $C_i$. Define $\tau \colon X \to X^{tw} = X/\mu_n$ the quotient, and $\pi^{tw} \colon X^{tw} \to D/\mu_n$ the \textit{twisted} model of $X$, with central fiber $X^{tw}_0$. In particular,
        \[\ov C = X^{tw}_{0,red} \subseteq X^{tw}_0 \subseteq X^{tw}.\]
        Let $\rho \colon \ti X \to X^{tw}$ be the minimal resolution. We define $\ti X_0 = \rho^*X^{tw}_0$ the fiber over $0$ and we let $\ti C$, $\ti C_i$ the strict transforms of $\ov C$ and $\ov C_i$, respectively. We say that $\ti X_0$ is a \textit{tame singular fiber}, or just \textit{singular fiber} for our purposes. We say that the $\ti C_i \subseteq \ti X_0$ are the \textit{principal components} of $\ti X_0$.

        Let $C^\nu \to C$ be the normalization and $X^{\nu} = C^\nu \times D \xrightarrow{\pi^\nu} D$. Let $\mu_n$ act on $X^{\nu}$ by $\zeta_n(x,t) = (f(x),\zeta_n t)$, and let $\tau^\nu \colon X^\nu \to X^{\nu,tw} = X^\nu/\mu_n$. To each $C_i$ there corresponds a component $C^\nu_i$. We define $\ov C^\nu$, $\ov C^\nu_i$, $\pi^{\nu,tw}$, $X^{\nu,tw}_0$, $\ti C^\nu$, $\ti C^\nu_i$, $\ti X^\nu$, $\rho^\nu$ and $\ti X^\nu_0$ analogously.

        When $\ti X_0$ is connected and is obtained as from a quotient of $C \times D \to D$ where $C$ is a smooth, possibly disconnected curve, we say is $\ti X_0$ is a \textit{building block}. If moreover, $C$ is connected, we say that $\ti X_0$ is a \textit{primitive building block}.

        If $\ti X_0$ is a primitive block obtained as the quotient of $C$ by $f$, for every $c \geq 1$ we may define the building block $[c]\ti X_0$ obtained as the quotient of $C' = \bigsqcup_{i=1}^c C$ by the automorphism $f'$ which permutes cyclically the copies of $C$, and such that $(f')^c$ acts as $f$ on each component. Then $[c]\ti X_0$ is combinatorially identical to $\ti X_0$, except each multiplicity is multiplied by $c$.

        In general, $\ti X^\nu_0$ is a collection of building blocks. We say that $\ti X^\nu_{0,i}$ is the building block corresponding to $C_i$.
    \end{definition}

    \begin{remark}
        Our definition of a principal component differs slightly from the definition given in \cite{Dok25}. For us, a principal component is the strict transform of a curve in $\ov C$, and thus, depends on the particular presentation of $\ti X_0$ as a quotient, meanwhile \cite{Dok25} defines them as those components which have positive genus or intersect at least 3 other components. This means that, for instance, the curve intersecting three components in a D-tail is a principal component for \cite{Dok25} but is not a principal component for us, if it was obtained by ``glueing of a point to itself'' in the sense of \Cref{swap-local-glueing-algorithm}.
    \end{remark}

    \begin{definition} \label{numbers}
        Let $\mu_n \curvearrowright C \subseteq X \to D$ as before inducing $f \in \Aut(C)$. Let $C_i \subseteq C$ be a component of $C$. We define the following numbers:
        \begin{align*}
            \gamma_i &= \min\left\{\gamma > 0 \mid f^\gamma(C_i) = C_i\right\}, & m_i &= \min \left\{m > 0 \mid f^m|_{C_i} = \id_{C_i}\right\}, & d_i &= \frac{n}{m_i}.
        \end{align*}
        Now if $p \in C$ is a point, which is not necessarily a node, define
        \begin{align*}
            h_p &= \min\left\{h > 0 \mid f^h(p) = p\right\} & n_p &= \frac{n}{h_p}
        \end{align*}
        If moreover, a given component $C_i$ contains $p$, also define
        \begin{align*}
            n_{i,p} &= \begin{cases}
                \ds\frac{m_i}{h_p} &  \text{if $p$ is not a node, or if $p$ is a node and $f^{h_p}$ does not swap the branches at $p$},\\ \\
                \ds\frac{m_i}{2h_p} & \text{if $p$ is a node and $f^{h_p}$ swaps the branches at $p$}.
            \end{cases}
        \end{align*}
        Finally, if $p$ is a node in $C$ which is a singularity of $A_{k-1}$ in $X$, we let $k_p = k$.
    \end{definition}

    \begin{remark}
        These numbers we just defined have the following properties:
        \begin{itemize}
            \item $\gamma_i$ is the number of components in the $f$-orbit of $C_i$.
            \item The multiplicity of $\ov C_i$ in the scheme-theoretic fiber $X^{tw}_0$ of $X^{tw} \to D/\mu_n$ is $m_i = \frac{n}{d_i}$. For $p \in C_i$, this number equals $h_pn_{i,p}$ or $2h_pn_{i,p}$ depending on whether $f$ does not swap the branches at $p$ or if it does, respectively.
            \item $\frac{m_i}{\gamma_i}$ is the degree of the map $C_i \to \ov C_i$.
            \item $d_i$ is the ramification index of $\pi$ along $C_i$.
            \item $h_p$ is the number of $f$-orbits of $p$.
            \item $n_p$ is the order of the stabilizer subgroup $\langle f^{h_p} \rangle$ acting on $p$.
            \item $n_{i,p}$ is the local degree, or ramification index, of $C_i^\nu \to \ov C_i$ at the normalizations of $p$.
            \item If $C_i$ and $C_{i'}$ intersect at $p$, the local intersection number of both of these components at $p$ is $\frac{1}{k_p}$.
        \end{itemize}
        Also note that whenever $p$ is a node lying in two components $C_i$ and $C_{i'}$, then $d_in_{p,i} = d_{i'}n_{p,i'}$ is independent of the component, as these are equal to either $n_p$ or $\frac{n_p}{2}$ depending on if $f$ swaps the branches at $p$. Also, all these numbers are invariants under the $f$-action. For this reason, we may for example, say that $h_p$ corresponds to a point $p \in \ov C$ instead of $C$. If $C$ is smooth and connected and $\mu_n$ acts faithfully (for example, if the associated singular fiber $\ti X_0$ is a building block), then $d = 1$ and $n_p = n_{i,p}$, so there is no ambiguity.
    \end{remark}

    \begin{remark}
        If $C^\nu \to C$ is the normalization and $X^{\nu} \to D$ is the associated smooth fibration, then for every component $C^{\nu}_i$ of $C^{\nu}$, and every point $p \in C^\nu$, we can define the numbers $\gamma_i^\nu$, $m_i^\nu$, etc. in the same way. They mostly agree with the corresponding numbers associated their images in $C$. The only time the numbers disagree is when $p^{\nu} \in C^\nu$ maps to a node $p \in C$ and some power of $f$ permutes the branches that pass through $p$. In this case the only difference is that $h_p^\nu = 2h_p$, $n_p^\nu = \frac{1}{2}n_p$. Note that by definition $n_{i,p}^\nu = n_{i,p}$ in every case.
    \end{remark}

    \begin{remark}
        As all singularities of $X$, $X^{tw}$ and $X^{\nu,tw}$ are quotient singularities, all curves in these surfaces are $\Q$-Cartier, so it makes sense for us to intersect them. In particular, the projection formula
        \[\tau_*A \cdot B = A \cdot \tau^*B\]
        holds, where $A \subseteq X$ and $B \subseteq X^{tw}$ are any proper curves.
    \end{remark}

    Fix representatives $C_1,\ldots,C_s$ among the $f$-orbits of components of $C$, which induces representatives $C_1^\nu,\ldots,C_s^{\nu}$. We first study what happens in the normalization $C^\nu \subseteq X^\nu \xrightarrow{\rho^\nu} X^{\nu,tw}$. As the $C_i^\nu$ are disjoint, so are the $\ov C_i^\nu$, and thus, $(\ov C_i^\nu)^2 = 0$ for all $i$. Let $p \in C_i^\nu$. If $n_p = 1$ then $\rho$ is étale at $p$ and so the quotient point $\ov p \in X^{\nu,tw}$ is smooth. Otherwise, $f^{h_p}$ acts on $T_{C^\nu_i,p}$ as multiplication by $\p{\zeta_{n_{i,p}}}^{a_{i,p}}$ for some $a_{i,p}$. If $0 < q_{i,p} < n_{i,p}$ is the inverse of $a_{i,p}$ modulo $n_{i,p}$, then over $\ov p \in \ov C_i^\nu$ we find a chain corresponding to the continued fraction $\frac{n_{i,p}}{q_{i,p}}$. By \Cref{self-int-change}, we must have
    \[\p{\ti C_i^\nu}^2 = -\sum_{\substack{\ov p \in \ov C_i^\nu,\\n_{i,p} > 1}} \frac{q_{i,p}}{n_{i,p}}.\]
    Over $p$, the exceptional chain consists of curves with self intersections given by the continued fraction
    \[\frac{n_{i,p}}{q_{i,p}} = [e_1,\ldots,e_\ell].\]
    If $\alpha_i$ are defined by $\alpha_0 = n_{i,p}$, $\alpha_1 = q_{i,p}$ and $\alpha_{j+1} = \alpha_j e_j - \alpha_{j-1}$, then following \Cref{chain-multiplicities} and since the multiplicity of $\ov C_i^\nu$ in the fiber is $m_i$, then the $j$-th component in the exceptional chain over $p$ of $\ti X^\nu \to X^{\nu,tw}$ has multiplicity
    \[m_i \cdot \frac{\alpha_j}{n_{i,p}} = h_p^\nu\alpha_j.\]

    Now let us compare this computation to the one we would obtain in the original singular fibration $X \to D$. Consider a component $C_i \subseteq C$, and let $C_{i,0},\ldots,C_{i,\gamma_i-1}$ be the components in the $f$-orbit of $C_i$, where $C_{i,0} = C_i$.
    \begin{definition} \label{point-types}
        In the above scenario, we categorize the non-free and non-smooth orbits of $C$ as follows.
        \begin{itemize}
            \item Let $p^1_1,\ldots,p^1_{r_1}$ be (a full set of) representatives for $f$-orbits of smooth points of $C$ in $C_i$ with $n_{i,p^1_j} > 1$. We say the are \textit{type 1}.
            \item Let $p^{2}_1,\ldots,p^{2}_{r_{2}}$ be representatives for $f$-orbits of nodes of $C$ in $C_i$, where the other component through $p^{2}_j$ is not in the $f$-orbit of $C_i$. We say they are \textit{type 2}.
            \item Let $p^{3}_1,\ldots,p^{3}_{r_{3}}$ be representatives for $f$-orbits of nodes of $C$ in $C_i$, where both branches through $p^3_j$ belong to components in the $f$-orbit of $C_i$, and no power of $f$ swaps the branches. We say they are \textit{type 3}.
            \item Let $p^{4}_1,\ldots,p^{4}_{r_{4}}$ be representatives for $f$-orbits of nodes of $C$ in $C_i$, where both branches through $p^4_j$ belong to components in the $f$-orbit of $C_i$, and the branches are swapped by a power of $f$. We say they are \textit{type 4}.
        \end{itemize}
    \end{definition}
        
    With this, since $C_i\cdot C = 0$ in $X$ we obtain
    \[C_i \cdot \sum_{j=0}^{\gamma_i-1}C_{i,j} = -C_i \cdot \sum_{\substack{C' \subseteq C,\\C' \neq C_{i,j}}} C' = -\sum_{j=1}^{r_2} \frac{h_{p^2_j}}{\gamma_{i}k_{p^2_{j}}}.\]
    As $\Q$-divisors we have
    \[\tau^* \ov C_i = \sum_{j=0}^{\gamma_i-1}d_iC_{i,j}.\]
    By projection formula, we have
    \[n\ov C_i^2 = (\tau^*\ov C_i)^2 = \sum_{j=0}^{\gamma_i-1} d_iC_{i,j} \sum_{j=0}^{\gamma_i-1}d_iC_{i,j} = \gamma_id_i^2C_i \cdot \sum_{j=0}^{\gamma_i-1}C_{i,j} = -\sum_{j=1}^{r_2} \frac{h_{p^2_j}\gamma_id_i^2}{\gamma_i k_{p^2_j}},\]
    where the third equality holds since $D = \sum C_{i,j}$ is invariant under the $\mu_n$-action, so $C_{i,j}\cdot D = C_i \cdot D$. We can conclude that
    \[\ov C_i^2 = -\sum_{j=1}^{r_2} \frac{d_i}{n_{i,p^2_j}k_{p^2_j}}.\]
    
    Without loss of generality, let us focus on the curve $C_1$ and a point $p \in C_1$.
    
    If $p = p^1_j$ is a point of the first kind, then above its quotient there will be a chain corresponding to the fraction $\frac{n_{1,p}}{q_{1,p}}$ identical to the one appearing in the corresponding building block $\ti X^\nu_{0,1}$. This contributes to a change in the self intersection of the strict transform $\ti C_1$ by $\frac{q_{1,p}}{n_{1,p}}$, which again agrees with the contribution to $\ti C_1^\nu$.

    Now, suppose that $p = p_j^2$ is a point of the second kind. Suppose, without loss of generality that $C_{2}$ is the other component through $p$. Then $f^{h_p}$ acts on $T_{C_1,p}$ as $(\zeta_{n_{1,p}})^{a_{1,p}}$ and on $T_{C_{2}}$ as $(\zeta_{n_{2,p}})^{a_{2,p}}$ for some integers $0 < a_{1,p} \leq n_{1,p}$, and $0 < a_{2,p} \leq n_{2,p}$. Then, by \Cref{non-swap-quotient-theorem}, the image of $p$ is a singularity of type
    \[\frac{1}{n_p}(d_1a_{1,p}, d_2a_{2,p},1;j_p) = \frac{1}{\frac{n_pk_p}{d_1d_2}}\p{1,\frac{q_1k_p - d_1}{d_2}} = \frac{1}{\frac{n_pk_p}{d_1d_2}}\p{\frac{q_2k_p - d_2}{d_1},1},\]
    where $0 < q_{1,p} \leq n_{1,p}$ and $0 < q_{2,p} \leq n_{2,p}$ are inverses of $a_{1,p}$ and $a_{2,p}$ modulo $n_{1,p}$ and $n_{2,p}$ respectively, and $k_p = d_1a_{1,p} + d_2a_{2,p} + j_pn_p$. Note that when $n_{1,p} = 1$, we choose $q_{1,p}$ to be also equal to 1, and similarly with $q_{2,p}$.

    Let $\delta_{1,p}$ be the integer such that
    \[0 \leq \frac{q_{1,p}k_p-d_1}{d_2} + \delta_{1,p}\frac{n_pk_p}{d_1d_2} < \frac{n_pk_p}{d_1d_2},\]
    or in other words, $\delta_{1,p}$ can be defined by
    \[\delta_{1,p} = \ceil{-\frac{(q_{1,p}k_p-d_1)/d_2}{(n_pk_p)/(d_1d_2)}} = \ceil{\frac{1}{n_{1,p}^2\p{\frac{a_{1,p}}{n_{1,p}} + \frac{a_{2,p}}{n_{2,p}} + j_p}}- \frac{q_{1,p}}{n_{1,p}}}.\]
    Note that if $j_p \geq 0$, then $\delta_{1,p} = 0$. Otherwise, by \Cref{deficit-proposition}, when $j_p=-1$, $\delta_{1,p} = \delta(\frac{n_{1,p}}{a_{1,p}},\frac{n_{2,p}}{a_{2,p}})$. Either way can write the same singularity in standard form
    \[\frac{1}{n_p}(d_1a_{1,p}, d_2a_{2,p},1;j_p) = \frac{1}{\frac{n_pk_p}{d_1d_2}}\p{1,\frac{q_{1,p}k_p - d_1}{d_2} + \delta_{1,p}\frac{n_pk_p}{d_1d_2}}.\]
    In case the point happens to be smooth, that is, when $n_pk_p = d_1d_2$, then $\delta_{1,p}$ has the property that the expression evaluates to $\frac{1}{1}(1,0)$ whose associated fraction $\frac{1}{0} = [~]$ is empty, thus still corresponds to its minimal resolution, which is an isomorphism at $\ov p$. If $Z_p \to X^{tw}$ is the minimal resolution at $\ov p$, and $\ti C_{1,p}$ is the strict transform of $\ov C_1$, then
    \[\ti C_{1,p}^2 = \ov C_1^2 - \frac{1}{\frac{n_pk_p}{d_1d_2}}\cdot\p{\frac{q_{1,p}k_p - d_1}{d_2} + \delta_{1,p}\frac{n_pk_p}{d_1d_2}} = \ov C_1^2 + \frac{d_1}{n_{1,p}k_p} - \frac{q_{1,p}}{n_{1,p}} - \delta_{1,p}.\]
    After resolving all other singularities in $\ov C$, there will be a contribution to the self intersection of $\ti C_1$ not coming from $p$, and a contribution from $p$ that we just computed. Something similar happened in the case of $\ti C_1^\nu$, so we can separate the local with non-local contributions, that is, coming from $p$ or points other than $p$, and compare. We obtain
    \begin{align*}
        \ti C_1^2 &= \ov C_1^2 + \p{\text{non-local contribution}} + \frac{d_1}{n_{1,p}k_p} - \frac{q_{1,p}}{n_{1,p}} - \delta_{1,p}\\
        &= (\ti C_1^\nu)^2 + \p{\text{non-local contribution}} - \delta_{1,p} - \e_{1,p},
    \end{align*}
    where we define
    \[\e_{1,p} = \begin{cases}
        0 & \text{ if } n_{1,p} \neq 1\\
        1 & \text{ if } n_{1,p} = 1
    \end{cases},\]
    as in the second case, the corresponding point in $\ov C_1^\nu$ is smooth so no contribution appears in the resolution $\ti C_1^\nu$, but $\frac{q_{1,p}}{n_{1,p}} = 1$ still appears in this contribution for $\ti C_1$.
    
    Now assume that $p = p^3_j$ is a point of the third kind, then there are two points $p^\nu$ and ${p^{\nu}}'$ over $p$ in $C^\nu_1$ and $C^\nu_{1,j}$ that correspond to two points $\ov p^\nu$ and ${\ov p^\nu}'$ in $\ov C_1$. These two points must have the same invariant $n_{1,p}$, but could have different coefficient $q_{1,p}$ and $q_{1,p}'$. Either way they determine two chains in $\ti C^\nu$ corresponding to the fractions $\frac{n_{1,p}}{q_{1,p}}$ and $\frac{n_{1,p}}{q_{1,p}'}$. We also have $n_p = d_1n_{1,p}$ and $k_p = (a_{1,p} + a_{1,p}')d_1 + j_{1,p}d_1n_{1,p}$. Then by \Cref{non-swap-quotient-theorem}, the image of $p$ in $\ov C_1$ is a singularity of type
    \begin{equation}\label{singularity-equality}
        \frac{1}{\frac{n_pk_p}{d_1^2}}\p{1,\frac{q_{1,p}k_p - d_1}{d_1}} = \frac{1}{\frac{n_pk_p}{d_1^2}}\p{\frac{q_{1,p}'k_p-d_1}{d_1},1}.
    \end{equation}
    This singularity is already written in standard form by \Cref{no-delta}. First, assume that $\frac{n_pk_p}{d_1^2} \neq 1$, so this point is singular. From \Cref{singularity-equality} we can see that the inverse of $\frac{q_{1,p}k_p-d_1}{d_1}$ modulo $\frac{n_pk_p}{d_1^2}$ is $\frac{q_{1,p}'k_p-d_1}{d_1}$ and thus, if $Z_p \to X^{tw}$ is the resolution of $p$, by \Cref{self-int-change-nodal}, we have
    \[\ti C_{1,p}^2 = \ov C_1^2 - \frac{\frac{q_{1,p}k_p - d_1}{d_1} + \frac{q_{1,p}'k_p - d_1}{d_1} + 2}{\frac{n_pk_p}{d_1^2}} = \ov C_1^2 - \frac{q_{1,p}}{n_{1,p}} - \frac{q_{1,p}'}{n_{1,p}}.\]
    The point $\ov p$ happens to be smooth in $X$ if and only if $n_{1,p} = a_{1,p} = q_{1,p} = 1$ and $j_{1,p} = -1$, and in this case, we have $\ti C_{1,p}^2 = \ov C_1^2$. Thus, comparing to the building block $\ti X^\nu_{0,1}$ and separating local and non-local contributions, we have
    \[\ti C_1^2 = (\ti C_1^\nu)^2 + (\text{non-local contribution}) - 2\e_{1,p},\]
    where
    \[\e_{1,p} = \begin{cases}
        0 & \text{ if } n_{1,p} \neq 1\\
        0 & \text{ if } n_{1,p} = 1 \text{ and } j = -1\\
        1 & \text{ if } n_{1,p} = 1 \text{ and } j \geq 0\\
    \end{cases}.\]

    Now assume that $p = p^4_j$ is a point of the fourth kind. Recall that in this case $h_p = \frac{h_p^\nu}{2}$ and $n_p = 2n_p^\nu$. Similarly as before, we define $a_{1,p}$ and $q_{1,p}$ to be the induced action of $f^{2h_p}$ on $T_{p^\nu} C^{\nu}$, which is independent of the choice of point over $p$. Assuming that $n_{1,p} > 1$ so the corresponding points are singular, on $\ti C_1^\nu$, there are two isomorphic chains corresponding to the fraction
    \[\frac{n_{1,p}}{q_{1,p}} = [e_1,\ldots,e_\ell],\]
    with multiplicities given by
    \[(2h_p\alpha_1,\ldots,2h_p\alpha_\ell),\]
    as $h_p^\nu = 2h_p$. Recall that $\alpha_1 = q_{i,p}$, as well that $a_\ell = 1$ for any such chain. On $C$, $f^{h_p}$ acts on $T_{p}C$ as multiplication $\smallpmatrix{0 & \zeta_{n_p}^u\\\zeta_{n_p}^v & 0}$ on a basis induced by the branches for some $u,v$, which we choose so that $0 < u + v \leq n_p$. As $n_p = 2n_p^\nu$, these two imply that $2a_{1,p}d_1 = u + v$. By \Cref{swap-quotient-theorem}, the point $\ov p \in \ov C \subseteq X^{tw}$ is a singularity of type
    \[\frac{1}{2n_{1,p}d_1}\p{2a_{1,p}d_1,1;j}_D.\]
    \Cref{swap-local-glueing-algorithm} implies that if $Z_p \to X^{tw}$ is the minimal resolution at the image of $p$ and $\ti C_{1,p}$ is the strict transform of $\ov C_1$, then
    \[\ti C_{1,p}^2 = \ov C_1^2 - \frac{q_{1,p}}{n_{1,p}}.\]
    So, comparing with the building block $\ti X_{0,1}^\nu$, we obtain
    \[\ti C_1^2 = (C_1^\nu)^2 + (\text{non-local contribution}) - \e_{1,p}\]
    where
    \[\e_{1,p} = \begin{cases}
        0 & \text{ if } n_{1,p} \neq 1\\
        1 & \text{ if } n_{1,p} = 1.
    \end{cases}.\]

    We condense all this analysis in the following result

    \subsection{Global Glueing Algorithm} \label{global-glueing-algorithm}
    Let $X \to D$ be a generically smooth fibration over the disc on nodal curves of genus $g$. Suppose $\mu_n$ acts on $X$ and $D$ equivariantly, such that $\zeta_n \cdot t = \zeta_nt$ on $D$ and acts as $f \in \Aut(C)$ on the central fiber $C \subseteq X$ over $0$.

    Let $C^\nu$ be the normalization of $C$ and $X^\nu = C^\nu \times D$ with the action $\zeta_n \cdot (x,t) = (f(x),\zeta_n t)$. Let $\ti X \to X^{tw} = X/\mu_n$ and $\ti X^\nu \to X^{\nu,tw} = X^\nu/\mu_n$ be the minimal resolutions. Define $\ov C$ and $\ov C^\nu$ to be the quotients of $C$ and $C^\nu$ by $f$, and let $\ti X_0, \ti X_0^\nu$ the central fibers in the resolutions $\ti X$ and $\ti X^\nu$.

    Write $\ov C = \bigcup_{i \in I} \ov C_i$ and $\ov C^\nu = \bigcup_{i \in I} \ov C_i^\nu$ as unions of irreducible components, and let $\ti C_i$ and $\ti C_i^\nu$ their strict transforms of $\ov C_i$ and $\ov C_i^\nu$ on $\ti X$ and $\ti X^\nu$. We say a point $p \in \ov C$ is type 1 if it the image of a smooth point in $C$, is of type 2 if it is a node connecting two different components, type 3 if it is a node in a single component, and type 4 if it is not a node in $\ov C$, but is the image of a node in $C$.
    
    Then the combinatorial data of $\ti X_0$, that is, components, intersections and multiplicities, can be obtained from $\ti X_0^\nu$ by applying \Cref{non-swap-local-glueing-algorithm} on chains corresponding to points of type 2 and 3, applying \Cref{swap-local-glueing-algorithm} on chains corresponding to points of type 4, and modifying the self intersection of $\ti C_i^\nu$ with the formula
    \[\ti C_i^2 = \p{\ti C_i^\nu}^2 - \sum \p{\delta_{i,p^2_j} + \e_{i,p^2_j}} - \sum 2\e_{i,p^3_j} - \sum \e_{i,p^4_j},\]
    where the sum is over type 2,3, and 4 points in $\ov C_i$. Here
    \begin{itemize}
        \item $\delta_{i,p_j^2}$ is obtained from \Cref{non-swap-local-glueing-algorithm} while computing the chain over $p_j^2$. In particular it is always $0$ if $j \geq 0$.
        \item $\e_{i,p_j^2}$ (and $\e_{i,p_j^4}$) equals $1$ if the corresponding point in $\ti X_0^\nu$ is smooth (that is, if it is the image of a free $f$-orbit), and otherwise it is zero.
        \item $\e_{i,p_j^3}$ equals $1$ if $j_{i,p_j^3} \geq 0$ and the corresponding point in $\ti X_0^\nu$ is smooth, and equals $0$ otherwise.
    \end{itemize}
    Notice that in the way \Cref{non-swap-local-glueing-algorithm} is set up, the multiplicities of the components in the glued chains over $p$ are calculated using $\tau^*(n_{1,p}L_1 + n_{2,p}L_2)$, but the correct multiplicities are instead the ones appearing in
    \[\tau^*(m_1L_1 + m_2L_2) = h_p\tau^*(n_{1,p}L_1 + n_{2,p}L_2),\]
    so the multiplicity of each chain is obtained by multiplying by $h_p$ the multiplicities that the theorem gives. Similarly, we need to multiply the multiplicities from \Cref{swap-local-glueing-algorithm} by $h_p = \frac{h_p^\nu}{2}$ as well to obtain the correct result.

    The multiplicity of $\ti C_i$ and the chains over smooth points of $\ti C$ are preserved from $\ti C$.

    \begin{remark}\label{glueing-is-not-canonical-scheme-structure}
        Although the building blocks have by definition a scheme structure, we make no claims about how the non-reduced structure is modified in the process of glueing. We already mentioned how setting $X^\nu = C^\nu \times D$, meaning that the non-reduced structure on $\ti X^\nu_0$ is a quotient of a trivial deformation of $C^\nu$, is already not canonical. We will now show how uniqueness of a glueing fails by an example where multiple non-reduced structures appear by glueing together the same building blocks.
    \end{remark}

    \begin{example}
        Consider the curve $C$ which is the disjoint union of two rational curves with a single node, and $C'$ to be the union of two $\P^1$'s intersecting transversally at two points. Let $f \colon C \to C$ the action permuting both curves, and $g \colon C' \to C'$ an action that permutes both components, while also permuting the two intersection points. Then there is an isomorphism $C^\nu \cong (C')^\nu$ under which $f^\nu$ and $g^\nu$ agree. Moreover, $\ti C^\nu = (\ti C')^\nu$ consists of a $\P^1$ with two special points, which are glued in the way above to obtain $\ti C$ and $\ti C'$. Here $\ti C_{red} \cong \ti C'_{red}$ as they are both rational curves with a single node, but their non-reduced structure differ. As a general fiber of a fibration $X \to D$ in which $C$ lives is disconnected, we have $H^0(\ti C,\O_{\ti C}) = K^2$, but for the same reason, $H^0(\ti C', \O_{\ti C'}) = K$. This computation will be a consequence of \Cref{geometrically-connected-theorem}.
    \end{example}

    For the following examples, recall that the \textit {signature} of an action $f$ on a smooth connected curve $C$ is a tuple $(n_1,\ldots,n_k)$, where $k$ is the number of non-free orbits for the action, and $n_i$ is the size of the stabilizer group at the $i$-th non-trivial orbit.
    \begin{example}[Kodaira's classification, smooth case]
        Let $E_0$ be an elliptic curve with $j$ invariant $0$. If $p \in E_0$ is the base point, the automorphism group of $E_0$ fixing $p$ is $\Z/6\Z$ and one of its generators $f$ has signature $(2,3,6)$, where $f^3$, $f^2$ and $f$ act on the tangent spaces as $\zeta_2$, $\zeta_3$ and $\zeta_6$ respectively. Since $E_0/f \cong \P^1$, then the building block $\ti X_0$ associated to this pair $(E_0,f)$ consists of a rational curve of multiplicity $6$, attached to three chains appearing as the resolutions of the singularities $\frac{1}{2}(1,1)$, $\frac{1}{3}(1,1)$ and $\frac{1}{6}(1,1)$. This is the minimal normal crossings resolution of the Kodaira fiber $II$. Similarly, the pairs $(E_0,f^2)$, $(E_0,f^3)$, $(E_0,f^4)$ and $(E_0,f^5)$ are associated to minimal normal crossings resolutions of fibers $IV$, $I_0^*$, $IV^*$ and $II^*$ respectively.

        Now let $E_1$ be a genus 1 curve with $j$ invariant $1728$. One of the two generators $f$ of its (pointed) automorphism group $\Z/4\Z$ has signature $(2,4,4)$, and acts on the tangent spaces as multiplication by $\zeta_2$, $\zeta_4$ and $\zeta_4$ respectively. For the pairs $(E_1,f)$, $(E_1,f^2)$ and $(E_1,f^3)$, the associated singular fibers are the minimal normal crossings resolutions of the fibers $III$, $I_0^*$ and $III^*$.
        
        Finally, let $E$ be any elliptic curve and $e \in E$ be a point of order $n$. Consider the automorphism $f \in \Aut(E)$ defined by $f(x) = x + e$. Then $f$ has no fixed points and $E/f$ is also an elliptic curve. The associated singular fiber is of Kodaira type $\li_{n}I_0$.
    \end{example}

    \begin{example}[Kodaira's classification, nodal case 1]
        Let $E_\infty$ be a rational curve with a single node, and suppose that it is the central fiber of a stable fibration $X \to D$, where there is an $A_{k-1}$ singularity at the node. The full automorphism group of $E_\infty$ is $\G_m \rtimes \Z/2\Z$. Here $E_\infty^\nu \cong \P^1$, and $E_\infty$ is obtained from it by glueing $0$ with $\infty$.
        
        Any automorphism $\sigma \not\in \G_m$ is an involution that fixes two smooth points, and swaps the branches of the node. If $X \to D$ admits an equivariant action which acts as $\sigma$ on the central fiber, then $E_\infty/\sigma \cong \P^1$, but on $\ov X$ there will be the singularities $\frac{1}{2}(1,1)$, $\frac{1}{2}(1,1)$ and $\frac{1}{2}(2,1;j)_D$, where $k = 2j+2$. The central fiber in the resolution is thus the Kodaira fiber $I_{j+1}^*$.

        Let's see how to recover it with the glueing algorithm. The associated building block for this fiber is the one obtained from the involution $z \mapsto -z$ of $\P^1 \cong E_\infty^\nu$. So it is consists of a central $\P^1$ of multiplicity $2$ and two $(-2)$-curves intersecting it.
        \singularfiber[$\ti X_0^\nu$]{$(-1)$}{1,1}{$(-2)$}{0}{}{1,1}{$(-2)$}
        We want to apply \Cref{swap-local-glueing-algorithm} over an empty chain in $\ti X_0^\nu$, since $f^\nu$ acts freely on the normalization of the node. Therefore we need to first blow it up:
        \singularfiber[$\ti X_0^{\nu+}$]{$(-2)$}{1,1}{$(-2)$}{1}{$(-1)$}{1,1}{$(-2)$}
        Then, according to \Cref{swap-local-glueing-algorithm}, we must replace this $(-1)$ curve with a D-tail consisting of a chain of $j+1$ $(-2)$-curves and two $(-2)$-curves at the end. This is again a Kodaira $I_{j+1}^*$ fiber.

        Let $f = \zeta_n^a \in \G_m$ be an automorphism of order $n$, where $\gcd(a,n) = 1$. We think of it as $z \mapsto \zeta_n^a z$ on its normalization $\P^1$. It follows that $f$ acts as $\zeta_n^a$ and $\zeta_n^{n-a}$ on the tangent spaces at $0$ and $\infty$, so $k = n + jn$ for $j \geq 0$. The associated building block consists of a central fiber isomorphic to $\P^1$ and multiplicity $n$, and two chains, dual to each other, appearing as the resolutions of $\frac{1}{n}(1,q)$ and $\frac{1}{n}(1,n-q)$, where $q$ is the inverse of $a$ modulo $n$.
        \singularfiber{$(-1)$}{1,3}{$\frac{1}{n}(1,q)$}{0}{}{1,3}{$\frac{1}{n}(1,n-q)$}
        According to \Cref{non-swap-local-glueing-algorithm}, we must subtract 1 to the self intersections of the last components of the chains and connect them together with another chain of $j-1$ $(-2)$-curves of multiplicity $1$. This may not look like a Kodaira fiber, but the central curve is a $(-1)$ that we can contract. Since the singularities were duals to begin with, by \Cref{contraction-of-duals} we can keep contracting until the very last curves of the chains become connected, each having self intersection $-2$. This is a Kodaira fiber $I_{j+1}$. 
    \end{example}

    \begin{example}[Kodaira's classification, nodal case 2]
        Now consider $C$ to be a cycle of $m$ $\P^1$'s (that is, an $I_m$), and let $f \in \Aut(C)$ be the automorphism of order $m$ sending the $i$-th component to the $(i+1)$-th one. For now, let us assume that such an automorphism can be extended to an equivariant action on a generically smooth fibration $X \to D$. Here, $C^\nu$ is a disjoint union of $m$ $\P^1$'s where $f^\nu$ acts transitively on components. The associated building block is a single $\P^1$ of multiplicity $m$, with two marked points that we would need to glue. We obtain the Kodaira fiber $\li_mI_{j+1}$.
    \end{example}

    \begin{example}[Cusps]
        Let $H$ be a smooth (hyperelliptic) curve of genus $g$. It is possible to choose it such that it admits an automorphism $f$ of order $4g + 2$ having signature $(2,2g+1,4g+2)$, where $f$ acts as multiplication by $\zeta_2$, $\zeta_{2g+1}^{2g-1}$ and $\zeta_{4g+2}$ on the respective tangent spaces. The quotient $H/f$ is isomorphic to $\P^1$, and the associated building block is
        \singularfiber{$(-1)$}{1,1}{$(-4g-2)$}{1}{$(-2)$}{1,3}{$\frac{1}{2g+1}(1,g)$}
        Here, we use that $g$ is the inverse of $2g - 1$ modulo $2g + 1$. The lower chain is easy to understand. Since
        \[\frac{2g + 1}{g} = 3 - \frac{1}{~\frac{g}{g-1}~},\]
        we deduce that
        \[\frac{2g + 1}{g} = [3,\underbrace{2,\ldots,2}_{g-1}]\]
        We can contract the central $(-1)$-curve, then contract the image of the $(-2)$ curve. At this point, there is a $(-4g)$-curve intersecting the first curve of a chain $[1,2,\ldots,2]$ at a tacnode. Contracting this chain completely, we obtain a singular curve
        \begin{center}\begin{tikzpicture}
            \draw[smooth,domain=-1.25:1.25,variable=\x] plot ({0.5*\x*\x},{0.25*\x*\x*\x});
            \node[right] at (0.75,0.4) {$(0)$};
            \node[left] at (-0.5,0) {$A_{2g}$};
            \draw[->] (-0.5,0) -- (-0.1,0);
        \end{tikzpicture}\end{center}
        where the curve singularity (which is a smooth point in the ambient surface) is the cusp $y^2 = x^{2g+1}$.
    \end{example}

    \begin{example}[Multiple cusps]
        Let $g_1,g_2 \geq 1$, and let $H_1$, $H_2$ be the hyperelliptic curves of genus $g_1$ and $g_2$ respectively with actions $f_1, f_2$ from the example above. Let $C$ be the glueing of $H_1$ and $H_2$ along their unique fixed point, the ones with signatures $4g_i + 2$, and let $f$ be the induced automorphism of $C$. Again, let us assume that $(C,f)$ can be extended to an equivariant action on a generically smooth surface $X \to D$, where the node of $C$ is a singularity of type $A_{k-1}$. Since the actions on the tangent spaces of the fixed points are $\zeta_{4g_i+2}$, then
        \[k = d\p{4g_2 + 2 + 4g_1 + 2 + (4g_1 + 2)(4g_2 + 2)j}, \qquad \text{ for some } d \in \frac{1}{\gcd(4g_1 + 2,4g_2 + 2)}\Z.\]
        Here $d = \gcd(4g_1 + 2,4g_2 + 2)$ if and only if the equivariant action on $X \to D$ acts faithfully on the central fiber $C$. However, the philosophy behind \Cref{non-swap-local-glueing-algorithm} tells us that it does not matter to us what exactly $k$ is, but it does help us see that $j$ cannot be $-1$ as in that case $k$ would be negative. This fact can also be deduced by \Cref{non-swap-local-glueing-algorithm} (4), since if we attempt to contract the chain
        \[[4g_1 + 2, 1, 4g_2 + 2] \qquad \rightsquigarrow \qquad [4g_1 + 1, 4g_2 + 1],\]
        we do not arrive to a configuration containing $[1,1]$.

        So, if $j = 0$, we obtain
        \begin{center}\begin{tikzpicture}
            \draw (2,3) -- (2,0);
            \node[left] at (1.8,2.1) {(-1)};
            
            \draw(-2.2,2.7) -- (2.2,2.7);

            \node[above] at (0,2.7) {$(-4g_1-4g_2-4)$};

            \draw (1.8,1.5) -- (2.8,1.5);
            \node[right] at (3,1.5) {$(-2)$};

            \draw (1.8,0.4) -- (2.8,0.6);
            \draw (2.4,0.6) -- (3.4,0.4);
            \draw (3,0.4) -- (4,0.6);
            \node[right] at (4.2,0.3) {$\frac{1}{2g_2 + 1}(1,g_2)$};

            \draw (-2,3) -- (-2,0);
            \node[right] at (-1.8,2.1) {(-1)};

            \draw (-1.8,1.5) -- (-2.8,1.5);
            \node[left] at (-3,1.5) {$(-2)$};

            \draw (-1.8,0.4) -- (-2.8,0.6);
            \draw (-2.4,0.6) -- (-3.4,0.4);
            \draw (-3,0.4) -- (-4,0.6);
            \node[left] at (-4.2,0.3) {$\frac{1}{2g_1 + 1}(1,g_1)$};
        \end{tikzpicture}\end{center}
        After all contractions, we obtain a double cuspidal curve
        \begin{center}\begin{tikzpicture}
            \draw[smooth,samples=50,domain=0:1,variable=\y] plot ({sqrt(sqrt(\y*\y*\y)+1)},{\y});
            \draw[smooth,samples=50,domain=0:{sqrt(-1/sqrt(8)+1)},variable=\y] plot ({sqrt(-sqrt(\y*\y*\y)+1)},{\y});
            \draw[smooth,samples=50,domain=-0.55:0.55,variable=\x] plot({\x},{pow((\x*\x-1)*(\x*\x-1),1/3)});
            \draw[smooth,samples=50,domain=0:{sqrt(-1/sqrt(8)+1)},variable=\y] plot ({-sqrt(-sqrt(\y*\y*\y)+1)},{\y});
            \draw[smooth,samples=50,domain=0:1,variable=\y] plot ({-sqrt(sqrt(\y*\y*\y)+1)},{\y});
            \node[above] at (0,1.2) {$(0)$};
            \node[left] at (-2,-0.5) {$A_{2g_1}$};
            \draw[->] (-2,-0.5) -- (-1.1,0);
            \node[right] at (2,-0.5) {$A_{2g_2}$};
            \draw[->] (2,-0.5) -- (1.1,0);
        \end{tikzpicture}\end{center}
        If $j > 0$, we have
        \begin{center}\begin{tikzpicture}
            \draw (2,3) -- (2,0);
            \node[left] at (1.8,1.9) {(-1)};
            
            \draw(0.7,2.8) -- (2.2,2.6);
            \node[above] at (1.2,3) {$(-4g_2-3)$};
            
            \draw(-0.7,2.8) -- (-2.2,2.6);
            \node[above] at (-1.2,3) {$(-4g_1-3)$};

            \draw (-1.1,2.8) -- (-0.2,2.6);
            \draw (1.1,2.8) -- (0.2,2.6);
            \draw[densely dotted] (-0.5,2.625) -- (0.5,2.625);

            \draw[decoration={brace,mirror,amplitude=7},decorate] (-1,2.3) -- node[below=6pt] {$j-1$} (1,2.3);

            \draw (1.8,1.5) -- (2.8,1.5);
            \node[right] at (3,1.5) {$(-2)$};

            \draw (1.8,0.4) -- (2.8,0.6);
            \draw (2.4,0.6) -- (3.4,0.4);
            \draw (3,0.4) -- (4,0.6);
            \node[right] at (4.2,0.3) {$\frac{1}{2g_2 + 1}(1,g_2)$};

            \draw (-2,3) -- (-2,0);
            \node[right] at (-1.8,1.9) {(-1)};

            \draw (-1.8,1.5) -- (-2.8,1.5);
            \node[left] at (-3,1.5) {$(-2)$};

            \draw (-1.8,0.4) -- (-2.8,0.6);
            \draw (-2.4,0.6) -- (-3.4,0.4);
            \draw (-3,0.4) -- (-4,0.6);
            \node[left] at (-4.2,0.3) {$\frac{1}{2g_1 + 1}(1,g_1)$};
        \end{tikzpicture}\end{center}
        Here, there are $j-1$ $(-2)$-curves connecting both exceptionals. Again, after all contractions, we obtain the following configuration
        \begin{center}\begin{tikzpicture}
            \draw[smooth,samples=50,domain=0:1,variable=\y] plot ({1+sqrt(sqrt(\y*\y*\y)+1)},{\y});
            \draw[smooth,samples=50,domain=0:{sqrt(-1/sqrt(8)+1)},variable=\y] plot ({1+sqrt(-sqrt(\y*\y*\y)+1)},{\y});
            \draw[smooth,samples=50,domain=0:{sqrt(-1/sqrt(8)+1)},variable=\y] plot ({-1-sqrt(-sqrt(\y*\y*\y)+1)},{\y});
            \draw[smooth,samples=50,domain=0:1,variable=\y] plot ({-1-sqrt(sqrt(\y*\y*\y)+1)},{\y});
            \node[above] at (2,1.2) {$(-1)$};
            \node[above] at (-2,1.2) {$(-1)$};
            \node[left] at (-3,-0.5) {$A_{2g_1}$};
            \draw[->] (-3,-0.5) -- (-2.1,0);
            \node[right] at (3,-0.5) {$A_{2g_2}$};
            \draw[->] (3,-0.5) -- (2.1,0);
            
            \draw (-1.8,0.7) -- (-1,0.5);
            \draw[densely dotted] (-1.2,0.5) to[bend left] (1.2,0.5);
            \draw (1.8,0.7) -- (1,0.5);

            \draw[decoration={brace,mirror,amplitude=7},decorate] (-1.6,0.3) -- node[below=6pt] {$j-1$} (1.6,0.3);
        \end{tikzpicture}\end{center}
    \end{example}

    \section{Existence of Fibrations with Specified Action on Fiber}\label{section:existence-of-fibration-with-action}

    In the previous sections we saw how singular fibers arise from a family of stable curves $X \to D$, together with an equivariant $\mu_n$-action, and we saw that, at least combinatorially, the data of the singular fiber depends only on the central curve $C \subseteq X$, an automorphism $f \colon C \to C$ and the singularity type $A_{k_p-1}$ on each node $p \in C$. Our purpose now is to show the converse: that given the data $(C,f,\{k_p\})$, up to a compatibility condition that we already saw at the start of \Cref{section:cyclic-quotients-of-Ak}, then there exists a generically smooth fibration $X \to D$ with an equivariant $\mu_n$-action which restricts to $f$ on the central fiber.

    Let $G$ be a group acting on a scheme $X$, and let $\Def(X)$ be its first order deformations. Then $G$ acts on $\Def(X)$ in the following natural way: if $\iota \colon X \hookrightarrow X'$ is a deformation of $X$ over $\Spec K[\e]/(\e^2)$ corresponding to a class $[X'] \in \Def(X)$, and $g \in G$ is an automorphism $g \colon X \to X$, then $g \cdot [X'] \in \Def(X)$ corresponds to the isomorphism class of
    \[\iota \circ g^{-1} \colon X \hookrightarrow X'.\]
    More specifically, suppose we have the exact sequence
    \[0 \to \O_X \xrightarrow{\e} \O_{X'} \xrightarrow{\pi} \O_X \to 0.\]
    Pushing this sequence forward along $g$, and noting that $g^\# \colon \O_X \to g_*\O_X$ is an isomorphism, we obtain

    \[\begin{tikzcd}
        0 \ar[r] & \O_X \ar[d] \ar[r,"\e \circ g^{\#}"] & g_*\O_{X'} \ar[d,equal] \ar[r,"g^{\#-1} \circ \pi"] & \O_X \ar[r] \ar[d] & 0\\
        0 \ar[r] & g_*\O_X \ar[r,"\e"] & g_*\O_{X'} \ar[r,"\pi"] & g_*\O_X \ar[r] & 0.
    \end{tikzcd}\]
    The upper sequence corresponds to the extension class of $g \cdot [X'] \in \Def(X)$.

    When $X$ is a local complete intersection, (or alternatively, reduced and generically smooth, see \cite[Theorem 1.1.10]{Sernesi2006}) as is the case with nodal curves, we can naturally identify $\Def(X)$ with $\Ext^1(\Omega_X,\O_X)$.

    \begin{example}\label{nodal-ext}
        Let $X = \Spec K[x,y]/(xy)$. Then $\Def(X) = \Ext^1(\Omega_X,\O_X) \cong K$, where $\lambda \in K$ corresponds to the deformation $X_{\lambda}$ defined by
        \[0 \to K[x,y]/(xy) \to K[x,y,\e]/(xy-\lambda \e, \e^2) \to K[x,y]/(xy) \to 0.\]
        Let $g \colon X \to X$ be defined by $(x,y) \mapsto (ax,by)$ for $a,b \in K^\times$. Then the map $\O_X \to g_*\O_{X_\lambda}$ is given by
        \[p(x,y) \mapsto p(ax,by)\e,\]
        and $g_*\O_{X_\lambda} \to \O_X$ is given by
        \[p(x,y) + q(x,y)\e \mapsto p(a^{-1}x,b^{-1}y).\]
        Therefore, we see that the map $g_*\O_{X_\lambda} \to \O_{X_{ab\lambda}}$ determined by
        \[(x,y,\e) \mapsto (a^{-1}x,b^{-1}y,\e)\]
        is an isomorphism of extensions. So $g$ acts as multiplication by $ab$ in $\Def(X)$.

        Similarly, if $g \colon X \to X$ is defined by $(x,y) \mapsto (ay,bx)$, then $g$ also acts as multiplication by $ab$ in $\Def(X)$.

        Up to choosing an ordering of branches of $X$, $g$ acts on $T_{X,0}$ as either the matrix $\smallpmatrix{a&0\\0&b}$ or $\smallpmatrix{0&b\\a&0}$, so the action of $g$ on $\Def(X)$ is determined by its action on $T_{X,0}$.
    \end{example}

    Now, assume that $C$ is a nodal curve and $G$ is linearly reductive. From the Grothendieck spectral sequence for $\Ext$ we have the exact sequence
    \[0 \to H^1(T_C) \to \Ext^1(\Omega_C,\O_C) \to H^0(\sExt^1(\Omega_C,\O_C))\to 0.\]
    Here $T_C$ is the tangent sheaf on $C$, and $H^1(T_C)$ classifies those deformations of $X$ that do not smooth the nodes of $C$, so we interpret $H^0(\sExt^1(\Omega_C,\O_C))$ as the induced deformations at the nodes. Since $G$ is linearly reductive and acts on each term of this sequence, there exists a non-canonical $G$-equivariant splitting
    \[\Ext^1(\Omega_C,\O_C) \cong H^1(T_C) \oplus H^0(\sExt^1(\Omega_C,\O_C)).\]
    Since $\sExt^1(\Omega_C,\O_C)$ is supported at the nodes of $C$, we further have a natural decomposition
    \begin{equation}\label{ext-decomposition}
        H^0(\sExt^1(\Omega_C,\O_C)) \cong \bigoplus_{i=1}^\ell \Ext^1(\Omega_{C,p_i},\O_{C,p_i})
    \end{equation}
    where $p_1,\ldots,p_\ell$ are the nodes of $C$. For each of these nodes, by passing to the completion we obtain natural isomorphisms
    \[\Ext^1(\Omega_{C,p_i},\O_{C,p_i}) \cong \Def(\widehat C_{p_i})\]
    and this group is, up to choosing branches, isomorphic to $\Def(K[[x,y]]/(xy)) \cong K$ as in \Cref{nodal-ext}. Thus, if $g \in G$ fixes $p_i$, then it acts on the summand $\Ext^1(\Omega_{C,p_i},\O_{C,p_i})$ as multiplication by some constant determined by the action of $g$ on $T_{C,p_i}$. More generally,
    \[g(\Ext^1(\Omega_{C,p_i},\O_{C,p_i})) = \Ext^1(\Omega_{C,g(p_i)},\O_{C,g(p_i)})\]
    as subspaces of $H^0(\sExt^1(\Omega_C,\O_C))$.

    If $D$ is a divisor on $C$ away from the nodes, the deformations of the pair $(C,D)$ give an analogous exact sequence
    \[0 \to H^1(T_C(-D)) \to \Ext^1(\Omega_C(D),\O_C) \to H^0(\sExt^1(\Omega_C,\O_C))\to 0.\]
    We still have a $G$-equivariant decomposition
    \[\Ext^1(\Omega_C(D),\O_C) \cong H^1(T_C(-D)) \oplus H^0(\sExt^1(\Omega_C,\O_C)).\]
    
    \begin{theorem}\label{weak-realization-theorem}
        Let $n$ be an integer coprime to $\chr K$. Let $C$ be a possibly disconnected, nodal curve and let $f \in \Aut(C)$ be an automorphism of finite order dividing $n$. Let $p_1,\ldots,p_\ell$ be the nodes of $C$ and let $k_1,\ldots,k_\ell$ be positive integers. The following are equivalent
        \begin{enumerate}
            \item There exists a family of nodal curves $X \to B$ such that
            \begin{itemize}
                \item $B = \Spec R$ is a smooth finite type curve over $K$ with a $\mu_n$-action, with $0 \in B$ a fixed point around which, formally locally, $\zeta_n \colon \Spec K[[t]] \to \Spec K[[t]]$ looks like $t \mapsto \zeta_n t$.
                \item The (geometric) general fiber is smooth, $X_0 \cong C$ and $\zeta_n$ acts as $f$ on $X_0$.
                \item Each node $p_i \in X_0$ is a singular point of type $A_{k_i-1}$ in $X$.
            \end{itemize}
            \item There exists a $\mu_n$-equivariant map
            \[\psi \colon \A^1 \to H^0(\sExt^1(\Omega_C,\O_C)),\]
            whose projection onto the summand $\Ext^1(\Omega_{C,p_i},\O_{C,p_i}) \cong K$ is of the form $\psi_i(t) = \lambda_i t^{k_i}$ for some $\lambda_i \neq 0$. Here $\mu_n$ acts on $\A^1$ as $t \mapsto \zeta_n t$ and on $H^0(\sExt^1(\Omega_C,\O_C))$ as $f$.
            \item The $k_i$ are invariant along $f$-orbits and for every node $p_i$ and $d \in \Z$, if $f^d(p_i) = p_i$, then $f^d$ acts on $\Ext^1(\Omega_{C,p_i},\O_{C,p_i})$ as multiplication by $\zeta_n^{k_id}$ .
        \end{enumerate}
    \end{theorem}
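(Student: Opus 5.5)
I will prove $(1)\Rightarrow(3)\Rightarrow(2)\Rightarrow(1)$.

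For $(1)\Rightarrow(3)$, work formally locally at a node $p_i$. The family $X\to B$ is $\mu_n$-equivariant, so $f$ permutes the nodes of $X_0$ together with their singularity types in $X$. This gives invariance of $k_i$ along $f$-orbits. Near $p_i$ the total space is $\widehat{\O}_{X,p_i}\cong K[[x,y,t]]/(xy-u\,t^{k_i})$ with $u$ a unit, and after rescaling we may take $xy=t^{k_i}$. Suppose $f^d(p_i)=p_i$. Then $f^d$ acts on $t$ by $\zeta_n^d$. We may choose coordinates in which $f^d$ acts diagonally or by a swap on $x,y$, since the group is linearly reductive. The relation $xy=t^{k_i}$ then forces the product of the two eigenvalue factors to equal $\zeta_n^{dk_i}$. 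By \Cref{nodal-ext}, $f^d$ acts on $\Def(\widehat C_{p_i})$ as multiplication by exactly this product $ab$. This is~(3).

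For $(3)\Rightarrow(2)$, choose representatives of the $f$-orbits of nodes. Fix a node $p$ with orbit $p,f(p),\dots,f^{h-1}(p)$ and stabilizer generated by $f^h$. Pick any nonzero $\lambda\in\Ext^1(\Omega_{C,p},\O_{C,p})$ and set $\psi_{f^e(p)}(t)=\zeta_n^{-ek}\,t^{k}\,f^e_*\lambda$ for $0\le e<h$. This transports $\lambda$ around the orbit. Equivariance $\psi(\zeta_n t)=f\cdot\psi(t)$ then holds for each step $e\to e+1$ automatically. The only consistency condition is at $e=h$, where it asks that $f^h$ act on the summand at $p$ by $\zeta_n^{hk}$, which is exactly~(3). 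Doing this on each orbit and summing gives $\psi$.

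For $(2)\Rightarrow(1)$, which I expect to be the main obstacle, I would lift $\psi$ to the full deformation space and then use equivariant versality.
\begin{itemize}
\item Use the $\mu_n$-equivariant splitting $\Ext^1(\Omega_C,\O_C)\cong H^1(T_C)\oplus H^0(\sExt^1(\Omega_C,\O_C))$ to regard $\psi$ as an equivariant map $\A^1\to\Ext^1(\Omega_C,\O_C)$, with zero $H^1(T_C)$-component.
\item Nodal curves are unobstructed. Since $\mu_n$ is linearly reductive, there is a $\mu_n$-equivariant formally smooth miniversal deformation over $\widehat{\A}(\Ext^1)$ (Rim's equivariant versality), and its local-to-global map to $\prod_i\Def(\widehat C_{p_i})$ is smooth and compatible with the action. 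Compose with $\psi$ to obtain a $\mu_n$-equivariant family over $\Spec K[[t]]$. Locally at $p_i$ it is $xy=\lambda_i t^{k_i}+(\text{higher order})$, so $p_i$ is an $A_{k_i-1}$ point. The generic fiber smooths every node, and since smooth curves are open, the generic fiber is smooth.
\item If $C$ is not proper, or to guarantee the map is a genuine equivariant map into the base rather than only up to higher-order terms, I would choose the equivariant identification of the local rings with their tangent data by averaging over $\mu_n$.
\end{itemize}
The remaining obstacle is passing from the formal family over $K[[t]]$ to a finite-type smooth curve $B$. For that I would use Artin algebraization, or a $\mu_n$-invariant algebraic versal family, for instance a Hilbert scheme slice, and pull back along an equivariant étale neighbourhood. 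Here I would need care that equivariance and the singularity types survive the approximation, which I would arrange by approximating $\psi$ only to order higher than $\max k_i$.
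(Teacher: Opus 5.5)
The genuine gap is in $(2)\Rightarrow(1)$, at the step ``locally at $p_i$ it is $xy=\lambda_i t^{k_i}+(\text{higher order})$''. After you pull back the equivariant versal family along $\psi$, the local equation at $p_i$ is $xy=s_i(\psi(t))$. Here $s_i$ is the smoothing parameter of $p_i$, pulled back to the hull by the local-to-global map. Only the \emph{linear} part of $s_i$ is the $i$-th projection $\ell_i$. Its nonlinear part involves the other node coordinates, and on $\psi$ a monomial $\ell_j\ell_{j'}$ contributes $t^{k_j+k_{j'}}$, which can have lower order than $t^{k_i}$.

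For instance, take $n=1$, two nodes, $k_1=1$, $k_2=5$, and the linearization $\ell_2=s_2-s_1^2$, which is equivariant because the group is trivial. Your $\psi$ then gives $xy=\lambda_2t^5+\lambda_1^2t^2$ at $p_2$, which is an $A_1$ point rather than $A_4$. For non-trivial $f$ the same failure occurs whenever the characters of $\ell_j\ell_{j'}$ and $\ell_i$ agree. Averaging over $\mu_n$ does not fix this, since it only produces \emph{some} equivariant linearization, not one adapted to the nodes.

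What is missing is to use the smoothness of the local-to-global map to choose the formal coordinates so that the $s_i$ themselves are coordinates. You can do this equivariantly: choose semi-invariant equations of the boundary branches and transport them along each $f$-orbit as in your $(3)\Rightarrow(2)$ construction, then complete to a full coordinate system by averaging. Equivalently, the $i$-th boundary branch becomes exactly the hyperplane $\ker\bigl(\Ext^1\to\Ext^1(\Omega_{C,p_i},\O_{C,p_i})\bigr)$, and then $s_i\circ\psi=\lambda_it^{k_i}$ on the nose. This is precisely what the paper asserts when it identifies the branches of $\partial\M^D_{g,|D|}$ at $[C]$ with the coordinate hyperplanes under its chart.

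The paper's route also removes your approximation step. It picks an $f$-invariant divisor $D$ making $(C,D)$ stable and applies the local structure theorem for the DM stack $\ov\M^D_{g,|D|}$. This gives an algebraic chart $[\Spec A/\langle f\rangle]$ with an equivariant étale map to $\Ext^1(\Omega_C(D),\O_C)$, and $B$ is simply the pullback of $\psi$ along it.

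The other directions are fine. Your $(3)\Rightarrow(2)$ is the paper's basis construction. Your direct $(1)\Rightarrow(3)$ is a more elementary substitute for the paper's route, which does $(1)\Rightarrow(2)$ via the chart plus truncation to the $t^{k_i}$ terms, and then $(2)\Rightarrow(3)$. Note, though, that you cannot rescale to $xy=t^{k_i}$ and linearize at the same time for free. Instead:
\begin{enumerate}
\item work in equivariant branch-adapted coordinates in which $xy=u\,t^{k_i}$ with $u$ a unit;
\item apply $f^d$ to get $ab\,u=\zeta_n^{dk_i}f^d(u)$;
\item evaluate at $p_i$ to conclude $ab=\zeta_n^{dk_i}$.
\end{enumerate}
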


    \begin{remark}
        Suppose that $p$ is a node of $C$ and $f^{h}$ is the smallest power of $f$ fixing $p$. Formally near $p$, $f$ acts on $\Spec K[[x,y]]/(xy)$ as $(x,y) \mapsto (\zeta_{n/h}^ax,\zeta_{n/h}^by)$ or $(x,y) \mapsto (\zeta_{n/h}^ay,\zeta_{n/h}^bx)$. Then the statement in (2) is equivalent to saying that $k_i \equiv a + b \mod n/h$, which we already saw at the start of \Cref{section:cyclic-quotients-of-Ak} as a necessary condition for $f$ to be extended on a surface containing $C$.
    \end{remark}

    \begin{proof}[Proof of \Cref{weak-realization-theorem}]
        We can easily reduce to the case when $C$ is connected, as we can treat each component independently. Let $\M_{g,m}^D$ be the moduli space of reduced divisorially marked curves, parametrizing pairs $(\mathcal{C},\mathcal{D})$, where $\mathcal{C} \to S$ is a family of nodal curves of genus $g$, and $\mathcal{D} \subseteq \mathcal{C}$ is a flat divisor whose restriction to fibers are reduced of degree $m$, and such that $\omega_{\mathcal{C}/S}(\mathcal{D})$ is relatively ample. This moduli space can also be understood as the quotient $\ov \M_{g,m}/S_m$.

        Let $D \subseteq C$ be a divisor which is stable under the $f$-action and such that $(C,D)$ is a stable divisorially marked curve.

        $(1) \iff (2)$: By the local structure on Deligne-Mumford stacks \cite[Theorem 5.3.1]{alper-stacks}, there is a scheme $\Spec A \ni w$ with an $\Aut(C,D)$-action, and an étale morphism $[\Spec A/\Aut(C,D)] \to \ov \M^D_{g,|D|}$ which induces isomorphisms on stabilizer groups at every point (see \cite[Proof of Theorem 5.4.6]{alper-stacks}). Now, since the order of $f$ is coprime to $\chr K$, $[\Spec A/\langle f\rangle]$ is a tame Deligne-Mumford stack, and after composing with $[\Spec A/\langle f \rangle] \to [\Spec A/\Aut(C,D)]$, the same proof of \cite[Theorem 7.7.1]{alper-stacks} applies to show that there are étale morphisms
        \[\begin{tikzcd}
            \mathcal{W} = ([\Spec A/\langle f \rangle],w) \ar[d] \ar[r] & (\ov\M^D_{g,|D|},[C])\\
            \mathcal{T} = ([T_{\ov\M^D_{g,|D|},[C]}/\langle f \rangle],0), 
        \end{tikzcd}\]
        where the vertical (resp. horizontal) one induces isomorphisms (resp. embeddings) on stabilizer groups at every point of $\mathcal{W}$. Here $T_{\ov\M^D_{g,|D|},[C]}$ is naturally isomorphic to $\Ext^1(\Omega_C(D),\O_C)$, and the action of $\langle f \rangle$ is the natural one described at the beginning of this section.

        The boundary $\partial \M^D_{g,|D|} \subseteq \ov\M^D_{g,|D|}$ is a normal crossing divisor, which at $[C]$ has $\ell$ branches, the $i$-th of them corresponding to directions in moduli which do not smooth the point $p_i$. The map $\mathcal{W} \to \mathcal{T}$ is constructed in a natural way, and as such, the $i$-th branch of the boundary divisor $\partial \ov\M^D_{g,|D|}$ at $[C]$ corresponds to the $i$-th hyperplane given by the zero set of the projection
        \[\Ext^1(\Omega_C(D),\O_C) \to \Ext^1(\Omega_{C,p_i},\O_{C,p_i}).\]

        The existence of a family of stable curves $X \to B$ as in $(1)$ is equivalent to a classifying map $\varphi \colon B \to \ov\M^D_{g,|D|}$ together with a $2$-isomorphism $\alpha \colon \varphi \Rightarrow \varphi \circ \zeta_n$ which restricts to
        \[f \in \Aut_{\ov\M^D_{g,|D|}}(C)\]
        at $0 \in B$.

        Since $\mathcal{W} \to \ov\M^D_{g,|D|}$ (resp. $\mathcal{W} \to \mathcal{T}$) induce embeddings (resp. isomorphisms) on stabilizer groups, then the induced morphisms on inertia
        \[\begin{tikzcd}
            \mathcal{I}_\mathcal{W} \ar[d] \ar[r] & \mathcal{I}_{\ov\M^D_{g,|D|}} \times_{\ov\M^D_{g,|D|}} \mathcal{W}\\
            \mathcal{I}_{\mathcal{T}} \times_{\mathcal{T}} \mathcal{W} 
        \end{tikzcd}\]
        are open and closed embeddings (resp. isomorphisms). We use this fact to conclude that the existence of the data
        \[(\varphi \colon B \to \ov\M^D_{g,|D|}, \alpha \colon \varphi \Rightarrow \varphi \circ \zeta_n), \quad \text{ such that } \alpha|_0 = f\]
        is, up to étale covers $B' \leftarrow B'' \to B$, equivalent to the existence of analogous data
        \[(\psi \colon B' \to \mathcal{T}, \beta \colon \psi \Rightarrow \psi \circ \zeta_n).\]

        A morphism $\psi \colon B' \to [T_{\ov\M^D_{g,|D|},[C]}/\langle f \rangle]$ together with a $2$-isomorphism $\beta \colon \psi \Rightarrow \psi \circ \zeta_n$ is, étale locally at $0 \in B'$, equivalent to a morphism $\psi \colon B' \to \Ext^1(\Omega_C(D),\O_C)$ mapping $0 \in B'$ to $0$, which is $\mu_n$-equivariant. Furthermore, this morphism corresponds to a generically smooth family if and only if, locally around 0, the image of $\psi$ is not supported on one of the coordinate hyperplanes determined by the $i$-th projection $\Ext^1(\Omega_C(D),\O_C) \to \Ext^1(\Omega_{C,p_i},\O_{C,p_i})$. Having a singularity of type $A_{k_i-1}$ at $p_i$ corresponds to $B'$ intersecting said hyperplane with order $k_i$.

        Now, under a $\langle f \rangle$-equivariant decomposition
        \[\Ext^1(\Omega_C(D),\O_C) \cong H^1(T_C(-D)) \oplus H^0(\sExt^1(\Omega_C,\O_C)),\]
        the existence of a morphism $B' \to \Ext^1(\Omega_C(D),\O_C)$ satisfying the above properties is equivalent to the existence of a morphism from $B'$ to $H^0(\sExt^1(\Omega_C,\O_C))$ instead. This shows that $(2)$ implies $(1)$.

        For the other way around, consider the composition
        \[\Spec K[[t]] \to B' \to H^0(\sExt^1(\Omega_C,\O_C)) = \bigoplus_{i=1}^\ell \Ext^1(\Omega_{C,p_i},\O_{C,p_i}).\]
        On the $i$-th summand, this morphism looks like a power series of lowest degree $k_i$. We can truncate each of these power series and leave only the term with degree $k_i$, so we obtain a map $\A^1 \to H^0(\sExt^1(\Omega_C,\O_C))$. Since $\Aut(C)$ acts linearly on $H^0(\sExt^1(\Omega_C,\O_C))$, this new morphism still satisfies the required properties.

        We will next show the equivalence of $(2)$ and $(3)$. We may assume that there is a single $f$-orbit of nodes, since $H^0(\sExt^1(\Omega_C,\O_C))$ decomposes $f$-equivariantly as a sum of $\Ext$ groups over orbits, and we can treat each independently. Under this assumption, $f^\ell$ is the minimal power of $f$ that fixes one (and thus all) $p_i$. Also, note that both $(2)$ and $(3)$ imply that the $k_i$ are all equal, so let us name that common integer $k$.
        
        $(2) \Rightarrow (3)$. As $f^\ell(p_i) = p_i$, then $f^\ell$ must act as multiplication by some $\tau_i \in \G_m$ on $\Ext^1(\Omega_{C,p_i},\O_{C,p_i})$. We need only to verify the condition when $d = \ell$. On one hand, we have that
        \[\psi(\zeta_n^\ell t) = \p{\lambda_1\zeta_n^{k\ell}t^k,\ldots,\lambda_\ell \zeta_n^{k\ell}t^k}.\]
        On the other hand, by the way we defined $\tau_i$ we have
        \[f^\ell(\psi(t)) = \p{\lambda_1\tau_1t^k,\ldots,\lambda_\ell\tau_\ell t^k}.\]
        As $\psi$ is $\mu_n$-equivariant, it must follow that $\tau_i = \zeta_n^{k\ell}$ for all $i$.
        
        $(3) \Rightarrow (2)$. We re-order the nodes $p_i$ while finding a basis of $H^0(\sExt^1(\Omega_C,\O_C))$ in the following way. Fix a node $p_1$ and an arbitrary non-zero element $e_1 \in \Ext^1(\Omega_{C,p_1},\O_{C,p_1})$. We define inductively
        \[p_{i+1} = f(p_i),\]
        and
        \[e_{i+1} = \zeta_n^{-k}f(e_i) \in \Ext^1(\Omega_{C,p_{i+1}},\O_{C,p_{i+1}}) \subseteq H^0(\sExt^1(\Omega_C,\O_C)).\]

        By hypothesis, in this basis $f$ acts as the matrix
        \[ f = \zeta_n^k\begin{bmatrix}
            0 & 0 & \cdots & 0 & 1\\
            1 & 0 & \cdots & 0 & 0\\
            0 & 1 & \cdots & 0 & 0\\
            \vdots & \vdots & \ddots & \vdots & \vdots\\
            0 & 0 & \cdots & 1 & 0 
        \end{bmatrix}.\]
        We define the map
        \begin{align*}
            \psi \colon \A^1 &\to \bigoplus_{i=1}^{\ell} \Ext^1(\Omega_{C,p_i},\O_{C,p_i})\\
            t &\mapsto (t^k,\ldots,t^k).
        \end{align*}
        We have
        \[f(\psi(t)) = (\zeta_n^kt^k,\ldots,\zeta_n^kt^k) = \psi(\zeta_n t),\]
        so $\psi$ is $\mu_n$-equivariant.
    \end{proof}

    \section{The Other Approach to the Classification of Tame Singular Fibers} \label{section:classification-of-all-fibers}

    In this section our aim is to give another strategy for the classification of singular fibers in fibrations of curves. Rather than a proof, we will see how we can classify and glue building blocks in a way analogous to \cite{Dok25}. As an application, together with the combinatorial analysis of \cite{Dok25} we can show another proof of \cite[Corollary 4.3]{Winters1974}.

    In \Cref{section:glueing-building-blocks} we showed how, given a stable curve with an action and an extra parameter for each node, we can construct the corresponding fiber by glueing together the building blocks obtained from the normalization of the original stable curve. \Cref{weak-realization-theorem} can be used to prove the converse. Up to some compatibility conditions that we will see in this section, it tells us that any combinatorial glueing of building blocks is realized as the special fiber of some generically smooth fibration.

    Since the combinatorics of a singular fiber are determined only by how the building blocks are glued together, we can skip the first step of classifying stable curves with an action. Instead we may classify building blocks and describe the rules on how they can be glued together.

    Let $C$ be a smooth, connected curve and $f \in \Aut(C)$ be an automorphism of order $n$. Let $\ti X_0$ be the central fiber of the resolution $\ti X \to C \times D/\mu_n$ and let $\ti C_1 \subseteq \ti C$ be the strict transform of the image of $C$. Then $\ti X_0$ consists of $\ti C_1$ together with a number of chains emanating from the points corresponding to orbits of non-free locus in $C$.

    With the notation of \Cref{section:glueing-building-blocks}, we have seen that
    \begin{equation*}
        \ti C_1^2 = -\sum_p \frac{q_{1,p}}{n_{1,p}} = -\frac{1}{n}\sum_p h_pq_{1,p},
    \end{equation*}
    so in particular,
    \begin{equation}\label{cover-action-restriction}
        \sum_p h_pq_{1,p} \equiv 0 \mod n.
    \end{equation}
    In characteristic zero, or more generally if $f$ has order coprime to the characteristic of the base field, the converse is also true in the following sense.
    \begin{theorem}[Riemann Existence Theorem for Cyclic Galois Covers]\label{existence-cyclic-galois-cover}
        Let $B$ be a smooth projective curve over $K$, let $x_1,\ldots,x_\ell \in X$ and let $n \geq 1$ be an integer coprime to $\chr K$. Suppose that for each $i = 1,\ldots, \ell$ we have chosen a factorization $n = h_in_i$, and integers $a_i$ coprime to $n_i$. Let $q_i$ be the inverse of $a_i$ modulo $n_i$ and assume that
        \[\sum_{i=1}^\ell h_iq_i \equiv 0 \mod n.\]
        Then there exists a cyclic Galois cover $C \to B$ and a generator $f \in \Aut(C \to B)$ such that $f$ is free away from $x_1\ldots,x_\ell$, and such that over $x_i$ there are $h_i$ pre-images, all of which have ramification order $n_i$ and $f^{h_i}$ acts as multiplication by $\zeta_{n_i}^{a_i}$ on their tangent spaces. Moreover, if $g(C) = 0$, $C$ has exactly $\gcd(\{h_i\})$ connected components. If $g(B) > 0$, then for every divisor $d$ of $\gcd(\{h_i\})$, there exists such $C$ with $d$ components.
    \end{theorem}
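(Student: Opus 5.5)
The plan is to construct the cover by Kummer theory. Since $n$ is coprime to $\chr K$ and $K$ contains $\zeta_n$, a $\mu_n$-cover of $B$ branched only over the $x_i$ is the normalization of $\operatorname{Spec}_B \bigoplus_{j=0}^{n-1} L^{-j}$. Here $L$ is a line bundle with an isomorphism $L^{\otimes n} \cong \O_B(E)$ for some effective divisor $E = \sum c_i x_i$, and the algebra structure comes from a section $s$ of $\O_B(E)$ cutting out $E$. The generator $f$ acts on $L^{-1}$ by $\zeta_n$, and this is the generator we will use.

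\emph{Step 1: choosing the $c_i$.} Near $x_i$ the cover is given locally by $y^n = u\,t^{c_i}$ with $u$ a unit. Put $e_i = \gcd(c_i,n)$. Then there are $e_i$ preimages of $x_i$, each with ramification index $n/e_i$. To get $h_i$ preimages of index $n_i$ we need $e_i = h_i$, so we write $c_i = h_i c_i'$ with $\gcd(c_i',n_i)=1$. On one branch choose a uniformizer $s$ with $s^{n_i} = t$, so that $y = (\text{unit})\cdot s^{c_i'}$. The stabilizer $f^{h_i}$ sends $y \mapsto \zeta_n^{h_i} y = \zeta_{n_i} y$. If it acts on the tangent space by $s \mapsto \zeta_{n_i}^{a_i} s$, comparing the two gives $a_i c_i' \equiv 1 \pmod{n_i}$, that is $c_i' \equiv q_i$. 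So we take $c_i = h_i q_i$; shifting each $c_i$ by a multiple of $n$ only changes $L$.

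\emph{Step 2: choosing $L$.} The hypothesis $\sum h_i q_i \equiv 0 \pmod n$ says exactly that $\deg E$ is divisible by $n$. Since $\operatorname{Pic}^0(B)$ is divisible, some $L$ with $L^{\otimes n}\cong \O_B(E)$ exists. Away from the $x_i$ the cover is étale, hence $f$ acts freely there. Normality of the local models $y^{n_i'} = t^{q_i}$ after splitting off $h_i$ sheets gives the stated local behaviour at each preimage. This proves existence.

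\emph{Step 3: counting components.} This is the step I expect to be the main obstacle. The cover splits into $e$ components, $e \mid n$, exactly when the Kummer class of $(L,s)$ is an $e$-th power. Concretely this means: $e \mid c_i$ for all $i$, and there is $M$ with $M^{\otimes e}\cong L$ and $M^{\otimes n/e} \cong \O_B(E/e)$ compatibly with $s$. Because $\gcd(h_iq_i,n) = h_i$, the first condition says $e \mid g := \gcd(\{h_i\})$.
\begin{itemize}
\item If $B\cong\P^1$ (I read ``$g(C)=0$'' in the statement as $g(B)=0$), then $\operatorname{Pic}$ is torsion free and $L = \O(\deg E/n)$ is forced. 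Taking $M = \O(\deg E/(ng))$ works for $e=g$, so the cover has exactly $g$ components.
\item If $g(B)>0$, then $L$ is determined only up to an $n$-torsion line bundle $T$. Start from $L_0 = M_0^{\otimes g}$ with $M_0^{\otimes n/g}\cong\O_B(E/g)$, which gives $g$ components. Replacing $L_0$ by $L_0\otimes T$, with $T$ a $g$-torsion bundle of exact order $g/d$ chosen inside the $g$-torsion of $\operatorname{Pic}^0(B)$ (nonempty since $g(B)>0$ and $\chr K\nmid n$), should leave exactly $d$ components. The monodromy at the $x_i$ is unchanged because $T$ is étale-locally trivial.
\end{itemize}
The delicate part is making this last count precise. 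I would do it by identifying the cover with a class in $H^1_{\text{ét}}(B\setminus\{x_i\},\mu_n)$, whose number of components is the index of the image of $\pi_1$ in $\Z/n$. I would then check that twisting by $T$ enlarges that image by exactly the order of $T$.
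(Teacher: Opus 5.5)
Your existence argument (Steps 1 and 2) is sound, and it is a more explicit route than the paper's. The paper instead chooses a homomorphism $\varphi\colon\pi_1^{p'}(B')\to\Z/n\Z$ with $\varphi(\sigma_i)=\bar e h_iq_i$ from the standard presentation of the tame fundamental group. Your local computation $c_i\equiv h_iq_i \pmod n$ does produce the prescribed stabilizers and tangent characters.

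The gap is in the last bullet of Step 3. Twisting by a torsion bundle $T$ of exact order $g/d$ (with your $g=\gcd(\{h_i\})$) does not in general leave $d$ components. The claim that the twist enlarges the monodromy image by exactly the order of $T$ is false. The image of the untwisted character is $g\Z/n\Z$, and it is already generated by the local monodromies at the $x_i$ (the classes of the $c_i$, up to sign). The unramified character $\psi_T$ attached to $T$ is trivial on these, so the twisted image is $g\Z/n\Z+\operatorname{im}\psi_T$. A character of order $g/d$ has image $\frac{nd}{g}\Z/n\Z$. Hence the twisted cover has $\gcd(g,nd/g)=d\gcd(g/d,n/g)$ components, which equals $d$ only if $\gcd(g/d,n/g)=1$.

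Concretely, take $B$ elliptic, $n=4$, $h_1=h_2=n_1=n_2=2$, $a_1=a_2=q_1=q_2=1$ and $d=1$. The untwisted cover is $y^4=w^2$ for some $w\in K(B)^*$. Twisting by $T=\O(D_T)$ with $2D_T=\operatorname{div}(r)$ turns it into $y^4=(w/r)^2$, which still has two components.

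The repair is to twist by $T$ of exact order $n/d$, which exists because $\operatorname{Pic}^0(B)[n/d]\cong(\Z/(n/d)\Z)^{2g(B)}$. Then $\operatorname{im}\psi_T=d\Z/n\Z\supseteq g\Z/n\Z$, and the cover has exactly $d$ components. This is exactly the paper's choice $\varphi(\alpha_1)=d$.

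A smaller issue: your concrete splitting criterion is misstated. The conditions $M^{\otimes e}\cong L$ and $M^{\otimes n/e}\cong\O_B(E/e)$ together force $M^{\otimes n(e-1)}\cong\O_B$, hence $E=0$ when $e>1$. If you meant $M^{\otimes n}\cong\O_B(E/e)$, the condition is only sufficient, and it depends on the chosen representatives of the $c_i$ modulo $n$. For example, on $\P^1$ with $n=e=2$, $E=2x$ and $L=\O(1)$, the cover $y^2=t^2$ splits although $L$ is not a square. The correct criterion is that $e\mid c_i$ for all $i$ and $L^{\otimes n/e}\cong\O_B(E/e)$ compatibly with $s$. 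Equivalently, the rational function $F$ with $y^n=F$ is an $e$-th power in $K(B)$. With this criterion the $\P^1$ bullet is just a degree comparison. Your choice $M=\O(\deg E/(ng))$, by contrast, tacitly requires $ng\mid\deg E$, which you would first have to arrange by shifting some $c_i$ by a multiple of $n$.
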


    \begin{proof}
        Define $B' = B \setminus\{x_1,\ldots,x_\ell\}$ and $g = g(B)$. Let $p = \chr K \geq 0$ and let $p'$ be the set of primes different to $p$. Pick a geometric generic point $\ov\eta_B \colon \Spec \ov{K(B)} \to B$. Let $D_i = \Spec \O_{B,x_i}^{sh} \xrightarrow{v_i} B$ be the spectrum of the strict henselization at $x_i$, and let $D_i^* = \Spec K(\O_{B,x_i}^{sh})$. By \cite[Exposé XIII Corollaire 2.12]{SGA1}, there exist geometric generic points
        \[\ov\eta_i \colon \Spec \ov{K(B)} \to D_i,\]
        elements $\alpha_1,\beta_1,\ldots,\alpha_g,\beta_g \in \pi_1^{p'}(B',\ov\eta_B)$ and generators $\tau_i \in \pi_1^{p'}(D_i^*,\ov\eta_i)$ such that $v_i(\ov\eta_i) = \ov\eta_B$, and where if $\sigma_i$ is the image of $\tau_i$ under the map $\pi_1^{p'}(D_i^*,\ov\eta_i) \to \pi_1^{p'}(B',\ov\eta_B)$, then $\pi_1^{p'}(B',\ov\eta_B)$ is the pro-$p'$ group generated by $\alpha_i,\beta_i,\sigma_j$ subject to the condition
        \[[\alpha_1,\beta_1]\cdot\ldots\cdot[\alpha_g,\beta_g] \cdot \sigma_1\cdot\ldots\cdot \sigma_\ell = 1.\]
        Recall that we have chosen a compatible sequence of roots of unity $\{\zeta_m\}_{(m,p) = 1}$. This determines a canonical isomorphism
        \[\pi_1^t(D_i^*) = \pi_1^{p'}(D_i^*) \overset{\sim}{\longrightarrow} \Z_{p'} = \varprojlim_{(m,p)=1} \Z/m\Z = \prod_{q \neq p} \Z_{q}\]
        in the following way. If $t_i$ is a uniformizing parameter of $\O_{B,x_i}^{sh}$, then any tamely ramified cover of $D_i$ is of the form
        \[Y_i = \Spec \O_{B,x_i}^{sh}[T]/(T^m - t_i) \to \Spec \O_{B,x_i}^{sh},\]
        where the Galois group is generated by $T \mapsto \zeta_m T$. Therefore, any $\tau \in \pi_1^{p'}(D_i^*)$ must act on the tangent space of the closed point $y_i \in Y_i$ as multiplication by $\zeta_m^{e_m}$ for some integer $e_m$ defined modulo $m$. We may define the image of $\tau$ in $\Z_{p'}$ as $\{e_m\}_{m}$.

        Now, \Cref{cover-action-restriction} tells us that all the $\tau_i$ must have the same image in $\Z_{p'}$. Indeed, there exist homomorphisms $\varphi \colon \pi_1^{p'}(B') \to \Z/m\Z = G$ with $\varphi(\sigma_i) = c_i$ whenever $\sum c_i \equiv 0 \mod m$. Such morphism corresponds to a cyclic Galois cover $C \to B$ ramified over $x_i$, and we let $f \in \Aut(C \to B) \cong G^{op}$ be the generator of deck transformations corresponding to $1$. Let $\ti h_i = \gcd(c_i,m)$ and $m_i = m/\ti h_i$, so that there are $\ti h_i$ points over $x_i$. By looking at the induced cover $D_i \times_B C \to D_i$, we see that $f^{c_i}$ acts on the tangent space of points over $x_i$ as multiplication by $\zeta_{m_i}^{e_{i,m_i}}$, where $\{e_{i,m}\}$ is the image of $\tau_i$ in $\Z_{p'}$. This means that $f^{\ti h_i}$ acts as $\zeta_{m_i}^{e_{i,m_i}\ti a_i}$, where $\ti a_i$ is the inverse of $c_i/h_i$ modulo $m_i$. Then we know that
        \[\sum_{i=1}^\ell e_{i,m_i}c_i \equiv 0 \mod m.\]
        This equation must be true for every choice of $c_i$ with $\sum c_i \equiv 0 \mod m$, which forces all the $e_{i,m_i}$ to be equal. We will just denote the common image of all the $\tau_i$ as $\{e_m\}$.

        Consider covers $C \to B$ associated to morphisms
        \[\varphi \colon \pi_1^{p'}(B') \to \Z/n\Z,\]
        satisfying $\varphi(\sigma_i) = \ov e h_iq_i$, where $\ov e$ is the inverse of $e_n$ modulo $n$. These morphisms are well defined by hypothesis, and the same analysis shows that the action of $f^{h_i}$ on the tangent spaces above $x_i$ is multiplication by $\zeta_{n_i}^{a_i}$.

        Finally, the number of connected components of $C$ equals the index of the image of $\varphi$ in $\Z/m\Z$. When $g(B) = 0$, there is a unique $\varphi$ satisfying the above requirement, and for which, the index is $\gcd(\{h_i\})$. When $g(B) > 0$, we may choose $\varphi$ by setting $\varphi(\alpha_1) = d$ and $\varphi(\alpha_j) = \varphi(\beta_j) = 0$ for the other generators.
    \end{proof}
    
    Using Riemann-Hurwitz, we can define Euler characteristic of a building block $\ti X_0$ as follows. Recall that the principal component $\ti C \subseteq \ti X_0$ is the image of the connected smooth curve $C$ with which we obtain $\ti X_0$ as a quotient.

    \begin{definition}[{compare with \cite[Definition 6.3]{Dok25}}]\label{Euler-characteristic-definition}
        Let $\ti C \subseteq \ti X_0$ be the principal component of a building block of multiplicity $n$ coprime to $\chr K$, and let $x_1,\ldots,x_k \in \ti C$ be a list of points which at least contains all $x_i$ over which $C \to \ti C$ is ramified. We define the Euler characteristic of $\ti X_0$ as the topological Euler characteristic of $C \subseteq X$ before the quotient, which equals
        \[\chi(\ti X_0) = \chi(C) = -\deg \omega_C = n(\chi(\ti C) - k) + \sum_{i=1}^k h_{x_i}.\]
        where $k$ and $h_i$ are as in \Cref{existence-cyclic-galois-cover}. If $\mathcal{I} \subseteq \{x_1,\ldots,x_k\}$, we define the Euler characteristic of the pair $(\ti X_0,\mathcal{I})$ as
        \[\chi(\ti X_0,\mathcal{I}) = -\deg \omega_C(-\pi^{-1}(\mathcal{I})_{red}) = n(\chi(\ti C) - k) + \sum_{x_i \not\in \mathcal{I}} h_{x_i},\]
        where $\pi \colon C \to \ti C$ is the quotient map. Note that $\chi(\ti X_0,\mathcal{I})$ depends only on $\ti X_0$ and the subset $\mathcal{I} \subseteq \ti C$, not on the list $\{x_1,\ldots,x_k\}$.
    \end{definition}

    More generally,
    \begin{definition}
        Let $\mu_n \curvearrowright C \subseteq X \to D$ be an equivariant action on a generically smooth fibration with $C$ nodal, and let $\ti X_0$ be the central fiber on the minimal resolution $\ti X \to X^{tw} \to D/\mu_n$. We define
        \[\chi(\ti X_0) = -\deg \omega_C.\]
    \end{definition}
    
    Let $\ti X_0$ be a singular fiber as above, and let $\ti X^\nu_0 = \bigcup \ti X^\nu_{0,i}$ the associated collection of building blocks. Let $\mathcal{I}_i \subseteq \ti C^\nu_i$ be the subset of points along which a glueing operation (either of both) is applied to obtain $\ti X_0$. Then the next two results follow easily.

    \begin{theorem}[{compare with \cite[Theorem 6.4]{Dok25}}]
        With $\ti X_0$ and $\ti X^\nu_{0,i}$ as above,
        \begin{align*}
            \chi(\ti X_0) &= \sum_{i} \chi(\ti X_{0,i}^\nu,\mathcal{I}_i).
        \end{align*}
    \end{theorem}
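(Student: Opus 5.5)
We reduce the statement to additivity of $-\deg\omega$ along the normalization $\nu\colon C^\nu \to C$. By definition, $\chi(\ti X_0) = -\deg\omega_C$. The standard formula for the dualizing sheaf of a nodal curve gives
\[\nu^*\omega_C = \omega_{C^\nu}(N),\]
where $N \subseteq C^\nu$ is the reduced divisor of the $2\delta$ preimages of the $\delta$ nodes of $C$. Hence
\[-\deg\omega_C = -\deg\omega_{C^\nu}(N) = \sum_i -\deg \omega_{C^\nu_{(i)}}\bigl(N\cap C^\nu_{(i)}\bigr),\]
where the sum runs over the connected components $C^\nu_{(i)}$ of $C^\nu$. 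This is the only geometric input; the remaining steps are bookkeeping that identifies each summand with $\chi(\ti X^\nu_{0,i},\mathcal{I}_i)$.

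First we group the components of $C^\nu$ into $f$-orbits. Each orbit gives a (possibly disconnected) smooth curve $C^\nu_i=\bigsqcup_j C^\nu_{i,j}$ with the induced $\mu_n$-action. Its quotient by $\mu_n$ is the building block $\ti X^\nu_{0,i}$, which has principal component $\ti C^\nu_i$. The definition of $\chi$ of a building block via $C$ extends verbatim to a disconnected $C^\nu_i$ on which $\mu_n$ acts transitively on components. That is exactly the $[c]$-construction of \Cref{big-definitions}, so $\chi(\ti X^\nu_{0,i},\mathcal{I}) = -\deg\omega_{C^\nu_i}(-\pi^{-1}(\mathcal{I})_{red})$ still holds. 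Here we read the sign in \Cref{Euler-characteristic-definition} as twisting by $+\pi^{-1}(\mathcal{I})_{red}$, since that is the twist consistent with the right-hand formula $n(\chi(\ti C)-k)+\sum_{x_i\notin\mathcal I}h_{x_i}$. Riemann--Hurwitz for the tame cyclic cover $C^\nu_i\to\ti C^\nu_i$ confirms this. It gives $-\deg\omega_{C^\nu_i} = n(\chi(\ti C^\nu_i)-k)+\sum_x h_x$, where each branch point contributes its number of preimages $h_x$. Removing the reduced fibers over $\mathcal{I}$ subtracts $h_x$ for $x\in\mathcal I$.

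Second, we check that $N\cap C^\nu_i$ is precisely $\pi^{-1}(\mathcal{I}_i)_{red}$. The glueing points $\mathcal{I}_i\subseteq \ti C^\nu_i$ are by definition the images of preimages of nodes: points of types 2, 3 and 4 in the notation of \Cref{point-types}. Their full $\mu_n$-orbits in $C^\nu_i$ are exactly the node preimages lying on $C^\nu_i$, since $N$ is $f$-stable. For a type 4 point the two branch preimages lie in a single orbit, so they are counted once in $\mathcal I_i$ as a single point of $\ti C^\nu_i$. Its reduced preimage nevertheless contains both branches, which matches $N$. Type 3 nodes give two distinct points of $\mathcal I_i$, and each is counted with its own orbit. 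Summing over $i$ then yields the stated formula.

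The main obstacle is this last identification of orbits with reduced fibers, especially the type 4 case, where $h^\nu_p=2h_p$ and one must check that no double counting occurs. We would verify it by comparing the count $\#(N\cap C^\nu_i)=\sum_{x\in\mathcal I_i}h^\nu_x$ with the number of node preimages directly, using the invariants $h_p, h_p^\nu$ from the remarks following \Cref{numbers}.
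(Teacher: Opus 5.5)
Your proposal is correct and follows the paper's own argument: pull $\omega_C$ back along the normalization, identify the twist on each orbit $C^\nu_i$ with $\pi^{-1}(\mathcal{I}_i)_{red}$, and add degrees over all orbits. Your reading of the sign is also the right one, since $\nu^*\omega_C \cong \omega_{C^\nu}(N)$ carries a plus sign; the paper writes the twist with a minus sign both in \Cref{Euler-characteristic-definition} and in its one-line proof, and these two slips are consistent with each other, whereas your version makes everything agree with the Riemann--Hurwitz expression $n(\chi(\ti C)-k)+\sum_{x_i\notin\mathcal{I}}h_{x_i}$.
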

    \begin{proof}
        Let $\iota^\nu$ be the composition $C_i^\nu \to C_i \hookrightarrow C$, then
        \[(\iota^{\nu})^*(\omega_C) \cong \omega_{C_i^\nu}(-(\iota^{\nu})^{-1}\pi^{-1}(\mathcal{I}_{i})_{red}).\]
        The rest follows by adding over the degrees over all orbits of all $C_i$.
    \end{proof}

    Note that two singular fibers with possibly different non-reduced structure but obtained by glueing the same building blocks along the same points must have the same Euler characteristic.

    \begin{lemma}[Nodal Riemann-Hurwitz for Galois Covers]\label{nodal-RH}
        Let $C \to B$ be a Galois cover of nodal curves of degree $n$, and for $p \in B$ let $h_p$ be as in \Cref{numbers}. Then
        \[\chi(C) =n(\chi(B) - k_1 - k_4) + \sum_{\text{type 1}} h_p.\]
        where $k_1$ and $k_4$ are, respectively, the number of type 1 and 4 points in the base $B$, as in \Cref{point-types}.
    \end{lemma}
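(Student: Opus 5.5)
We argue by normalizing. Let $\nu\colon C^\nu \to C$ and $B^\nu \to B$ be the normalizations. The Galois action of $G=\mu_n$ (generated by $f$) lifts to $C^\nu$, and $C^\nu/G$ is the normalization of $B$ away from type 4 points. Type 3 nodes of $B$ have two preimages in $B^\nu$. A type 4 point $p\in B$ is a smooth point of $B$ that is the image of a node of $C$, so it has a single preimage in $C^\nu/G$.

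We will use two Euler characteristic identities. Since $\chi=-\deg\omega$ and $\nu^*\omega_C=\omega_{C^\nu}(\text{preimages of nodes})$, we have
\[\chi(C)=\chi(C^\nu)-2\#\{\text{nodes of }C\},\qquad \chi(B)=\chi(B^\nu)-2(k_2+k_3).\]
Here type 2 and type 3 points are exactly the nodes of $B$.

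\emph{Step 1: apply Riemann--Hurwitz on the smooth cover.} We apply the smooth Riemann--Hurwitz formula, in the form used in \Cref{Euler-characteristic-definition}, to each orbit of components of $C^\nu \to C^\nu/G$. Summing over orbits, this gives
\[\chi(C^\nu)=n\,\chi(C^\nu/G)-\sum_{y}(n-h^\nu_y),\]
where $y$ runs over points of $C^\nu/G$ and $h^\nu_y$ is the number of preimages of $y$ in $C^\nu$. This is legitimate because the action is tame ($n$ is coprime to $\chr K$ in our setting).

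\emph{Step 2: compare the quotient with $B^\nu$.} The space $C^\nu/G$ is obtained from $B^\nu$ by separating nothing further. Each type 4 point of $B$ corresponds to one point of $C^\nu/G$ whose preimages are the $2h_p$ branch points of nodes over $p$. Each type 2 or 3 node $p$ of $B$ corresponds to two points of $C^\nu/G$, each with $h_p$ preimages, since a non-swapping node orbit gives $h_p$ nodes, each contributing one preimage on each side. Hence $C^\nu/G\cong B^\nu$, and the ramification sum splits by type:
\begin{itemize}
\item type 1 contributes $n-h_p$;
\item type 2 and type 3 each contribute $2(n-h_p)$;
\item type 4 contributes $n-2h_p$.
\end{itemize}

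\emph{Step 3: count nodes of $C$ and combine.} The nodes of $C$ over a type 2 or 3 point number $h_p$, and over a type 4 point also $h_p$. Substituting into $\chi(C)=\chi(C^\nu)-2\#\text{nodes}$ and using $\chi(B^\nu)=\chi(B)+2(k_2+k_3)$, the computation goes as follows.

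For a type 2 or 3 point, the contributions are $2n$ from $n\chi(B^\nu)$, then $-2(n-h_p)$ from ramification, then $-2h_p$ from nodes. These sum to $0$.

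For a type 4 point, the contributions are $-(n-2h_p)-2h_p=-n$.

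For a type 1 point, the contribution is $-(n-h_p)=-n+h_p$.

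Summing everything gives
\[\chi(C)=n(\chi(B)-k_1-k_4)+\sum_{\text{type }1}h_p,\]
which is the formula claimed in \Cref{nodal-RH}.

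\emph{Main obstacle.} The delicate part is the bookkeeping at type 4 points. One must check that $C^\nu/G$ has exactly one point there with $2h_p$ preimages. This uses the fact that $h^\nu_p=2h_p$, as in the remark following \Cref{numbers}: the two branches at a node are swapped by $f^{h_p}$, so all $2h_p$ branch points lie in a single $G$-orbit. One must also confirm that such a point is smooth in $B$, so that it does not alter $\chi(B)$ through normalization.
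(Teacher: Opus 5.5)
Your proposal is correct and follows essentially the same route as the paper: apply the (tame) Riemann--Hurwitz formula to the smooth cover $C^\nu \to B^\nu$, then correct with $\chi(C)=\chi(C^\nu)-2\#\text{nodes}(C)$ and $\chi(B)=\chi(B^\nu)-2(k_2+k_3)$, using $h^\nu_p=h_p$ at type 2 and 3 points and $h^\nu_p=2h_p$ at type 4 points. Your per-type bookkeeping spells out the paper's ``easy substitution''; the identification $C^\nu/G\cong B^\nu$ is most cleanly justified by noting that $C^\nu/G$ is normal and finite birational over $B$.
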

    \begin{proof}
        Let $k_2$ and $k_3$ the number of type 2 and 3 respectively in $B$. Then usual Riemann-Hurwitz on $C^\nu \to B^\nu$ gives us
        \[\chi(C^\nu) = n(\chi(B^\nu) - k_1 - 2k_2 - 2k_3 - k_4) + \sum_{\text{type 1}} h_p^\nu + 2\p{\sum_{\text{type 2,3}} h_p^\nu} + \sum_{\text{type 4}} h_p^\nu.\]
        On $C$ we have
        \[\chi(C) = \chi(C^\nu) - 2\#\text{nodes}(C) = \chi(C^\nu) - 2\p{\sum_{\text{type 2,3,4}} h_p} = \chi(C^\nu) - 2\p{\sum_{\text{type 2,3}} h_p^\nu} - \sum_{\text{type 4}} h_p^\nu.\]
        On $B$ we have
        \[\chi(B) = \chi(B^\nu) - 2\#\text{nodes}(B) = \chi(B^\nu) - 2k_2 - 2k_3.\]
        The result follows from an easy substitution.
    \end{proof}

    For the following, we say that an irreducible component $E \subseteq \ti X_0$ is non-destabilizing if either (1) it has genus $g \geq 2$, (2) it has genus $g = 1$ and intersects at least another curve in $\ti X_0$, or (3) it has genus $g = 0$ and intersects at least three curves in $\ti X_0$.
    \begin{theorem}\label{equivalence-of-principal-under-stability}
        Suppose that $\mu_n \curvearrowright C \subseteq X \to D$ is an equivariant action, where $C$ is a stable, possibly disconnected curve. Then the set of non-destabilizing curves in $\ti X_{0}$ is precisely the set of principal components together with the curves intersecting three components in resolutions over type 4 orbits (c.f. \Cref{point-types,swap-local-glueing-algorithm}).
    \end{theorem}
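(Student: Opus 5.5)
The plan is to check the two inclusions separately, using the explicit description of $\ti X_0$ given by the glueing algorithms of \Cref{section:glueing-building-blocks}. We read off the components of $\ti X_0$ from \Cref{global-glueing-algorithm}. They are the principal components $\ti C_i$, the components of exceptional chains over type 1, 2 and 3 points (from \Cref{non-swap-local-glueing-algorithm} and the chains of \Cref{chain-multiplicities}), and the components of D-tails over type 4 points (from \Cref{swap-local-glueing-algorithm}). Every non-principal component is an exceptional curve of the minimal resolution of a cyclic quotient or dihedral quotient singularity, so it is a smooth rational curve.

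\emph{Non-principal components.} A component of a Hirzebruch--Jung chain meets at most two other components of $\ti X_0$: its neighbours in the chain, together with $L_1$, $L_2$ or $L_0$ at the ends. It is rational, so it is destabilizing. In a D-tail, every component is part of a chain, and so meets at most two others, except the branching vertex. That vertex meets exactly three components: the last $(-2)$ curve (or the curve $E_\ell$ if $j=0$) and the two terminal $(-2)$-curves. It is therefore non-destabilizing, which is the extra class in the statement.

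\emph{Principal components.} Let $\ti C_i$ be principal, the image of an orbit of $C_i\subseteq C$. We compare the stability of $C_i$ in $C$, which holds by hypothesis, with the count on $\ti C_i$, using Riemann--Hurwitz for $C_i\to \ov C_i$. Let $N$ be the number of special points of $C_i$, meaning points lying on other branches of $C$, where self-nodes count twice. Stability gives $2g(C_i)-2+N>0$. We have $\chi(C_i)-N = -\deg\omega_C|_{C_i}$, and by the computation in \Cref{nodal-RH} and \Cref{Euler-characteristic-definition} this equals $\tfrac{m_i}{\gamma_i}\bigl(\chi(\ov C_i^\nu)-k\bigr)+\sum h^\nu_p$. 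Here the sum runs over the branch points of $\ov C_i^\nu$ that are not images of special points, $k$ counts the branch points together with all images of special points, and $h^\nu_p<m_i/\gamma_i$ at the branch points. So stability says that
\[
\chi(\ov C_i^\nu)-k+\sum \tfrac{h_p^\nu\gamma_i}{m_i}<0 .
\]
Now count the components of $\ti X_0$ meeting $\ti C_i$. Each type 1 point with $n_{i,p}>1$ contributes a chain, hence one neighbour. Each type 2, 3 or 4 point contributes at least one neighbour: a glued chain, the other principal component, or the first curve of the D-tail. In the degenerate cases (4) of \Cref{non-swap-local-glueing-algorithm}, the chain collapses and $\ti C_i$ meets $L_2$ directly, which is still a neighbour. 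A type 3 point with nonempty chain gives two intersection points. If $\ti C_i$ meets itself at a smooth point, $\ti C_i$ has a node and is then either of positive arithmetic genus or the count below still holds. Each ramified non-special point also gives a neighbour, since $q/n<1$ produces a nonempty chain. So the number of neighbours is at least $k$, counting the branch points among the points of $\ti C_i$.

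It then remains to check three cases. If $g(\ov C_i)\ge 2$, we are done. If $g(\ov C_i)=1$, the inequality forces $k\ge1$. If $g(\ov C_i)=0$, then since $h^\nu_p\gamma_i/m_i>0$ we get $k>2$, so $k\ge 3$.

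The main obstacle I expect is making sure that each special point really contributes a \emph{distinct} intersecting component in the degenerate glueings. Two places need care. The first is $j=-1$, case (4) with $r=0$ or $s=0$, where the chain disappears and one must confirm that the neighbour is not counted twice or identified with $\ti C_i$ itself. The second is type 3 points of case (3), where $\ov p$ is smooth and $\ti C_i$ acquires a node rather than a neighbour. In that last case $\ti C_i$ has arithmetic genus one higher, and I would treat it by reading ``genus'' with the node counted: each self-node counts as two branch incidences, matching $N$. I would check these cases directly against the explicit pictures in \Cref{non-swap-local-glueing-algorithm}.
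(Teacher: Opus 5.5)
Your proposal is correct and is essentially the paper's argument: exceptional curves are curves in linear chains, except the trivalent D-tail vertex. For a principal component, Riemann--Hurwitz for the quotient combined with stability of $C$ gives $\chi(\ti C_i)-(\ti X_{0,red}-\ti C_i)\cdot\ti C_i<0$. You apply it to $C_i^\nu\to\ov C_i^\nu$ of degree $m_i/\gamma_i$, while the paper applies \Cref{nodal-RH} to the whole orbit $D\to\ov C_i$ of degree $m_i$; the two are equivalent. Your worry about the degenerate glueings is unnecessary. The criterion only needs one intersection point of $\ti C_i$ (or one branch of a self-node) for each of the $k$ points of $\ov C_i^\nu$, not distinct neighbouring components, and this holds in every case of \Cref{non-swap-local-glueing-algorithm,swap-local-glueing-algorithm}.
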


    \begin{proof}
        The only non-destabilizing curves extracted in the resolution $\ti X \to X^{tw}$ must be the one intersecting three components in the resolution of type 4 orbits. Thus it suffices to show that every principal component is non-destabilizing. We know that $\ti C_i \subseteq \ti X_0$ is non-destabilizing if and only if
        \[\chi(\ti C_i) - (\ti X_{0,red}-\ti C_i)\cdot \ti C_i < 0.\]
        Let $D = \bigcup f^{j}(C_i)$ be the full orbit of $C_i$, so that $D \to \ov C_i$ is Galois of order $m_i$ dividing $n$. Then the above condition is equivalent to
        \[\chi(\ov C_i) - k_0 - k_1 - k_4 < 0,\]
        where $k_0$ is the number of points in $\ov C_i$ intersecting another $\ov C_j$ at a point $p \in \ov C_i$ which is not a ramification point of $D \to \ov C_i$; and $k_1$, $k_4$ are the number of type 1 and 4 points in $\ov C_i$ (considering only the quotient $D \to \ov C_i$) respectively.
        By \Cref{nodal-RH}, we have
        \[\chi(\ov C_i) - k_0 - k_1 - k_4 = \frac{1}{m_i}\p{\chi(D) - \sum_{\text{type 1}} h_p - m_ik_0} \leq \frac{1}{m_i}\p{\chi(D) - (C-D)\cdot D} < 0.\]
        The last inequality holds since $D \subseteq C$ is non-destabilizing by stability of $C$.
    \end{proof}

    \begin{remark}
        This theorem says that, when $C$ is stable and $\ti X_0$ is obtained from $C$ by a quotient, then the principal components of $\ti X_0$ together with the degree $3$ components in D-tails are precisely the principal components of $\ti X_0$ in the sense of \cite[Definitions 4.1 and 6.3]{Dok25}.
    \end{remark}

    Another way of interpreting \Cref{existence-cyclic-galois-cover} is that a potential building block that is combinatorially consistent does indeed exist.

    \begin{definition}
        Let $\chi \in \Z$. Define $\Lambda_{\chi,n}$ as the set of equivalence classes of pairs $(\ti X_0,\mathcal{I})$ where $\ti X_0$ is a building block of multiplicity $n$ and $\mathcal{I} \subseteq \ti C$ is a subset of points, such that
        \[\chi(\ti X_0,\mathcal{I}) = \chi.\]
        Define
        \[\Lambda_{\chi} = \bigcup_{\substack{n \geq 1 \\\chr K \nmid n}} \Lambda_{\chi,n}, \qquad \text{ and } \qquad \Lambda = \bigcup_{\chi \in \Z} \Lambda_{\chi}.\]
    \end{definition}

    \begin{proposition}
        Let $n$ coprime to $\chr K$. Let $B$ be a smooth connected curve. Let $\{x_1,\ldots,x_k\} \subseteq B$ and $\mathcal{I} \subseteq B$ be sets of  points. Let $n_ih_i = n$ be a choice of factorization of $n$ for $i=1,\ldots,k$ and let $0 \leq q_i < n_i$ be coprime to $n_i$ such that
        \[\sum h_iq_i \equiv 0 \mod n.\]
        Then there is a pair $(\ti X_0, \mathcal{I}) \in \Lambda$ where $\ti X_0$ is a building block of multiplicity $n$, $B \cong \ti C \subseteq \ti X_0$, and such that the exceptional chains of $\ti X_0$ are given by the fractions $\frac{n_i}{q_i}$. Conversely, any pair $(\ti X_0,\mathcal{I}) \in \Lambda$ is of this form.
    \end{proposition}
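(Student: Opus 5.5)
The existence direction follows directly from \Cref{existence-cyclic-galois-cover}. The converse follows from the computation at the start of this section. In both directions, the local description of a building block near a non-free orbit comes from \Cref{self-int-change,chain-multiplicities}.

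\emph{Existence.} Given the data, set $a_i$ to be the inverse of $q_i$ modulo $n_i$. The entries $q_i$ with $n_i=1$ are conventionally $1$ or $0$; since they contribute $h_i=n$ to the sum, they do not affect the congruence. Since $\sum h_iq_i\equiv 0 \bmod n$, \Cref{existence-cyclic-galois-cover} applied to the projective completion of $B$ produces a cyclic Galois cover $C\to B$ with generator $f$. This cover is free away from the $x_i$, has exactly $h_i$ preimages over each $x_i$, and $f^{h_i}$ acts on their tangent spaces by $\zeta_{n_i}^{a_i}$. If $B$ is not complete, I would compactify it, keeping the extra points free; this is harmless because they never enter the combinatorial data.

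The issue is connectedness. A building block in \Cref{big-definitions} only requires $\ti X_0$ to be connected, not $C$, so a disconnected $C$ is allowed. The quotient $\ti X_0$ of $C\times D$ by $\mu_n$ acting by $(f,\zeta_n)$ is connected because its reduced fiber is $\ov C\cong B$ together with exceptional chains attached to it. Hence $\ti X_0$ is a building block of multiplicity $n$ with $\ti C\cong B$.

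Next I identify the chains. Over the image of $x_i$, the local model is $\A^2$ with $f^{h_i}$ acting by $(\zeta_{n_i}^{a_i},\zeta_{n_i})$ on (fiber direction, $t$). The quotient singularity is therefore $\frac1{n_i}(a_i,1)=\frac1{n_i}(1,q_i)$ by \Cref{cyclic-quotients-basic-properties}(4). Its minimal resolution is the chain $\frac{n_i}{q_i}$, with the blow-up convention when $n_i=1$. The marked set $\mathcal I$ is transported along $B\cong\ti C$, which gives $(\ti X_0,\mathcal I)\in\Lambda$ with $\chi=\chi(\ti X_0,\mathcal I)$.

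\emph{Converse.} Take $(\ti X_0,\mathcal I)\in\Lambda$, presented as the quotient of $C\times D$ by $(f,\zeta_n)$ with $C$ smooth. Set $B=\ti C\cong C/f$, which is smooth and connected because $\ti X_0$ is connected. Let the $x_i$ be the images of the non-free orbits. By \Cref{numbers}, these orbits give $h_i$ and $n_i$ with $n=h_in_i$, and $a_i$ from the tangent action of $f^{h_i}$. The same local computation shows that the chains are $\frac{n_i}{q_i}$ with $q_i=a_i^{-1}\bmod n_i$. Finally, \Cref{cover-action-restriction} gives $\sum h_iq_i\equiv 0\bmod n$.

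\emph{Main obstacle.} The only delicate point is bookkeeping of conventions. First, the proposition writes $0\le q_i<n_i$, while earlier sections use $0<q\le n$; I would note that $q_i\equiv 0$ exactly when $n_i=1$, and that $h_iq_i\equiv0$ or $n$ either way. Second, the identification of $\frac1{n_i}(a_i,1)$ with $\frac{n_i}{q_i}$ requires the correct ordering of axes, so that the principal component plays the role of $L_0$; I would check this against \Cref{self-int-change} via $\ti C^2=-\sum q_i/n_i$.
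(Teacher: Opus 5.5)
Your proposal is correct and follows the paper's own approach. The paper gives no separate proof: it presents the proposition as a restatement of \Cref{existence-cyclic-galois-cover} for existence, and of the self-intersection computation leading to \eqref{cover-action-restriction} for the converse, which is exactly the argument you give. The one slip is the parenthetical ``blow-up convention when $n_i=1$'': under the proposition's normalization $q_i=0$, the fraction is $\frac{1}{0}=[~]$, so a free orbit contributes no exceptional curve to $\ti X_0$ (defined via the minimal resolution); also, $B$ is implicitly projective, so no compactification step is needed.
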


    We say two pairs are combinatorially equivalent if they have the same dual graph including the labeling of multiplicities, genera and the markings $\mathcal{I}$.

    \begin{theorem}
        For $\chi < 0$, there are finitely many combinatorially equivalence classes in $\Lambda_{\chi}$, and they are effectively computable.
    \end{theorem}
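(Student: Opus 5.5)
The plan is to bound every combinatorial invariant of a pair $(\ti X_0,\mathcal{I}) \in \Lambda_\chi$ in terms of $\chi$ alone, and then enumerate the finitely many resulting configurations. By the preceding proposition, such a pair is determined combinatorially by the data $(g(\ti C), n, k, \{(n_i,q_i)\}_{i \le k}, \mathcal{I})$, subject to $\sum h_i q_i \equiv 0 \mod n$. Here $x_1,\ldots,x_k$ are the ramified points, so $n_i > 1$. We may also add the marked unramified points, so that every point of $\mathcal{I}$ appears in the list. The continued fractions $\frac{n_i}{q_i}$ then determine the chains, and their multiplicities are $h_i\alpha_j$. So it suffices to show that $g(\ti C)$, $n$, $k$ and $|\mathcal{I}|$ are bounded in terms of $\chi$. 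Effectivity then follows, because for bounded $n$ one simply lists all admissible tuples, checks the congruence, and computes the continued fractions.

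Write $\chi(\ti C) = 2-2g$ and let $k'$ be the number of listed points, ramified or in $\mathcal{I}$. The definition gives
\[\chi = n\Bigl(2-2g-k'+\sum_{x_i \notin \mathcal{I}} \tfrac{1}{n_{x_i}}\Bigr),\]
using $h_i/n = 1/n_i$. Set $S = 2-2g-k'+\sum_{x_i\notin\mathcal{I}} 1/n_i$. Since $\chi<0$, we have $S<0$ and $n = \chi/S = |\chi|/|S|$. The key step is therefore a uniform lower bound on $|S|$. This is the classical Hurwitz-type argument: $S$ has the form $2-2g-\sum_{i}(1-\tfrac{1}{n_i}) - |\mathcal{I}|$, where terms with $x_i\in\mathcal{I}$ contribute $1$ instead of $1-1/n_i$. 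When it is negative, $S \le -\tfrac{1}{42}$. This follows from the standard case analysis of $2g-2+\sum(1-1/n_i)$ over $n_i \ge 2$, with the extremal case $(2,3,7)$. Points in $\mathcal{I}$ only increase each summand, to $1$, so the same minimum bound applies. Hence $n \le 42|\chi|$.

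Once $n$ is bounded, each $n_i \mid n$ is bounded. Also each ramified point contributes at least $\tfrac12$ to $|S|$, each point of $\mathcal{I}$ contributes $1$, and genus contributes $2g-2$. Since $|S| = |\chi|/n \le |\chi|$, we get $2g-2+\tfrac{k}{2}+|\mathcal{I}|/2 \le |\chi|+2$, roughly. This bounds $g$, $k$ and $|\mathcal{I}|$. The $q_i$ range over the finitely many residues coprime to $n_i$. Finally, \Cref{existence-cyclic-galois-cover} guarantees that every tuple satisfying the congruence is realized by a connected $C$ whenever $\gcd(h_i)=1$ or $g>0$; otherwise it is excluded. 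This makes the enumeration exact and not just an upper bound.

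The main obstacle is the careful Hurwitz-type bound on $|S|$ in the presence of the marked set $\mathcal{I}$. One has to handle mixed marked and unmarked points, and the degenerate case $n=1$ or $k=0$ where $g\ge 2$ or $|\mathcal{I}|$ forces negativity. I would treat these cases directly: if $S \ne 0$ is a negative rational with denominators dividing $\operatorname{lcm}(n_i)$, minimize over the finitely many small configurations (genus $0$ with at most four points, genus $1$ with one point) and check that the minimum is attained at $\frac{1}{42}$.
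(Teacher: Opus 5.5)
Your finiteness argument is correct. It follows the same skeleton as the paper's second, combinatorial argument: bound $n$, then $\chi(\ti C)$, then the number of special points, then enumerate residues $q_i$ with $\sum h_iq_i\equiv 0 \bmod n$. The paper's primary argument is different and moduli-theoretic: it reads finiteness off the finite type and finite inertia of $[\M_{g,m}/S_m]$, together with the locally closed stratification of that inertia by combinatorial type.

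The genuine difference from the paper is the bound on $n$. The paper quotes $n\le 6-2\chi$ from Dokchitser, which is Wiman-type: for $\chi=2-2g$ it reads $4g+2$. That bound exploits cyclicity. Indeed the signature $(2,3,7)$ never occurs for a cyclic group, so your $\frac{1}{42}$ is not attained, though you only need the inequality. Your $n\le 42|\chi|$ is cruder, but it depends only on the orbifold Euler characteristic $S$ of the base. $S$ is unchanged when passing from $\ti X_0$ to $[c]\ti X_0$, while $n$ and $\chi$ are both multiplied by $c$, so your bound covers non-primitive blocks automatically. For instance, a genus $2$ curve with an automorphism of order $10$ gives a block $[2]\ti X_0$ with $n=20$ and $\chi=-4$, which violates $n\le 6-2\chi$ as stated.

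Your last step, however, is wrong. In \Cref{big-definitions} a building block may be the quotient of a \emph{disconnected} smooth curve $C$. The proposition immediately preceding the statement asserts that every tuple with $\sum h_iq_i\equiv 0 \bmod n$ gives an element of $\Lambda$, with no connectivity requirement. Tuples with $g(\ti C)=0$ and $\gcd(\{h_i\})>1$ are realized by a $C$ with $\gcd(\{h_i\})$ components (\Cref{existence-cyclic-galois-cover}), for example by blocks of the form $[c]\ti X_0$. Excluding them makes your enumeration miss genuine classes of $\Lambda_\chi$. The fix is simply to keep every tuple satisfying the congruence; with that change your algorithm is exact.
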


    \begin{proof}
        Building blocks are classified by pairs $(C,f)$ where $C$ is a smooth curve and $f$ is an automorphism. Moreover, the condition $\chi(\ti X_0,\mathcal{I}) < 0$ corresponds to $(C,\pi^{-1}(\mathcal{I}))$ being stable. Thus finiteness is a formal consequence of the stack of divisorially marked curves $\M_{g,n}^D = [\M_{g,n}/S_n]$ being a stack of finite type with finite inertia, where the combinatorial types in the inertia stack $\mathcal{I}_{\M_{g,n}^D}$ form a locally closed stratification. Another proof together with an algorithm is found in \cite[Theorems 7.5(6) and 8.1(3)]{Dok25}, which we summarize here. A combinatorial analysis shows that, if $\ti X_0$ has multiplicity $n$ and $\chi(\ti X_0,\mathcal{I}) = \chi < 0$, then
        \[1 \leq n \leq 6-2\chi.\]
        Having $n$ fixed, we can bound $\chi(\ti C)$. It has to be an even integer in the range
        \[\frac{\chi}{n} \leq \chi(\ti C) \leq 2.\]
        Further fixing $\chi(\ti C)$, the number $\ell$ counting the points $x \in \ti C$ such that $n_x > 1$ lies in the range
        \[0 \leq \ell \leq \frac{-2\chi}{n} + 2\chi(\ti C).\]
        Having also fixed $\ell$, there are finitely many (and effectively computable) pairs of tuples $(n_1,\ldots,n_\ell)$ and $(b_1,\ldots,b_\ell)$ with $n_i > 1$ dividing $n$ and $1 \leq b_i < n_i$, such that $n$ divides $\sum b_i$. We can recover $h_i = \gcd(b_i,n)$ and $q_i = \frac{b_i}{h_i}$. Next, for every sub-tuple $\mathcal{I}_{>1}$ of $(n_1,\ldots,n_\ell)$ of size $r \leq \ell$, we determine if the quantity
        \[s = \frac{-\chi}{n} + \chi(\ti C) - r - \sum_{n_j \in \mathcal{I}_{>1}}\p{1 - \frac{1}{n_j}}\]
        is an integer. If so, defining
        \[\mathcal{I} = \mathcal{I}_{> 1} \cup \{y_1,\ldots,y_s\}\]
        for some choice of points $y_j$ with $n_{y_j} = 1$  determines an equivalence class of pairs $(\ti X_0,\mathcal{I})$ in $\Lambda_\chi$.
    \end{proof}

    As promised in \Cref{the-promise}, we will now prove the existence of arbitrary glueing of building blocks exists. With this, we have a justification for the correctness of \cite[Algorithm 10.10]{Dok25} independently of Winters' result.

    \begin{theorem} \label{arbitrary-glueing-exists}
        Let $\ti Y_0 = \bigcup \ti Y_{0,i}$ be a finite collection of building blocks with principal components $\ti C = \bigcup \ti C_i$. Let $\mathcal{J}$ be a finite indexing set and $\{(p_\alpha,p_\alpha')\}_{\alpha \in \mathcal{J}}$ be pairs of points in $\ti C$ such that
        \begin{itemize}
            \item For every $\alpha \in \mathcal{J}$, $h_{p_\alpha} = h_{p_\alpha'}$.
            \item $p_\alpha \neq p_\beta$ and $p_\alpha \neq p_\beta'$ if $\alpha \neq \beta$.
            \item If $p_\alpha = p_\alpha'$, then $h_{p_\alpha}$ is even.
        \end{itemize}
        For $\alpha \in \mathcal{J}$, let $i_\alpha$ be the index such that $p_\alpha \in \ti Y_{0,i_\alpha}$, and similarly for $i_\alpha'$. Let $\{j_\alpha\}_{\alpha \in \mathcal{J}}$ be integers such that
        \begin{itemize}
            \item $j_\alpha \geq -1$ if $p_\alpha \neq p_\alpha'$ and $\frac{a_{p_\alpha}}{n_{i_\alpha,p_\alpha}} + \frac{a_{p_\alpha'}}{n_{i_\alpha',p_\alpha'}} > 1$ in the notation of \Cref{non-swap-local-glueing-algorithm}(3) and (4), or
            \item $j_\alpha \geq 0$ otherwise.
        \end{itemize}
        Then there exists a tame singular fiber $\ti X_0$ with $\ti X_0^\nu \cong \ti Y_0$, which is combinatorially obtained from $\ti Y_0$ in the sense of \Cref{global-glueing-algorithm} by glueing the chains over the points $p_\alpha$ and $p_\alpha'$ using the non-swapping \Cref{non-swap-local-glueing-algorithm} if $p_\alpha \neq p_\alpha'$ and using the swapping \Cref{swap-local-glueing-algorithm} with the data $\{j_\alpha\}$ instead, if $p_\alpha = p_\alpha'$.
    \end{theorem}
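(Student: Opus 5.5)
The plan is to realize the glueing by building an honest nodal curve $C$ with an automorphism $f$ out of the curves underlying the building blocks, and then to invoke \Cref{weak-realization-theorem} to produce a family $X \to D$ with an equivariant $\mu_n$-action. Once we have that family, \Cref{global-glueing-algorithm} computes the combinatorics of $\ti X_0$ from those of $\ti X_0^\nu$.

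First, for each building block $\ti Y_{0,i}$ choose the smooth curve $C_i$ (possibly disconnected, as for $[c]\ti X_0$) together with an automorphism $f_i$ of order $n_i$ realizing it. Take $n$ to be a common multiple of the $n_i$ coprime to $\chr K$. Replace $C_i$ by $\bigsqcup_{n/n_i} C_i$, letting the new automorphism permute the copies cyclically with $(n/n_i)$-th power equal to $f_i$. This does not change the building block combinatorially up to the scaling of multiplicities already built into \Cref{big-definitions}. To arrange a common $n$ consistent with the prescribed multiplicities, I would instead allow $\mu_n$ to act non-faithfully, with $f^{m_i}|_{C_i}=\id$, exactly as in \Cref{numbers}. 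Set $C^\nu = \bigsqcup C_i$ with the induced $f$.

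Now form $C$ by identifying orbits. For a pair $(p_\alpha,p'_\alpha)$ with $p_\alpha\neq p'_\alpha$, the fibers over $p_\alpha$ and $p'_\alpha$ are $f$-orbits of the same size $h_{p_\alpha}=h_{p'_\alpha}$. Pick $x$ over $p_\alpha$ and $x'$ over $p'_\alpha$, and glue $f^r(x)$ to $f^r(x')$ for every $r$. This is well defined because both orbits have length $h$. When $p_\alpha=p'_\alpha$, the orbit has even length $h$; glue $f^r(x)$ to $f^{r+h/2}(x)$, so that $f^{h/2}$ swaps the branches. The disjointness hypothesis makes all these identifications compatible, so $f$ descends to an automorphism of the nodal curve $C$ whose normalization is $(C^\nu,f)$. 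By construction, the quotient $C/f$ has type 2 or 3 points at the non-swapped pairs and type 4 points at the swapped ones.

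Next, choose $k_p$ at each node. At a non-swapped node fixed by $f^{h}$, the two branches carry weights $\zeta^{a}$ and $\zeta^{b}$, and we set $k_p = d_1a_{p_\alpha}+d_2a_{p'_\alpha}+j_\alpha n_p$, constant along the orbit. The hypothesis on $j_\alpha$ ($j_\alpha\ge -1$ only when $\frac{a}{n}+\frac{a'}{n'}>1$) is exactly what guarantees $k_p\ge 1$. At a swapped node we set $k_p=u+v+j_\alpha n_p$ with $j_\alpha\ge0$. By \Cref{nodal-ext}, $f^{h}$ acts on $\Ext^1(\Omega_{C,p},\O_{C,p})$ by the product of the branch eigenvalues, which equals $\zeta_n^{k_ph}$. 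This is condition (3) of \Cref{weak-realization-theorem}, which therefore yields a family $X\to B$ with central fiber $C$, $A_{k_p-1}$ singularities, and equivariant action restricting to $f$. We then localize at $0$ and apply \Cref{global-glueing-algorithm}.

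I expect the main obstacle to be the bookkeeping of the normalization and the multiplicities. One must check that the chosen lifts and the normalization of the exponents, namely $0<a\le n$ for the non-swap case and $0<u+v\le n$ for the swap case, match the conventions of \Cref{non-swap-quotient-theorem} and \Cref{swap-quotient-theorem}, so that the resulting $j$ is precisely $j_\alpha$. One must also confirm that $\ti X_0^\nu$ built from $C\times D$ agrees with the given $\ti Y_0$, including the case of non-faithful actions. A smaller point is the reduction to connected components and to the generic smoothness required by \Cref{weak-realization-theorem}. This needs every node to be smoothed, which holds since $\lambda_i\neq0$. If a component is unstable, I would add an $f$-invariant marking $D$ as in that proof.
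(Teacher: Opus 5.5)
Your proposal is correct and follows the paper's proof essentially step for step. You glue the curves $C_i^\nu$ $f$-equivariantly along the orbits over $p_\alpha,p'_\alpha$, with the $f^{h/2}$ identification in the swapped case. You then define $k_\alpha$ from $a_{p_\alpha}$, $a_{p'_\alpha}$ and $j_\alpha$, and apply \Cref{weak-realization-theorem} followed by \Cref{global-glueing-algorithm}.

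Of your two options for a common $n$, keep the non-faithful action, which is exactly what the paper's $d_i=n/m_i$ encode. Replacing $C_i$ by $n/n_i$ cyclically permuted copies would turn $\ti Y_{0,i}$ into $[n/n_i]\ti Y_{0,i}$ and multiply its multiplicities.
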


    \begin{remark}
        The condition $h_{p_{\alpha}} = h_{p_{\alpha'}}$ is equivalent to the last components in the chains over $p_\alpha$ and $p_{\alpha'}$ having the same multiplicity.
    \end{remark}

    \begin{proof}
        Let $C_i^\nu \to \ti C_i^\nu \subseteq \ti Y_{0,i}$ be the quotient map from the smooth, possibly disconnected curve $C_i^\nu$ to the principal component of $\ti Y_{0,i}$, and let $f_i \in \Aut(C_i^\nu)$ be the automorphism used to obtain $\ti Y_{0,i}$. We construct a curve $C$ with an automorphism $f \in \Aut(C)$ by glueing points in $C^\nu = \bigcup C_i^\nu$ along the pre-images of $p_\alpha$ and $p_\alpha'$:
        \begin{itemize}
            \item If $p_\alpha \neq p_\alpha'$, by hypothesis there exist $h_{p_\alpha}$ points over $p_\alpha$ and the same number over $p_\alpha'$. We glue them in any way which is compatible with the $f_i$'s, which is always possible.
            \item If $p_\alpha = p_\alpha'$, we glue a point $y$ over $p_\alpha$ to the point $f^{h_{p_\alpha}/2}(y)$, which is possible since $h_{p_\alpha}$ is even. This glueing is also compatible with the $f_i$'s. 
        \end{itemize}
        As this glueing is compatible with the $f_i$, it determines $f \in \Aut(C)$ by glueing the automorphisms $f_i$.

        Using the integers $j_\alpha$, we may define the integers
        \[k_\alpha = d_{i_\alpha}a_{p_\alpha} + d_{i_\alpha'}a_{p_\alpha'} + j_\alpha n_{p_\alpha} \geq 1, \quad \text{ if } p_\alpha \neq p_\alpha', \text{ or }\]
        \[k_\alpha = d_{i_\alpha}\ov{2a_{p_\alpha}} + j_\alpha n_{p_\alpha} \geq 1, \quad \text{ if } p_\alpha = p_{\alpha}',\]
        where $\ov{2a_{p_\alpha}}$ is the unique residue modulo $n_{i_\alpha,p_\alpha}$ of $2a_{p_\alpha}$ in the range $[1,n_{i_\alpha,p_\alpha}]$. These $k_\alpha$ will play the role of the singularity type of the nodes in the ambient surface $X \to D$. So using the data $(C,f,\{k_\alpha\})$, we construct a surface $X \to D$ using \Cref{weak-realization-theorem}. By \Cref{non-swap-local-glueing-algorithm,swap-local-glueing-algorithm,global-glueing-algorithm}, we see that the combinatorics of the resulting singular fiber $\ti X_0$ is obtained from $\ti Y_0$ as in the statement.
    \end{proof}

    This completes the parallel between our method and the one worked out in \cite{Dok25}. Knowing that singular fibers arise as quotients of stable curves, we can make use of the combinatorics done in \cite{Dok25} to prove a strengthening of \cite[Corollary 4.3]{Winters1974} which answers \cite[Question 3.8]{Dok25} in characteristic 0, or in positive characteristic when the fiber is ``tame''.

    \begin{definition} \label{combinatorial-singular-curve-definition}
        A combinatorial singular fiber is a pair $(\Gamma, \{m_i\})$ where $\Gamma = \bigcup_{i\in I} \Gamma_i$ is a connected, reduced normal crossings curve, and $m_i \geq 1$ are integers such that for every $i \in I$,
        \[\Gamma_i^2 := -\frac{1}{m_i}\sum_{j \neq i} m_j \cdot \#(\Gamma_i \cap \Gamma_j)\]
        is an integer. We define the Euler characteristic of a component $\Gamma_i$ as
        \[\chi_\Gamma(\Gamma_i) = -2m_i(g(\Gamma_i)-1) - \sum_{j \neq i} m_j\cdot\#(\Gamma_i \cap \Gamma_j),\]
        and the Euler characteristic of the full fiber as its sum
        \[\chi = \chi(\Gamma,\{m_i\}) = \sum_i \chi_\Gamma(\Gamma_i) = -\sum_i \p{2m_i(g(\Gamma_i)-1) + \sum_{j \neq i} m_j\cdot\#\p{\Gamma_i \cap \Gamma_j}}.\]
        We say that $\Gamma_i$ is a $(-1)$-curve if $\Gamma_i \cong \P^1$ and $\Gamma_i^2 = -1$. 
    \end{definition}
    
    \begin{theorem} \label{existence-of-geometrically-connected-fiber}
        Let $d \geq 1$ be an integer and let $(\Gamma,\{m_i\})$ be a combinatorial singular fiber of Euler characteristic $2d(1-g)$, for $g \geq 2$. Assume that any curve $\Gamma_i$ with $m_i$ divisible by $\chr K$ is isomorphic to $\P^1$ and intersects the rest of $\Gamma$ in at most two points. Then the following are equivalent.
        \begin{enumerate}
            \item There exists a proper morphism $\ti \pi \colon \ti X \to B$ with smooth geometric generic fibers, where $\ti X$ is a smooth surface, $0 \in B$ is a smooth point in a curve, and $\ti X_{0,red} \cong \Gamma$ where the multiplicity of $\Gamma_i$ in $\ti X_{0}$ is $m_i$, and $\dim H^0(\ti X_0,\O_{\ti X_0}) = d$.
            \item At least one of the following hold
            \begin{itemize}
                \item $g(\Gamma) \geq 1$ and $d$ divides $\gcd(\{m_i\})$.
                \item $\gcd(\{m_i\}) = d$.
            \end{itemize}
        \end{enumerate}
    \end{theorem}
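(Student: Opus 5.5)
Both directions are handled through the stacky presentation: any tame fiber is a quotient of a stable family, and any combinatorially admissible configuration is realized by glueing building blocks (\Cref{arbitrary-glueing-exists}). The key invariant is $\dim H^0(\ti X_0,\O_{\ti X_0})$. This is the number of connected components of the stable curve $C$ in a presentation $\mu_n \curvearrowright C \subseteq X \to D$. Equivalently, it is the degree of the Stein factorization of $\ti X \to B$ near $0$. So the first step is to prove a lemma (the \Cref{geometrically-connected-theorem} alluded to earlier): if $\ti X_0$ is the quotient of $X \to D$ by $\mu_n$, then $h^0(\O_{\ti X_0})$ equals the number of connected components of the geometric generic fiber of $X^{tw} \to D/\mu_n$. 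Concretely, it is the number of $\mu_n$-orbits on components of the general fiber of $X \to D$, corrected by the index of the stabilizer acting on the $t$-line.

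For (1)$\Rightarrow$(2), I would Stein factorize $\ti X \to B' \to B$, where $B' \to B$ has degree $d$ and is totally ramified over $0$ after shrinking, since $\ti X_0$ is connected. Then $\ti X_0$ is the pullback of $0$, which has multiplicity $d$ in $B'$. Hence $\ti X_0 = d\cdot F$ with $F$ the fiber of $\ti X \to B'$, and $d \mid m_i$ for all $i$. If $g(\Gamma)=0$, I would show $\gcd(m_i)=d$ exactly. Suppose $F$ itself had all multiplicities divisible by $e>1$. Then $F = eF'$ with $F'$ a tree of rational curves, so $h^0(\O_{F'})=1$ and $\O_{F'}(F')$ is torsion of order $e$ in $\operatorname{Pic}F'$. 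But $\operatorname{Pic}^0$ of a tree of $\P^1$'s is trivial, and $\deg \O_{F'}(F')|_{\Gamma_i}=0$, so $\O_{F'}(F')$ is trivial. Then the standard argument (Raynaud's multiple-fiber lemma) gives $h^0(\O_F)\ge 2$, which contradicts connectedness of the fibers of $\ti X \to B'$. The tameness hypothesis on $\chr K$ ensures this reasoning does not break for wild multiplicities.

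For (2)$\Rightarrow$(1), I would first reduce to $d=1$ on the combinatorial side. Set $m_i' = m_i/d$, build a fibration with connected fibers realizing $(\Gamma,\{m_i'\})$, and then base change by $t \mapsto t^d$ and compose with $B' \to B$ of degree $d$. More precisely, I take $\ti X \to B' \to B$ with $B'\to B$ totally ramified of degree $d$. The resulting fiber over $0$ is $d$ times the old one and has $h^0=d$. This needs $(\Gamma,\{m_i'\})$ to be admissible, which holds because the integrality and Euler characteristic conditions scale. The Euler characteristic becomes $2(1-g)$. When $g(\Gamma)\ge1$ but $\gcd(m_i)/d = e>1$, I must produce connected fibers whose multiplicities are all divisible by $e$. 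Here I would use the last clause of \Cref{existence-cyclic-galois-cover}: since $\Gamma$ has a component of positive genus or a cycle, one chooses the cyclic cover over that building block (or the glueing over the cycle) to be connected rather than split into $e$ copies.

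The main work, and the main obstacle, is realizing an arbitrary admissible $(\Gamma,\{m_i'\})$ with $\gcd$-condition as a tame quotient fiber. I would invoke Dokchitser's combinatorial decomposition \cite{Dok25}: after blowing down to the minimal normal crossings model, the assumption on components with $\chr K \mid m_i$ guarantees those components lie in chains. Then $\Gamma$ decomposes into principal components with outer chains, inner chains and D-tails. Each principal component with its outer chains is realized as a building block via the proposition following \Cref{existence-cyclic-galois-cover}; the congruence $\sum h_iq_i\equiv 0$ follows from $\Gamma_i^2\in\Z$. Each inner chain or D-tail is realized by the local glueing algorithms with an appropriate $j_\alpha$. \Cref{arbitrary-glueing-exists} then yields $\ti X$. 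Finally, I would compute $h^0$ using the lemma from the first step and adjust choices of cyclic covers so that the number of components equals the required value. This last bookkeeping step, namely matching the component count with $\gcd$ and genus, is where I expect the most delicate case analysis.
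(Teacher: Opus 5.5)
Your overall architecture is the paper's. You reduce to $d=1$ via the Stein factorization, and conversely compose with a degree-$d$ map totally ramified at $0$. You realize $(\Gamma,\{m_i/d\})$ by Dokchitser's decomposition (\Cref{Dokchitser-main-result}), cyclic covers from \Cref{existence-cyclic-galois-cover}, and \Cref{weak-realization-theorem}. When $g(\Gamma)\ge 1$ you get connectedness from a positive-genus component or a twisted reglueing along a cycle.

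The genuine divergence is the case $g(\Gamma)=0$ of (1)$\Rightarrow$(2). You argue with the multiple fiber $F=eF'$ and the triviality of $\O_{F'}(F')$. The paper instead stays inside the quotient presentation and uses the combinatorial identities \Cref{gcd-equals-hp} and \Cref{components-equals-gcd}: the $\gcd$ of all multiplicities is $\gcd(\{h_p\})$, which is the number of components of $C$ when $C/f$ has genus $0$. Your route is more classical and avoids the inductive graph count. The paper's gives an exact component count that serves both directions.

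The step you defer as delicate bookkeeping is in fact immediate from your own Stein argument. This is connectedness of $C$ in (2)$\Rightarrow$(1) when $g(\Gamma)=0$. If the constructed $C$ has $r$ components, the Stein factorization of $\ti X\to B$ is totally ramified of degree $r$ over $0$, since $\ti X_0$ is connected. So $r$ divides every $m_i$, and $\gcd(\{m_i\})=1$ forces $r=1$. The paper uses \Cref{components-equals-gcd} there instead.

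Some steps need repair.

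First, $h^0(\O_F)\ge 2$ does not contradict connectedness of the fibers of $\pi'\colon \ti X\to B'$. From $\pi'_*\O_{\ti X}=\O_{B'}$ you only get $h^0(\O_F)=1+\dim_K (R^1\pi'_*\O_{\ti X})[t]$, with $t$ a uniformizer at $0'$, and such torsion does occur for wild fibers. What you actually need is $h^0(\O_F)=1$, which your Step-1 lemma (\Cref{geometrically-connected-theorem}) gives when applied to $\ti X\to B'$. That lemma, and likewise your claim that the Stein degree is exactly $d$, is only available once $\ti X$ is known to be the resolved quotient of a tame stable family. This need only hold up to blow-ups and blow-downs in the fiber, which do not change $h^0$. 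This is exactly where the hypothesis on components with $\chr K\mid m_i$ enters (1)$\Rightarrow$(2), via \cite[Theorem 10.4.47]{Liu} and \Cref{semi-tame-valuative-criterion}. You should invoke these explicitly rather than assume every fiber is tame.

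Second, $F'$ is non-reduced. Triviality of $\operatorname{Pic}^0$ of a tree of $\P^1$'s only puts $\O_{F'}(F')$ in the kernel of $\operatorname{Pic}F'\to\operatorname{Pic}F'_{red}$, which is unipotent. To conclude that $\O_{F'}(F')$ is trivial you need $\chr K\nmid e$. This holds because $\chi<0$ forces some component that is not a $\P^1$ meeting the rest at most twice, and its multiplicity is then prime to $\chr K$ by hypothesis. Once $\O_{F'}(F')$ is trivial, no appeal to Raynaud is needed. A nonzero section of $\O_{F'}(-(e-1)F')\cong\O_{F'}$ maps, via $0\to\O_{F'}(-(e-1)F')\to\O_{eF'}\to\O_{(e-1)F'}\to 0$, to a section of $\O_F$ independent of $1$.

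Finally, the count in your lemma is simply the number of connected components of $C$. The reformulation through $\mu_n$-orbits on components of the general fiber does not make sense, since $\mu_n$ does not preserve a general fiber.
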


    First let us translate the statement $\dim H^0(\ti X_0,\O_{\ti X_0}) = d$ to the statement that the generic fiber of $\ti X$ having $d$ components.

    \begin{lemma}\label{quotient-lemma}
        Let $X \to D$ be a map between quasi-projective schemes over $K$, and let $G$ be a finite linearly reductive group acting equivariantly on $X \to D$. Let $B \to D/G$ be any map and define $B' = D \times_{D/G} B$ and $X' = X \times_D B'$. Then the diagram
        \[\begin{tikzcd}
            X'/G \ar[r] \ar[d] & X/G \ar[d]\\
            B \ar[r] & D/G
        \end{tikzcd}\]
        is cartesian.
    \end{lemma}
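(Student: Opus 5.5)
The statement asserts that forming the coarse quotient by $G$ commutes with the base change $B \to D/G$. The plan is to reduce to an affine, purely algebraic statement about invariants commuting with tensor products. Since $G$ is finite and $X$, $D$ are quasi-projective, every $G$-orbit lies in a $G$-stable affine open. We may therefore cover $D$ by $G$-stable affines $\Spec A$, whose images $\Spec A^G$ cover $D/G$. We then cover the preimage of each such affine in $X$ by $G$-stable affines $\Spec R$, with $R$ a $G$-equivariant $A$-algebra. Cartesianness is local on $B$ and on $X/G$, so we may also take $B = \Spec S$ affine over $\Spec A^G$. With these reductions, $B' = \Spec(A \otimes_{A^G} S)$ and $X' = \Spec(R \otimes_{A^G} S)$, where $G$ acts trivially on $S$. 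The claim becomes the equality
\[(R \otimes_{A^G} S)^G \cong R^G \otimes_{A^G} S.\]

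The key step is this identity, and it is where linear reductivity is used. Because $G$ is linearly reductive, the functor of invariants on $G$-equivariant quasi-coherent modules is exact, and it is realized by a Reynolds operator. Concretely, there is an $R^G$-linear projection $e \colon R \to R^G$ splitting the inclusion and natural in $R$. This gives a decomposition $R = R^G \oplus R'$ of $R^G$-modules, hence of $A^G$-modules, where $R' = \ker e$ has no invariants and is $G$-stable.

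Tensoring this decomposition with $S$ over $A^G$ gives
\[R \otimes_{A^G} S = (R^G \otimes_{A^G} S) \oplus (R' \otimes_{A^G} S).\]
The $G$-action is on the first factors, since $S$ carries the trivial action. It remains to show that $(R' \otimes_{A^G} S)^G = 0$.

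For this, write $R' \otimes_{A^G} S$ as a cokernel of maps between direct sums of copies of $R'$, using a free presentation of $S$ over $A^G$. Invariants of a direct sum of copies of $R'$ vanish. Since invariants form an exact functor, the invariants of the cokernel also vanish. Alternatively, when $G$ is a constant group of order prime to $\chr K$, one can argue directly: the Reynolds operator is $\frac{1}{|G|}\sum_{g} g$, which commutes with $-\otimes S$. For the group scheme $\mu_n$ one can instead use the grading of $R$ by characters, where invariants are the degree-zero part, and grading visibly commutes with tensoring by a degree-zero module.

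The main obstacle is not the algebra but making sure the local reductions glue correctly. We need $X/G$ to be covered by the spectra $\Spec R^G$, and $D/G$ by the spectra $\Spec A^G$. This holds because geometric quotients by finite groups are computed affine-locally on $G$-stable affines and are compatible with $G$-stable open restriction, which follows from the same exactness of invariants applied to localizations at invariant elements. Once these checks are done, the local isomorphisms are canonical, since they are induced by the natural map $R^G \otimes_{A^G} S \to (R \otimes_{A^G} S)^G$. They therefore glue to the global cartesian square.
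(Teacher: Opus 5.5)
Your proposal is correct and follows essentially the same route as the paper. It reduces to the affine identity $(R\otimes_{A^G}S)^G \cong R^G\otimes_{A^G}S$, splits $R = R^G\oplus R'$ using linear reductivity, and shows $(R'\otimes_{A^G}S)^G=0$ via a surjection from a direct sum of copies of $R'$ together with exactness of invariants. Your extra care with $G$-stable affine covers and gluing just spells out the reduction to the affine case, which the paper states in one line.
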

    \begin{proof}
        This reduces to the algebraic statement that: Given a $K$-algebra $R$ with a $G$ action, an $R$-algebra $S$ with a $G$-equivariant action, and a $R^G$-algebra $T$, then $\p{S \otimes_{R^G} T}^G \cong S^G \otimes_{R^G} T$. This holds because $G$ is linearly reductive, so $S^G$ is a $G$-equivariant direct summand in $S$. Indeed, writing $S = S^G \oplus S'$ where $(S')^G = 0$, then taking a free resolution $(R^G)^{\oplus J} \to T \to 0$ we obtain a surjection $(S')^{\oplus J} \to S' \otimes T \to 0$ where $G$ acts diagonally on $(S')^{\oplus J}$. As $G$ is linearly reductive, taking invariants is an exact functor, and so, since $(S')^{\oplus J, G} = (S'{}^G)^{\oplus J} = 0$, we obtain $\p{S' \otimes T}^G = 0$.
    \end{proof}
    \begin{theorem}\label{geometrically-connected-theorem}
        Let $n$ be coprime to $\chr K$ and let $\mu_n \curvearrowright C \subseteq X \xrightarrow{\pi} D$ be an equivariant action on a generically smooth fibration with nodal central fiber, and let $r = \rank \pi_*\O_X$ be the number of connected components of a geometric generic fiber. If $\pi^{tw} \colon X^{tw} = X/\mu_n \to D$ is the quotient and $X^{tw}_0$ is the central fiber, then $\rank \pi^{tw}_*\O_{X^{tw}} = r$ and
        \[\dim H^0(X^{tw}_0,\O_{X^{tw}_0}) = r.\]
    \end{theorem}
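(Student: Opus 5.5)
The plan is to prove the two claims separately: first the rank of $\pi^{tw}_*\O_{X^{tw}}$ via invariants and base change, then $h^0$ of the central fiber via cohomology and base change. Throughout, $D=\Spec K[[t]]$ and $D/\mu_n=\Spec K[[s]]$ with $s=t^n$.

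\textbf{Rank of $\pi^{tw}_*\O_{X^{tw}}$.} Since $\mu_n$ is linearly reductive, taking invariants commutes with pushforward:
\[\pi^{tw}_*\O_{X^{tw}} = (\pi_*\O_X)^{\mu_n},\]
viewed as a module over $K[[s]]$. Now $\pi_*\O_X$ is a finite torsion-free $K[[t]]$-module, since $X$ is reduced and $\pi$ is flat and proper. Hence $\pi_*\O_X$ is free of rank $r$, so it is free of rank $nr$ over $K[[s]]$. To compute the rank of the invariant part, I pass to the generic point: $K((t))$ is Galois over $K((s))$ with group $\mu_n$. By Galois descent (Hilbert 90, valid for a finite $K((t))$-algebra with semilinear action), $(\pi_*\O_X\otimes K((t)))^{\mu_n}$ has $K((s))$-dimension equal to the $K((t))$-dimension of $\pi_*\O_X\otimes K((t))$, which is $r$. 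Since $\pi^{tw}$ is again flat and proper with reduced source (\Cref{quotient-lemma} lets us compare fibers if needed), $\pi^{tw}_*\O_{X^{tw}}$ is torsion-free, hence free of rank $r$.

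\textbf{Dimension of $H^0$ of the central fiber.} I will apply cohomology and base change to $\pi^{tw}$, which is flat with one-dimensional fibers. The module $R^1\pi^{tw}_*\O_{X^{tw}} = (R^1\pi_*\O_X)^{\mu_n}$ is a direct summand of $R^1\pi_*\O_X$, by exactness of invariants. If $R^1\pi_*\O_X$ is torsion-free, then so is its summand, and the base change map on $H^0$ is an isomorphism, so $H^0(X^{tw}_0,\O_{X^{tw}_0})$ has dimension $r$.

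\textbf{Main obstacle: torsion in $R^1\pi_*\O_X$.} This can fail, since $h^0$ of the central fiber may jump. Indeed $h^0(C,\O_C)$ is the number of components of $C$ (the fiber is reduced and nodal), and $C$ could be connected while the generic fiber is not.

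\emph{Alternative argument.} Let $\O_{X'}$ be the Stein factorization, $X\to D'=\Spec \pi_*\O_X\to D$. Because $X$ is normal, $D'$ is normal, hence a disjoint union of discs each finite over $D$. If $D'$ is étale over $D$, then $C$ splits into $r$ pieces and everything is clean. If $D'$ is ramified, the central fiber $X_0$ has nilpotent structure in general. In that case I would instead argue with $\pi'\colon X\to D'$, which has geometrically connected fibers and reduced central fiber (reducedness follows from flatness and generic smoothness of the nodal model). Then $\pi'_*\O_X=\O_{D'}$ commutes with base change, and $H^0(X_0,\O_{X_0})=\O_{D'}\otimes K[[t]]/(t)$, which has dimension $\operatorname{rank}\pi_*\O_X = r$.

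Taking $\mu_n$-invariants, and using \Cref{quotient-lemma} to identify $X^{tw}_0$ with $(X\times_D \Spec K[[t]]/(t^n))/\mu_n$, I get
\[H^0(X^{tw}_0,\O) = \bigl(\pi_*\O_X\otimes K[[t]]/(t^n)\bigr)^{\mu_n} = (\pi_*\O_X)^{\mu_n}\otimes K[[s]]/(s).\]
This has dimension $r$. The delicate point is justifying that $H^0$ commutes with base change to $t^n=0$, which comes from $H^0(X\times_D D'_m) = \O_{D'}/(t^m)$.
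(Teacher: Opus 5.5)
Your conclusion is right, and your Stein-factorization argument is in substance the paper's proof. However, the paragraph on the ``main obstacle'' is wrong, and the reason it is wrong is exactly the key step of the paper. Write the Stein factorization as $X\to D'\to D$ with $D'=\bigsqcup_i\Spec K[[u_i]]$ and $t=(\text{unit})\cdot u_i^{e_i}$. Every component of $X_0$ lying over $u_i=0$ then has multiplicity divisible by $e_i$. So reducedness of the nodal fiber $C$ forces all $e_i=1$: $D'\to D$ is \'etale, $D'\cong\bigsqcup_{r}D$, and $C$ has exactly $r$ connected components. You observe yourself that a ramified $D'$ puts nilpotents in $X_0$; that contradicts the hypothesis rather than being a case to treat. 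Hence $C$ can never be connected while the generic fiber is not. Also $h^0(C,\O_C)=r$ equals the generic value, so $R^1\pi_*\O_X$ is torsion-free by the usual base-change sequence. Your ``ramified case'' is therefore vacuous. That is just as well, because the reason you give there for reducedness of the central fiber of $X\to D'$ (flatness and generic smoothness) does not imply it; what does imply it is reducedness of $C$.

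Once this is fixed, your first approach is already a complete proof, and it differs from the paper's. Since $R^1\pi^{tw}_*\O_{X^{tw}}=(R^1\pi_*\O_X)^{\mu_n}$ is a direct summand of a torsion-free module, cohomology and base change applied to $\pi^{tw}$ itself gives $H^0(X^{tw}_0,\O_{X^{tw}_0})\cong\pi^{tw}_*\O_{X^{tw}}\otimes K$. This has dimension $r$ by your first part, with no need for the thickened fiber $X_{\tau^{-1}(0)}$ or \Cref{quotient-lemma}.

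The paper, like your alternative argument, instead uses base change for $\pi$ along $\Spec K[t]/(t^n)$. It then uses \Cref{quotient-lemma} to write $H^0(X^{tw}_0,\O_{X^{tw}_0})$ as the invariants of $H^0(X_{\tau^{-1}(0)},\O)\cong A\otimes V$, and counts those invariants by an isotypic decomposition built from an equivariant section $V\to H^0(X_{\tau^{-1}(0)},\O)$. Your last step there is $(M/t^nM)^{\mu_n}=M^{\mu_n}/sM^{\mu_n}$, by exactness of invariants, with $M^{\mu_n}=\pi^{tw}_*\O_{X^{tw}}$ free of rank $r$. This is a tidier replacement for the paper's count, and it makes the second assertion a direct consequence of the first. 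Your Galois-descent proof of the rank statement is a correct expansion of what the paper calls clear.
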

    \begin{proof}
        First, it is clear that $\rank \pi^{tw}_*\O_{X^{tw}}$ is $r$ as $\mu_n$ acts generically freely on $D$.
        
        We know that $X \to D$ is nodal, so in particular every component has multiplicity $1$. This implies that the Stein factorization $X \to D' \to D$ is unramified, and thus, $X \to D$ satisfies cohomology and base change in the sense of \cite[Proposition 2.1]{Con11}. Now, the fiber over $0$ under the quotient $\tau \colon D \to D/\mu_n$ is $\tau^{-1}(0) = \Spec A$, where $A = K[t]/(t^n)$, and $\Spec A/\mu_n = \Spec K$. By cohomology and base change,
        \begin{equation}\label{global-sections-of-thickened-fiber}
            H^0(X_{\tau^{-1}(0)},\O_{X_{\tau^{-1}(0)}}) = (f_*\O_X)|_{\tau^{-1}(0)} \cong A^r,
        \end{equation}
        where this last equality holds as $K$-vector spaces, not necessarily as $\mu_n$-representations. By \Cref{quotient-lemma}, the thickened fiber $X_{\tau^{-1}(0)}$ satisfies $X_{\tau^{-1}(0)}/\mu_n = X^{tw}$, and so
        \[H^0(X_{\tau^{-1}(0)},\O_{X_{\tau^{-1}(0)}})^{\mu_n} = H^0(X^{tw}_0,\O_{X_0^{tw}}).\]
        Thus, we are reduced to verifying that the action of $\mu_n$ on $A^r$ induced from \Cref{global-sections-of-thickened-fiber} is such that $\dim_K (A^r)^{\mu_n} = r$. Let $F_1,\ldots,F_r$ be the connected components of $X_0$, so that
        \[H^0(X_0,\O_{X_0}) = \bigoplus_{i=1}^r H^0(F_i,\O_{F_i}) \cong \bigoplus_{i=1}^r K =: V,\]
        where each summand corresponds to constant functions on $F_i$. This determines a $\mu_n$-equivariant section
        \[V \to H^0(X_{\tau^{-1}(0)},\O_{X_{\tau^{-1}(0)}}),\]
        which we use to construct an injective, and thus also surjective, morphism of $\mu_n$-representations
        \[A \otimes_K V \to H^0(X_{\tau^{-1}(0)},\O_{X_{\tau^{-1}(0)}}).\]
        Now, let us decompose $A$ and $V$
        \[A = \bigoplus_{i=0}^{n-1} A_i, \qquad V = \bigoplus_{i=0}^{n-1} V_i\]
        into their sub-representations $A_i$, $V_i$, where $\zeta_n$ acts as multiplication by $\zeta_n^i$. Note that $A_i = Kt^i \subseteq K[t]/(t^n) \cong K$ is one-dimensional. We have
        \[(A \otimes_K V)^{\mu_n} = \p{\bigoplus_{i,j=0}^{n-1} A_i \otimes_K V_j}^{\mu_n} = \bigoplus_{i+j \equiv 0 \!\!\mod n} A_{i} \otimes_K V_{j} \cong \bigoplus_{j=0}^{n-1} V_j \cong V,\]
        which proves that
        \begin{equation*}
            \dim H^0(X_0^{tw},\O_{X_0^{tw}}) = r.\qedhere
        \end{equation*}
    \end{proof}

    The following is a partial strengthening of the main result of \cite{Bresciani2023}. This says that even if a Deligne-Mumford stack $\mathcal{X}$ is not tame, if a rational map $\Spec R \dasharrow \mathcal{X}$ satisfies a tame valuative criterion, then this extension can still be taken as a root stack.

    \begin{lemma}\label{semi-tame-valuative-criterion}
        Let $f \colon \mathcal{X} \to \mathcal{Y}$ be a proper DM morphism of algebraic stacks, and let $R$ be a DVR with quotient field $L$. Suppose that we have a 2-commutative square
        \[\begin{tikzcd}
            \Spec L \ar[r] \ar[d,hook] & \mathcal{X} \ar[d,"f"]\\
            \Spec R \ar[r] & \mathcal{Y}
        \end{tikzcd}\]
        Moreover, suppose that there exists tamely ramified quasi-finite extension of DVRs $R \hookrightarrow R'$ such that a lift
        \[\begin{tikzcd}
            && \mathcal{X} \ar[d,"f"]\\
            \Spec R' \ar[r] \ar[rru] & \Spec R \ar[r] & \mathcal{Y}
        \end{tikzcd}\]
        exists. Then there exists a unique positive integer $n$ and a representable lifting $\sqrt[n]{\Spec R} \to \mathcal{X}$ making the diagram 
        \[\begin{tikzcd}
            &\Spec L \ar[d,hook] \ar[dl,hook] \ar[r] & \mathcal{X} \ar[d,"f"]\\
            \sqrt[n]{\Spec R} \ar[r] \ar[rru] & \Spec R \ar[r] \ar[from=u,hook,crossing over] & \mathcal{Y}
        \end{tikzcd}\]
        2-commute. 
    \end{lemma}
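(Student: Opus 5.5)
Reduce to the case where $R$ is strictly henselian, then pass from the tame extension $R'$ to a Kummer cover and from there to a root stack. First, since $f$ is DM and proper, lifts of $\Spec R \to \mathcal{Y}$ along $\Spec L \to \mathcal{X}$, when they exist over an extension, are unique up to unique 2-isomorphism. Hence the problem is étale local on $\Spec R$ and we may replace $R$ by its strict henselization (representable morphisms from root stacks and their uniqueness descend along the étale cover by the usual descent for morphisms into stacks, using uniqueness to get the cocycle condition). So assume $R$ strictly henselian with uniformizer $t$. Then the tamely ramified extension $R \hookrightarrow R'$ (after localizing at a point, which we can do since quasi-finite extensions of henselian rings split into local pieces) is of the form $R' = R[s]/(s^m - u t)$ with $m$ prime to the residue characteristic; absorbing the unit $u$ (which has an $m$-th root since $R$ is strictly henselian and $m$ is invertible), we get $R' = R[t^{1/m}]$, a Galois $\mu_m$-cover, with $\sqrt[m]{\Spec R} = [\Spec R'/\mu_m]$.

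Next, produce descent data for the lift $g \colon \Spec R' \to \mathcal{X}$. For each $\zeta \in \mu_m$, the two maps $g$ and $g \circ \zeta$ agree (canonically) on $\Spec L' $, where $L' = \operatorname{Frac} R'$, since both restrict to the composite $\Spec L' \to \Spec L \to \mathcal{X}$, and both lie over the same map to $\mathcal{Y}$. By the uniqueness part of the valuative criterion for the proper DM (hence separated, with finite unramified diagonal) morphism $f$, this 2-isomorphism over $\Spec L'$ extends uniquely to a 2-isomorphism $g \Rightarrow g \circ \zeta$ over $\Spec R'$ compatible with the identification in $\mathcal{Y}$. Uniqueness gives the cocycle condition, so $g$ descends to a morphism $[\Spec R'/\mu_m] = \sqrt[m]{\Spec R} \to \mathcal{X}$ making the diagram 2-commute. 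This step I expect to be routine but worth writing carefully, since it is exactly where $f$ being DM and separated is used in place of tameness of $\mathcal{X}$.

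Finally make the lift representable and pin down $n$. The morphism $\sqrt[m]{\Spec R} \to \mathcal{X}$ induces on the stabilizer $\mu_m$ of the closed point a homomorphism to the (finite) automorphism group in $\mathcal{X}$ of the image point; let $\mu_{m/n}$ be its kernel, i.e. the subgroup acting trivially. The main obstacle is to show that this kernel acts trivially on the whole family, so that the morphism factors through the rigidification $\sqrt[m]{\Spec R} \to \sqrt[n]{\Spec R}$ (which is the quotient of $\sqrt[m]{\Spec R}$ by the central subgroup $\mu_{m/n}$ of its generic-trivial inertia). For this I would use that generic points have trivial stabilizer in the source, and that the kernel of the map on inertia is a closed subgroup scheme of the inertia of $\sqrt[m]{\Spec R}$ over $\Spec R$, which is $\mu_m$ supported on the closed point; being linearly reductive (as $m$ is prime to the characteristic), the subgroup $\mu_{m/n}$ acts trivially on a formal neighborhood iff it does so at the closed point, so the standard rigidification result (Abramovich–Olsson–Vistoli) applies and gives the factorization, now representable by construction. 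Uniqueness of $n$ then follows because, for any representable lift from $\sqrt[n']{\Spec R}$, the stabilizer $\mu_{n'}$ must inject into, and (comparing both over the common cover $\sqrt[\mathrm{lcm}]{\Spec R}$ via the uniqueness from the previous paragraph) equal the image of the inertia computed above, forcing $n' = n$.
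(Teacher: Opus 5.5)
Most of your proposal is sound. The base change to $\mathcal{X}_R=\mathcal{X}\times_{\mathcal{Y}}\Spec R$, the identification $R'=R[t^{1/m}]$, and the descent of $g$ to a morphism $\sqrt[m]{\Spec R}=[\Spec R'/\mu_m]\to\mathcal{X}$ all work: the generic identifications are sections of a finite relative Isom scheme over $\Spec R'$, so they extend uniquely and keep the cocycle condition. Your uniqueness argument for $n$ through a common cover $\sqrt[N]{\Spec R}$ is also fine. Replacing $R$ by $R^{sh}$ is not literally \'etale descent, but a limit argument, or Lemma 3.11 of \cite{Bresciani2023} as in the paper, covers it.

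The gap is the final step, which you yourself flag as the main obstacle. The morphism $\sqrt[m]{\Spec R}\to\sqrt[n]{\Spec R}$ is not a rigidification. Rigidification in the sense of Abramovich--Corti--Vistoli and Abramovich--Olsson--Vistoli removes a subgroup of the inertia that is flat over the whole stack, and it produces a gerbe. Here the inertia of $\sqrt[m]{\Spec R}$ is trivial over $\Spec L$ and is $\mu_m$ only at the closed point, so your kernel $\mu_{m/n}$ is not a flat subgroup. Correspondingly, $\sqrt[m]{\Spec R}\to\sqrt[n]{\Spec R}$ is an isomorphism over $\Spec L$, not a gerbe. Nor does $\mu_{m/n}$ ``act trivially on a formal neighbourhood'': it acts on $\Spec R'$ by $t^{1/m}\mapsto\zeta t^{1/m}$. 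Since $\Spec R'\to\Spec (R')^{\mu_{m/n}}$ is ramified, a $\mu_{m/n}$-equivariant structure on $g$ is not descent data for that cover. What you actually need is a Kempf-type descent statement: a $K$-equivariant map to a DM stack descends to the quotient by $K$ when $K$ maps trivially to the stabilizers. The proposal gives no argument for this.

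Here is one way to close the gap. Let $x$ be the image of the closed point. By the local structure of separated DM stacks, \'etale locally on the coarse space of $\mathcal{X}_R$ near $x$ we have $\mathcal{X}_R\simeq[U/H]$, with $H$ the (constant) stabilizer of $x$. Since $R$ is strictly henselian, your map $\sqrt[m]{\Spec R}\to\mathcal{X}_R$ lands in this chart. Since $R'$ is strictly henselian, the pulled-back $H$-torsor on $\Spec R'$ is trivial. The $\mu_m$-equivariant structure then amounts to a homomorphism $\rho\colon\mu_m\to H$ together with a $\rho$-equivariant map $\psi\colon\Spec R'\to U$. This $\psi$ is invariant under $\ker\rho$, so it factors through $\Spec (R')^{\ker\rho}=\Spec R[t^{1/n}]$. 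The induced map $[\Spec R[t^{1/n}]/\mu_n]\to[U/H]$ is injective on stabilizers, hence representable. Alternatively, take the relative coarse moduli space of Abramovich--Olsson--Vistoli and identify it with $\sqrt[n]{\Spec R}$.

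With this repair, your route is a genuine alternative to the paper's. The paper never builds a map out of $\sqrt[m]{\Spec R}$. It normalizes $\mathcal{X}$, through which the lift from the normal $\Spec R'$ factors. From the tower $\Spec R'\to U\to\Spec R$ it reads off that $|H|$ divides $[L':L]$, so the normalization is tame. It then quotes Bresciani's Proposition 3.12, which says normal tame stacks proper and birational over a strictly henselian DVR are root stacks. Your approach avoids both the normalization and that classification, at the cost of the explicit descent computation above.
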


    \begin{proof}
        Following \cite[Proof of Theorem 3.1]{Bresciani2023}, we may assume that $\mathcal{Y} = \Spec R$ so $\mathcal{X}$ is Deligne-Mumford, that $\Spec L$ is a dense open in $\mathcal{X}$, and that $R$ and $R'$ are Nagata. By passing to the relative (and thus absolute) normalization of $\mathcal{X}$ and since $\Spec R'$ is also normal, by \cite[Lemma A.5 (3)]{Ascher2020}, we may further assume that $\mathcal{X}$ is normal. By \cite[Lemma 3.11]{Bresciani2023}, which does not actually use the tameness hypothesis, we know that the property of being a root stack is étale local on $\Spec R$, so we may further assume that $R$ is strictly henselian. Let $p\geq 0$ be the characteristic of the residue field of $R$. Since $\pi_1^t(\Spec R \setminus 0) \cong \prod_{\ell \neq p} \Z_\ell$, we know that $R/R'$ is Galois with cyclic Galois group $G$ of order coprime to $p$.

        Now let $U \to [U/H] \to \mathcal{X}$ be a connected, étale local representation of $\mathcal{X}$ as a quotient stack, where $H$ is the stabilizer of the point above $0 \in \Spec R$. Since $\Spec R$ is strictly henselian and $U$ is normal, we have that $[U/H] = \mathcal{X}$ and $U/H = \Spec R$, and it follows that $H$ acts generically freely on $U$. This means that the sequence of Galois covers
        \[\Spec R' \to U \to \Spec R\]
        corresponds to the chain of inclusions
        \[G \hookleftarrow H \hookleftarrow 0,\]
        so $H$ must also have order coprime to $p$, thus $\mathcal{X}$ is tame. By \cite[Proposition 3.12]{Bresciani2023}, $\mathcal{X} = \sqrt[n]{\Spec R}$, and representability and uniqueness follow.
    \end{proof}

    \begin{lemma}\label{gcd-equals-hp}
        Let $\ti X_0$ and be singular fiber, which is obtained by an action $f \in \Aut(C)$, and let $m_i$ and $h_p$ be as in \Cref{numbers}. Let $\alpha_j$ be the multiplicities of exceptional curves in $\ti X_0 \to X_0^{tw}$. Then
        \[\gcd(\{m_i\}_i \cup\{\alpha_j\}_j) = \gcd(\{h_p\}_{p \in C}).\]
        Here, $p$ ranges over all points in $C$, so in particular considers general points where $h_p = m_i$.
    \end{lemma}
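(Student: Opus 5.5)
We will prove the two divisibilities separately. Set $h = \gcd(\{h_p\}_{p \in C})$ and $M = \gcd(\{m_i\}_i \cup \{\alpha_j\}_j)$.

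\emph{Step 1: $h \mid M$.} Every multiplicity appearing in $\ti X_0$ is an integer multiple of some $h_p$. For a principal component $\ti C_i$, the multiplicity is $m_i$, and $m_i = h_p$ for a general point $p \in C_i$. For the exceptional curves over the image of a point $p$, the local algorithms give the multiplicities. Over type 1 points, the multiplicities are $h_p\alpha_j$ by \Cref{chain-multiplicities} and the computation preceding \Cref{point-types}. Over type 2 and 3 points, the multiplicities produced by \Cref{non-swap-local-glueing-algorithm} are integers, and these are then multiplied by $h_p$ as explained in \Cref{global-glueing-algorithm}. Over type 4 points, the multiplicities are $h_p$ times the integers of \Cref{swap-local-glueing-algorithm}, since $h_p^\nu = 2h_p$ and the D-tail multiplicities there are even or equal to $1$ after this scaling. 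Hence every multiplicity is divisible by some $h_p$, and therefore by $h$.

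\emph{Step 2: $M \mid h$.} It suffices to show that for every $p \in C$, $h_p$ lies in the subgroup of $\Z$ generated by the multiplicities. For a general point this is immediate, since $h_p = m_i$. For a special point $p$ we use the last curve of the exceptional chain over $\ov p$:
\begin{itemize}
\item For type 1 points, the chain ends with a curve of multiplicity $h_p\alpha_\ell = h_p$, since $\alpha_\ell = 1$ by \Cref{remark:last-multiplicity-is-one}.
\item For type 2 and 3 points with $j \geq 0$, the glued chain contains curves of multiplicity $h_p \cdot 1$.
\item For type 4 points, the two extremal $(-2)$-curves of the D-tail have multiplicity $h_p \cdot 1$, using the rescaling by $h_p = h_p^\nu/2$.
\end{itemize}
The obstacle is the $j = -1$ case of \Cref{non-swap-local-glueing-algorithm}(3),(4), where the multiplicity-$1$ curves get contracted. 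For these we argue by Bézout instead. In case (4) the remaining chain still satisfies the recursion, so consecutive multiplicities $\alpha_{r-1}, \alpha_r$ (or $\beta_{s-1}, \beta_s$) are coprime. The recursion $\alpha_{i+1} = e_i\alpha_i - \alpha_{i-1}$ with $\alpha_\ell = 1$ forces $\gcd(\alpha_{i-1}, \alpha_i) = 1$ for all $i$. When the chain is empty or the point is smooth, we instead use the neighbouring principal multiplicities: $\gcd(h_pn_{1,p}, h_pn_{2,p}) = h_p\gcd(n_{1,p}, n_{2,p})$. The relation $n_2a_1 + n_1a_2 = 1 + n_1n_2$, established in the proof of the $(r,s) = (0,1)$ case, shows that $\gcd(n_{1,p}, n_{2,p}) = 1$. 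After multiplying by $h_p$, we get $h_p \in M\Z$. In case (3), $n_{i,p} = 1$, so $m_i = h_p$ directly.

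Combining Steps 1 and 2 gives the equality. The bookkeeping in the $j = -1$ subcase, checking that adjacent multiplicities (including $L_1, L_2$) remain coprime after scaling by $h_p$, is the step we expect to require the most care.
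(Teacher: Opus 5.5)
Your proof is correct and uses the same ingredients as the paper. Every chain multiplicity is $h_p$ times an integer. Consecutive terms of the recursive multiplicity sequences are coprime, so each chain together with its adjacent principal multiplicities has gcd exactly $h_p$. The D-tail ends contribute multiplicity $h_p = h_p^\nu/2$. The paper packages this as ``true for a disjoint union of building blocks, and preserved by glueing'', whereas you prove both divisibilities directly on the glued fiber. Your explicit B\'ezout treatment of the $j=-1$ contractions spells out what the paper compresses into the phrase that non-swapping glueing preserves the gcd.
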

    \begin{proof}
        We first show that this holds when $\ti X_0$ is a disjoint union of building blocks. Let $C_i \subseteq C$ be a connected component. Looking at the building block $\ti X_{0,i}$, let $E_1,\ldots,E_\ell$ be a chain over a point $p$. Then the multiplicity of $E_j$ in $\ti X_0$ is $h_p\alpha_j$, where $\alpha_j$ is obtained as in \Cref{a_i}. Note that in the sequence $n_{i,p} = a_0,a_1,\ldots,a_{\ell_p}$, each number is coprime to the next one, which implies that the greatest common divisor of the multiplicity of $\ti C_i$ and those of $E_j$ is $h_p$. This shows that $\gcd(m_i,\{\alpha_j\}) = \gcd(\{h_p\})$ where the $j$ ranges over chains in $\ti X_{0,i}$ and $p$ ranges in $C_i$. This still makes sense even when $\ti X_{0,i}$ has no exceptional curves, as in that case, $h_p = m_i$ for every $p$.

        For the general case, we use the fact that $\ti X_0$ is obtained, combinatorially, by glueing chains in $\ti X_0^\nu$. Note that applying \Cref{non-swap-local-glueing-algorithm} to any pair of points in $\ti C \subseteq \ti X_0^\nu$ preserves the $\gcd$ of the multiplicities of the chains and the principal components. When applying \Cref{swap-local-glueing-algorithm} to a point $p$ in $\ti C^\nu$, the set of multiplicities can only be changed by possibly adding two extra values corresponding to the $\P^1$'s at the end of the D-tail. These $\P^1$'s have multiplicity $h_p = \frac{1}{2}h_p^\nu$, so the result still follows from the first case.
    \end{proof}

    \begin{lemma}\label{components-equals-gcd}
        Let $C$ be a possibly disconnected nodal curve and $f \in \Aut(C)$ be an automorphism of order coprime to $\chr K$. If $C/f$ is connected and has genus $0$, then $C$ has $\gcd(\{h_{p}\})$ connected components.
    \end{lemma}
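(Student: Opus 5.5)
Since $\langle f\rangle$ acts on $C$, it permutes the connected components of $C$. Because $C/f$ is connected, this permutation action is transitive. Fix one connected component $C_0$ and let $c$ be the size of the orbit of $C_0$; this is the number of components of $C$. Then $f^c$ is the smallest power of $f$ preserving $C_0$, and the stabilizer of $C_0$ in $\langle f\rangle$ is $\langle f^c\rangle$. The goal is to show that $c = \gcd(\{h_p\})$, and I will prove the two divisibilities separately.

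\textbf{Step 1: $c$ divides every $h_p$.} If $f^{h}(p)=p$ for some $p\in C_0$, then $f^{h}$ maps $C_0$ to a component meeting $C_0$, namely at $p$. Since components are disjoint, $f^{h}(C_0)=C_0$, so $c\mid h$. In particular $c \mid h_p$ for every $p \in C_0$. For $p$ in another component $f^{i}(C_0)$, the integer $h_p$ equals $h_{f^{-i}(p)}$, so $c\mid h_p$ for all $p$. Hence $c\mid\gcd(\{h_p\})$.

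\textbf{Step 2: $\gcd(\{h_p\})$ divides $c$.} Consider the connected nodal curve $C_0$ with the automorphism $g=f^c$. The natural map $C_0/g\to C/f$ is an isomorphism, since every $f$-orbit meets $C_0$ exactly in a $g$-orbit. So $C_0\to C_0/g$ is a Galois cover of a connected genus $0$ nodal curve with connected total space. For $p\in C_0$, the $f$-orbit of $p$ has size $h_p$, while its $g$-orbit has size $h_p/c$.

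It therefore suffices to prove the connected case: for a connected $C_0$ with $C_0/g$ of genus $0$, we have $\gcd(\{h^g_p\})=1$, where $h^g_p = h_p/c$. This gives $\gcd(\{h_p\}) = c$.

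\textbf{Connected case.} Suppose $e=\gcd(\{h^g_p\})>1$. Consider the subgroup $\langle g^{e'}\rangle$, where $e'$ is chosen so that $\langle g^{e'}\rangle$ is the index-$e$ subgroup of $\langle g\rangle$. The intermediate quotient $C_0\to C_0/g^{e'}\to C_0/g$ is a cyclic cover of degree $e$. This cover is étale over every point, because each stabilizer $\langle g^{h^g_p}\rangle$ is contained in $\langle g^{e'}\rangle$ (as $e\mid h^g_p$). It is also étale at nodes, in the sense that the fibres over every point of $C_0/g$ have exactly $e$ points, including points over nodes. Over type 4 nodes I need to check that $h$ is computed so that no branch swapping enters the degree-$e$ quotient. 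This holds because swapping occurs only under $g^{h_p}$, which lies in $\langle g^{e'}\rangle$.

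The cover $C_0/g^{e'} \to C_0/g$ is then a connected étale cyclic cover of degree $e$ and order prime to $\chr K$ of a connected nodal curve of arithmetic genus $0$, i.e. a tree of $\P^1$'s. Such a curve is simply connected for tame covers. Over each $\P^1$ component the cover is trivial, since $\pi_1^{t}(\P^1)=0$. Gluing trivial covers along a tree with no cycles gives a trivial cover, so the cover is disconnected when $e > 1$, a contradiction. Hence $e=1$.

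The main obstacle is the claim that the intermediate cover is étale, including at nodes of types 3 and 4. At such nodes I must verify that the local structure of the quotient by $\langle g^{e'}\rangle$ is still a node and that $C_0/g^{e'}\to C_0/g$ is a local isomorphism there. I would check this with the formal local models of the actions $(x,y)\mapsto(\zeta x,\zeta' y)$ or $(\zeta y,\zeta' x)$, using that $g^{e'}$ contains the whole stabilizer of the node. An alternative route avoids nodes entirely: apply the genus-$0$ case of \Cref{existence-cyclic-galois-cover} to each normalized component and propagate connectivity along the dual tree of $C/f$.
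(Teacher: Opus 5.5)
Your argument is correct, and it takes a genuinely different route from the paper.

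\textbf{The paper's route.} The paper inducts on the number of irreducible components of $\ov C = C/f$. In the base case $\ov C \cong \P^1$ with $C$ smooth, it quotes the genus-$0$ count in \Cref{existence-cyclic-galois-cover}. That count is itself a computation of the image of the tame fundamental group of $\P^1$ minus the branch points in $\Z/n\Z$. It then treats type 4 nodes separately, showing that the count passes from $\gcd(\{h_p^\nu\})$ to either the same number or half of it. In the inductive step it splits $\ov C$ at a single point. A Chinese-remainder/graph argument then shows that gluing $g_1$ and $g_2$ cyclically permuted components along one orbit leaves $\gcd(g_1,g_2)$ components.

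\textbf{Your route.} You reduce to the connected case through the stabilizer $\langle f^c\rangle$ of a component. You then read $e=\gcd(\{h^g_p\})>1$ as saying that $\langle g\rangle/\langle g^e\rangle$ acts freely on $C_0/g^e$. This yields a connected \'etale $\Z/e$-cover of a tree of $\P^1$'s, which cannot exist.

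\textbf{What each buys.} Your version treats all node types uniformly and isolates the real content: an arithmetic-genus-$0$ nodal curve has no nontrivial connected finite \'etale covers. It also shows why arithmetic rather than geometric genus $0$ is needed; a cycle of $\P^1$'s rotated by $f$ is a counterexample otherwise. It needs no tameness at that step, since $\pi_1^{et}(\P^1)$ is trivial in every characteristic. The paper's version is more hands-on: it tracks explicitly how components merge under glueing, in the spirit of its glueing algorithms, and it reuses the Riemann existence statement it has already proved.

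\textbf{The step you flag as the main obstacle.} It needs no formal-local computation at nodes. A finite group acting with trivial stabilizers on a quasi-projective scheme has an \'etale quotient map, and the source is a torsor over the quotient (SGA~1, Exp.~V). You have already shown exactly this: every stabilizer $\langle g^{h^g_p}\rangle$ lies in $\langle g^e\rangle$, including at branch-swapped nodes. Also, your index-$e$ subgroup is simply $\langle g^e\rangle$, since each $h^g_p$, and hence $e$, divides the order of $g$.
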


    \begin{proof}
        We prove this by induction on the number of components of $\ov C = C/f$. When $\ov C$ is irreducible, then $\ov C \cong \P^1$. If $C$ is smooth, the result follows from \Cref{existence-cyclic-galois-cover}. Otherwise, there are points $p_1,\ldots,p_k \in \ov C$ such that over $p_i$, $C$ is nodal and $f$ must swap the branches at each node. If $q_{i,j}$ are the nodes over $p_i$, let $q_{i,j,1}$ and $q_{i,j,2}$ be their normalizations on $C^\nu$. Then $q_{i,j,1}$ and $q_{i,j,2}$ belong to the same component of $C^\nu$ if $f^{h_{p_i}} = f^{h_{p_i}^\nu/2}$ does not permute the components of $C$, or in other words, if $\gcd(\{h_p^\nu\})$ divides $h_{p_i}$. In case it does permute them, it follows that $f^{\gcd(\{h_{p}^\nu\})/2}$ is the first power of $f$ which maps the component containing $q_{i,j,1}$ to the one containing $q_{i,j,2}$ and vice-versa. This power of $f$ is independent of $p_i$, so we have that either
        \begin{itemize}
            \item the number of connected components of $C$ is $\gcd(\{h_p^\nu\})$ if $\gcd(\{h_p^\nu\})$ divides every $h_{p_i}$. In this case, $\gcd(\{h_p^\nu\}) = \gcd(\{h_p\})$.
            \item otherwise, as long as there is at least one $p_i$ for which $h_{p_i}$ is not divisible by $\gcd(\{h_p^\nu\})$, then the number of connected components of $C$ is $\frac{1}{2}\gcd(\{h_p^\nu\})$. In this case, $\gcd(\{h_p\}) = \frac{1}{2}\gcd(\{h_p^\nu\})$.
        \end{itemize}
        With this we can deduce that the number of connected components of $C$ is always $\gcd(\{h_p\})$.

        Now suppose that $\ov C = \ov C_1 \cup \ov C_2$, where $\ov C_1$ intersects $\ov C_2$ at a single point $p_0$, and write $C = C_1 \cup C_2$. Then by induction hypothesis the number of connected components of $\ov C_1$ is $\gcd(\{h_{p_1}\})$ where $p_1$ ranges in $\ov C_1$, and similarly, the number of connected components of $C_2$ is $\gcd(\{h_{p_2}\})$ where $p_2$ ranges in $\ov C_2$. As $C_1$ and $C_2$ are glued along the $h_{p_0}$ points over $p_0$, which are permuted cyclically, it follows that the number of connected components of $C$ is the $\gcd$ of the number of connected components of $C_1$ and $C_2$.

        A way to visualize this last fact is by reducing to a graph theory problem. If $g_1 = \gcd(\{h_{p_1}\})$ and $g_2 = \gcd(\{h_{p_2}\})$, then let us consider two collections of nodes $v_1,\ldots,v_{g_1}$ and $w_1,\ldots,w_{g_2}$, and we apply the following procedure. We connect $v_1$ with $w_1$, then $v_2$ with $w_2$ and so on. Whenever we reach $v_{g_1}$ or $w_{g_2}$, we continue the next step with $v_{g_1}$ or $w_{g_2}$ respectively. The procedure ends once we arrive back to $v_1$ and $w_1$. Then at the end there will be $\gcd(g_1,g_2)$ connected components. Indeed $v_i$ and $w_j$ end up connected if and only if $i \equiv j \mod \gcd(g_1,g_2)$.
    \end{proof}

    Finally, we summarize most of the classification result of \cite{Dok25} for use with our notation.
    \begin{definition}\label{Dokchitser-main-result-definitions}
        Let $(\Gamma,\{m_i\})$ be a combinatorial singular fiber of strictly negative Euler characteristic. Assume that every $(-1)$-curve in $(\Gamma,\{m_i\})$ intersects at least three other components. Using \Cref{combinatorial-singular-curve-definition}, we say a component $\Gamma_i \subseteq \Gamma$ is called
        \begin{itemize}
            \item a principal component if $\chi_\Gamma(\Gamma_i) < 0$,
            \item the D-tail core if $\chi_\Gamma(\Gamma_i) = 0$, or
            \item a link if $\chi_\Gamma(\Gamma_i) > 0$.
        \end{itemize}
        For every intersection point $p \in \Gamma$, if $p$ belongs to $\Gamma_i$ and $\Gamma_j$ (where we allow $i=j$ in case $p$ is a node of the component $\Gamma_i$), define $\mathfrak{o}_{i,p}$ to be the residue of $m_j$ modulo $m_i$ in the range $[1,m_i]$. Let $h_{i,p} = \gcd(\mathfrak{o}_{i,v},m_i)$ and $q_{i,p} = \frac{\mathfrak{o}_{i,p}}{h_{i,p}}$. We define $a_{i,p}$ to be the inverse of $q_{i,p}$ modulo $\frac{m_i}{h_{i,p}}$ in the range $[1,\frac{m_i}{h_{i,p}}]$.

        We say that a maximal linear configuration of links in $\Gamma$ is a chain. A chain is called an outer chain (resp. inner chain) if it is connected to the rest of $\Gamma$ at one point (resp. at two points).
    \end{definition}
    \begin{remark}
        Again, this definition differs slightly from \cite[Definition 4.1]{Dok25}. Assuming $\chi(\Gamma,\{m_i\})< 0$, it follows from \cite[Theorem 8.1]{Dok25} that the only difference is that we do not consider the core of a D-tail to be a principal component, whereas they are considered to be principal components in \cite{Dok25}.
    \end{remark}
    \begin{theorem}\label{Dokchitser-main-result}
        Let $(\Gamma,\{m_i\})$ be as above. Then $\Gamma$ is the union of its principal components, D-tail cores, and inner and outer chains. Specifically,
        \begin{enumerate}
            \item Links are precisely those components isomorphic to $\P^1$, which intersect the rest of $\Gamma$ at most twice.
            \item For every $\Gamma_i$, $m_i$ divides
            \[\sum_{p \in \Gamma_i} \mathfrak{o}_{i,p}.\]
            \item For every node $p \in \Gamma_i \cap \Gamma_j$,
            \[h_{i,p} = h_{j,p},\]
            and if $\Gamma_i$ is a link that intersects the rest of $\Gamma$ at two points $p,p'$, then $h_{i,p} = h_{i,p'}$.
            \item Principal components $\Gamma_1$ and $\Gamma_2$ are connected to each other by (possibly empty) inner chains through $p_1 \in \Gamma_1$ and $p_2 \in \Gamma_2$, whose self intersections and multiplicities (up to dividing by $\gcd(m_1,m_2)$) corresponds to those of the resolution of the singularity
            \[\frac{1}{m_1m_2}(m_2a_1,m_1a_2,1;j),\]
            where $a_i = a_{i,p}$, and $j+1$ is the number of links in the chain with multiplicity $\gcd(m_1,m_2)$. Here, $j \geq -1$ if $\frac{a_1}{m_1} + \frac{a_2}{m_2} > 1$, and $j \geq 0$ otherwise.
            \item An outer chain intersecting a principal component $\Gamma_1$ at $p$ has self intersections and multiplicities are given by the resolution of the singularity $\frac{1}{m_1}(1,q_{1,p})$.
            \item A D-tail core $\Gamma_\delta$ is isomorphic to $\P^1$ and is connected via a (possibly empty) inner chain to a principal component $\Gamma_1$ at some point $p$, and off it sprout two outer chains consisting of one link each, self intersection $-2$ and multiplicity half that of $\Gamma_\delta$. The self intersections and multiplicities of the inner chain together with the D-tail core are given by the resolution of the singularity
            \[\frac{1}{m_1/\gcd(\mathfrak{o}_{1,p},m_1)}(2a_{1,p},1;j),\]
            where $j+1$ is the number of components with multiplicity $\gcd(\mathfrak{o}_{1,p},m_1)$ in this extended chain.
        \end{enumerate}
    \end{theorem}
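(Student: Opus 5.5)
The plan is to derive the statement mostly from the numerical identities built into \Cref{combinatorial-singular-curve-definition}, and to match the resulting chain data against the local models of \Cref{non-swap-local-glueing-algorithm,swap-local-glueing-algorithm}. The one genuinely global combinatorial input I would import from \cite[Theorems 7.5 and 8.1]{Dok25}, after checking that our conventions translate to theirs. The basic identity, used throughout, is
\[m_i\Gamma_i^2 + \sum_{j\neq i} m_j\,\#(\Gamma_i\cap\Gamma_j) = 0 .\]

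I would prove the local items first.

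For (2), reduce this identity modulo $m_i$. Each neighbour contributes $m_j \equiv \mathfrak{o}_{i,p}$ at each intersection point $p$. Self-nodes need a small adjustment in the bookkeeping, since then the "other" multiplicity is $m_i$ itself and $\mathfrak{o}_{i,p}=m_i$. This gives $m_i \mid \sum_p \mathfrak{o}_{i,p}$.

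For (3), note that $h_{i,p}=\gcd(m_i,m_j)$ is symmetric in $i$ and $j$. For a link meeting the rest of $\Gamma$ at $p$ and $p'$, the identity reads $m_i e = m_j + m_{j'}$. Hence $\gcd(m_i,m_j)=\gcd(m_i,m_{j'})$, because $m_j \equiv -m_{j'} \pmod{m_i}$.

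For (1), I would argue as follows. A component with $\chi_\Gamma(\Gamma_i)>0$ must have genus $0$ and $\sum_{\text{nbrs}} m_j < 2m_i$. The identity then forces $-\Gamma_i^2 < 2$, so $\Gamma_i^2=-1$. (Here $\Gamma_i$ is not all of $\Gamma$, because $\chi<0$, so it has at least one neighbour.) The standing assumption then says such a $(-1)$-curve meets at least three components, or else it is not a link at all. Ruling out a $(-1)$-curve meeting three or more components with $\chi_\Gamma>0$ is a global statement. For it I would invoke \cite[Theorem 7.5]{Dok25}; I expect this to be the main obstacle to making the argument self-contained, since a purely local count such as $m_i=4$ with neighbours of multiplicities $2,1,1$ is not excluded by the identity alone. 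The converse direction of (1) is immediate: a $\P^1$ meeting the rest of $\Gamma$ at most twice has $\chi_\Gamma(\Gamma_i) = 2m_i - m_j - m_{j'}$, and this is positive except when it is $0$, which is exactly the D-tail core case.

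For the chain items (4)–(6), I would run the recursion $m_{i-1}+m_{i+1}=e_i m_i$ along a maximal linear configuration of links, starting from the principal component.

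For (5), an outer chain ends at a link with a single neighbour. This forces its multiplicity to divide that neighbour's multiplicity, and reversing the recursion shows the sequence of multiplicities divided by $h_{1,p}$ is the sequence $a_i$ of \Cref{a_i} for $\frac{m_1/h}{q_{1,p}}$. \Cref{chain-multiplicities} then identifies it with the resolution of $\frac{1}{m_1}(1,q_{1,p})$.

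For (4), I would split the inner chain at its components of minimal multiplicity $\gcd(m_1,m_2)$. The two ends are then the chains of $\frac{m_1}{q_{1}}$ and $\frac{m_2}{q_{2}}$ read from each side, joined in the middle by $j-1$ $(-2)$-curves, or fused, or contracted as in \Cref{contraction-of-duals}. Comparing with the three cases of \Cref{non-swap-local-glueing-algorithm}, via \Cref{non-swap-quotient-theorem} and the $j=-1$ criterion $\frac{a_1}{m_1}+\frac{a_2}{m_2}>1$, identifies the configuration with the resolution of $\frac{1}{m_1m_2}(m_2a_1,m_1a_2,1;j)$.

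For (6), the core has $\chi_\Gamma=0$, so it is a $\P^1$ whose neighbours' multiplicities sum to exactly $2m_\delta$. I would show that two of its branches must be single $(-2)$-links of multiplicity $m_\delta/2$, again invoking \cite{Dok25} for the exclusion of other patterns. The remaining inner chain then matches the D-tail of \Cref{swap-quotient-theorem}, with $\frac{a+b+n}{a+b}$ computed from $2a_{1,p}$.

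Finally, the decomposition of $\Gamma$ into principal components, D-tail cores, and inner and outer chains follows from (1) by taking maximal linear configurations of links.
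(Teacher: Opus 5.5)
Your arguments for (2) and (3) are correct, and more direct than the citations the paper uses. The recursion argument for (5) also works once you know that chain links have self-intersection at most $-2$. The gap is in (1), and it propagates to (6) and to the final decomposition.

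\textbf{The weighted $\chi_\Gamma$ does not separate links by valence.} By \Cref{combinatorial-singular-curve-definition}, $\sum_{j\neq i} m_j\,\#(\Gamma_i\cap\Gamma_j)=-m_i\Gamma_i^2$, hence
\[
\chi_\Gamma(\Gamma_i)=m_i\bigl(2-2g(\Gamma_i)+\Gamma_i^2\bigr).
\]
So for components meeting the rest of $\Gamma$, this $\chi_\Gamma$ sorts them by self-intersection. Positive $\chi_\Gamma$ means exactly a $(-1)$-curve, and $\chi_\Gamma=0$ means exactly a $(-2)$-curve. In particular every $(-1)$-curve has $\chi_\Gamma=m_i>0$. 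The fact you want to import from \cite[Theorem 7.5]{Dok25} therefore amounts to saying that no $(-1)$-curve meets three or more components, and that is false. Your configuration $4=2+1+1$ is a genuine example, not an obstacle. In genus $g\geq 2$, the central curve of the block in the paper's Cusps example is a $(-1)$-curve of multiplicity $4g+2$ meeting three components.

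\textbf{Your converse is also wrong.} For a $\P^1$ meeting the rest at one or two points, $\sum_{\text{nbrs}} m_j=e_im_i$ with $e_i=-\Gamma_i^2$, so $\chi_\Gamma=(2-e_i)m_i$. Since $(-1)$-curves meeting at most two components are excluded, $e_i\geq 2$ and $\chi_\Gamma$ is never positive. For example, the end curve of self-intersection $-(4g+2)$ and multiplicity $1$ in the cusp block has $\chi_\Gamma=-4g$.

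\textbf{The same problem breaks (6).} The condition $\chi_\Gamma=0$ holds for every smooth rational $(-2)$-curve. That includes the two $(-2)$ leaves sprouting from any D-tail core, and the $(-2)$-curve of multiplicity $2g+1$ in the cusp block. For these, your claim that two of the branches are single $(-2)$-links of multiplicity $m_\delta/2$ fails.

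\textbf{What is needed.} With $\chi_\Gamma$ read literally, (1) is simply not true, so no citation can close the step. You have to replace the weighted $\chi_\Gamma$ by Dokchitser's valence-based notions. Principal components are those of positive genus or meeting at least three other components (see the remark following \Cref{big-definitions}). The D-tail core is singled out by the Euler characteristic of \Cref{Euler-characteristic-definition}, which counts attachment points and adds $h$ for each outer chain, giving $m_\delta(2-3)+\tfrac{m_\delta}{2}+\tfrac{m_\delta}{2}=0$. This is what the paper does: it takes (1) and the decomposition directly from \cite[Theorem 7.5 (1),(2)]{Dok25} and \cite[Theorem 8.1]{Dok25}, in line with the remark after \Cref{Dokchitser-main-result-definitions}.

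\textbf{A smaller fix in (4).} With these notions your recursion arguments for (4) and (5) go through. In (4), though, split the chain at multiplicity $h_{1,p}=h_{2,p}$, the constant gcd of consecutive multiplicities along the chain, not at $\gcd(m_1,m_2)$. Two multiplicity-$2$ components joined by a single $(-4)$-curve of multiplicity $1$ is the case $j=0$, yet the chain has no link of multiplicity $\gcd(m_1,m_2)=2$.
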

    \begin{definition}
        We define a D-tail as the union of a D-tail core, the two $(-2)$ outer components intersecting it, and the (possibly empty) inner chain connecting it to some principal component.
    \end{definition}
    \begin{proof}[Proof of \Cref{Dokchitser-main-result}]\ 
        \begin{enumerate}
            \item Follows from \cite[Theorem 7.5 (1) and (2)]{Dok25}.
        \end{enumerate}
        With this, the statement that $\Gamma$ is the union of principal components, D-tail cores and chains is proved using basic graph theory and \cite[Theorem 8.1]{Dok25} (where we may discard the Kodaira fibers since $\chi(\Gamma,\{m_i\}) < 0$).
        \begin{enumerate}[resume]
            \item Follows classically from the theory of Hirzebruch-Jung continued fractions, \cite[Theorem B (i)]{Dok25}.
            \item This is essentially the divisibility condition of \Cref{Dokchitser-main-result-definitions}.
            \item Follows by definition and from \cite[Lemma 5.4]{Dok25}.
            \item Follows from the statement about $\beta$ in \cite[Theorem 5.14 (1)]{Dok25}, as the multiplicities $\alpha_k$ of the exceptionals $E_k$ in the resolution of the singularity $\frac{1}{m_1m_2}(m_2a_1,m_1a_2,1;j)$ satisfy
            \[\frac{\alpha_{k-1}+\alpha_{k+1}}{\alpha_k} = -\Gamma_k^2 \in \Z_{\geq 2}.\]
            \item  The fact that each D-tail core intersects two outer and one inner chain connected to a principal component follows from \cite[Theorem 8.1 (b)]{Dok25}, where we use $\chi < 0$ to discard the Kodaira case. The statement about self intersections and multiplicities follows from \cite[Algorithm 5.16]{Dok25}. Indeed, since the multiplicity of $\Gamma_\delta$ equals that of the first link it intersects in the inner chain, $m_\delta$ must divide $m_1$. In \cite{Dok25}'s notation, the multiplicities on the chain connecting $\Gamma_\delta$ and $\Gamma_1$ are obtained, in that order, from the inner sequence associated to $m_\delta\frac{m_\delta-q_1m_\delta}{n}m_1$. In our notation, this corresponds to the singularity $\frac{1}{m_1/m_\delta}(m_1/m_\delta, a_1,1;j)$, which by \Cref{non-swap-quotient-theorem} equals
            \[\frac{1}{a_1 + \frac{m_1}{m_\delta}(j+1)}\p{1,a_1 + \frac{m_1}{m_\delta}j}.\]
            By \Cref{-2-contraction} its continued fraction expansion is
            \[[\underbrace{2,\ldots,2}_{j},e_\ell+1,\ldots,e_1],\]
            where
            \[\frac{m_1/m_\delta}{q_1} = [e_1,\ldots,e_\ell].\]
            The rest follows from \Cref{swap-quotient-theorem}.

            Some remarks on this part of the proof. In most of the cases, the inner sequence associated to a type $m\frac{d-d'}{n}m'$ includes the multiplicities of the curves through the singularity, that is, $L_1$ and $L_2$ from \cref{non-swap-local-glueing-algorithm}, so the coefficients in the continued fraction expansion should correspond to the the non-extremal terms in the inner sequence. However, since the multiplicity of $\Gamma_\delta$ equals the multiplicity of the link it intersects, we are in the case $m = c$ from \cite[Algorithm 5.16 (1)]{Dok25}, so by \cite[Algorithm 5.16 (3)]{Dok25}, the first term in the sequence does not actually correspond to $L_1$ but to the first component in the resolution.

            For similar reasons, the case $m_1 = q_1 = m_\delta$ would require special treatment as it does not satisfy the hypothesis of \Cref{-2-contraction} and the condition $m' = c$ would also apply on the other end, but the end result is the same. 
        \end{enumerate}
    \end{proof}

    \begin{proof}[Proof of \Cref{existence-of-geometrically-connected-fiber}]
        We start with the case $d = 1$. We may assume that $\Gamma$ has no $(-1)$-curves intersecting less than 3 other curves in $\Gamma$, as contracting those does not change the $\gcd$, and a smooth such surface $\ti X \to B$ exists if and only if there exists $\ti X' \to B$ having the contraction of $\Gamma$ as central fiber.

        $(1) \Rightarrow (2)$ As $\ti X \to B$ has no $(-1)$-curves in the central fiber, this is a minimal regular normal crossings (m.r.n.c.) model. Since $X \to B$ has connected geometric fibers, $\ti\pi$ corresponds to a rational classifying map $B \dasharrow \ov\M_g$. By \cite[Theorem 10.4.47]{Liu}, near $0$ there is a tamely ramified cover $B' \to B$ with a lift $B' \to \ov\M_g$ corresponding to a stable fibration $X \to B'$ with central fiber $C$. By \Cref{semi-tame-valuative-criterion}, we may assume that $X \to B'$ has an equivariant $\mu_n$-action, for $n$ coprime to the characteristic of the base field, which fixes $0 \in B'$ and $B'/\mu_n = B$. By \Cref{equivalence-of-principal-under-stability}, the minimal resolution of $X/\mu_n$ is m.r.n.c, and thus equals $\ti X$. By \Cref{gcd-equals-hp,components-equals-gcd}, if $g(\Gamma) = 0$, then $\gcd(\{m_i\}) = 1$, as $X$ has connected fibers.

        $(2) \Rightarrow (1)$ By \Cref{Dokchitser-main-result}, there is a decomposition of $\Gamma$ into principal components, inner and outer chains connecting them, and D-tails. If an irreducible component of $\Gamma$ has a node, we interpret it as a length zero inner chain.
        
        We claim that there exists a connected stable curve $C$ of genus $g$, an automorphism $f \in \Aut(C)$ of order coprime to $\chr K$ and integers $j_p \geq 1$ indexed by orbits of nodes $p$ of $C$ such that the associated central fiber $\ti X_0$ constructed using \Cref{weak-realization-theorem} has combinatorial data $(\Gamma,\{m_i\})$. We can prove this directly in characteristic zero using Winters' theorem, but we can do it combinatorially in arbitrary characteristic independently of that result.

        Let $\ov C^\nu$ consist of the normalized principal components in $\Gamma$ (so we do not include D-tails). We let $\ov C$ be obtained from $\Gamma$ by deleting outer chains and D-tails, and contracting inner chains. Focusing on a component $\ov C_i^\nu \subseteq \ov C^\nu$, we have the data of its multiplicity $m_i$, and for each point $p_{i,v} \in \ov C_i^\nu$ whose image in $\ov C$ is a node, then its image in $\Gamma$ is also a node, so we obtain the numbers $\mathfrak{o}_{i,v}$ and $a_{i,v}$ as in \Cref{Dokchitser-main-result-definitions} and $j_{i,v}$ as in \Cref{Dokchitser-main-result}. Since $m_i$ is coprime to the characteristic of $\chr K$ by hypothesis and $\sum \mathfrak{o}_{i,v}$, \Cref{existence-cyclic-galois-cover} tells us that there is a possibly disconnected curve $C_i^\nu$ and an order $m_i$ automorphism $f_i \in \Aut(C_i^\nu)$ such that $\ov C_i^\nu = C_i^\nu/f_i$, where the orbit of $p_{i,v}$ has size $h_{i,v}^\nu = \frac{m_i}{\gcd(\mathfrak{o}_{i,v},m_i)}$, and the actions on the tangent spaces over the point $p_{i,v}$ is given by the inverse of $a_{i,v}$ modulo $\frac{m_i}{h_{i,v}}$. If $\ov C^\nu \to \ov C$ glues $p_{i,v}$ to some other $p_{i',v'}$ of a different orbit, then by using \Cref{Dokchitser-main-result} (3) inductively along the chain we have $h_{i,v}^\nu = h_{i',v'}^\nu$, so we may glue the fibers over $p_{i,v}$ and $p_{i',v'}$ in $C_i$ and $C_{i'}$ respectively, in any way compatible with the actions $f_i$. If $p_{i',v'}$ belongs to the same orbit as $p_{i,v}$, we may also glue a point $p$ in the orbit to $f^{h_{i,v}^\nu/2}(p)$, as $h_{i,v}$ needs to be even in this case. We obtain a curve $C$ and an automorphism $f \in \Aut(C)$, and integers $j_p = j_{i,v}$ for each node $p$ in $\ov C$.

        If $g(\Gamma) = 0$ and $\gcd(\{m_i\}) = 1$, it follows from \Cref{components-equals-gcd} that $C$ is connected. If $g(\Gamma) \geq 1$, we'll show how to modify $(C,f)$ into $(C',f')$ which is connected and has an associated singular fiber with the same combinatorics.

        If there is some component in $\ov C_i$ with geometric genus $p_g(\ov C_i) \geq 1$, then by \Cref{existence-cyclic-galois-cover} we may replace $C_i$ with a connected curve. Once one fiber over $\ov C_i$ is connected, all of $C$ will be connected as well.

        If all components of $\ov C$ are possibly nodal but rational, then there must exist a loop $\ov C_1,\ldots,\ov C_k$ with $k \geq 1$, such that $\ov C_i$ intersects $\ov C_{i+1}$ but also $\ov C_k$ intersects $\ov C_1$ at some point $p$. Here, if $k = 1$, we mean that $\ov C$ is a rational curve and $p$ is a node in it. Let $D$ be obtained from $C$ normalizing the nodes above $p$, and let $f_D$ the induced automorphism. Write $D = \bigsqcup D_j$ where the $D_j$ are the $\ell$ isomorphic connected components of $D$, and order them such that $f_D(D_j) = D_{j+1}$ for $j=1,\ldots,\ell-1$. Note that $\ell$ necessarily divides $h_p$. Over $p$, let $q_{1,1},\ldots,q_{1,h_p}$ and $q_{2,1},\ldots,q_{2,h_p}$ be the orbits of points in $D$ chosen in some order induced by $f$, where $q_{1,j}$ belongs to $D_{j \!\mod \ell}$. Then we obtain $C'$ by glueing $q_{1,j}$ to $q_{2,j+1}$ for $j = 1,\ldots,\ell-1$ and $q_{1,\ell}$ to $q_{2,1}$. This glueing is $f_D$-equivariant, so we can glue it to an automorphism $f' \in \Aut(C')$. Clearly, $C'$ is connected, and $f'$ acts on tangent spaces in the same way as $f$ did. We may now replace $C$ with $C'$.

        Now that we know that $C$ is connected, we can use \Cref{weak-realization-theorem} with the data $(C,f,\{j_p\})$ to obtain a $\mu_n$-equivariant family of stable curves $X \to B'$, from which we quotient and resolve singularities to obtain a fibration $\ti X \to B$. It is easy to see using \Cref{Dokchitser-main-result} (4) - (6) that the central fiber $\ti X_0$ is combinatorially identical to $(\Gamma,\{m_i\})$.

        Finally, for an arbitrary $d$, we note that if $\ti X \to B$ has $\dim H^0(\ti X_0,\O_{\ti X_0}) = d$, then the Stein factorization $\ti X \to B'$ of $\ti X \to B$ has connected fibers, so $\dim H^0(\ti X_{0'},\O_{\ti X_{0'}}) = 1$. This implies that $B' \to B$ is totally ramified to order $d$, so the multiplicities of the curves in $\ti X_{0'}$ are obtained from the ones in $\ti X_0$ by dividing by $d$.

        Conversely, if $g(\Gamma) \geq 1$ and $d$ divides $\gcd(\{m_i\})$ or if $\gcd(\{m_i\}) = d$, we consider the fibration $\ti X \to B'$ obtained by applying the above construction to the combinatorial singular fiber $(\Gamma,\{\frac{m_i}{d}\})$ with $d = 1$, and composing the resulting fibration $\ti X \to B'$ with a map $B' \to B$ which is generically $d:1$ and completely ramified at $0'$.
    \end{proof}

    \printbibliography
    
\end{document}